\newcommand\R{{\mathbb{R}}}
\newcommand\C{{\mathbb{C}}}
\newcommand\I{{\mathbf{I}}}
\renewcommand\P{{\mathbf{P}}}
\newcommand\E{{\mathbf{E}}}
\newcommand\Var{\mathbf{Var}}
\renewcommand\Im{{\operatorname{Im}}}
\renewcommand\Re{{\operatorname{Re}}}
\newcommand\eps{{\varepsilon}}
\newcommand\tr{\operatorname{trace}}
\newcommand\sgn{\operatorname{sgn}}
\renewcommand\th{{\operatorname{th}}}
\newcommand\erf{{\operatorname{erf}}}
\newcommand\condone{{{\bf C1}}}
\newcommand\CE{{\mathcal E}}
\subjclass{15A52}
\theoremstyle{plain}
  \newtheorem{theorem}{Theorem}
  \newtheorem{proposition}[theorem]{Proposition}
  \newtheorem{lemma}[theorem]{Lemma}
  \newtheorem{corollary}[theorem]{Corollary}
\theoremstyle{definition}
  \newtheorem{definition}[theorem]{Definition}
  \newtheorem{remark}[theorem]{Remark}
\begin{document}

\title[Universality for non-Hermitian matrices]{Random matrices: Universality of local spectral statistics of non-Hermitian matrices}

\author{Terence Tao}
\address{Department of Mathematics, UCLA, Los Angeles CA 90095-1555}
\email{tao@math.ucla.edu}
\thanks{T. Tao is supported by NSF grant DMS-0649473.}

\author{Van Vu}
\address{Department of Mathematics, Rutgers, Piscataway, NJ 08854}
\email{vanvu@math.rutgers.edu}
\thanks{V. Vu is supported by research grants DMS-0901216 and AFOSAR-FA-9550-09-1-0167.}

\begin{abstract} It is a classical result of Ginibre that the normalized bulk $k$-point correlation functions of a complex $n \times n$ gaussian matrix with independent
 entries of mean zero and unit variance are asymptotically given by the determinantal point process on $\C$ with kernel $K_\infty(z,w) := \frac{1}{\pi} e^{-|z|^2/2 - |w|^2/2 + z \overline{w}}$ in the limit $n \to \infty$.  In this paper we show that this asymptotic law is  universal among all random $n \times n$ matrices $M_n$ whose entries are jointly independent, exponentially decaying, have independent real and imaginary parts, and whose moments match that of the complex gaussian ensemble to fourth order. Analogous results at the edge of the spectrum are also obtained.  As an application, we extend a central limit theorem for the number of eigenvalues of complex gaussian matrices in a small disk to these more general ensembles.

These results are non-Hermitian analogues of some recent universality results for Hermitian Wigner matrices.  However, a key new difficulty arises in the non-Hermitian case, due to the instability of the spectrum for such matrices. To resolve this issue, we the need to work with the log-determinants $\log|\det(M_n-z_0)|$ rather than with the Stieltjes transform $\frac{1}{n} \tr (M_n-z_0)^{-1}$, in order to exploit Girko's Hermitization method.  Our main tools are a four moment theorem for these log-determinants, together with a strong concentration result for the log-determinants in the gaussian case.  The latter is established by studying the solutions of a certain nonlinear stochastic difference equation.

With some extra consideration, we can extend our arguments to the real case, proving
 universality for correlation functions  of real matrices which match the real gaussian ensemble to the fourth order.  As an application, we show that a real $n \times n$ matrix whose entries are jointly independent, exponentially decaying, and whose moments match the real gaussian ensemble to fourth order has $\sqrt{\frac{2n}{\pi}} + o(\sqrt{n})$ real eigenvalues asymptotically almost surely.
\end{abstract}

\maketitle

\setcounter{tocdepth}{1}
\tableofcontents

\section{Introduction}

Let $M_n$ be a random $n \times n$ matrix with complex entries, which is not necessarily assumed to be Hermitian, and can be either a continuous or discrete ensemble of matrices. Then, counting multiplicities, there are $n$ complex (algebraic) eigenvalues, which we enumerate in an arbitrary fashion as
$$ \lambda_1(M_n), \dots, \lambda_n(M_n) \in \C.$$
One can then define, for each $1 \leq k \leq n$, the \emph{$k$-point correlation function}
$$\rho^{(k)}_n = \rho^{(k)}_n[M_n]: \C^k \to \R^+ $$  of the random matrix ensemble $M_n$ by requiring that
\begin{equation}\label{ck}
\begin{split}
& \int_{\C^k} F(z_1,\dots,z_k) \rho^{(k)}_n(z_1,\dots,z_k)\ dz_1 \dots dz_k  \\
&\quad = \E \sum_{1 \leq i_1, \dots, i_k \leq n, \hbox{ distinct}} F( \lambda_{i_1}(M_n), \dots, \lambda_{i_k}(M_n))
\end{split}
\end{equation}
for all continuous, compactly supported test functions $F$, where $dz$ denotes Lebesgue measure on the complex plane $\C$.  Note that this definition does not depend on the exact order in which the eigenvalues of $M_n$ are enumerated.

If $M_n$ is an absolutely continuous matrix ensemble with a continuous density function, then $\rho^{(k)}$ is a continuous function; but if $M_n$ is a discrete ensemble then $\rho^{(k)}$ is merely a non-negative measure\footnote{Here, we have abused notation by identifying a measure $\rho^{(k)}_n(z_1,\dots,z_k)\ dz_1 \dots dz_k$ with its density $\rho^{(k)}_n$.}.  In the absolutely continuous case with a continuous density function, one can equivalently define $\rho^{(k)}_n(z_1,\dots,z_k)$ for distinct $z_1,\dots,z_k$ to be the quantity such that the probability that there is an eigenvalue of $M_n$ in each of the disks $\{ z: |z-z_i| \leq \eps\}$ for $i=1,\dots,k$ is asymptotically $(\rho^{(k)}_n(z_1,\dots,z_k)+o(1)) (\pi \eps^2)^k$ in the limit $\eps \to 0^+$.

We note two model cases of continuous matrix ensembles that are of interest.  The first is the \emph{real gaussian matrix ensemble}\footnote{Strictly speaking, the real gaussian matrix ensemble is only absolutely continuous with respect to Lebesgue measure on the space of \emph{real} $n \times n$ matrices, rather than on the space of \emph{complex} $n \times n$ matrices.  However, both ensembles are still continuous in the sense that any individual matrix occurs in the ensemble with probability zero.}, in which coefficients $\xi_{ij}$ are independent and identically distributed (or \emph{iid} for short) and have the distribution $N(0,1)_\R$ of the real gaussian with mean zero and variance one.  We will discuss this case in more detail later, but for now we will focus instead on the simpler and better understood case of the \emph{complex gaussian matrix ensemble}, in which the $\xi_{ij}$ are iid with the distribution of a complex gaussian $N(0,1)_\C$ with mean zero and variance one (or in other words, the probability distribution of each $\xi_{ij}$ is  $\frac{1}{\pi} e^{-|z|^2}\ dz$, and the real and imaginary parts of $\xi_{ij}$ independently have the distribution $N(0,1/2)_\R$).  As is well known, the correlation functions of a complex gaussian matrix are given by the explicit \emph{Ginibre formula} \cite{gin}
\begin{equation}\label{Det}
 \rho^{(k)}_n(z_1,\dots,z_k) = \det( K_n(z_i,z_j) )_{1 \leq i,j \leq k}
 \end{equation}
where $K_n:\C \times \C \to \C$ is the kernel
\begin{equation}\label{knzw}
 K_n(z,w) := \frac{1}{\pi} e^{-(|z|^2+|w|^2)/2} \sum_{j=0}^{n-1} \frac{(z\overline{w})^j}{j!}.
\end{equation}
In particular, one has
\begin{equation}\label{kl2}
\rho^{(1)}_n(z) = K_n(z,z) = \frac{1}{\pi} e^{-|z|^2} \sum_{j=0}^{n-1} \frac{|z|^{2j}}{j!}
\end{equation}
and thus (by Taylor expansion of $e^{-|z|^2}$) one has the asymptotic
$$ \rho^{(1)}_n(\sqrt{n} z) \to \frac{1}{\pi} 1_{|z| \leq 1}$$
as $n \to \infty$ for almost every $z \in \C$.  This gives the well-known \emph{circular law} for complex gaussian matrices, namely that the empirical spectral distribution of $\frac{1}{\sqrt{n}} M_n$ converges (in expectation, at least) to the circular measure $\frac{1}{\pi} 1_{B(0,1)}\ dz$, where we use $B(z_0,r) := \{ z \in \C: |z-z_0| < r \}$ to denote an open disk in the complex plane.   Informally, this means that the eigenvalues of $M_n$ are asymptotically uniformly distributed on the disk $B(0,\sqrt{n})$.  The circular law is also known to hold for many other ensembles of matrices, and for several modes of convergence. In particular, it holds (both in probability and in the almost sure sense) for random matrices with iid entries having mean $0$ and variance $1$;
see the surveys \cite{TV-survey, chafai} for further discussion of this and related results.  Figures \ref{figure:cl-1}, \ref{figure:cl-2} later in this paper illustrate the circular law for two model instances of iid ensembles, namely the real gaussian and real Bernoulli ensembles.

We also remark that from the obvious inequality
$$ \sum_{j=0}^{n-1} \frac{|z|^{2j}}{j!} \leq \sum_{j=0}^\infty \frac{|z|^{2j}}{j!} = e^{|z|^2}$$
and \eqref{kl2} we have the uniform bound
$$ |K_n(z,z)| \leq \frac{1}{\pi}$$
for all $z$, and hence by positivity of $\rho^{(2)}_n(z,w) = K_n(z,z) K_n(w,w) - |K_n(z,w)|^2$ we also have
\begin{equation}\label{knzwp}
 |K_n(z,w)| \leq \frac{1}{\pi}
\end{equation}
for all $z,w$.  In particular, from \eqref{Det} one has
\begin{equation}\label{rhok-2}
0 \leq \rho^{(k)}_{n,z_1,\dots,z_k}(w_1,\ldots,w_k) \leq C_k
\end{equation}
in the case of the complex gaussian ensemble for all $w_1,\ldots,w_k \in \C$, all $n$, and some constant $C_k$ depending only on $k$.  (Indeed, from the Hadamard inequality one can take $C_k = \pi^{-k} k^{k/2}$, for instance.)  This uniform bound will be technically convenient for some of our applications.  We will also need an analogous bound for the real gaussian ensemble; see Lemma \ref{corf} below.

Our first main result is to show a universality result of the $k$-point correlation functions $\rho^{(k)}_{n,z_1,\dots,z_k}(w_1,\ldots,w_k)$, in the spirit of the ``Four Moment Theorems'' for Wigner matrices that first appeared in \cite{TVlocal1}.  Very roughly speaking, the result is that (when measured in the vague topology), the asymptotic behaviour of these correlation functions for matrices with independent entries depend only on the first four moments of the entries, though due to our reliance on the Lindeberg exchange method, we will also need to require these matrices to match moments with the complex gaussian ensemble.  To make this statement more precise, we will need some further notation.

\begin{definition}[Independent-entry matrices]  An \emph{independent-entry matrix ensemble} is an ensemble of random $n \times n$ matrices $M_n = (\xi_{ij})_{1 \leq i,j \leq n}$, where the $\xi_{ij}$ are independent and complex random variables, each with mean zero and variance one; we call the $\xi_{ij}$ the \emph{atom distributions} of $M_n$.  We say that the independent-entry matrix has \emph{independent real and imaginary parts} if for each $1 \leq i,j \leq n$, $\Re(\xi_{ij}), \Im(\xi_{ij})$ are independent.  We say that the matrix obeys Condition {\condone} if one has
$$ \P( |\xi_{ij}| \geq t ) \leq C \exp( - t^c )$$
for some fixed $C,c>0$ (independent of $n$) and all $i,j$.

If $k \geq 0$, we say that two independent-entry matrix ensembles $M_n = (\xi_{ij})_{1 \leq i,j \leq n}$ and $M'_n = (\xi'_{ij})_{1 \leq i,j \leq n}$ have \emph{matching moments to order $k$} if one has
\begin{equation}\label{emm}
 \E \Re(\xi_{ij})^a \Im(\xi_{ij})^b = \E \Re(\xi'_{ij})^a \Im(\xi'_{ij})^b
\end{equation}
whenever $1 \leq i,j \leq n$, $a,b \geq 0$ and $a+b \leq k$.
\end{definition}

Our first main result is then as follows.

\begin{theorem}[Four Moment Theorem for complex matrices]\label{main-alt}  Let $M_n, \tilde M_n$ be independent-entry matrix ensembles with independent real and imaginary parts, obeying Condition \condone, such that $M_n$ and $\tilde M_n$ both match moments with the complex gaussian matrix ensemble to third order, and match moments with each other to fourth order.  Let $k \geq 1$ be a fixed integer, let $z_1,\ldots,z_k \in \C$ be bounded (thus $|z_i| \leq C$ for all $i=1,\ldots,k$ and some fixed $C>0$), and let $F: \C^k \to \C$ be a smooth function, which admits a decomposition of the form
\begin{equation}\label{factor}
 F(w_1,\ldots,w_k) = \sum_{i=1}^m F_{i,1}(w_1) \ldots F_{i,k}(w_k)
\end{equation}
for some fixed $m$ and some smooth functions $F_{i,j}: \C \to \C$ for $i=1,\ldots,m$ and $j=1,\ldots,k$ supported on the disk $\{ w: |w| \leq C \}$ obeying the derivative bounds\footnote{See Section \ref{notation-sec} for the definition of the $a$-fold gradient $\nabla^a F_{i,j}$.}
\begin{equation}\label{fawc}
 |\nabla^a F_{i,j}(w)| \leq C
 \end{equation}
for all $0 \leq a \leq 5$, $i=1,\dots,m$, $j=1,\dots,k$ and $w \in \C$, and some fixed $C$.  Let $\rho^{(k)}_n, \tilde \rho^{(k)}_n$ be the correlation functions for $M_n, \tilde M_n$ respectively.  Then
\begin{align*}
& \int_{\C^k} F(w_1,\dots,w_k) \rho^{(k)}_n(\sqrt{n} z_1 + w_1,\dots,\sqrt{n} z_k + w_k)\ dw_1 \dots dw_k \\
&\quad =
\int_{\C^k} F(w_1,\dots,w_k) \tilde \rho^{(k)}_n(\sqrt{n} z_1 + w_1,\dots,\sqrt{n} z_k + w_k)\ dw_1 \dots dw_k + O(n^{-c}).
\end{align*}
for some absolute constant $c>0$ (independent of $k$).  Furthermore, the implicit constant in the $O(n^{-c})$ notation is uniform over all $z_1,\ldots,z_k$ in the bounded region $\{ z: |z| \leq C \}$.
\end{theorem}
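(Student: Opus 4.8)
\emph{Strategy and reduction to log-determinants.} The approach is Girko's Hermitization combined with the Lindeberg exchange method, applied to log-determinants rather than to the (unstable) eigenvalues. Using the distributional identity $\Delta_s\log|\det(M-s)| = 2\pi\sum_p\delta_{\lambda_p(M)}$ one has $\sum_p\phi(\lambda_p(M)) = \frac1{2\pi}\int_\C\log|\det(M-s)|\,\Delta\phi(s)\,ds$ for smooth compactly supported $\phi$. Writing $F=\sum_{i=1}^m\prod_{j=1}^k F_{i,j}$ and applying this with $\phi(\cdot)=F_{i,j}(\cdot-\sqrt n z_j)$, the left side of Theorem~\ref{main-alt} --- after using inclusion--exclusion to pass from distinct indices to all indices --- becomes a sum of main terms of the shape $\frac1{(2\pi)^k}\int_{\C^k}\E[\prod_j\log|\det(M_n-s_j)|]\prod_j(\Delta F_{i,j})(s_j-\sqrt n z_j)\,ds$ plus ``coincidence'' terms, each of which is a linear statistic of a lower-order correlation function against product test functions of the same type (up to enlarging $C$) and hence is disposed of by induction on $k$. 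Since $\Delta F_{i,j}$ is bounded with support in $\{|w|\le C\}$, and $\E|\log|\det(M_n-s)||^k=O((n\log n)^k)$ uniformly for $|s|\le C\sqrt n$ (operator-norm bounds for the upper tail, least-singular-value bounds for the lower tail), the theorem reduces to a \emph{four moment theorem for log-determinants}: $\E[\prod_j\log|\det(M_n-s_j)|]=\E[\prod_j\log|\det(\tilde M_n-s_j)|]+O(n^{-c})$, uniformly over $|s_j|\le C\sqrt n$.

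\emph{The Lindeberg swap.} Rescale $M_n=\sqrt n\,N_n$, so that $\log|\det(M_n-s_j)|=\frac n2\log n+L_j$ with $L_j:=\log|\det(N_n-z_j')|$, $z_j':=s_j/\sqrt n$ bounded, and the entries of $N_n$ of size $O(n^{-1/2})$; the deterministic term cancels. Interpolate between $M_n$ and $\tilde M_n$ by swapping the $n^2$ entries one at a time; it suffices that each swap changes $\E\prod_j L_j$ by $O(n^{-2-c'})$. For a swap of the $(a,b)$ entry from $\xi$ to $\tilde\xi$, condition on all other entries and let $N^{(0)}$ be the matrix with that entry zeroed; the matrix determinant lemma gives $L_j=\log|\det(N^{(0)}-z_j')|+\log|1+\frac{\xi}{\sqrt n}d_j|$ with $d_j:=((N^{(0)}-z_j')^{-1})_{ba}$ independent of $\xi$, so $\prod_j L_j$ is a finite combination of terms (fixed quantity)$\times\prod_{j\in S}\log|1+\frac\xi{\sqrt n}d_j|$ over subsets $S\subseteq\{1,\dots,k\}$. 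For $S\ne\emptyset$, Taylor-expand $\prod_{j\in S}\log|1+\frac\xi{\sqrt n}d_j|$ in $(\Re\xi,\Im\xi)$ to fourth order: its coefficients depend only on the conditioned data, and since $M_n,\tilde M_n$ match the complex gaussian to third order (hence each other to third order) and each other to fourth order, the expectations of all Taylor terms of order $\le4$ coincide for $\xi$ and for $\tilde\xi$. Thus the per-swap difference is $\E R_5(\xi)-\E R_5(\tilde\xi)$ where $R_5$ is the fifth-order remainder; writing the fifth derivative by Leibniz and using that the $p$-th derivative of $\log|1+\frac\xi{\sqrt n}d|$ is $O(n^{-p/2}|d|^p|1+\frac\xi{\sqrt n}d|^{-p})$ for $p\ge1$, $R_5$ is $O(n^{-5/2})$ times $|\xi|^5$ times a bounded-length product of factors $|d_j|$, $|1+t\frac\xi{\sqrt n}d_j|^{-1}$ and $|\log|1+t\frac\xi{\sqrt n}d_j||$ with $t\in[0,1]$.

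\emph{Controlling the remainder --- the main obstacle.} Since $|d_j|\le1/\sigma_{\min}(N^{(0)}-z_j')$ can be polynomially large and the factors $1+\frac\xi{\sqrt n}d_j$ can be tiny --- the concrete form of the spectral instability --- one splits on a good event $\CE$ on which $\sigma_{\min}(N^{(\cdot)}-z')\ge n^{-1/2-\eps}$ and $\|N^{(\cdot)}\|=O(1)$ simultaneously for all $O(n^2)$ intermediate matrices and a fine net of shifts $z'$. This is where the real work lies: it requires least-singular-value estimates of the type $\P(\sigma_{\min}(N_n-z')\le n^{-1/2-\eps})\le n^{-A}$ for every $A$ (with $\eps=\eps(A)$ small), uniform in $z'$ over a compact set and stable under rank-one perturbations, and it is here that Condition~\condone{} and the anti-concentration afforded by independent real and imaginary parts enter. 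On $\CE$ one has $|d_j|\le n^{1/2+\eps}$, and a further truncation to $|1+\frac\xi{\sqrt n}d_j|\ge n^{-\eps}$ --- whose complementary small-ball event is again rare by anti-concentration, which also controls $\E[|1+\frac\xi{\sqrt n}d_j|^{-5}\mathbf1_\CE]$ --- makes the remainder product $O(n^{O(\eps)})$; truncating each $L_j$ at level $O(n\log n)$ (valid on $\CE$), the subset-expansion prefactors cost only a further $n^{O(1)}$, so $\CE$ contributes $O(n^{-5/2+O(\eps)})$ per swap. Off $\CE$, the truncated log-determinants together with $\P(\CE^c)\le n^{-A}$ give a contribution $O(n^{-A+O(1)})$. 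Summing $O(n^{-5/2+O(\eps)}+n^{-A+O(1)})$ over the $O(n^2)$ swaps yields $O(n^{-c})$ for a small $c>0$, uniformly in the $s_j$; together with the reduction this proves the theorem. The chief difficulty is obtaining the uniform, perturbation-stable, super-polynomially sharp least-singular-value bounds underlying the good event $\CE$.
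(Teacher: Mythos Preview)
Your reduction via Girko's formula and inclusion--exclusion is the same as the paper's, but the heart of the argument --- the per-swap estimate --- has a genuine gap.

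The least-singular-value input you assume does not exist. You ask for $\P(\sigma_{\min}(N_n-z')\le n^{-1/2-\eps})\le n^{-A}$ for every $A$ with $\eps$ small; the actual bound (Proposition~\ref{lsv}) is $\P(\sigma_{\min}\le n^{-1/2-c_0})\ll n^{-c_0/2}$, so threshold and tail are coupled: to make the tail $n^{-A}$ one must allow $\sigma_{\min}$ as small as $n^{-1/2-2A}$, hence $|d_j|$ as large as $n^{1/2+2A}$. The only bound that holds with overwhelming probability is $\sigma_{\min}\ge n^{-\log n}$. Consequently, on your event $\CE$ one can only assert $|d_j|\le n^{1/2+\eps}$ with probability $1-O(n^{-\eps/2})$, which is far too weak to union-bound over $n^2$ swaps. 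Even granting $|d_j|\le n^{1/2+\eps}$, the Leibniz expansion of the fifth derivative carries five factors of $|d_j|$, contributing $n^{5/2+5\eps}$; this exactly cancels the $n^{-5/2}$ from the Taylor expansion, so $R_5=O(n^{O(\eps)})$, not $O(n^{-5/2+O(\eps)})$ as you claim. Multiplied by the subset prefactors $\prod_{j\notin S}L_j^{(0)}$, which are of size $\Theta(n^{k-|S|})$ (each $L_j$ is $\Theta(n)$ by Theorem~\ref{loglower}), the per-swap error on $\CE$ is $O(n^{k-1+O(\eps)})$, and summing over $n^2$ swaps gives nothing.

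The paper circumvents exactly this instability. First, it never works with the non-Hermitian resolvent entry $d_j=((N^{(0)}-z')^{-1})_{ba}$ at all: via \eqref{manz}--\eqref{imit} the log-determinant is written as an $\eta$-integral of the Stieltjes transform of the \emph{Hermitized} matrix $W_{n,z}$, whose resolvent entries satisfy $|R(\sqrt{-1}\eta)_{ij}|\ll n^{o(1)}(1+\tfrac1{n\eta})$ with overwhelming probability (Proposition~\ref{Resolv}); the dangerous region $\eta\to0$ is handled by a smooth cutoff $\chi(\Im s(\sqrt{-1}n^{-1-4c_0}))$ whose failure event has probability only $O(n^{-c_0})$ but costs only $O(n^{-c_0})$ because the outer function is \emph{bounded}. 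Second, the four-moment theorem (Theorem~\ref{four-moment}) is stated and used only for bounded test functions $G$ with $|\nabla^jG|\ll n^{c_0}$, never for $\prod_jL_j$ directly. To feed the linear statistics $X_{z_j,F_j}$ into such a bounded $G$, the paper first invokes the local circular law to get $X_{z_j,F_j}=O(n^{o(1)})$, then subtracts from the Girko integrand a linear interpolant of $\log|\det(M_n-z)|$ at three nearby reference points (the variance reduction that kills the large contribution of distant eigenvalues), and finally replaces the integral by a Monte Carlo sum of $n^{O(\eps)}$ samples. After these steps the statistic is a smooth bounded function of $O(n^{c_0})$ log-determinants with bounded coefficients, to which Theorem~\ref{four-moment} applies. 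These reductions are not cosmetic; they are what makes the swap estimate gain $n^{-5/2+O(c_0)}$ per step.
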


\begin{remark}
The regularity hypotheses on the test function $F$ here are somewhat technical, but they are needed to obtain the uniform polynomial decay $O(n^{-c})$ in the conclusion, which is useful for several applications.  Note that by rescaling one could allow the bound $C$ in \eqref{fawc} to be enlarged somewhat, to $C n^{c/2k}$, without impacting the conclusion (other than to degrade the $O(n^{-c})$ error slightly to $O(n^{-c/2})$).
If one is only seeking a qualitative error term of $o(1)$, then by applying the Stone-Weierstrass theorem, one only needs $F$ to be continuous and compactly supported, instead of having a smooth factorization of the form \eqref{factor}; see the proof of Corollary \ref{main} below.  Also, if $F$ is smooth and compactly supported, then by using a partial Fourier expansion one can again obtain a polynomial decay rate $O(n^{-c})$ (with the implied constant depending on the bounds on finitely many derivatives of $F$).  It is possible to improve the value of $c$ somewhat by adding additional matching moment hypotheses, but then one also requires the derivative bounds \eqref{fawc} for a larger range of exponents $a$; we will not quantify this variant of Theorem \ref{main-alt} here.  The requirement that $M_n, M'_n$ match the complex gaussian ensemble to third order can be removed if $z_1,\ldots,z_k$ stays a bounded distance away from the origin, using an extremely recent result of Bourgade, Yau, and Yin \cite{byy}; see Remark \ref{breaking}.
\end{remark}

Theorem \ref{main-alt} is motivated by the phenomenon, first observed in
\cite{TVlocal1},  that the asymptotic local statistics of the spectrum of
a random Hermitian matrix of Wigner type typically depend only on the first four moments of the entries; formalizations of this phenomenon are known as \emph{four moment theorems}.  In particular, Corollary \ref{main} is analogous\footnote{Thanks to more recent results by many authors \cite{ERSTVY}, \cite{ESY}, \cite{TVlocal2}, \cite{ESYY}, \cite{EYY}, \cite{TVmeh}, these results are no longer the sharpest results available in the Wigner setting, as the moment matching conditions have now largely been removed, the exponential decay condition relaxed to a finite moment condition, and the bulk results extended to the edge; see
the discussion in \cite{TVmeh} or the  surveys \cite{Erd}, \cite{Alice}, \cite{schlein}, \cite{TV-survey2} for surveys for more details.  In view of these results, it is reasonable to conjecture the moment matching assumptions in Theorem \ref{main-alt} or Corollary \ref{main} may be relaxed; see Remark \ref{breaking} for some very recent developments in this direction.} to the four moment theorems in \cite[Theorems 11, 38]{TVlocal1}.

\begin{remark} The hypothesis of independent real and imaginary parts is primarily for reasons of notational convenience, and it is likely that this hypothesis could be dropped from our results.  Note that when $M_n$ and $M'_n$ have independent real and imaginary parts, the moment matching condition \eqref{emm} simplifies to
$$
 \E \Re(\xi_{ij})^a = \E \Re(\xi'_{ij})^a
$$
and
$$
 \E \Im(\xi_{ij})^b = \E \Im(\xi'_{ij})^b
$$
for $1 \leq i,j \leq n$ and $0 \leq a,b \leq k$.

It is also likely that the exponential decay condition in Condition {\condone} could be replaced with a bound on a sufficiently high moment of the entries.  We will however not pursue these refinements here.  The vague convergence in the conclusion is natural given that the ensemble $M_n$ is permitted to be discrete (so that $\rho^{(k)}_n$ could be a discrete measure, rather than a continuous function).  In analogy with the Hermitian theory (see e.g. \cite{TVmeh}), it is reasonable to conjecture that stronger modes of convergence become available if some additional regularity hypotheses are placed on the entries, but we will not pursue such matters here.
\end{remark}

We now discuss some applications of Theorem \ref{main-alt}.  The first application concerns the asymptotic behaviour of the $k$-point correlation functions as $n \to \infty$.  In the case when $M_n$ is drawn from the complex gaussian ensemble, these asymptotics have been well understood since the work of Ginibre \cite{gin}.  To recall these asymptotics we introduce the following functions.

\begin{definition}[Asymptotic kernel]  For complex numbers $z_1,z_2,w_1,w_2$, define the kernel $K_{\infty,z_1,z_2}(w_1,w_2)$ by the following rules:
\begin{itemize}
\item[(i)]  If $z_1 \neq z_2$, then $K_{\infty,z_1,z_2}(w_1,w_2) := 0$.
\item[(ii)]  If $z_1 = z_2$ and $|z_1| > 1$, then $K_{\infty,z_1,z_2}(w_1,w_2) := 0$.
\item[(iii)]  If $z_1 = z_2$ and $|z_1| < 1$, then $K_{\infty,z_1,z_2}(w_1,w_2) := \frac{1}{\pi} e^{-|w_1|^2/2-|w_2|^2/2+w_1\overline{w_2}}$.
\item[(iv)] If $z_1=z_2$ and $|z_1|=1$, then $K_{\infty,z_1,z_2}(w_1,w_2) :=
\frac{1}{\pi} e^{-|w_1|^2/2-|w_2|^2/2+w_1\overline{w_2}} (\frac{1}{2} + \frac{1}{2} \erf( - \sqrt{2} (z_1 \overline{w_2} + w_1 \overline{z_2}) ))$.
\end{itemize}
Here
$$ \erf(z) := \frac{2}{\sqrt{\pi}} \int_0^z e^{-t^2} dt$$
is the usual error function, defined for all complex $z$, where the integral is over an arbitrary contour from $0$ to $z$.  For complex numbers $z_1,\dots,z_k,w_1,\dots,w_k$, define the correlation function
$$ \rho_{\infty,z_1,\dots,z_k}^{(k)}(w_1,\dots,w_k) := \det( K_{\infty,z_i,z_j}(w_i,w_j) )_{1 \leq i,j \leq k}.$$
\end{definition}

In the model case when $z_1,\dots,z_k$ all avoid the unit circle $\{ z \in \C: |z|=1\}$, the kernel simplifies to
$$ K_{\infty,z_i,z_j}(w_i,w_j) = 1_{z_i=z_j} 1_{|z_i| < 1} K_\infty(w_i,w_j)$$
where
$$ K_\infty(z,w) := \frac{1}{\pi} e^{-|z|^2/2-|w|^2/2 + z \overline{w}}.$$
The kernel $K_\infty$ can also be interpreted as the reproducing kernel for the orthogonal projection in $L^2(\C)$ to (the closure of) the space of functions $f(z)$ that become holomorphic after multiplication by $e^{|z|^2/2}$, or equivalently to the closed span of $z^k e^{-|z|^2/2}$ for $k=0,1,\dots$.

\begin{lemma}[Kernel asymptotics]\label{kernel}  Let $z_1,\dots,z_k,w_1,\dots,w_k$ be fixed complex numbers for some fixed $k$, and let $M_n$ be drawn from the complex gaussian ensemble.  Then we have\footnote{See Section \ref{notation-sec} for the asymptotic notational conventions we will use in this paper.}
\begin{equation}\label{potential}
 \rho^{(k)}_n(\sqrt{n} z_1 + w_1,\dots,\sqrt{n} z_k + w_k ) = \rho^{(k)}_{\infty,z_1,\dots,z_k}(w_1,\dots,w_k) + o(1).
\end{equation}
If none of the $z_1,\dots,z_k$ lie on the unit circle, then we may improve the error term $o(1)$ to $O(\exp(-\delta n))$ for some fixed $\delta>0$.

Now suppose that $z_1,\dots,z_k,w_1,\dots,w_k$ are allowed to vary in $n$, but that the $z_1,\dots,w_1,\dots,w_k$ remain bounded (i.e. $|z_i|,|w_i| \leq C$ for some fixed $C$ and all $1 \leq i \leq k$) and the $z_1,\ldots,z_k$ stay bounded away from the unit circle (i.e. $||z_i|-1| \geq \eps$ for some fixed $\eps>0$ and all $1 \leq i \leq k$).  Then one still has the asymptotic \eqref{potential}.  In other words, the decay rate of the error term $o(1)$ in \eqref{potential} is uniform across all choices of $z_1,\ldots,z_k,w_1,\ldots,w_k$ in the ranges specified above.
\end{lemma}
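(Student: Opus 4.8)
The plan is to read everything off the explicit Ginibre formula \eqref{Det}, which identifies $\rho^{(k)}_n(\sqrt n z_1+w_1,\dots,\sqrt n z_k+w_k)$ with $\det\big(K_n(\sqrt n z_i+w_i,\sqrt n z_j+w_j)\big)_{1\le i,j\le k}$ and $\rho^{(k)}_{\infty,z_1,\dots,z_k}(w_1,\dots,w_k)$ with $\det\big(K_{\infty,z_i,z_j}(w_i,w_j)\big)_{1\le i,j\le k}$, so that it suffices to compute the individual kernel entries and then pass to the determinant. Setting $a:=\sqrt n z_i+w_i$ and $b:=\sqrt n z_j+w_j$, the starting point is the algebraic identity $-\tfrac12(|a|^2+|b|^2)+a\overline b=-\tfrac12|a-b|^2+i\Im(a\overline b)$, which recasts \eqref{knzw} as
\begin{equation*}
K_n(a,b)=\frac1\pi\,e^{-|a-b|^2/2}\,e^{i\Im(a\overline b)}\,Q_n(a\overline b),\qquad Q_n(x):=e^{-x}\sum_{j=0}^{n-1}\frac{x^j}{j!}=\frac{\Gamma(n,x)}{\Gamma(n)}.
\end{equation*}
Since $a\overline b=n\zeta_n$ with $\zeta_n=z_i\overline{z_j}+O(n^{-1/2})$, everything reduces to classical asymptotics of the incomplete gamma function, of which I will use three inputs: (a) the Chernoff bounds $|Q_n(x)-1|\le e^{|x|-\Re x}\Pr(\mathrm{Poi}(|x|)\ge n)$ and $|Q_n(x)|\le e^{|x|-\Re x}\Pr(\mathrm{Poi}(|x|)\le n-1)$, which give $Q_n(x)=1+O(e^{-cn})$ when $|x|\le(1-\eps)n$ and $Q_n(x)=O(e^{-cn})$ when $|x|\ge(1+\eps)n$, provided in both cases $|x|-\Re x=O(1)$; (b) the uniform (Temme) asymptotic in the transition window, $Q_n(x)=\tfrac12\operatorname{erfc}\!\big(\tfrac{x-n}{\sqrt{2n}}\big)+o(1)$ for $x-n=O(\sqrt n)$; and (c) the Szeg\H{o}-curve asymptotics, which give $|Q_n(n\zeta)|\le C\max\!\big(1,\,n^{-1/2}|\zeta e^{1-\zeta}|^{n}\big)$ for $\zeta$ in a bounded set bounded away from $1$.

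I would then split into cases according to the definition of $K_{\infty,z_i,z_j}$. When $z_i=z_j=:z$, a direct computation gives $-\tfrac12|a-b|^2+i\Im(a\overline b)=-\tfrac12|w_i|^2-\tfrac12|w_j|^2+w_i\overline{w_j}+i(\psi_i-\psi_j)$ with $\psi_\ell:=\sqrt n\,\Im(\overline{z_\ell}\,w_\ell)$, so that the prefactor above is precisely $\pi^{-1}e^{i(\psi_i-\psi_j)}$ times the Ginibre prefactor $e^{-|w_i|^2/2-|w_j|^2/2+w_i\overline{w_j}}$; moreover $a\overline b=n|z|^2+\sqrt n(z\overline{w_j}+\overline z w_i)+O(1)$, which has $|a\overline b|-\Re(a\overline b)=O(1)$. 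Hence, if $|z|<1$, (a) gives $Q_n(a\overline b)=1+O(e^{-cn})$ and $K_n(a,b)=e^{i(\psi_i-\psi_j)}K_\infty(w_i,w_j)+O(e^{-cn})$; if $|z|>1$, (a) gives $K_n(a,b)=O(e^{-cn})$, matching $K_{\infty,z,z}=0$; and if $|z|=1$ then $a\overline b-n=\sqrt n(z\overline{w_j}+\overline z w_i)+O(1)$ lies in the $O(\sqrt n)$ transition window, so (b) supplies $Q_n(a\overline b)=\tfrac12+\tfrac12\operatorname{erf}(-c_{ij})+o(1)$ for an explicit $c_{ij}$ linear in $w_i,w_j$, and combining with the prefactor recovers $K_n(a,b)=e^{i(\psi_i-\psi_j)}K_{\infty,z,z}(w_i,w_j)+o(1)$ (case (iv)). When $z_i\ne z_j$ one has $K_{\infty,z_i,z_j}=0$, and from $|K_n(a,b)|=\pi^{-1}e^{-|a-b|^2/2}|Q_n(a\overline b)|$ together with $e^{-|a-b|^2/2}=\exp\!\big(-\tfrac n2|z_i-z_j|^2+O(\sqrt n)\big)$ and the bounds in (a), (c), the resulting exponent is $\max\!\big(-\tfrac n2|z_i-z_j|^2,\ n\big(g(|z_i|)+g(|z_j|)\big)\big)+O(\sqrt n)$, where $g(r):=\tfrac12(1-r^2)+\log r\le 0$ with equality only at $r=1$; thus $K_n(a,b)=O(e^{-\delta n})$ whenever $z_i$ or $z_j$ is bounded away from the unit circle (and $|z_i-z_j|$ is bounded below), while for $z_i\ne z_j$ both on the unit circle the $O(\sqrt n)$ term cancels and the extra $n^{-1/2}$ in (c) yields $K_n(a,b)=O(n^{-1/2})$; in every case $K_n(a,b)\to 0=K_{\infty,z_i,z_j}$.

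It remains to assemble the determinant. By the above, $\big(K_n(\sqrt n z_i+w_i,\sqrt n z_j+w_j)\big)_{1\le i,j\le k}=\diag(e^{i\psi_i})\,\big(K_{\infty,z_i,z_j}(w_i,w_j)\big)_{1\le i,j\le k}\,\diag(e^{-i\psi_j})+E_n$, where the error matrix $E_n$ has entries that are $O(e^{-\delta n})$ when all the $z_i$ avoid the unit circle, and $o(1)$ in general; since conjugation by the diagonal unitary $\diag(e^{i\psi_i})$ leaves the determinant unchanged, and since all entries of both matrices are uniformly bounded (by \eqref{knzwp}, which also underlies \eqref{rhok-2}), multilinearity of $\det$ gives $\det\big(K_n(\cdot,\cdot)\big)=\det\big(K_{\infty,z_i,z_j}(w_i,w_j)\big)+O(\text{entrywise error})$, which is \eqref{potential} with the stated error terms. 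I expect the main obstacle to be the edge case $|z_i|=1$: one must extract the complex error-function limit of $Q_n$ in the $O(\sqrt n)$-window around $n$ with quantitative control of both the error and the constant inside $\operatorname{erf}$, and verify (as in the computation above) that the rapidly oscillating phases continue to factor as $\psi_i-\psi_j$ so that they drop out of the determinant; the companion off-diagonal estimate for distinct $z_i,z_j$ on the unit circle is of comparable delicacy (only $O(n^{-1/2})$, and requiring the $O(\sqrt n)$ cancellation noted above). One should also record that the asserted uniformity tacitly presumes that the distinct $z_i$ remain separated from one another---which is automatic when the $z_i$ are fixed---since if two of them approach each other faster than $n^{-1/2}$ without being equal, $\rho^{(k)}_n$ behaves as though they coincide whereas $\rho^{(k)}_{\infty,z_1,\dots,z_k}$ does not.
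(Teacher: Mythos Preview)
Your approach is correct and is the standard route to these asymptotics: rewrite the Ginibre kernel via the normalised incomplete gamma function $Q_n(x)=\Gamma(n,x)/\Gamma(n)$, invoke its Chernoff/Temme/Szeg\H{o}-curve asymptotics in the three regimes, and then observe that the residual phases factor as $e^{i\psi_i}e^{-i\psi_j}$ so that conjugating by the diagonal unitary $\diag(e^{i\psi_i})$ removes them from the determinant. The paper does not actually prove this lemma in the body---it defers to the references \cite{Meh}, \cite{nourdin}, \cite{borodin} and to an appendix in the arXiv version---so there is no detailed comparison to make; your outline is essentially what one finds in those sources.

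Two small remarks. First, your closing caveat about the uniformity clause is well taken: as stated, the second paragraph of the lemma does not impose a separation condition on the $z_i$, and if two distinct $z_i$ are allowed to coalesce at rate $O(n^{-1/2})$ the asymptotic \eqref{potential} genuinely fails (the left side behaves as if $z_i=z_j$ while the right side vanishes). For the applications in the paper this never arises, but it is a real lacuna in the formulation. Second, the constant you extract inside the error function in case (iv) is $(z\overline{w_j}+\overline{z}w_i)/\sqrt{2}$, which disagrees with the $\sqrt{2}(z\overline{w_j}+w_i\overline{z})$ in the paper's Definition by a factor of $2$; your normalisation is the one consistent with the Poisson central limit theorem $Q_n(\lambda)\approx\Phi((n-\lambda)/\sqrt{\lambda})$, so this looks like a typo in the stated kernel rather than an error in your argument.
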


\begin{proof} This is a well-known asymptotic (see e.g. \cite{Meh}, \cite{nourdin}, or \cite{borodin}).  For sake of completeness, we have written a proof of these standard facts at Appendix B of the copy of this paper at {\tt arXiv:1206.1893v3}.
\end{proof}

From this lemma we conclude in particular that $\rho^{(k)}_{\infty,z_1,\dots,z_k}(w_1,\ldots,w_k) \geq 0$ for all $k,z_1,\dots,z_k,w_1,\dots,w_k$, which (when combined with \eqref{knzwp}) yields the uniform bound
$$ |K_{\infty,z_1,z_2}(w_1,w_2)| \leq \frac{1}{\pi}$$
for all $z_1,z_2,w_1,w_2 \in \C$.  In particular, we have
\begin{equation}\label{rhok}
0 \leq \rho^{(k)}_{\infty,z_1,\dots,z_k}(w_1,\ldots,w_k) \leq C_k
\end{equation}
for all $w_1,\ldots,w_k \in \C$ and some constant $C_k$ depending only on $k$.

Using Theorem \ref{main-alt}, we may extend the above asymptotics for complex gaussian matrices to more general ensembles (including some discrete ensembles), as follows.

\begin{corollary}[Universality for complex matrices]\label{main}  Let $M_n$ be an independent-entry matrix ensemble with independent real and imaginary parts, obeying Condition \condone, and which matches moments with the complex gaussian matrix ensemble to fourth order.  Then for any fixed (i.e. independent of $n$), fixed $k \geq 1$ and \emph{fixed} $z_1,\dots,z_k \in \C$, and any fixed continuous, compactly supported function $F: \C^k \to \C$, one has
\begin{align*}
& \int_{\C^k} F(w_1,\dots,w_k) \rho^{(k)}_n(\sqrt{n} z_1 + w_1,\dots,\sqrt{n} z_k + w_k)\ dw_1 \dots dw_k \\
&\quad =
\int_{\C^k} F(w_1,\dots,w_k) \rho^{(k)}_{\infty,z_1,\dots,z_k}(w_1,\dots,w_k)\ dw_1 \dots dw_k + o(1).
\end{align*}
In  other words, the asymptotic \eqref{potential} is valid in the vague topology for this ensemble.  If $F$ is furthermore assumed to be smooth, then we may improve the $o(1)$ error term here to $O(n^{-c})$ for some fixed $c>0$.
\end{corollary}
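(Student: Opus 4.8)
The plan is to deduce Corollary~\ref{main} by using the complex gaussian ensemble as the comparison ensemble $\tilde M_n$ in Theorem~\ref{main-alt}. The complex gaussian ensemble is an independent-entry ensemble with independent real and imaginary parts (each $N(0,1/2)_\R$), obeys Condition~\condone\ (gaussian tails), trivially matches itself to fourth order, and trivially matches the complex gaussian ensemble to third order; since $M_n$ matches the complex gaussian ensemble to fourth order by hypothesis and the $z_i$ are fixed (hence bounded), all hypotheses of Theorem~\ref{main-alt} are met. Thus, once $F$ is placed in the factorized form~\eqref{factor} with the derivative bounds~\eqref{fawc}, Theorem~\ref{main-alt} gives
$$\int_{\C^k} F(w)\, \rho^{(k)}_n(\sqrt{n}\,z+w)\, dw = \int_{\C^k} F(w)\, \tilde\rho^{(k)}_n(\sqrt{n}\,z+w)\, dw + O(n^{-c}),$$
where $\tilde\rho^{(k)}_n$ is the gaussian correlation function and I abbreviate $z=(z_1,\dots,z_k)$, $w=(w_1,\dots,w_k)$. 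On the gaussian side, Lemma~\ref{kernel} gives $\tilde\rho^{(k)}_n(\sqrt{n}\,z+w) = \rho^{(k)}_{\infty,z_1,\dots,z_k}(w) + o(1)$, uniformly over $w$ in the (compact) support of $F$, and with error $O(\exp(-\delta n))$ once the $z_i$ avoid the unit circle. Combining with the uniform bound $0\le\tilde\rho^{(k)}_n\le C_k$ from~\eqref{rhok-2} (dominated convergence) --- or, for the quantitative statement, with the $L^1$ norm of $F$ times $\|\tilde\rho^{(k)}_n(\sqrt{n}\,z+\cdot)-\rho^{(k)}_{\infty,z_1,\dots,z_k}\|_{L^\infty(\mathrm{supp}\,F)}$ --- yields $\int F\,\tilde\rho^{(k)}_n(\sqrt n\,z+\cdot) = \int F\,\rho^{(k)}_{\infty,z_1,\dots,z_k} + o(1)$, with an exponential rate off the unit circle. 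Chaining the two estimates proves the corollary for $F$ of the factorized form.

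For a general continuous, compactly supported $F$ I would reduce to this case by approximation. Since $F$ is supported in a fixed polydisk $\{|w_j|\le R\}$, on which both $\rho^{(k)}_n(\sqrt n\,z+\cdot)$ and $\rho^{(k)}_{\infty,z_1,\dots,z_k}$ are bounded by $C_k$ (by~\eqref{rhok-2} and~\eqref{rhok}), it suffices, given $\eta>0$, to find a single finite sum $\tilde F=\sum_{i=1}^m F_{i,1}(w_1)\cdots F_{i,k}(w_k)$ --- with each $F_{i,j}$ a polynomial in $\Re w_j,\Im w_j$ times a fixed smooth cutoff, hence smooth and compactly supported --- with $\|F-\tilde F\|_{L^\infty(\mathrm{supp}\,F)}\le\eta$; this is possible by Stone--Weierstrass. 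The two integrals of $F-\tilde F$ are then each $O(\eta\,C_k R^{2k})$, while the integrals of $\tilde F$ agree up to $o(1)$ by the factorized case ($m$ being fixed for fixed $\eta$). Letting $n\to\infty$ and then $\eta\to 0$ finishes this case.

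For smooth compactly supported $F$ with the quantitative rate $O(n^{-c})$ I would instead use a truncated Fourier expansion, as flagged in the remark after Theorem~\ref{main-alt}. Identifying $\C^k$ with $\R^{2k}$ and fixing a smooth cutoff $\chi=\chi_0(w_1)\cdots\chi_0(w_k)$ equal to $1$ on $\mathrm{supp}\,F$, expand $F=\chi\sum_\xi c_\xi e^{i\xi\cdot w}$ on a large torus containing $\mathrm{supp}\,F$; smoothness gives $|c_\xi|\le C_A(1+|\xi|)^{-A}$ for every $A$. Truncating to $|\xi|\le N:=n^{c_0}$ for a small $c_0>0$ leaves a tail of size $O_A(N^{-A+2k})=O(n^{-2c_0})$ for $A$ fixed large, and produces a sum of $m\lesssim N^{2k}=n^{2kc_0}$ rank-one terms $\prod_j \chi_0(w_j)e^{i\xi_j\cdot w_j}$, each supported in a fixed disk with $\|\nabla^a(\chi_0(w_j)e^{i\xi_j\cdot w_j})\|_\infty\lesssim N^a\le n^{5c_0}$ for $0\le a\le 5$. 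Applying Theorem~\ref{main-alt} to each rank-one term separately, and invoking the rescaling remark to absorb the $n^{5c_0}$ derivative bounds (valid when $5c_0\le c/(2k)$, at the cost of degrading the per-term error to $O(n^{-c/2})$), gives a Four-Moment error of $O(m\,n^{-c/2})=O(n^{2kc_0-c/2})$, a genuine power saving once $c_0\le c/(10k)$; feeding in the rate from Lemma~\ref{kernel} then gives $\int F\,\rho^{(k)}_n(\sqrt n\,z+\cdot)=\int F\,\rho^{(k)}_{\infty,z_1,\dots,z_k}+O(n^{-c'})$ for some $c'>0$. I expect the only genuine obstacle to be this last bit of bookkeeping: keeping the number $m$ of retained Fourier modes small enough relative to the $O(n^{-c/2})$ per-term gain, while the derivative bounds those pieces incur stay within the $n^{c/(2k)}$ tolerance of the rescaling remark and the discarded tail $N^{-A+2k}$ stays $O(n^{-c'})$ --- a balance that forces $c_0$ small (in terms of $k$ and $c$) and $A$ large. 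Everything else is routine: the qualitative continuous case needs only Stone--Weierstrass together with the uniform bounds~\eqref{rhok-2} and~\eqref{rhok}, and the substantive analytic inputs --- the Four Moment Theorem and the gaussian kernel asymptotics --- are supplied by Theorem~\ref{main-alt} and Lemma~\ref{kernel}.
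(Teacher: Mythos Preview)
Your approach is essentially the paper's: compare to the gaussian via Theorem~\ref{main-alt}, pass to the limit on the gaussian side via Lemma~\ref{kernel}, then approximate general $F$ by factorized functions (Stone--Weierstrass for continuous $F$, partial Fourier expansion for smooth $F$). The overall strategy is correct.

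There is one genuine gap. In the Stone--Weierstrass step you write that ``both $\rho^{(k)}_n(\sqrt n\,z+\cdot)$ and $\rho^{(k)}_{\infty,z_1,\dots,z_k}$ are bounded by $C_k$ (by~\eqref{rhok-2} and~\eqref{rhok})''. But \eqref{rhok-2} is stated \emph{only} for the complex gaussian ensemble; for a general (possibly discrete) $M_n$, $\rho^{(k)}_n$ need not even be a function, let alone uniformly bounded by $C_k$. So you cannot bound $\int_{\{|w_j|\le R\}} |F-\tilde F|\,\rho^{(k)}_n$ directly this way. The same issue recurs in your smooth-case argument when you discard the Fourier tail: the $L^\infty$ smallness of the tail only controls the integral against $\rho^{(k)}_n$ once you know $\int_{\text{polydisk}} \rho^{(k)}_n = O(1)$, which again you have not justified for general $M_n$.

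The fix is exactly what the paper does: choose a fixed nonnegative factorized smooth function $G$ (a product of bump functions) with $G\ge 1$ on the polydisk containing $\operatorname{supp}F$ and $\operatorname{supp}\tilde F$. Then $|F-\tilde F|\le \eta\,G$, and by the already-established factorized case together with \eqref{rhok},
\[
\int G\,\rho^{(k)}_n(\sqrt n\,z+\cdot) = \int G\,\rho^{(k)}_{\infty,z_1,\dots,z_k} + o(1) = O(1),
\]
giving $\int |F-\tilde F|\,\rho^{(k)}_n = O(\eta)$ as needed. With this correction in place your argument goes through and matches the paper's.
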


\begin{proof}  From Theorem \ref{main-alt} and Lemma \ref{kernel}, we obtain Corollary \ref{main} in the case when $F$ admits a decomposition of the form given in Theorem \ref{main-alt} (and in this case the $o(1)$ error can be improved to $O(n^{-c})$).  The more general case of continuous, compactly supported $F$ can then be deduced by using the Stone-Weierstrass theorem to approximate a continuous $F$ by an approximant $\tilde F$ of the form \eqref{factor} (and by using a further function of the form in Theorem \ref{main-alt} and \eqref{rhok} to upper bound the error).  When $F$ is smooth, one can replace the use of the Stone-Weierstrass theorem by a more quantitative partial Fourier series expansion of $F$ (extended periodically in a suitable fashion), followed by a multiplication by a smooth cutoff function, taking advantage of the rapid decrease of the Fourier coefficients in the smooth case; we omit the standard details.
\end{proof}

\begin{remark}  Note that in contrast to the situation in Theorem \ref{main-alt}, the parameters $z_1,\ldots,z_k$ in Corollary \ref{main} are required to be fixed in $n$, as opposed to being allowed to vary in $n$.  Related to this, the error term $o(1)$ in Corollary \ref{main} is not asserted to be uniform in the choice of $z_1,\ldots,z_k$, in contrast to the uniformity in Theorem \ref{main-alt}.  Indeed, given that the limiting correlation function $\rho^{(k)}_{\infty,z_1,\ldots,z_k}$ behaves discontinuously in $z_1,\ldots,z_k$ whenever two of the $z_i$ collide, or when one of the $z_i$ crosses the unit circle, one would not expect such uniformity in Corollary \ref{main}.  Thus, while Corollary \ref{main} describes more explicitly the limiting behavior (in certain regimes) of the correlation functions $\rho^{(k)}$, we regard Theorem \ref{main-alt} as the more precise statement regarding the asymptotics of these functions.
\end{remark}

In the Hermitian case, Four Moment Theorems can be used to extend various facts about the asymptotic spectral distribution of special matrix ensembles (such as the gaussian unitary ensemble) to other matrix ensembles which obey appropriate moment matching conditions.  Similarly, by using Theorem \ref{main-alt}, one may extend some facts about eigenvalues of complex gaussian matrices can now be extended to iid matrix models that match the complex gaussian ensemble to fourth order, although in some ``global'' cases the extension is only partial in nature due to the ``local'' nature of the four moment theorem.  Rather than provide an exhaustive list of such applications, we will present just one representative such application, namely that of (partially) extending the following central limit theorem of Rider \cite{rider}:

\begin{theorem}[Central limit theorem, gaussian case]\label{clt-gauss}  Let $M_n$ be drawn from the complex gaussian ensemble.  Let $r>0$ be a real number (depending on $n$) such that $1/r, r/n^{1/2} = o(1)$.  Let $z_0$ be a complex number (also depending on $n$) such that $|z_0| \leq (1-\eps) \sqrt{n}$ for some fixed $\eps>0$. Let $N_{B(z_0,r)}$ be the number of eigenvalues of $M_n$ in the ball $B(z_0,r) := \{ z \in \C: |z-z_0| < r \}$.  Then we have
$$
\frac{N_{B(z_0,r)} - r^2}{r^{1/2} \pi^{-1/4}} \to N(0,1)_\R$$
in the sense of distributions.  In fact, we have the slightly stronger statement that
\begin{equation}\label{Carleman}
 \E \left(\frac{N_{B(z_0,r)} - r^2}{r^{1/2} \pi^{-1/4}}\right)^k \to \E N(0,1)_\R^k
\end{equation}
for all fixed natural numbers $k \geq 0$.
\end{theorem}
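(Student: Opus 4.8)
The plan is to exploit the determinantal structure of the complex gaussian ensemble: its eigenvalues form a determinantal point process on $\C$ with kernel $K_n$ from \eqref{knzw}, which (as recalled after Lemma \ref{kernel}) is the kernel of the orthogonal projection $P_n$ in $L^2(\C)$ onto $\Span\{z^j e^{-|z|^2/2} : 0 \le j \le n-1\}$. Writing $A := B(z_0,r)$ and letting $K_A := 1_A P_n 1_A$ be the resulting finite-rank positive contraction on $L^2(\C)$, with eigenvalues $p_1,\dots,p_n \in [0,1]$, the generating-function identity $\E s^{N_A} = \det(I + (s-1)K_A) = \prod_{k=1}^n ((1-p_k) + p_k s)$ shows that $N_{B(z_0,r)}$ has the distribution of a sum $\sum_{k=1}^n \mathrm{Bern}(p_k)$ of independent Bernoulli variables (a special case of the Hough--Krishnapur--Peres--Vir\'ag theorem; since $K_A$ has finite rank it also follows at once from uniqueness of characteristic functions). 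The problem then reduces to estimating the spectral traces $\trace(K_A^m) = \sum_k p_k^m$, and principally $\E N_A = \trace K_A = \int_A K_n(z,z)\,dz$ and $\Var N_A = \trace K_A - \trace K_A^2 = \int_A K_n(z,z)\,dz - \int_{A\times A} |K_n(z,w)|^2\,dz\,dw$.

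I would compute the mean and variance by replacing $K_n$ with its bulk limit. Since $|z_0| \le (1-\eps)\sqrt n$ and $r = o(\sqrt n)$, for $n$ large every $z \in A$ satisfies $|z| \le (1-\eps/2)\sqrt n$, so by \eqref{kl2} and a Chernoff bound for the Poisson tail one has $K_n(z,z) = \frac1\pi\bigl(1 - \P(\mathrm{Poisson}(|z|^2) \ge n)\bigr) = \frac1\pi + O(e^{-cn})$ uniformly on $A$, whence $\E N_A = \frac1\pi |A| + O(|A| e^{-cn}) = r^2 + o(1)$. The same Poisson-tail bound gives $K_n(z,w) = \frac1\pi e^{-(|z|^2+|w|^2)/2 + z\overline{w}} + O(e^{-cn})$ uniformly for $z,w \in A$, and since $|e^{-(|z|^2+|w|^2)/2 + z\overline{w}}|^2 = e^{-|z-w|^2}$ this yields $\int_{A\times A} |K_n(z,w)|^2\,dz\,dw = \frac1{\pi^2}\int_{A\times A} e^{-|z-w|^2}\,dz\,dw + o(1)$. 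Writing $\int_A e^{-|z-w|^2}\,dw = \pi - g(z)$ with $g(z) := \int_{\C\setminus A} e^{-|z-w|^2}\,dw$, this integral equals $r^2 - \frac1{\pi^2}\int_A g$, so $\Var N_A = \frac1{\pi^2}\int_A g(z)\,dz + o(1)$. Since $g$ is superexponentially small outside an $O(1)$-neighbourhood of $\partial A$, the integral $\int_A g$ localizes to a boundary layer; replacing the circle $\partial A$ (radius $r\to\infty$) locally by a straight line introduces only a total-curvature correction of size $O(1) = o(r)$, and the straight-line model gives $\int_A g(z)\,dz = |\partial A| \cdot c_0 + o(r)$ with $c_0 = \sqrt\pi\int_0^\infty\!\!\int_0^\infty e^{-(a+b)^2}\,da\,db = \tfrac{\sqrt\pi}{2}$, hence $\int_A g = 2\pi r\cdot\tfrac{\sqrt\pi}{2} + o(r) = \pi^{3/2}r + o(r)$. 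Therefore $\Var N_{B(z_0,r)} = \pi^{-1/2} r + O(1) = r\pi^{-1/2}(1 + o(1))$, which in particular tends to infinity since $1/r = o(1)$.

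The CLT and the moment statement \eqref{Carleman} would then follow from the Bernoulli-sum representation via cumulant estimates. For $j \ge 2$ the Bernoulli cumulant factorises as $\kappa_j(\mathrm{Bern}(p)) = p(1-p)\,q_j(p)$ for a fixed polynomial $q_j$, so $|\kappa_j(\mathrm{Bern}(p))| \le C_j\,p(1-p)$, and by independence $|\kappa_j(N_A)| \le C_j\sum_k p_k(1-p_k) = C_j\Var N_A$ for all $j \ge 2$. Thus $Z_n := (N_{B(z_0,r)} - \E N_{B(z_0,r)})/\sqrt{\Var N_{B(z_0,r)}}$ has $\kappa_1(Z_n) = 0$, $\kappa_2(Z_n) = 1$, and $\kappa_j(Z_n) = O\bigl((\Var N_{B(z_0,r)})^{1-j/2}\bigr) \to 0$ for $j \ge 3$; since the moments of a random variable are fixed universal polynomials in its cumulants, $\E Z_n^k \to \E N(0,1)_\R^k$ for every fixed $k$, and the distributional convergence $Z_n \to N(0,1)_\R$ then follows from Carleman's condition. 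Finally, because $\Var N_{B(z_0,r)}/(r\pi^{-1/2}) \to 1$ and $\E N_{B(z_0,r)} - r^2 = o(1) = o(\sqrt{\Var N_{B(z_0,r)}})$, one may replace $\sqrt{\Var N_{B(z_0,r)}}$ by $r^{1/2}\pi^{-1/4}$ and $\E N_{B(z_0,r)}$ by $r^2$, incurring an error that vanishes in probability and in every moment; this gives both displayed assertions. I expect the main obstacle to be the variance asymptotic in the second paragraph: pinning down the constant $\pi^{-1/2}$ in the boundary-layer integral and checking that the curvature of $\partial A$, the proximity of $A$ to the spectral edge, and the discrepancy between $\E N_A$ and $r^2$ all contribute only $o(r)$ (equivalently $o(\sqrt{\Var N_A})$ after rescaling). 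The determinantal-CLT machinery of the third paragraph is, by contrast, routine.
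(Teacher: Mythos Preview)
Your proposal is correct and follows the same strategy as the paper: the determinantal CLT (cited there as Costin--Lebowitz/Soshnikov, written out by you via the Bernoulli-sum representation and the cumulant bound $|\kappa_j(N_A)|\le C_j\Var N_A$) combined with the mean and variance asymptotics (obtained there from Lemma~\ref{kernel} and \cite{rider}, and by you via a direct boundary-layer computation of $\int_A g$). Your version is thus a self-contained unpacking of the paper's citation-based argument; note also that your mean estimate $\E N_A=r^2+o(1)$ is sharper than the paper's stated $(1+o(1))r^2$ and is in fact what the re-centering step actually requires.
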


\begin{proof}  From the general Costin-Lebowitz central limit theorem for determinantal point processes \cite{costin}, \cite{sos}, \cite{sos2} we know that
$$
\frac{N_{B(z_0,r)} - \E N_{B(z_0,r)}}{(\Var N_{B(z_0,r)})^{1/2}} \to N(0,1)_\R$$
provided that $\Var N_{B(z_0,r)} \to \infty$; indeed, an inspection of the proof in \cite{sos2} gives the slightly stronger assertion that
$$
\E \left(\frac{N_{B(z_0,r)} - \E N_{B(z_0,r)}}{(\Var N_{B(z_0,r)})^{1/2}}\right)^k \to \E N(0,1)_\R^k$$
for any fixed $k \geq 0$.  Thus it will suffice to establish the asymptotics
$$ \E N_{B(z_0,r)} = (1+o(1)) r^2$$
and
$$ \Var N_{B(z_0,r)} = (1+o(1)) \pi^{-1/2} r.$$
Using \eqref{ck}, \eqref{Det}, one can write the left-hand sides here as
$$ \int_{B(z_0,r)} K_n(z,z)\ dz$$
and
$$ \int_{B(z_0,r)} K_n(z,z)\ dz - \int_{B(z_0,r)} \int_{B(z_0,r)} |K_n(z,w)|^2\ dz dw$$
respectively.  By Lemma \ref{kernel}, the former expression converges to $\int_{B(z_0,r)} \frac{1}{\pi} \ dz = r^2$.  Lemma \ref{kernel} also reveals that the second expression is asymptotically independent of $z_0$, and so one may without loss of generality take $z_0=0$.  But then the required asymptotic follows from \cite[Theorem 1.6]{rider} (after allowing for the different normalisation for $M_n$ in that paper).
\end{proof}

Using Theorem \ref{main-alt}, we may extend this result to more general ensembles, at least in the small radius case:

\begin{corollary}[Central limit theorem, general case]\label{clt-2}  Let $M_n$ be an independent-entry matrix ensemble with independent real and imaginary parts, obeying Condition \condone, such that $M_n$ matches moments with the complex gaussian matrix ensemble to fourth order.  Then the conclusion of
Theorem \ref{clt-gauss} for $M_n$ holds provided that one has the additional assumption $r \leq n^{o(1)}$.
\end{corollary}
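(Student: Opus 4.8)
The plan is to deduce Corollary~\ref{clt-2} from Theorem~\ref{clt-gauss} by a moment-comparison argument, with Theorem~\ref{main-alt} supplying the transfer; the extra hypothesis $r \le n^{o(1)}$ is precisely what controls the accumulated errors. Let $\tilde M_n$ denote the complex gaussian ensemble, write $N := N_{B(z_0,r)}$, and let $c>0$ be the constant from Theorem~\ref{main-alt}. Since $N(0,1)_\R$ is determined by its moments, it suffices to prove \eqref{Carleman} for $M_n$, and since \eqref{Carleman} holds for $\tilde M_n$ it is enough to show
$$\E_{M_n}\left(\frac{N-r^2}{r^{1/2}\pi^{-1/4}}\right)^{k} - \E_{\tilde M_n}\left(\frac{N-r^2}{r^{1/2}\pi^{-1/4}}\right)^{k} = o(1)$$
for each fixed $k$. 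Expanding the $k$-th power, the coefficient of $\E N^i$ in the left-hand side has magnitude $O(r^{3k/2-2i}) = n^{o(1)}$, so this will follow once we establish, with $\delta := n^{-c/(10k)}$,
$$\E_{M_n} N^i = \E_{\tilde M_n} N^i + O(r^{2i-1}\delta) + O(r^{2i}\,n^{-c/2})$$
for each $0 \le i \le k$: summing against the above coefficients yields an overall discrepancy $O(r^{3k/2-1}\delta) + O(r^{3k/2}n^{-c/2}) = o(1)$, using $r \le n^{o(1)}$. Finally, by \eqref{ck} and the expansion of $N^i$ into a fixed non-negative integer combination of falling factorials $N(N-1)\cdots(N-j+1)$, $j\le i$, each $\E N^i$ is a fixed non-negative combination of the integrals $\int_{B(z_0,r)^j} \rho^{(j)}_n$ with $j \le i$; so it remains to compare these for the two ensembles.

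The domain $B(z_0,r)^j$ has diameter $\sim r$ and its indicator is not smooth, so I would mollify and then tile. Fix smooth radial functions $\psi_\pm$ with $1_{B(z_0,r-\delta)} \le \psi_- \le 1_{B(z_0,r)} \le \psi_+ \le 1_{B(z_0,r+\delta)}$ and $|\nabla^a \psi_\pm| = O(\delta^{-a})$ for $a \le 5$, and a smooth partition of unity $1 = \sum_v \chi_v$ on $\C$ with $v$ ranging over a lattice and each $\chi_v$ a translate of a fixed bump supported on $B(v,2)$; only $O(r^2)$ of the $\chi_v$ meet the support of $\psi_\pm$. Setting $S_\pm := \sum_\ell \psi_\pm(\lambda_\ell(M_n))$ we get the sandwich $N_{B(z_0,r-\delta)} \le S_- \le N \le S_+ \le N_{B(z_0,r+\delta)}$ (and likewise for $\tilde M_n$), hence $\E S_-^i \le \E N^i \le \E S_+^i$ for both ensembles. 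Writing $\E S_\pm^i$ as a fixed combination of $\int_{\C^j} \prod_{m}\psi_\pm(w_m)\,\rho^{(j)}_n(w)\,dw$ and expanding $\prod_m \psi_\pm(w_m) = \sum_{(v_1,\dots,v_j)} \prod_m (\psi_\pm\chi_{v_m})(w_m)$ into $O(r^{2j})$ pieces, I would apply Theorem~\ref{main-alt} to each piece after the substitution $w_m = \sqrt n\,\zeta_m + u_m$ with $\zeta_m := v_m/\sqrt n$: the $m$-th factor becomes a bump $u_m \mapsto (\psi_\pm\chi_{v_m})(v_m + u_m)$ supported in $\{|u_m|\le 2\}$ with $|\nabla^a| = O(\delta^{-5}) = O(n^{c/(2k)})$, while $|\zeta_m| \le (|z_0| + O(r))/\sqrt n \le 1-\eps + o(1) \le 1$ since $r = o(\sqrt n)$. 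Thus each piece satisfies the hypotheses of the quantitative refinement of Theorem~\ref{main-alt} noted in the remark following it, which yields a discrepancy $O(n^{-c/2})$, uniformly over the $O(r^{2j})$ pieces; summing, $\E_{M_n} S_\pm^i = \E_{\tilde M_n} S_\pm^i + O(r^{2i}\,n^{-c/2})$.

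It then remains to compare $\E_{\tilde M_n} S_\pm^i$ with $\E_{\tilde M_n} N^i$, a computation entirely within the gaussian ensemble. We have $0 \le \E_{\tilde M_n} S_+^i - \E_{\tilde M_n} N^i \le \E_{\tilde M_n} N_{B(z_0,r+\delta)}^i - \E_{\tilde M_n} N^i$, and the right side is a non-negative combination of integrals of $\rho^{(j)}_n$ over $B(z_0,r+\delta)^j \setminus B(z_0,r)^j$, a set of measure $O(r^{2j-1}\delta)$; since $\rho^{(j)}_n \le C_j$ for the complex gaussian ensemble by \eqref{rhok-2}, this difference is $O(r^{2i-1}\delta)$, and the analogous bound holds for $S_-$ and $B(z_0,r-\delta)$. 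Hence $\E_{\tilde M_n} S_\pm^i = \E_{\tilde M_n} N^i + O(r^{2i-1}\delta)$. Combining this with the previous paragraph and the sandwich $\E S_-^i \le \E N^i \le \E S_+^i$ for $M_n$, we obtain $|\E_{M_n} N^i - \E_{\tilde M_n} N^i| = O(r^{2i-1}\delta) + O(r^{2i}n^{-c/2})$, which is the estimate required in the first paragraph, and the corollary follows.

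I expect the main difficulty to lie in the treatment of the boundary circle $\{|z-z_0| = r\}$: the mollification scale $\delta$ must be small enough that the boundary error $O(r^{2i-1}\delta)$ becomes negligible after the normalization (which divides the $i$-th moment by $r^{k/2}$ and multiplies it by $r^{2(k-i)}$), yet large enough that the resulting $O(\delta^{-5})$ derivative bounds on $\psi_\pm$ stay within the range permitted by Theorem~\ref{main-alt}; the hypothesis $r \le n^{o(1)}$ is exactly what makes a valid choice of $\delta$ available, and it is also what keeps the $O(r^{2i})$-fold sum of four-moment errors from the tiling under control. The expansion of $N^i$ in falling factorials, the elementary area estimates near the boundary, and the verification that the translated bumps meet the hypotheses of Theorem~\ref{main-alt} are all routine.
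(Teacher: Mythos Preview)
Your proof is correct and follows essentially the same approach as the paper: both reduce to a moment comparison, replace the indicator of $B(z_0,r)$ by a smooth cutoff at scale a small negative power of $n$, decompose via a partition of unity into $O(r^{O(k)})$ bounded pieces, invoke Theorem~\ref{main-alt} (with the rescaling remark) on each piece, and control the boundary layer using the uniform bound \eqref{rhok-2} on the gaussian correlation functions. The only organisational differences are that the paper uses a single cutoff $G$ and bounds $\E N_{B(z_0,r+n^{-\eps})\setminus B(z_0,r)}^k$ via $1\times n^{-\eps}$ rectangles, whereas you use a two-sided sandwich $S_-\le N\le S_+$ and a direct area estimate on the annulus; and the paper takes the smoothing scale $n^{-\eps}$ with $\eps=o(1)$ slowly decaying, whereas you fix $\delta=n^{-c/(10k)}$ tied to the moment order.
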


We prove this result in Section \ref{real-sec}.  The restriction to small radii $r \leq n^{o(1)}$ appears to be a largely technical restriction, relating to the need to take arbitrarily high moments in order to establish a central limit theorem; see for instance Figure \ref{figure:circle} for some numerical evidence that the central limit theorem should in fact hold for larger radii as well (and for real matrices as well as complex ones).  It seems likely that one can also obtain extensions of many of the other results in \cite{rider} (or related papers, such as \cite{kostlan}, \cite{rider-2}) on gaussian fluctuations from the circular law from the complex gaussian ensemble to other ensembles that match the complex gaussian ensemble to a sufficiently large number of moments, but we will not pursue such results here.  We remark that for macroscopic statistics $\frac{1}{n} \sum_{i=1}^n F(\lambda_i/\sqrt{n})$ with $F$ fixed and analytic, such extensions (without the need for matching moments beyond the second moment) were already established in \cite{rider-silverstein}.

\begin{figure}
\begin{center}
\scalebox{.3}{\includegraphics{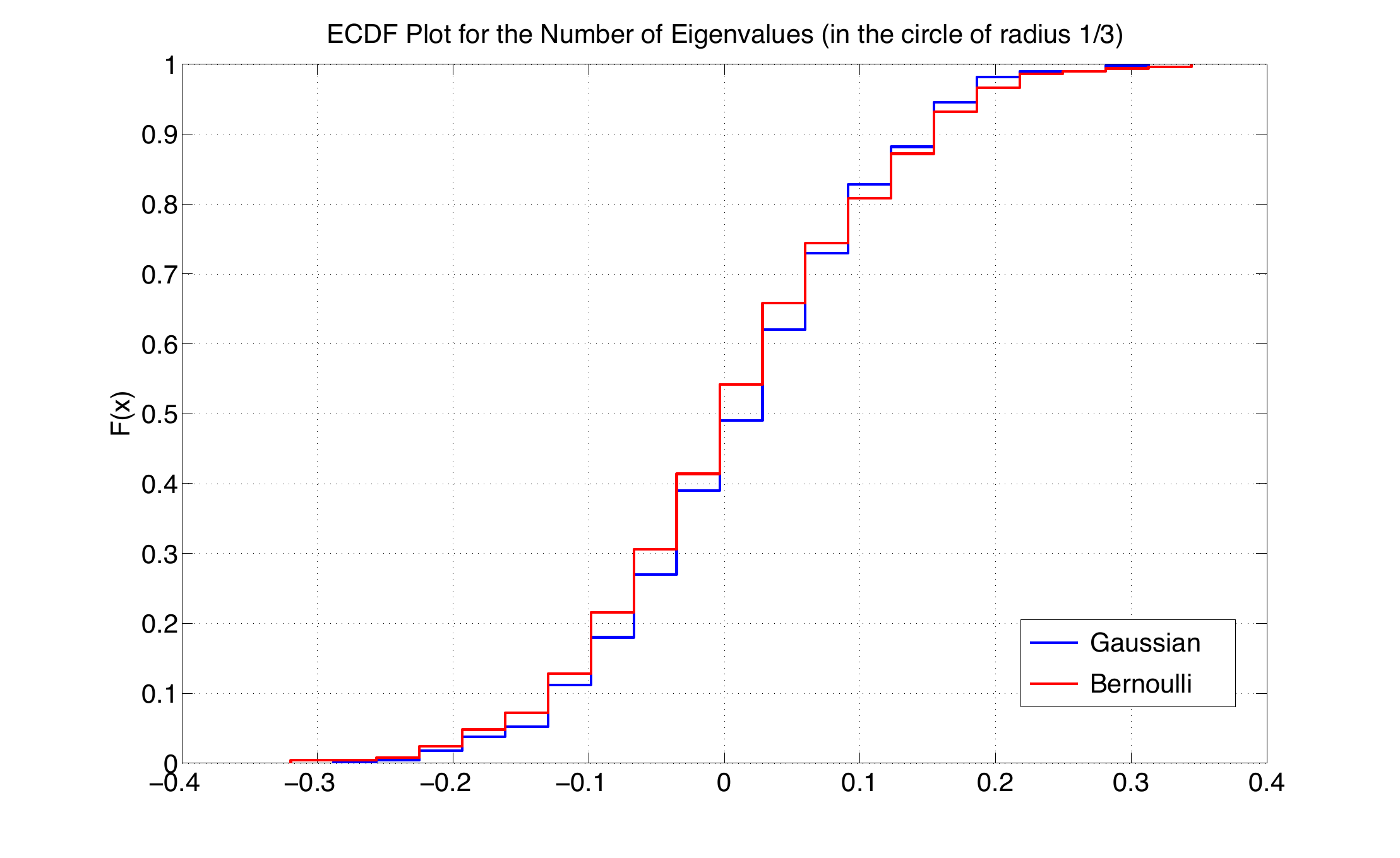}}
\end{center}
\caption{The cumulative distribution function for the number of eigenvalues in the disk $B(0,\sqrt{n}/3)$ of real gaussian and real Bernoulli matrices of size $10,000 \times 10,000$, after normalizing the mean by $n/9$ and variance by $\sqrt{n}$.  Thanks to Ke Wang for the data and figure.}
\label{figure:circle}
\end{figure}

\subsection{The real case and applications}

There is a (more complicated) analogue of Theorem \ref{main-alt} in which the complex entries are  replaced by real ones.  This has the effect of forcing the spectrum $\lambda_1(M_n),\dots,\lambda_n(M_n)$ to split into some number $\lambda_{1,\R}(M_n), \dots, \lambda_{N_\R[M_n],\R}(M_n)$ of real eigenvalues, together with some number $\lambda_{1,\C_+}(M_n),\dots,\lambda_{N_{\C_+}[M_n],\C_+}(M_n)$ of complex eigenvalues in the upper half-plane $\C_+ := \{ z \in \C: \Im(z) > 0 \}$, as well as their complex conjugates $\overline{\lambda_{1,\C_+}(M_n)},\dots,\overline{\lambda_{b,\C_+}(M_n)}$, where $N_\R[M_n], N_{\C_+}[M_n]$ denote the number of real eigenvalues of $M_n$ and the number of eigenvalues of $M_n$ in the upper half-plane respectively (so in particular, $N_\R[M_n] + 2 N_{\C_+}[M_n] = n$ almost surely).  Because of this additional structure of the eigenvalues, it is no longer convenient to consider the correlation functions $\rho^{(k)}_n: \C^k \to \R^+$ as defined in \eqref{ck}, since they become singular when one or more of the variables is real.  Instead, it is more convenient to work with the correlation functions $\rho^{(k,l)}_n: \R^k \times \C_+^l \to \R^+$, defined for $k,l \geq 0$ by the formula
\begin{equation}\label{ck-r}
\begin{split}
& \int_{\R^k}\int_{\C_+^l} F(x_1,\dots,x_k,z_1,\dots,z_l) \rho^{(k,l)}_n(x_1,\dots,x_k,z_1,\dots,z_l)\ dx_1 \dots dx_k dz_1 \dots dz_l  \\
&\quad=  \E \sum_{1 \leq i_1 < \dots < i_k \leq N_\R[M_n]}
\sum_{1 \leq j_1 < \dots < j_l \leq N_{\C_+}[M_n]} \\
&\quad\quad\quad\quad
F( \lambda_{i_1,\R}(M_n), \dots, \lambda_{i_k,\R}(M_n), \lambda_{j_1,\C_+}(M_n), \dots, \lambda_{j_l,\C_+}(M_n)).
\end{split}
\end{equation}
Again, the exact ordering of the eigenvalues here is unimportant.  When the law of $M_n$ has a continuous density with respect to Lebesgue measure on real matrices (which is for instance the case with the real gaussian ensemble), one can interpret $\rho^{(k,l)}_n(x_1,\dots,x_k,z_1,\dots,z_l)$ for distinct $x_1,\ldots,x_k \in \R$ and $z_1,\ldots,z_l \in \C_+$ as the unique real number such that, as $\eps \to 0$, the probability of simultaneously having an eigenvalue of $M_n$ in each of the intervals $(x_i-\eps,x_i+\eps)$ for $i=1,\ldots,k$ and in each of the disks $B(z_j,\eps)$ for $j=1,\ldots,l$ is equal to
$$(1+o(1)) \rho^{(k,l)}_n(x_1,\dots,x_k,z_1,\dots,z_l) (2\eps)^k (\pi \eps^2)^l$$
in the limit as $\eps \to 0$.

Define $\C_- := \{z \in \C: \Im(z) < 0 \}$ and $\C_* := \C_+ \cup \C_- = \C \backslash \R$.
We extend the correlation functions $\rho^{(k,l)}_n$ from $\R^k \times \C_+^l$ to $\R^k \times \C_*^l$ by requiring that the functions be invariant with respect to conjugations of any of the $l$ coefficients of $\C^l$.  We then extend $\rho^{(k,l)}_n$ by zero from $\R^k \times \C^l_*$ to $\R^k \times \C^l$.

When $M_n$ is given by the real gaussian ensemble, the correlation functions $\rho^{(k,l)}_n$ were computed by a variety of methods, for both odd and even $n$, in \cite{sommers}, \cite{sinclair}, \cite{borodin}, \cite{borodin-0}, \cite{ake}, \cite{kanz}, \cite{mays} (with the $(k,l)=(1,0),(0,1)$ cases worked out previously in \cite{leh}, \cite{eks}, \cite{edel}, building in turn on the foundational work of Ginibre \cite{gin}).  The precise formulae for these correlation functions are somewhat complicated and involve Pfaffians of a certain $2 \times 2$ matrix kernel; see Appendix \ref{real-app} for the formulae when $n$ is even, and \cite{sinclair}, \cite{mays} for the case when $n$ is odd.  To avoid some technical issues we shall restrict attention to the case when $n$ is even, although it is virtually certain that the results here should also extend to the odd $n$ case.

For technical reasons, we will need the following variant of \eqref{rhok-2}:

\begin{lemma}[Uniform bound on correlation functions]\label{corf}  Let $k,l \geq 0$ be fixed natural numbers, let $n$ be even, and let $M_n$ be drawn from the real gaussian ensemble.  Then for all $x_1,\dots,x_k \in \R$ and $z_1,\dots,z_l \leq \C$ one has
$$0 \leq \rho^{(k,l)}_n(x_1,\dots,x_k,z_1,\dots,z_l) \leq C_{k,l}$$
for some fixed $C_{k,l}$ depending only on $k,l$.
\end{lemma}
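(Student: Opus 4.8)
The strategy parallels the derivation of the uniform bound \eqref{rhok-2} in the complex case, with the Ginibre determinantal formula \eqref{Det} replaced by the Pfaffian formula for the real Ginibre correlation functions recorded in Appendix \ref{real-app}. Since $\rho^{(k,l)}_n$ has been extended by zero off $\R^k \times \C_*^l$ and is symmetric under conjugation of the $z_j$, we may assume $z_1,\dots,z_l \in \C_+$ (the bound being trivial otherwise). For $n$ even we then have $\rho^{(k,l)}_n(x_1,\dots,x_k,z_1,\dots,z_l) = \mathrm{Pf}(A)$, where $A = A_n$ is a real antisymmetric $2(k+l) \times 2(k+l)$ matrix whose $2 \times 2$ blocks are the values of a matrix kernel $\mathcal K_n$ — built from the scalar pre-kernel $S_n$, its relatives $D_n, I_n$, and the corresponding real--complex and complex--complex kernels, together with the Gaussian and error-function weight factors — evaluated at pairs of the points $x_1,\dots,x_k,z_1,\dots,z_l$. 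Non-negativity of $\rho^{(k,l)}_n$ is immediate: testing \eqref{ck-r} against non-negative $F$ shows the (continuous, indeed real-analytic) density $\rho^{(k,l)}_n$ is pointwise $\geq 0$; it is also manifest from the Pfaffian formula.

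For the upper bound, the key algebraic fact is the identity $\mathrm{Pf}(A)^2 = \det(A)$, valid for any antisymmetric $A$, so that $\rho^{(k,l)}_n = |\det A|^{1/2}$. By the Hadamard inequality, $|\det A| \leq \prod_{i=1}^{2(k+l)} \|r_i\|_2^2$, where $r_i$ denotes the $i$-th row of $A$; consequently it suffices to bound every entry of $A$ by an absolute constant $C_0$ (independent of $n$ and of the points), which yields $\rho^{(k,l)}_n \leq (2(k+l))^{k+l} C_0^{2(k+l)} =: C_{k,l}$. Equivalently, one may expand $\mathrm{Pf}(A)$ directly over perfect matchings; this too requires only entrywise bounds on $A$.

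It therefore remains to show that the weighted matrix kernel $\mathcal K_n$ is uniformly bounded: each of its component kernels stays $O(1)$ uniformly in $n$, in $x,y \in \R$, and in $z,w \in \C_+$, once the weight factors are included. The diagonal blocks are classical: up to absolute constants they recover the one-point densities $\rho^{(1,0)}_n(x)$ and $\rho^{(0,1)}_n(z)$, which are $O(1)$ uniformly in $n$ (the real eigenvalues having density $\approx (2\pi)^{-1/2}$ in the bulk and the complex ones density $\approx \pi^{-1}$), exactly as $|K_n(z,z)| \leq \frac{1}{\pi}$ in the complex case. The off-diagonal blocks are then controlled either by a Cauchy--Schwarz argument — non-negativity of $\rho^{(2,0)}_n, \rho^{(1,1)}_n, \rho^{(0,2)}_n$ bounds the relevant $4 \times 4$ sub-Pfaffians, which forces the off-diagonal blocks of $\mathcal K_n$ to be dominated by the geometric mean of the diagonal ones — or, more directly, by estimating the explicit formulas: the truncated exponential sums $\sum_{j<n}(zw)^j/j!$ are bounded against $e^{\Re(z\overline{w})}$ times the Gaussian weight just as in the complex case, and the residual $\mathrm{erfc}$ and incomplete-Gamma correction terms are handled by their trivial bounds.

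I expect this last step to be the only real obstacle: faithfully transcribing the even-$n$ Pfaffian formula of Appendix \ref{real-app} with all of its component kernels and weights, and checking that the potentially dangerous pieces — those responsible for the $\sqrt{n}$-scale support of the spectrum, and the $\sgn$ and $\erf$ corrections that produce the real eigenvalues — remain uniformly bounded after weighting. The remaining ingredients (non-negativity, $\mathrm{Pf}^2 = \det$, Hadamard) are routine.
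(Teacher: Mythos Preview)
Your reduction via $\operatorname{Pf}(A)^2=\det A$ and the Hadamard inequality to entrywise bounds on the $2\times 2$ matrix kernel $\mathcal K_n$ is correct and is exactly the paper's route. The gap is in how you propose to obtain those entrywise bounds.

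The Cauchy--Schwarz alternative does not go through in the Pfaffian setting. In the determinantal (scalar-kernel) case, non-negativity of $\rho^{(2)}$ reads $K(z,z)K(w,w)\ge |K(z,w)|^2$, which is exactly what one needs. In the Pfaffian case the off-diagonal block $B=\tilde K_n(\zeta,\zeta')$ is a $2\times 2$ matrix, and non-negativity of the $4\times 4$ Pfaffian gives only $\det B \le a_1 a_2$ (with $a_i$ the one-point values), which says nothing about the individual entries of $B$: for instance $B=\begin{pmatrix} M & 0 \\ 0 & 0 \end{pmatrix}$ has $\det B=0$ for any $M$. So positivity of $\rho^{(2,0)},\rho^{(1,1)},\rho^{(0,2)}$ cannot substitute for direct kernel estimates here.

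Your direct-estimate sketch is the correct path, but ``trivial bounds'' do not suffice. The kernels $\widetilde{DS}_n$ in the real--real and complex--complex cases carry a polynomial prefactor $(x'-x)$ or $(z'-z)$ that the naive bound $|e_{n/2}(\zeta)|\le e^{|\zeta|}$ fails to absorb when the argument of $e_{n/2}$ is not close to a positive real (e.g.\ when $xx'<0$, or when $z,z'$ subtend a nonzero angle). The paper's proof hinges on a refined estimate of the form
\[
|e_{n/2}(\zeta)| \ll \frac{|\zeta|^{1/2}}{\bigl||\zeta|-\zeta\bigr|}\, e^{|\zeta|},
\]
which exactly recovers the missing polynomial decay in those regimes. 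Without this (or an equivalent device), the bound on $\widetilde{DS}_n$ blows up, and your proposed proof is incomplete.
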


This lemma follows fairly easily from the computations in \cite{borodin}; we give the details in Appendix \ref{real-app}.  We will need this lemma in order to control the event of having two real eigenvalues that are very close to each other, or a complex eigenvalue very close to the real axis, as in those cases, one is close to a transition in which two real eigenvalues become complex or vice versa, creating a potential instability in the correlation functions $\rho^{(k,l)}_n$.  One can in fact establish stronger \emph{level repulsion}
 estimates which provide some decay on $\rho^{(k,l)}_n(x_1,\dots,x_k,z_1,\dots,z_l)$ as two of the $x_1,\ldots,x_k,z_1,\ldots,z_l$ get close to each other, or as one of the $z_i$ gets close to the real axis, but we will not need such estimates here.

We then have the following analogue of Theorem \ref{main-alt}, which is the second main result of this paper:

\begin{theorem}[Four Moment Theorem for real  matrices]\label{main-alt-2}  Let $M_n, \tilde M_n$ be independent-entry matrix ensembles with real coefficients, obeying Condition \condone, such that $M_n$ and $\tilde M_n$ both match moments with the real gaussian matrix ensemble to fourth order.  Let $k,l \geq 0$ be fixed integers, and let
let $x_1,\dots,x_k$ and $z_1,\dots,z_l \in \C$ be bounded.  Assume that $n$ is even. Let $F: \R^k \times \C^l \to \C$ be a smooth function which admits a decomposition of the form
$$ F(y_1,\dots,y_k,w_1,\ldots,w_l) = \sum_{i=1}^m G_{i,1}(y_1) \ldots G_{i,k}(y_k) F_{i,1}(w_1) \ldots F_{i,l}(w_l)$$
for some fixed $m$ and some smooth functions $G_{i,p}: \R \to \C$ and $F_{i,j}: \C \to \C$ for $i=1,\ldots,m$, $p=1,\dots,k$ and $j=1,\ldots,l$ supported on the interval $\{ y \in \R: |y| \leq C\}$ and disk $\{ w \in \C: |w| \leq C \}$ respectively, obeying the derivative bounds
$$ |\nabla^a G_{i,p}(y)|, |\nabla^a F_{i,j}(w)| \leq C$$
for all $0 \leq a \leq 5$, $i=1,\dots,m$, $p=1,\dots,k$, $j=1,\dots,l$, $y \in \R$, and $w \in \C$, and some fixed $C$.  Let $\rho^{(k,l)}_n, \tilde \rho^{(k,l)}_n$ be the correlation functions for $M_n, \tilde M_n$ respectively.  Then
\begin{align*}
& \int_{\R^k} \int_{\C^l} F(y_1,\dots,y_k,w_1,\dots,w_l) \rho^{(k,l)}_n(\sqrt{n} x_1 + y_1,\dots,\sqrt{n} x_k + y_k,\\
&\quad\quad \sqrt{n} z_1 + w_1,\dots,\sqrt{n} z_l + w_l)\ dw_1 \dots dw_l dy_1 \dots dy_k \\
&\quad =
\int_{\R^k} \int_{\C^l} F(y_1,\dots,y_k,w_1,\dots,w_l) \tilde \rho^{(k,l)}_n(\sqrt{n} x_1 + y_1,\dots,\sqrt{n} x_k + y_k, \\
&\quad\quad \sqrt{n} z_1 + w_1,\dots,\sqrt{n} z_l + w_l)\ dw_1 \dots dw_l dy_1 \dots dy_k + O(n^{-c}).
\end{align*}
for some absolute constant $c>0$ (independent of $k,l$).  Furthermore, the implicit constant in the $O(n^{-c})$ notation is uniform over all $x_1,\dots,x_k$ and $z_1,\ldots,z_l$ in the bounded regions $\{ x \in \R: |x| \leq C\}$ and $\{ z \in \C: |z| \leq C \}$ respectively.
\end{theorem}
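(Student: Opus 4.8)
The plan is to follow the proof of the complex Four Moment Theorem (Theorem \ref{main-alt}) essentially verbatim, adding the bookkeeping needed to accommodate the splitting of the real spectrum into $N_\R[M_n]$ real eigenvalues and $N_{\C_+}[M_n]$ conjugate pairs of complex eigenvalues, and — the one genuinely new ingredient — to track the \emph{sign} of the real-valued quantity $\det(M_n - x)$, which is what encodes the real eigenvalues. As in the complex case, the whole argument is run through Girko's Hermitization rather than through the Stieltjes transform, because individual eigenvalues of a non-Hermitian matrix are unstable under perturbation while the log-determinants are not. The target is a four moment theorem for a smooth functional of the vector of log-determinants $\log|\det(M_n - \zeta)|$, augmented in the real case by the signs $\sgn\det(M_n - x)$, at $O(1)$ many spectral points $\zeta$, from which Theorem \ref{main-alt-2} will follow after a reduction step and an invocation of the real Gaussian concentration estimate (established elsewhere in the paper as the analogue of the complex Gaussian concentration).

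For the reduction, recall Girko's identity: for suitably regular $\psi: \C \to \R$,
$$\sum_{i=1}^n \psi(\lambda_i(M_n)) = \frac{1}{2\pi}\int_\C (\Delta\psi)(z)\,\log|\det(M_n - z)|\ dz .$$
Applying this to (conjugation-symmetrized versions of) the rescaled bump factors $\lambda \mapsto F_{i,j}(\lambda - \sqrt n z_j)$ expresses the contribution of the complex eigenvalues to each such factor as an integral of $\log|\det(M_n - z)|$ against a smooth density supported at scale $O(1)$ near $\sqrt n z_j$; similarly, each factor $G_{i,p}$ contributes a functional of $x \mapsto \det(M_n - x)$ near $\sqrt n x_p$ on the real line, whose value — the $G_{i,p}$-weighted count of real eigenvalues there — is determined, via the intermediate value theorem, by $\log|\det(M_n - x)|$ together with $\sgn\det(M_n - x)$ at nearby real points. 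Multiplying the $k+l$ factors, taking expectations, and using \eqref{ck-r}, this rewrites $\int F \rho^{(k,l)}_n$ as the expectation of a bounded (after truncation) smooth functional of the log-determinants and determinant signs of $M_n - \zeta$ over finitely many $\zeta$, so the theorem reduces to showing this expectation changes by $O(n^{-c})$ when $M_n$ is replaced by $\tilde M_n$.

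That comparison is carried out by the Lindeberg exchange method. Enumerate the $n^2$ entries and swap them one at a time from $M_n$ to $\tilde M_n$; each swap is a rank-one change to $M_n$, hence a rank-$O(1)$, size-$n^{o(1)}$ change to the Hermitization $\begin{pmatrix} 0 & M_n - \zeta \\ (M_n-\zeta)^* & 0\end{pmatrix}$, which is small relative to the $O(\sqrt n)$ scale of the bulk of its eigenvalues. Taylor-expanding $\log|\det(M_n-\zeta)| = \tfrac12\log\det((M_n-\zeta)(M_n-\zeta)^*)$ in the swapped entry (equivalently, expanding the resolvent) produces a polynomial whose coefficients are smooth functionals of the unswapped matrix; the expectations of the terms of degree $\le 4$ agree for $M_n$ and $\tilde M_n$ by the fourth-moment matching hypothesis, and the degree-$\ge 5$ remainder is $O(n^{-c})$ after summing over all swaps, with a similar but more careful accounting for the determinant signs. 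This requires three quantitative inputs, all holding with very high probability and stable under swaps and passage to minors: (i) a lower bound on the least singular value of $M_n - \zeta$ (the crucial non-Hermitian least-singular-value estimate), so that the Taylor expansion of $\log\det$ converges with controlled remainder; (ii) the real Gaussian concentration bound for $\log|\det(M_n - \zeta)|$ (obtained via the nonlinear stochastic difference equation), which makes the functional genuinely bounded; and (iii) the uniform bound of Lemma \ref{corf}, used to discard the bad events on which $\rho^{(k,l)}_n$ is unstable.

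I expect the main obstacle to be exactly the spectral instability flagged in the introduction, sharper in the real case than the complex one: for $x$ real, $M_n - x$ is a real matrix with a non-negligible chance of having an eigenvalue within $O(1)$ of $x$, so the least singular value of $M_n - \zeta$ has a heavier tail near the real axis, and one must propagate least-singular-value bounds robust enough to survive all $n^2$ swaps there. Worse, a single swap can push a conjugate pair of nearly-real complex eigenvalues across the real axis, turning it into two real eigenvalues or vice versa — precisely the transition where $\rho^{(k,l)}_n$ is singular and where $\sgn\det(M_n-x)$ jumps; controlling the contribution of these configurations, and more generally showing that the number of sign changes of $x \mapsto \det(M_n - x)$ in an interval is stable under swaps outside a set of probability $O(n^{-c})$, is the genuinely new part of the argument relative to the complex case. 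This is handled by combining the least-singular-value control with the level-repulsion consequences of Lemma \ref{corf} and the fact that $F$ is only tested against $\rho^{(k,l)}_n$ by integration, so that isolated singular behaviour contributes negligibly; the rest of the proof is a line-by-line adaptation of the complex argument.
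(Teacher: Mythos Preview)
Your proposal takes a genuinely different route from the paper, and the difference is exactly at the point you flag as ``the genuinely new part'': the tracking of $\sgn\det(M_n-x)$.  The paper never does this.  Instead, it reduces the real case to the complex case by a \emph{thickening} trick.  First, it observes that the proof of Theorem \ref{main-alt} (Girko identity, Monte Carlo sampling, four moment theorem for log-determinants) goes through verbatim to show that expectations of products of \emph{full} linear statistics $X_{z,H} := \sum_{i=1}^n H(\lambda_i(M_n)-\sqrt n z)$ are insensitive.  Second, it uses this insensitivity together with Lemma \ref{corf} to establish a weak level repulsion bound (Lemma \ref{wlr}): with probability $1-O(\eps)$, a thin strip of width $\eps$ around the real axis contains no strictly complex eigenvalues.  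Hence the real-eigenvalue statistic $X_{x,F,\R}$ agrees, up to an $O(\eps^c)$ error, with the full statistic $X_{x,\tilde F}$ for a smooth $\tilde F$ supported on the strip, and similarly $X_{z,G,\C_+}$ can be replaced by a full statistic supported away from the strip.  Both are then handled by the already-established complex-case argument.  No sign information is ever needed.

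Your approach, by contrast, requires a four moment theorem for the sign-augmented vector $(\log|\det(M_n-\zeta_j)|, \sgn\det(M_n-x_p))$, and here there is a real gap.  The sign is a discontinuous function of the matrix entries, so it cannot be Taylor-expanded in the Lindeberg step; you would instead need to show that with probability $1-O(n^{-2-c})$ the sign is \emph{unchanged} by a single entry swap, so that the error sums to $O(n^{-c})$ over all $n^2$ swaps.  But a single swap is a perturbation of operator norm $n^{o(1)}$, which is larger than the available lower bound on the least singular value of $M_n-x$ (Proposition \ref{lsv} only gives $\geq n^{-\log n}$ with overwhelming probability, or $\geq n^{-1/2-c_0}$ with probability $1-O(n^{-c_0/2})$).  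Equivalently, a swap can move a real eigenvalue across $x$, or collide a nearly-real conjugate pair onto the axis, with probability that is \emph{not} $O(n^{-2-c})$; Lemma \ref{corf} is a statement about the Gaussian ensemble and gives a level repulsion of size $O(\eps)$, not the $O(n^{-2-c})$ you would need per swap.  Your proposal acknowledges this obstacle but does not explain how to overcome it, and I do not see how the ingredients you list suffice.  The paper's thickening trick sidesteps the issue entirely: it needs level repulsion only with probability $1-O(n^{-c_0})$ (not $1-O(n^{-2-c})$), because it is applied once at the end rather than at every swap.
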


As will be seen in Section \ref{four-real}, the proof of Theorem \ref{main-alt-2} proceeds along the same lines as Theorem \ref{main-alt}, but with some additional arguments involving Lemma \ref{corf} required to prevent pairs of eigenvalues from escaping or entering the real axis due to collisions.  It is because of these additional arguments that matching to fourth order, rather than third order, is required.  It is however expected that the moment conditions should be relaxed; see for instance Figures \ref{figure:cl-1}, \ref{figure:cl-2} for the close resemblance in spectral statistics between real gaussian and Bernoulli matrices, which only match to third order rather than to fourth order.

\begin{remark} In \cite{sinclair}, some explicit formulae for the correlation functions of real gaussian matrices in the case of odd $n$ were given, while in \cite{mays} a relationship between the correlation functions for odd and even $n$ is established.  In principle, one could use either of these two results to extend Lemma \ref{corf} to the odd $n$ case. Once the odd case of Lemma \ref{corf}
is obtained,   Theorem \ref{main-alt-2} extends automatically  to this case.  Due to space limitations, we do not attempt to execute this calculation here.
\end{remark}

\begin{figure}
\begin{center}
\scalebox{.3}{\includegraphics{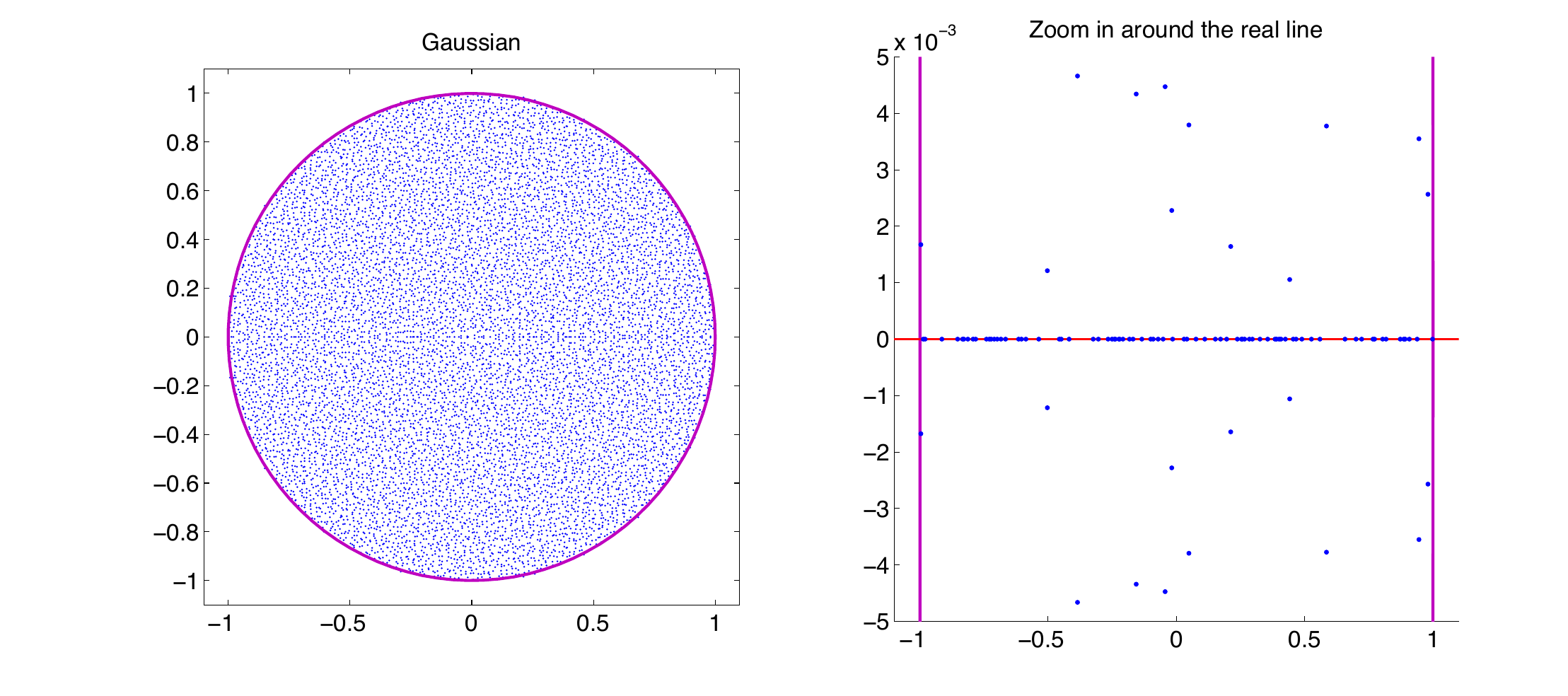}}
\end{center}
\caption{The spectrum of a random real gaussian $10,000 \times 10,000$ matrix, with additional detail near the origin to show the concentration on the real axis.  Thanks to Ke Wang for the data and figure.}
\label{figure:cl-1}
\end{figure}

\begin{figure}
\begin{center}
\scalebox{.3}{\includegraphics{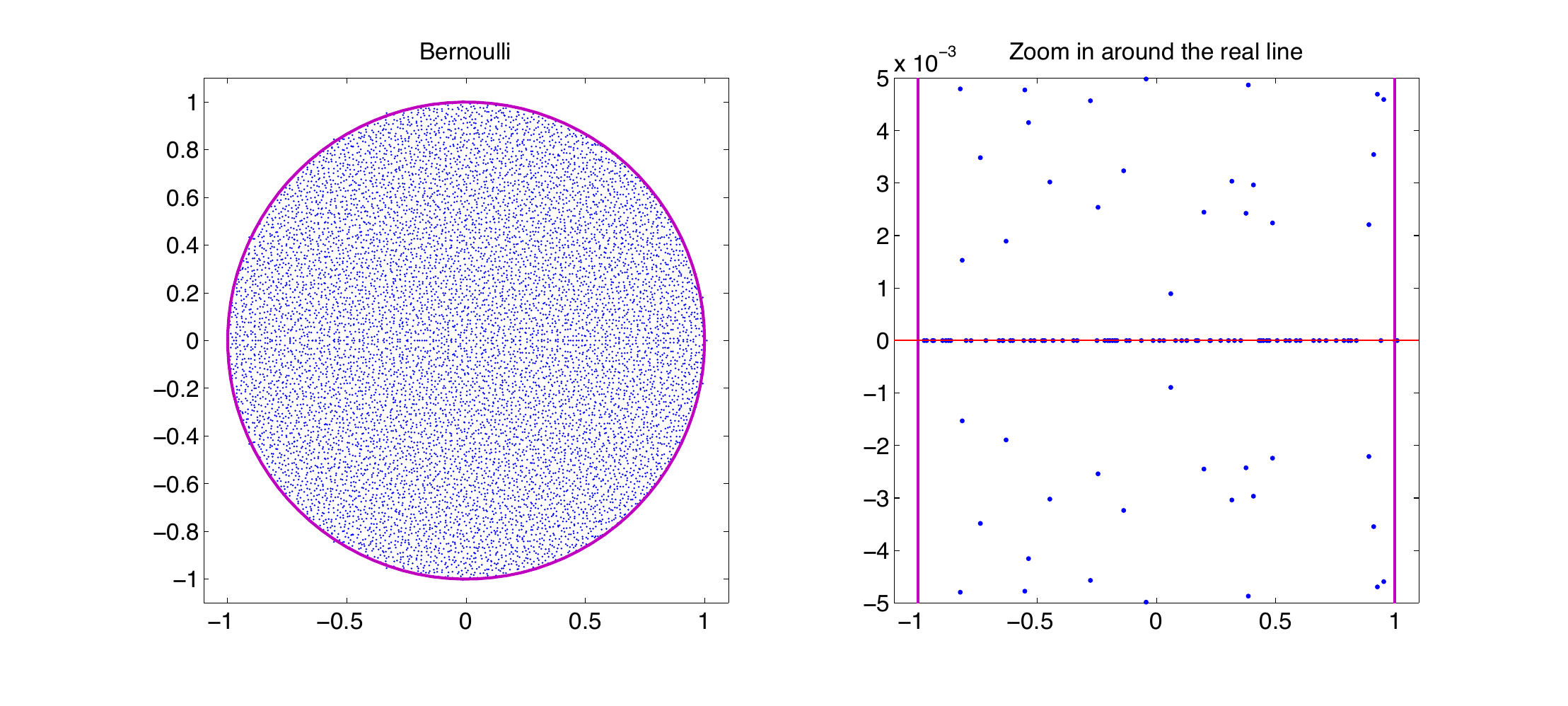}}
\end{center}
\caption{The spectrum of a random real Bernoulli $10,000 \times 10,000$ matrix, with additional detail near the origin.  Thanks to Ke Wang for the data and figure.}
\label{figure:cl-2}
\end{figure}

We now turn to applications of Theorem \ref{main-alt-2}.  In the complex case, the asymptotics for complex gaussian matrices given in Lemma \ref{kernel} could be extended to other independent entry matrices using Theorem \ref{main-alt}, yielding Corollary \ref{main}.  We now develop some analogous results in the real gaussian case.  We first recall the following result of Borodin and Sinclair \cite{borodin}:

\begin{lemma}[Kernel asymptotics, real case]\label{realkernel}  Let $k,l \geq 0$ be fixed natural numbers, and let $z$ be a fixed complex number.  Assume either that $k=0$, or that $z$ is real.  Then there is a function $\rho^{(k,l)}_{\infty,z}: \R^k \times \C^l \to \R^+$ with the property that one has the pointwise convergence
$$ \rho^{(k,l)}_n( \sqrt{n} z + y_1,\ldots, \sqrt{n} z + y_k, \sqrt{n} z + w_1,\ldots,\sqrt{n} z + w_l ) \to \rho^{(k,l)}_{\infty,z}(y_1,\ldots,y_k,w_1,\ldots,w_l)
$$
as $n \to \infty$, provided that $M_n$ is drawn from the real gaussian ensemble and $n$ is restricted to be even.
\end{lemma}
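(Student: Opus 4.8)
The plan is to derive Lemma \ref{realkernel} from the explicit Pfaffian formulae for the correlation functions of the real Ginibre ensemble (even $n$) recorded in Appendix \ref{real-app}, following Borodin and Sinclair \cite{borodin}. For even $n$ one has a representation of $\rho^{(k,l)}_n(x_1,\dots,x_k,z_1,\dots,z_l)$ as a Pfaffian of a $2(k+l)\times 2(k+l)$ antisymmetric matrix, assembled from a single $2\times 2$ matrix kernel $\mathbf{K}_n$ (with a real--real block, a real--complex block, and a complex--complex block) whose entries are built out of the partial exponential sums $e_m(t):=\sum_{j=0}^{m-1} t^j/j!$ --- equivalently, incomplete Gamma functions --- times Gaussian weight factors, with an additional error-function factor appearing in the real--real block. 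Since the Pfaffian is a polynomial in the matrix entries, it suffices to track the $n\to\infty$ asymptotics of each entry of $\mathbf{K}_n$ after the substitution of the rescaled arguments $\sqrt n z + a$ with $a$ bounded. First I would conjugate the matrix by a suitable diagonal matrix of Gaussian weights: this multiplies the Pfaffian by the corresponding determinant, which I track separately, and it reduces matters to showing that each entry of the \emph{conjugated} kernel converges to a finite limit.

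The kernel asymptotics themselves are then run exactly as in the proof of Lemma \ref{kernel}, by a Laplace / saddle-point analysis of the partial sums $e_{n-1}(u\overline v)$ with $u,v = \sqrt n z + O(1)$, so that $|u|^2 = n|z|^2 + O(\sqrt n)$. In the bulk $|z|<1$ the partial sum agrees with the full exponential up to $O(\exp(-\delta n))$; outside the disk $|z|>1$ it is exponentially smaller than the full exponential; and on the unit circle $|z|=1$ one obtains a crossover governed by $\erf$, just as in \eqref{potential} and its proof. Feeding this into the Pfaffian, the case $k=0$ with $z$ nonreal is essentially the complex Ginibre computation restricted to $\C_+$ (since $\sqrt n\,\Im z\to\infty$): the couplings between the $z_i$ blocks and the $\overline{z_i}$ blocks, and the antisymmetric parts of the real--complex and complex--complex kernels, all decay, so the Pfaffian reduces in the limit to a determinantal expression of complex-Ginibre type. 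When $z$ is real the real--real block survives and the Pfaffian converges to that of the limiting Borodin--Sinclair $2\times2$ matrix kernel. In all cases the limit exists, and since Lemma \ref{realkernel} asserts only the existence of $\rho^{(k,l)}_{\infty,z}$ and not an explicit formula for it, nothing further is needed.

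The main obstacle is the bookkeeping of exponential prefactors. Each individual entry of $\mathbf{K}_n$ is exponentially large or small in $n$, and a finite limit emerges only after the diagonal conjugation described above; one must check that a single diagonal matrix can be chosen, compatibly with the block structure, so that every conjugated entry settles to a finite limit, and that the determinant of the conjugating matrix supplies precisely the Gaussian normalisation required for $\rho^{(k,l)}_n$ itself to converge. A secondary nuisance is that the relevant asymptotics are not uniform as $|z|\to 1$, nor, when $z$ is real, uniform in the rescaled imaginary parts of the $w_j$, where the error-function weights in the real--complex block interpolate between regimes; consequently the argument is best organized as a case split on the position of $z$ (bulk, edge, exterior) and on whether $z$ is real. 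Since only pointwise convergence for fixed $z$ is claimed, a fairly crude version of this case analysis suffices, and the uniform statements of the sort proved in Lemma \ref{kernel} in the complex case are not needed here.
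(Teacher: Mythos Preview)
Your proposal is correct and is essentially the computation that the paper is invoking. The paper itself does not give an independent proof of Lemma \ref{realkernel}; it simply cites \cite[Section 7]{borodin-0} and \cite[Section 8]{borodin}, where exactly the Pfaffian-kernel asymptotics you outline (partial-exponential asymptotics plugged into the Borodin--Sinclair $2\times 2$ kernel, with the complex-Ginibre reduction when $z$ is strictly complex and the Pfaffian limit when $z$ is real) are carried out in full. Your sketch matches that route; the only quibbles are notational (the relevant partial sums in Appendix \ref{real-app} are $e_{n/2}$ and $c_{n/2}$ rather than $e_{n-1}$, and the Gaussian and $\operatorname{erfc}$ prefactors are already baked into the kernel entries, so the ``diagonal conjugation'' you describe is largely absorbed into the existing normalisations).
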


\begin{proof}  See \cite[Section 7]{borodin-0} or \cite[Section 8]{borodin}.  The limit $\rho^{(k,l)}_{\infty,z}$ is explicitly computed in these references, although when $z$ is real the limit is quite complicated, being given in terms of a Pfaffian of a moderately complicated matrix kernel involving the error function $\operatorname{erf}$.  However, when $z$ is strictly complex the limit is the same as in the complex gaussian case, thus $\rho^{(0,l)}_{\infty,z} = \rho^{(l)}_{\infty,z,\ldots,z}$; see \cite{borodin} for further details.  It is likely that the same asymptotic also holds for odd $n$, by using the explicit formulae in \cite{sinclair} or the relation between the odd and even $n$ correlation functions given in \cite{mays}; if the restriction to even $n$ is similarly dropped from Lemma \ref{corf}, then Corollary \ref{main-2} below can be extended to the odd $n$ case.  However, we will not pursue this matter here.
\end{proof}

We can then obtain the following universality theorem for the correlation functions of real matrices:

\begin{corollary}[Universality for real  matrices]\label{main-2}  Let $M_n$ be an independent-entry matrix ensemble with real coefficients obeying Condition \condone, and which matches moments with the real gaussian matrix ensemble to fourth order.  Assume $n$ is even.
Let $k,l \geq 0$ be fixed natural numbers, and let $z$ be a fixed complex number.  Assume either that $k=0$, or that $z$ is real.  Let $F: \R^k \times \C^l \to \R^+$ be a fixed continuous, compactly supported function.  Then
\begin{align*}
& \int_{\R^k} \int_{\C^l_*} F(y_1,\dots,y_k,w_1,\dots,w_l)  \\
&\quad\quad \rho^{(k,l)}_n(\sqrt{n} z + y_1,\dots,\sqrt{n} z + y_k, \sqrt{n} z + w_1,\dots,\sqrt{n} z + w_l )\\
 &\quad\quad\quad\quad\ dw_1 \ldots dw_l dy_1 \dots d y_k \\
&\quad \to
\int_{\R^k} \int_{\C^l_*} F(y_1,\dots,y_k,w_1,\dots,w_l) \\
&\quad\quad \rho^{(k,l)}_{\infty,z}(y_1,\dots,y_k,w_1,\dots,w_l)\\
 &\quad\quad\quad\quad\ dw_1 \ldots dw_l dy_1 \dots d y_k,
\end{align*}
where $\rho^{(k,l)}_{\infty,x_1,\dots,x_k,z_1,\dots,z_l}$ is as in Lemma \ref{realkernel}.
\end{corollary}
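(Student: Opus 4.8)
The plan is to deduce Corollary \ref{main-2} from Theorem \ref{main-alt-2} and Lemma \ref{realkernel} in essentially the same way Corollary \ref{main} was deduced from Theorem \ref{main-alt} and Lemma \ref{kernel}, the only extra bookkeeping being the real/complex splitting of the eigenvalues and the need to control the contributions near the real axis. First I would handle the model case in which $F$ admits a factorization of the form appearing in Theorem \ref{main-alt-2}, i.e. $F(y_1,\dots,y_k,w_1,\dots,w_l) = \sum_{i=1}^m G_{i,1}(y_1)\cdots G_{i,k}(y_k) F_{i,1}(w_1)\cdots F_{i,l}(w_l)$ with the stated smoothness and support bounds. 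Here Theorem \ref{main-alt-2}, applied with $\tilde M_n$ the real gaussian ensemble (which trivially matches itself to fourth order and obeys Condition \condone), gives
\begin{align*}
& \int_{\R^k}\int_{\C^l} F\, \rho^{(k,l)}_n(\sqrt n z + y_1,\dots,\sqrt n z + w_l)\ dw\, dy \\
&\quad = \int_{\R^k}\int_{\C^l} F\, \rho^{(k,l)}_{n,\mathrm{gauss}}(\sqrt n z + y_1,\dots,\sqrt n z + w_l)\ dw\, dy + O(n^{-c}),
\end{align*}
where I have taken all the shift parameters $x_1,\dots,x_k,z_1,\dots,z_l$ equal to the single value $z$, which is legitimate since $z$ (being real when $k\ge 1$) lies in the allowed bounded region. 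Then Lemma \ref{realkernel} identifies the limit of the gaussian integral as $\int_{\R^k}\int_{\C^l_*} F\, \rho^{(k,l)}_{\infty,z}$: one uses the pointwise convergence of $\rho^{(k,l)}_{n,\mathrm{gauss}}$ together with the uniform bound from Lemma \ref{corf} to invoke dominated convergence on the compact support of $F$, and notes that $\rho^{(k,l)}_n$ vanishes off $\R^k\times\C^l_*$ by definition, so the $\C^l$ and $\C^l_*$ integrals agree. Combining the two displays settles the factorizable case.

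Next I would pass from factorizable $F$ to general continuous, compactly supported $F: \R^k\times\C^l \to \R^+$ by a Stone--Weierstrass approximation, exactly as in the proof of Corollary \ref{main}. Given $\eps>0$, choose $\tilde F$ of the factorized form with $\|F - \tilde F\|_\infty \le \eps$ on a fixed compact set containing the supports (one can do this since products of the form $G(y)F(w)$ are dense in the sup norm on compacta, after a smooth truncation to enforce the support and derivative conditions). The difference $|F - \tilde F|$ is dominated by $\eps$ times a fixed factorizable cutoff, and I would apply the already-established factorizable case (or just Lemma \ref{corf} and the analogous uniform bound for $\rho^{(k,l)}_n$ obtained via Theorem \ref{main-alt-2}) to bound the resulting error uniformly in $n$ by $O(\eps)$; the same estimate on the limiting side uses the uniform bound on $\rho^{(k,l)}_{\infty,z}$, which follows from Lemma \ref{corf} by taking $n\to\infty$. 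Letting $\eps\to 0$ after $n\to\infty$ gives the claimed convergence. The smoothness improvement to $O(n^{-c})$ present in Corollary \ref{main} is not asserted here, so no partial Fourier argument is needed.

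The main obstacle is the degeneration of $\rho^{(k,l)}_n$ near the real axis: when an imaginary part $\Im(w_j)$ is small (of order $1/\sqrt n$ in unscaled coordinates, i.e. genuinely small after the $\sqrt n z + w$ rescaling only if $z$ is real), one is near the transition where a complex-conjugate pair of eigenvalues merges onto $\R$, and this is exactly where one worries about instability of the correlation functions. This is precisely what Lemma \ref{corf} is designed to handle — it gives a clean $n$-uniform upper bound $C_{k,l}$ even across this transition — so the role of that lemma in my argument is to make the dominated-convergence and approximation steps legitimate despite the lack of level-repulsion control. A secondary subtlety is that Theorem \ref{main-alt-2} is only proved for even $n$, so the corollary inherits that restriction; the statement already carries the hypothesis that $n$ is even, and the restriction is threaded through Lemma \ref{realkernel} as well, so nothing further is required. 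Everything else is routine and parallels the complex case verbatim.
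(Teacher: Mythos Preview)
Your proposal is correct and follows essentially the same approach as the paper: first handle the gaussian case via Lemma \ref{realkernel}, Lemma \ref{corf}, and dominated convergence, then transfer to general $M_n$ via Theorem \ref{main-alt-2}, and finally pass from factorizable to general continuous compactly supported $F$ by Stone--Weierstrass exactly as in the proof of Corollary \ref{main}. The paper's proof is a two-line sketch citing precisely these ingredients, and your write-up simply fills in the details it leaves implicit.
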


\begin{proof}  In the case when $M_n$ is drawn from the real gaussian ensemble, this follows from Lemma \ref{realkernel}, Lemma \ref{corf}, and the dominated convergence theorem.  The extension to more general independent-entry matrices then follows from Theorem \ref{main-alt-2} by repeating the arguments used to prove Corollary \ref{main}.
\end{proof}

As in the complex case, Theorem \ref{main-alt-2} can be used to (partially) extend various known facts about the distribution of the eigenvalues of a real gaussian matrices to other real independent entry matrices.  Rather than giving an exhaustive list of such extensions, we illustrate this with two sample applications.  Let $N_\R(M_n)$ denote the number of real zeroes of a random matrix $M_n$.  Thanks to earlier results \cite{eks, FN}, we have the following asymptotics:

\begin{figure}
\begin{center}
\scalebox{.3}{\includegraphics{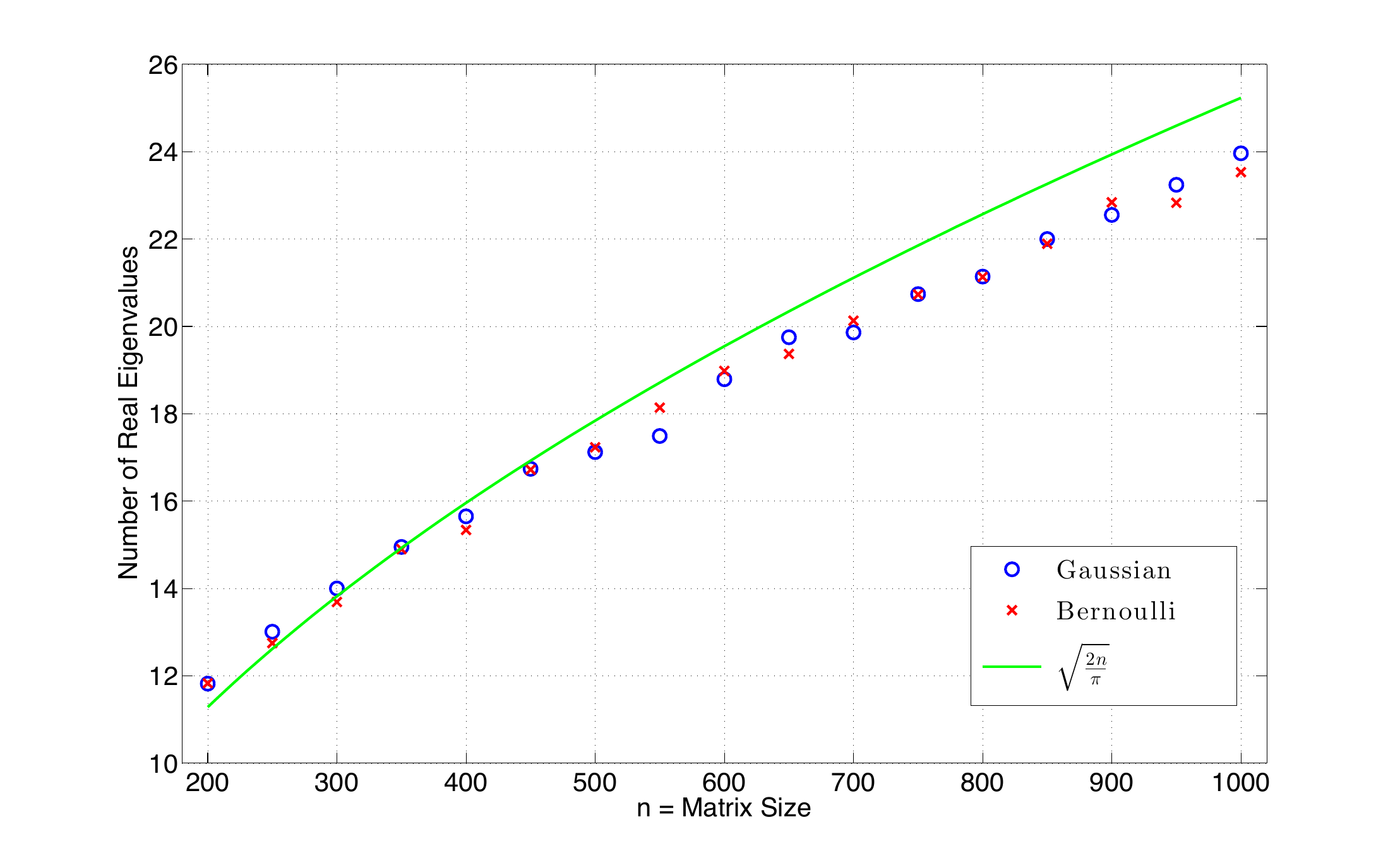}}
\end{center}
\caption{The empirical average number of real eigenvalues of $200$ samples of real gaussian and real Bernoulli matrices of various sizes, plotted against $\sqrt{\frac{2n}{\pi}}$.  Thanks to Ke Wang for the data and figure.}
\label{figure:real-ev}
\end{figure}

\begin{theorem}[Real eigenvalues of a real gaussian matrix]\label{really} Let $M_n$ be drawn from the real gaussian ensemble.  Then
$$ \E N_\R(M_n) = \sqrt{\frac{2n}{\pi}} + O(1)$$
and
$$ \Var N_\R(M_n) = (2-\sqrt{2}) \sqrt{\frac{2n}{\pi}} + o(\sqrt{n})$$
\end{theorem}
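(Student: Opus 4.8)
The plan is to reduce both statements to the known explicit formulae for the one- and two-point correlation functions $\rho^{(1,0)}_n,\rho^{(2,0)}_n$ of real eigenvalues of the real gaussian ensemble (recorded in Appendix \ref{real-app}, originally due to \cite{eks},\cite{edel},\cite{sommers},\cite{FN},\cite{borodin}), and then to carry out the asymptotic analysis of the resulting integrals. Writing $N_\R=N_\R(M_n)$ for the number of points of the real-eigenvalue point process (which is simple, since the real eigenvalues are a.s. distinct), the usual moment identities for point processes give
\begin{align*}
\E N_\R &= \int_\R \rho^{(1,0)}_n(x)\, dx, \\
\Var N_\R &= \int_\R \rho^{(1,0)}_n(x)\, dx + \int_\R \int_\R \big( \rho^{(2,0)}_n(x,y) - \rho^{(1,0)}_n(x)\rho^{(1,0)}_n(y) \big)\, dx\, dy,
\end{align*}
the first being a tautology and the second coming from $\E N_\R(N_\R-1)=\int_{\R^2}\rho^{(2,0)}_n$. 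Both integrals converge absolutely: the correlation functions are essentially smoothed indicators of $|x|\le\sqrt n$ with Gaussian-type decay beyond (and are uniformly bounded by Lemma \ref{corf}).

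For the mean I would substitute the Edelman--Kostlan--Shub formula expressing $\rho^{(1,0)}_n(x)$ as a linear combination of normalized incomplete Gamma functions of $x^2$ and $x^2/2$, and integrate termwise; this is exactly the computation in \cite{eks}, which in fact produces a closed form for $\E N_\R$ whose Stirling expansion is $\sqrt{2n/\pi}+O(1)$. Equivalently, one may avoid the exact evaluation: after rescaling $x=\sqrt n\,u$, the function $\sqrt n\,\rho^{(1,0)}_n(\sqrt n\,u)$ equals $\frac{\sqrt n}{\sqrt{2\pi}}1_{|u|<1}$ up to an error that is integrable and concentrated in a boundary layer of width $O(1/\sqrt n)$ around $|u|=1$, so $\int_{-1}^{1}\frac{1}{\sqrt{2\pi}}\,du\cdot\sqrt n=\sqrt{2n/\pi}$ with the boundary layer contributing only $O(1)$.

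For the variance the real work is the connected double integral. Inserting the explicit Pfaffian formula for $\rho^{(2,0)}_n$ and the formula for $\rho^{(1,0)}_n$, the integrand $\rho^{(2,0)}_n(x,y)-\rho^{(1,0)}_n(x)\rho^{(1,0)}_n(y)$ collapses to a bilinear expression in the entries of the $2\times2$ matrix kernel, and crucially this \emph{connected} combination decays like $e^{-c|x-y|^2}$ (from the Gaussian building blocks of the kernel) even though neither term does individually. I would split $\R^2$ into: an off-diagonal region $|x-y|>n^{\delta}$, contributing $O(\exp(-cn^{2\delta}))$ by that decay; an edge/tail region within distance $O(1)$ of $|x|=\sqrt n$ or beyond, where the integrand is $O(1)$ on a window of width $O(1)$ in each variable (or exponentially small beyond $\sqrt n$), contributing $O(1)$; and the bulk diagonal region $|x|,|y|\le(1-\delta)\sqrt n$, $|x-y|\le n^{\delta}$. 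On the bulk region one puts $x=\sqrt n\,u$, $y=x+s$, invokes the translation-invariant bulk scaling limit of the real-Ginibre kernel (the $\erf$-type kernel of Lemma \ref{realkernel}), integrates out $s\in\R$ to get an explicit function of $u$, integrates over $u\in(-1,1)$, and multiplies by the Jacobian $\sqrt n$. This reproduces the computation of Forrester and Nagao \cite{FN}: the resulting definite integral evaluates so that the connected double integral is $(1-\sqrt2)\sqrt{2n/\pi}+o(\sqrt n)$, and adding $\E N_\R=\sqrt{2n/\pi}+O(1)$ yields $\Var N_\R=(2-\sqrt2)\sqrt{2n/\pi}+o(\sqrt n)$.

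The hard part is the variance, and within it two points: (i) establishing a version of the bulk scaling limit of the $2\times2$ matrix kernel that is uniform in the macroscopic variable $u$ and comes with error terms integrable in the microscopic variable $s$, so that limit and integral may legitimately be interchanged; and (ii) the actual evaluation of the resulting explicit $\erf$-integral over $s$ and then $u$ to extract the constant $(2-\sqrt2)$ — both of which are carried out in \cite{FN}. Handling the spectral-edge contribution, where real eigenvalues merge into complex-conjugate pairs and the kernel takes a different (but still explicit) form, needs only the crude uniform bound of Lemma \ref{corf} to be shown $o(\sqrt n)$, so it is not a serious obstacle.
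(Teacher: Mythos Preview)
Your proposal is correct and in fact goes well beyond what the paper does: the paper's proof of this theorem is simply a two-line citation to \cite{eks} for the expectation and \cite{FN} for the variance, with no argument given. Your sketch is precisely a reconstruction of how those cited computations proceed (correlation-function integrals, bulk scaling of the real-Ginibre kernel, boundary-layer estimates), so the approaches coincide at the level of content; you have just unpacked the black box that the paper leaves closed.
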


\begin{proof} The expectation bound was established in \cite{eks}, and the variance bound in \cite{FN}.  In fact, more precise asymptotics are available for both the expectation and the variance; we refer the reader to these two papers \cite{eks}, \cite{FN} for further details.
\end{proof}

By using the above universality results, we may partially extend this result to more general ensembles:

\begin{corollary}[Real eigenvalues of a real  matrix]\label{really-2}  Let $M_n$ be an independent-entry matrix ensemble with real coefficients obeying Condition \condone, and which matches moments with the real gaussian matrix ensemble to fourth order.  Assume $n$ is even. Then
$$ \E N_\R(M_n) = \sqrt{\frac{2n}{\pi}} + O(n^{1/2-c})$$
and
$$ \Var N_\R(M_n) = O( n^{1-c} )$$
for some fixed $c>0$.  In particular, from Chebyshev's inequality, we have
$$ N_\R(M_n) = \sqrt{\frac{2n}{\pi}} + O(n^{1/2-c'})$$
with probability $1-O(n^{-c'})$, for some fixed $c'>0$.
\end{corollary}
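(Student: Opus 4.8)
The plan is to reduce both estimates to the corresponding facts for the real gaussian ensemble (Theorem~\ref{really}) by expressing the moments of $N_\R(M_n)$ through the correlation functions $\rho^{(1,0)}_n,\rho^{(2,0)}_n$ and then applying the real Four Moment Theorem (Theorem~\ref{main-alt-2}) locally. From \eqref{ck-r} one has $\E N_\R(M_n) = \int_\R \rho^{(1,0)}_n(x)\,dx$, and writing $N_\R^2 = N_\R + N_\R(N_\R-1)$ and using the $(k,l)=(2,0)$ case of \eqref{ck-r} gives $\E N_\R(M_n)^2 = \int_\R\rho^{(1,0)}_n + 2\int_{\R^2}\rho^{(2,0)}_n$, so that
\begin{equation}\label{varformula}
\Var N_\R(M_n) = \int_\R \rho^{(1,0)}_n(x)\,dx + 2\int_{\R^2}\rho^{(2,0)}_n(x_1,x_2)\,dx_1\,dx_2 - \Big(\int_\R\rho^{(1,0)}_n(x)\,dx\Big)^2 .
\end{equation}
The same identities hold for a real gaussian matrix $G_n$, where Theorem~\ref{really} supplies $\int_\R\rho^{(1,0)}_{n,G}(x)\,dx = \sqrt{2n/\pi}+O(1)$ and $\Var N_\R(G_n)=O(\sqrt n)$. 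Thus it suffices to compare the right-hand sides of these formulas for $M_n$ and for $G_n$.

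To make the comparison I would first localize. Standard concentration bounds for $\|M_n\|$ under Condition~\condone{} (as in the circular law literature) give $\|M_n\|\le C_0\sqrt n$ with overwhelming probability for a suitable fixed $C_0$; since $N_\R(M_n)\le n$ always, the parts of the integrals in \eqref{varformula} coming from $|x|>C_0\sqrt n$ are $O(n^{O(1)}\exp(-n^{c_0}))$, hence negligible, and likewise for $G_n$. Fix once and for all a smooth bump $\psi$ with bounded derivatives and $\sum_{p\in\Z}\psi(\cdot-p)\equiv 1$. Up to negligible errors, $\int_\R\rho^{(1,0)}_n(x)\,dx = \sum_{|p|\le C_0\sqrt n}\int_\R\psi(u)\,\rho^{(1,0)}_n(\sqrt n\tfrac{p}{\sqrt n}+u)\,du$, and to each of the $O(\sqrt n)$ summands I would apply Theorem~\ref{main-alt-2} with $(k,l)=(1,0)$, bounded parameter $p/\sqrt n$, and test function $\psi$ (a fixed function, so $m=1$ and the derivative bounds hold with a fixed constant): this replaces $\rho^{(1,0)}_n$ by its gaussian counterpart with an error $O(n^{-c})$ that is \emph{uniform in $p$}. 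Summing over the $O(\sqrt n)$ windows gives $\E N_\R(M_n) = \E N_\R(G_n) + O(n^{1/2-c}) = \sqrt{2n/\pi} + O(n^{1/2-c})$.

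For the variance I would run the same argument on $\int_{\R^2}\rho^{(2,0)}_n$: cutting off to $|x_1|,|x_2|\le C_0\sqrt n$ and inserting the partition of unity splits this into $O(n)$ pieces of the form $\int_{\R^2}\psi(u_1)\psi(u_2)\,\rho^{(2,0)}_n(\sqrt n\tfrac{p_1}{\sqrt n}+u_1,\sqrt n\tfrac{p_2}{\sqrt n}+u_2)\,du_1\,du_2$, to each of which Theorem~\ref{main-alt-2} with $(k,l)=(2,0)$ applies (the centres $p_1/\sqrt n,p_2/\sqrt n$ being allowed to coincide), giving a total error $O(n\cdot n^{-c}) = O(n^{1-c})$; and the squared term $(\int_\R\rho^{(1,0)}_n)^2$ changes by $O(n^{1/2-c})\cdot O(\sqrt n) = O(n^{1-c})$ since both factors are $O(\sqrt n)$. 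Feeding these into \eqref{varformula} yields $\Var N_\R(M_n) = \Var N_\R(G_n) + O(n^{1-c}) = O(n^{1-c})$. Finally, Chebyshev's inequality turns the two moment bounds into $N_\R(M_n) = \sqrt{2n/\pi} + O(n^{1/2-c'})$ with probability $1-O(n^{-c'})$ for a suitable fixed $c'>0$ (e.g.\ $c'=c/4$).

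The step needing the most care is the control of error accumulation: Theorem~\ref{main-alt-2} is a purely local statement, whereas $\E N_\R$ and $\Var N_\R$ are global quantities spread over a window of width $\asymp\sqrt n$, so one is summing $\asymp\sqrt n$ (respectively $\asymp n$) separate local comparisons. This is exactly what the \emph{uniformity} of the $O(n^{-c})$ error in Theorem~\ref{main-alt-2} (and its independence of $k,l$), combined with the operator-norm bound that confines the count to $O(\sqrt n)$ unit windows, is designed to absorb; note that the method does not recover the true order $\sqrt n$ of $\Var N_\R$, only the bound $O(n^{1-c})$ claimed. A secondary point is that Theorem~\ref{main-alt-2} and Lemma~\ref{corf} are established only for even $n$, which is why that parity restriction appears in the hypothesis.
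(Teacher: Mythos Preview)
Your argument is correct and follows essentially the same route as the paper: express $\E N_\R$ and $\E N_\R^2$ in terms of $\rho^{(1,0)}_n$ and $\rho^{(2,0)}_n$, localize to $|x|\ll\sqrt n$ via the spectral radius bound, smoothly partition into $O(\sqrt n)$ (respectively $O(n)$) unit-scale pieces, apply Theorem~\ref{main-alt-2} uniformly to each, and sum. The only cosmetic difference is that the paper sandwiches the indicator of $[-3\sqrt n,3\sqrt n]$ between two fixed smooth cutoffs $F_-\le F_+$ before decomposing, whereas you use a partition of unity directly; both reductions are equivalent in strength and effort.
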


We prove this result in Section \ref{real-sec}.

As another quick application, we can show for many ensembles that most of the eigenvalues are simple:

\begin{corollary}[Most eigenvalues simple]\label{simex}  Let $M_n$ be an independent matrix ensemble obeying Condition \condone, and which matches moments with the real or complex gaussian matrix to fourth order.  In the real case, assume $n$ is even.  Then with probability $1-O(n^{-c})$, at most $O(n^{1-c})$ of the complex eigenvalues, and $O(n^{1/2-c})$ of the real eigenvalues, are repeated, for some fixed $c>0$.
\end{corollary}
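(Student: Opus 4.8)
The plan is to bound the expected number of pairs of eigenvalues lying within a small distance $r$ of one another, and to reduce this to the gaussian case via the Four Moment Theorems (Theorem \ref{main-alt} and Theorem \ref{main-alt-2}), where the uniform bounds \eqref{rhok-2} and Lemma \ref{corf} on the correlation functions make the count easy. Since two coinciding eigenvalues are in particular within distance $r$ for every $r>0$, such a bound controls the number of repeated eigenvalues.

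First I would reduce to a bulk estimate. Fix a scale $r := n^{-c_0}$ for a small constant $c_0>0$ to be chosen, and a large fixed constant $C_0$. By a standard truncation and $\eps$-net argument, Condition \condone{} ensures that $\|M_n\|_{\mathrm{op}} \leq C_0\sqrt n$ — and hence that all eigenvalues of $M_n$ lie in $B(0,C_0\sqrt n)$ — with probability $1-O(n^{-A})$ for every fixed $A>0$. Let $Y$ be the number of ordered pairs $(i,j)$ of distinct indices with $|\lambda_i(M_n)-\lambda_j(M_n)| \leq r$ and $|\lambda_i(M_n)| \leq C_0\sqrt n$; in the real case let $Y_\R$ (resp. $Y_\C$) be the analogous count with both eigenvalues required to be real (resp. non-real). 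A repeated eigenvalue $\lambda_i$ satisfies $\lambda_i=\lambda_j$ for some $j \neq i$, hence $|\lambda_i-\lambda_j|=0 \leq r$; so on the above event the number of repeated eigenvalues is at most $Y$, and in the real case the number of repeated real eigenvalues is $O(Y_\R)$ while the number of repeated non-real eigenvalues is $O(Y_\C)$. By Markov's inequality it thus suffices to prove $\mathbf{E} Y = O(n^{1-c''})$, and in the real case additionally $\mathbf{E} Y_\R = O(n^{1/2-c''})$ and $\mathbf{E} Y_\C = O(n^{1-c''})$, for some fixed $c''>0$.

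To estimate $\mathbf{E} Y$, note that by the defining relation \eqref{ck} one has $\mathbf{E} Y = \int\int \rho^{(2)}_n(z,w)\, dz\, dw$ over the finite-measure region $\{|z| \leq C_0\sqrt n,\ |z-w| \leq r\}$, and similarly $\mathbf{E} Y_\R, \mathbf{E} Y_\C$ are given by \eqref{ck-r} in terms of $\rho^{(2,0)}_n$ and $\rho^{(0,2)}_n$. I would partition $B(0,C_0\sqrt n)$ into $O(n)$ cells of diameter at most a fixed constant $R$ (and, for $Y_\R$, partition the segment $[-C_0\sqrt n,C_0\sqrt n] \subset \R$ into $O(\sqrt n)$ intervals of length $O(1)$). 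For a cell with centre $z_\alpha$, after translating by $z_\alpha$ the contribution to $\mathbf{E} Y$ is at most
$$ \int_{|w_1| \leq R}\int_{|w_2| \leq 2R} 1_{|w_1-w_2| \leq r}\, \rho^{(2)}_n\!\left(\sqrt n\, \tfrac{z_\alpha}{\sqrt n}+w_1,\ \sqrt n\, \tfrac{z_\alpha}{\sqrt n}+w_2\right) dw_1\, dw_2,$$
with $|z_\alpha/\sqrt n| \leq C_0$ bounded. One majorises the rough cutoff $1_{|w_1| \leq R}1_{|w_2| \leq 2R}1_{|w_1-w_2| \leq r}$ by a smooth function of the product form $\sum_{i=1}^m \phi_{i,1}(w_1)\phi_{i,2}(w_2)$, obtained from a partial Fourier expansion of a smooth bump in the variable $w_1-w_2$; here the number of terms $m$ and the derivative bounds on the $\phi_{i,j}$ are $n^{O(c_0)}$, which for $c_0$ small enough lies within the range allowed by the rescaled form of the Four Moment Theorem recorded in the Remark after Theorem \ref{main-alt}. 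Applying Theorem \ref{main-alt} (resp. Theorem \ref{main-alt-2} with $(k,l)=(2,0)$ or $(0,2)$) to this test function, each cell's contribution equals the corresponding quantity for the gaussian ensemble plus $O(n^{-c'})$ for some $c'>0$; by \eqref{rhok-2} (resp. Lemma \ref{corf}) the gaussian quantity is $O(r^2)$ for a complex cell and $O(r)$ for a real interval. Summing over the $O(n)$ cells (resp. $O(\sqrt n)$ intervals) yields $\mathbf{E} Y = O(nr^2)+O(n^{1-c'})$, $\mathbf{E} Y_\C = O(nr^2)+O(n^{1-c'})$, and $\mathbf{E} Y_\R = O(\sqrt n\, r)+O(n^{1/2-c'})$, and choosing $c_0>0$ small enough for the rescaled Four Moment Theorem to apply gives the required power savings.

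I expect the main obstacle to be the balancing of the scale $r$: making the gaussian main term $O(nr^2)$ (resp. $O(\sqrt n\, r)$) an honest power saving forces $r$ to decay polynomially in $n$, but then the smoothed cutoff has derivatives of polynomial size, so one must invoke the version of the Four Moment Theorem that tolerates such derivative bounds at the cost of a slightly weaker error term — reconciling these two requirements is precisely what pins down the admissible exponent $c_0$. A lesser technical point is the reduction to the bulk, which relies on the standard fact that under Condition \condone{} the operator norm of $M_n$ is $O(\sqrt n)$ with overwhelming probability; and in the real case one should also note that a real eigenvalue and a non-real eigenvalue can never coincide, so these two regimes genuinely decouple.
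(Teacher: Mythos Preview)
Your proposal is correct and takes essentially the same approach as the paper: bound the expected number of close pairs of eigenvalues by covering the bulk, transferring each local contribution to the gaussian ensemble via the Four Moment Theorems, invoking the uniform correlation bounds \eqref{rhok-2} and Lemma \ref{corf} there, summing, and applying Markov's inequality. The paper organises the covering slightly differently---it uses balls of the small radius $\eps=n^{-c_0}$ (so $O(n/\eps^2)$ balls, each contributing $O(\eps^4)$) and, in the real case, separates off the strip $\{|\Im z|\leq \eps\}$ rather than splitting by eigenvalue type---but the arithmetic and the underlying mechanism are the same as yours.

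One small point to tighten: your partial Fourier decomposition produces $m=n^{O(c_0)}$ product terms, whereas Theorem \ref{main-alt} is stated for fixed $m$. This is harmless since the theorem is applied term by term and the errors simply add, but you should say so explicitly (or instead approximate the indicator $1_{|w_1-w_2|\leq r}$ by a single smooth bump and absorb the resulting $n^{O(c_0)}$ derivative bounds via the rescaling remark, as the paper does in analogous situations).
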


We establish this result in Section \ref{real-sec} also.  It should in fact be the case that with overwhelming probability, none of the eigenvalues are repeated, but this seems to be beyond the reach of our methods.

We thank Anthony Mays and the anonymous referees for corrections and help with the references.

\section{Key ideas and a sketch of the proof}

 The proof of the four moment theorem for (Hermitian) Wigner ensembles in \cite{TVlocal1}
 is based on the \emph{Lindeberg exchange strategy}, in which one shows that various statistics of ensembles are stable with respect to the swapping of one or two of the coefficients of that ensemble.  The original argument in \cite{TVlocal1} was based on a swapping analysis of individual eigenvalues $\lambda_i(M_n)$, which was somewhat complicated technically; but in \cite{ESYY}, \cite{knowles} it was observed that one could work instead with the  simpler swapping analysis of resolvents\footnote{Here and in the sequel we adopt the abbreviation $z$ for the scalar multiple $zI$ of the identity matrix.} (or Greens functions) $R(z) := (W_n-z)^{-1}$, particularly if one was mainly focused on obtaining a Four Moment Theorem for correlation functions, rather than for individual eigenvalues (which in any event are not natural to work with in the non-Hermitian case).  In all of these arguments for Wigner matrices, a key role was played by the \emph{local semi-circle law}, which could in turn be proven by exploiting concentration results for the Stieltjes transform $s(z) := \frac{1}{n} \tr (W_n-z)^{-1}$ of a Wigner matrix.  Again, we refer the reader to the preceding surveys for details.

Our strategy of proof of Theorem \ref{main-alt} and Theorem \ref{main-alt-2} is broadly analogous to that in the Hermitian case, in that it relies on a four moment theorem (Theorem \ref{loglower} below) and on a local circular law (Theorem \ref{local-circ} below).  However, this is highly non-trivial to execute this plan. We are going to need a number of new ideas, coming from different fields of mathematics, and a fair amount of delicate analysis using advanced
sharp concentration tools.

To start,   there is an essential
difference  between handling   non-Hermitian and Hermitian matrices, namely that the spectrum of a non-Hermitian matrix is highly unstable
(see \cite{bai} for a discussion). Due to this difficulty, even the (global) circular law, which is the non-Hermitian analogue of Wigner semi-circle law,
required several decades of effort to prove, and was solved
completely only recently (see the surveys \cite{TV-survey, chafai} for further discussion).  For this reason, it is no longer practical to make the resolvent $(M_n-z)^{-1}$ (and the closely related Stieltjes transform $\frac{1}{n} \tr (M_n - z)^{-1}$) the principal object of study.  Instead, following the foundational works of Girko \cite{girko} and Brown \cite{brown}, we shall focus on the  \emph{log-determinant}
$$ \log |\det( M_n - z)|$$
for a complex number parameter $z$.

The  log-determinant is connected to the eigenvalues of the iid matrix $M_n$ via the  obvious identity
\begin{equation}\label{log-det-1}
\log |\det( M_n - z)| = \sum_{i=1}^n \log |\lambda_i(M_n) - z|.
\end{equation}

In order to restrict to a local region, our  idea is to
use  \emph{Jensen's formula}.  Suppose that $f$ is an analytic function in a region in the complex plane which contains the closed disk $D$ of radius $r$ about the origin, $a_1, a_2, \ldots, a_n $ are the zeros of $f$ in the interior of $D $  (counting multiplicity), and $f(0) \neq 0$, then

$$ \log |f(0) |= \sum_{i=1}^k \log \frac {|a_i|}{r} + \frac{1}{2\pi} \int_0^{2\pi} \log |f (r e^{\sqrt{-1} \theta} ) | d \theta. $$

Applied  Jensen's formula to \eqref{log-det-1}, we obtain
\begin{equation}\label{jensen-det}
\begin{split}
\log |\det(M_n-z_0)| &= - \sum_{1 \leq i \leq n: \lambda_i(M_n) \in B(z_0,r)} \log \frac{r}{|\lambda_i(M_n)-z_0|} \\
&\quad + \frac{1}{2\pi} \int_0^{2\pi} \log |\det(M_n-z_0-re^{\sqrt{-1}\theta})|\ d\theta
\end{split}
\end{equation}
for any ball $B(z_0,r)$ (with the convention that both sides are equal to $-\infty$ when $z_0$ is an eigenvalue of $M_n$).

From \eqref{jensen-det}, we see (in principle, at least) that information on the (joint) distribution of the log-determinants $\log|\det(M_n-z)|$ for various values of $z$ should lead to information on the eigenvalues of $M_n$, and in particular on the $k$-point correlation functions $\rho^{(k)}_n$ of $M_n$.
As Jensen formula is a  classical tool in complex analysis, this step looks quite robust and would potentially find applications in the study of local properties
of many other random processes.

 On the other hand, we can also write the log-determinant in terms of the \emph{Hermitian} $2n \times 2n$ random matrix
\begin{equation}\label{wnz}
W_{n,z} :=
\frac{1}{\sqrt{n}} \begin{pmatrix}
0 & M_n -z \\
(M_n-z)^* & 0
\end{pmatrix}
\end{equation}
via the easily verified identity
\begin{equation}\label{manz}
\log |\det(M_n - z)| = \frac{1}{2} \log|\det W_{n,z}| + \frac{1}{2} n \log n.
\end{equation}
This observation is known as the \emph{Girko Hermitization trick}, and in principle reduces the spectral theory of non-Hermitian matrices to the spectral theory of Hermitian matrices.

The log-determinant of $W_{n,z}$ is in turn related to other spectral information of $W_{n,z}$, such as the \emph{Stieltjes transform}\footnote{We use $\sqrt{-1}$ to denote the standard imaginary unit, in order to free up the symbol $i$ to be an index of summation.}
$$ s_{W_{n,z}}(E+\sqrt{-1} \eta) := \frac{1}{2n} \tr\left( (W_{n,z} - E-\sqrt{-1} \eta)^{-1} \right)$$
of $W_{n,z}$, for instance via the identity
\begin{equation}\label{imit}
 \log|\det W_{n,z}| = \log|\det(W_{n,z}-\sqrt{-1} T)| - 2n \Im \int_0^T s_{W_{n,z}}(\sqrt{-1} \eta)\ d\eta,
\end{equation}
valid for arbitrary $T>0$.  Thus, in principle at least, information on the distribution of the Stieltjes transform $s_{W_{n,z}}$ will imply information on the log-determinant of $W_{n,z}$, and hence on $M_n-z$, which in turn gives information on the eigenvalue distribution of $M_n$.  This is the route taken, for instance, to establish the circular law for iid matrices; see \cite{TV-survey, chafai} for further discussion.  There is a non-trivial issue with the possible divergence or instability of the integral in \eqref{imit} near $\eta=0$, but it is now well understood how to control this issue via a regularisation or truncation of this integral, provided that one has adequate bounds on the least singular value of $W_{n,z}$; again, see \cite{TV-survey, chafai} for further discussion.  Fortunately, we and many
other researchers have proved such bounds in previous papers, using methods from a seemingly unrelated area of Additive Combinatorics (see Proposition \ref{lsv} below).

There is a significant  technical issue arising from the fact that formulae such as \eqref{imit} or \eqref{jensen-det} require one to control the value of various random functions, such as log-determinants or Stieltjes transforms, for an uncountable number of choices of parameters such as $z$ and $\eta$, so that one can no longer directly use union bound to control exceptional events when the expected control on these quantities fails.  To overcome this, we
appeal to  the  Monte Carlo method, frequently used in combinatorics and  theoretical compute science. This method enables us to
use random sampling arguments to replace many of these integral expressions by discrete, random, approximations,
  to which the union bound can be safely applied (see Section 5).

The application of the Monte Carlo method (Lemma \ref{sampling}), on the other hand, is far from straightforward, since in certain situations (see Section 6), the variance
is too high and so the bound implied by Lemma \ref{sampling} becomes rather weak.  We handle this situation by a {\it variance reduction} argument,
exploiting   analytical properties of the relevant functions. This step also looks  robust and may be useful for practitioners of the
Monte Carlo method in other fields.

After these steps, the rest of the proof  essentially boils down to error control,  in form of a sharp concentration inequality (Theorem \ref{loglower-gaussian}), which will be done  by analyzing a delicate (and rather unstable) random process, using recent martingale inequalities and various adhoc ideas.

\begin{remark}  For Hermitian ensembles, swapping methods (such as the Four Moment Theorem) are not the only way to obtain universality results; there is also an important class of methods (such as the local relaxation flow method) that are based on analysing the effect of a Dyson-type Brownian motion on the spectrum of a random matrix ensemble; see e.g. \cite{Erd}.  However, there is a significant obstruction to adapting such methods to the non-Hermitian setting, because the equations of the analogue to Dyson Brownian motion either\footnote{One can explain this by observing that in the Hermitian case, the eigenvalues determine the matrix up to a $U_n(\C)$ symmetry, but in the non-Hermitian case the symmetry group is now the much larger group $GL_n(\C)$.  Dyson Brownian motion is $U_n(\C)$-invariant, but is not $GL_n(\C)$-invariant, which is why this motion can be reduced to dynamics purely on eigenvalues in the Hermitian case but not in the non-Hermitian one.} couple together the eigenvectors and the eigenvalues in a complicated fashion, or need to be phrased in terms of a triangular form of the matrix, rather than a diagonal one (cf. \cite{Meh}).  We were unable to resolve these difficulties in the non-Hermitian case, and rely solely on swapping methods instead; unfortunately, this then requires us to place moment matching hypotheses on our matrix ensembles.  It seems of interest to develop further tools that are able to remove these moment matching hypotheses in non-Hermitian settings.
\end{remark}

\subsection{Key propositions}

The proof of Theorem \ref{main-alt} relies on two key facts, both of which may be of independent interest.  The first is a ``local circular law''.  Given a subset $\Omega$ of the complex plane, let
$$N_\Omega = N_\Omega[M_n] := |\{ 1\leq i \leq n: \lambda_i(M_n) \in \Omega \}$$
denote the number of eigenvalues of $M_n$ in $\Omega$.

\begin{theorem}[Local circular law]\label{local-circ}  Let $M_n = (\xi_{ij})_{1 \leq i,j \leq n}$ be an independent-entry matrix with independent real and imaginary parts obeying Condition \condone, and which matches either the real or complex gaussian matrix to third order.  Then for any fixed $C>0$, one has with overwhelming probability\footnote{See Section \ref{notation-sec} for a definition of this term, and for the definition of asymptotic notation such as $o(1)$ and $\ll$.} that
\begin{equation}\label{nbr}
 N_{B(z_0,r)} = \int_{B(z_0,r)} \frac{1}{\pi} 1_{|z| \leq \sqrt{n}}\ dz + O( n^{o(1)} r )
\end{equation}
uniformly for all $z_0 \in B(0,C \sqrt{n})$ and all $r \geq 1$.  In particular, we have
\begin{equation}\label{nbr-0}
N_{B(z_0,r)} \leq n^{o(1)} r^2
\end{equation}
with overwhelming probability, uniformly for all $z_0 \in B(0, C \sqrt{n})$ and all $r \geq 1$.
\end{theorem}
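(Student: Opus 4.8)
The plan is to pass, via Jensen's formula, from the eigenvalue-counting statement \eqref{nbr} to a concentration estimate for the log-determinant $\log|\det(M_n-w)|$, and then to prove that estimate through Girko's Hermitization together with the tools recalled in this section; I organize the argument in three steps. \emph{Step 1 (potential-theoretic reduction).} Summing the Jensen identity \eqref{jensen-det} over the radius, via $\log\frac{R}{|\lambda_i-z_0|}=\int_{|\lambda_i-z_0|}^{R}\frac{dr}{r}$ for $|\lambda_i-z_0|<R$, gives for all $z_0$ and $R>0$
\[ \int_0^R \frac{N_{B(z_0,r)}}{r}\,dr = \frac{1}{2\pi}\int_0^{2\pi}\log|\det(M_n-z_0-Re^{\sqrt{-1}\theta})|\,d\theta - \log|\det(M_n-z_0)|. \]
Let $U$ be the logarithmic potential of the circular law ($U(\zeta)=\tfrac{|\zeta|^2-1}{2}$ for $|\zeta|\le1$, $U(\zeta)=\log|\zeta|$ for $|\zeta|\ge1$) and put $P(w):=nU(w/\sqrt{n})+\tfrac{n}{2}\log n$. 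Jensen's formula for the superharmonic $U$ shows that the right-hand side above, with each $\log|\det(M_n-w)|$ replaced by $P(w)$, is exactly $\int_0^R \frac{m(r)}{r}\,dr$, where $m(r):=\int_{B(z_0,r)}\frac{1}{\pi}1_{|z|\le\sqrt{n}}\,dz$ is the right-hand side of \eqref{nbr}. Hence, once we know that with overwhelming probability
\begin{equation}\label{eq:lcl-logdet}
\log|\det(M_n-w)| = P(w) + n^{o(1)}
\end{equation}
simultaneously for all $w\in B(0,(C+1)\sqrt{n})$ --- the average over the circle $|w-z_0|=R$ only helping --- we obtain $\int_0^R r^{-1}N_{B(z_0,r)}\,dr = \int_0^R r^{-1}m(r)\,dr + n^{o(1)}$. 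Since $r\mapsto N_{B(z_0,r)}$ is nondecreasing while $m$ is smooth with $m'(r)\le 2r$, sandwiching $N_{B(z_0,R)}$ between the difference quotients $\frac{1}{\log(1+\delta)}\int_R^{R(1+\delta)}r^{-1}N_{B(z_0,r)}\,dr$ and $\frac{1}{\log(1/(1-\delta))}\int_{R(1-\delta)}^{R}r^{-1}N_{B(z_0,r)}\,dr$ and optimizing at $\delta\sim n^{o(1)}/R$ converts the $n^{o(1)}$ error in the integrated count into the claimed $O(n^{o(1)}r)$ error in $N_{B(z_0,r)}$; the ranges $r\gtrsim\sqrt{n}$ and $r\le n^{o(1)}$ are handled by elementary monotonicity.

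\emph{Step 2 (log-determinant concentration via Hermitization).} It remains to prove \eqref{eq:lcl-logdet} for fixed $w$. By \eqref{manz}, $\log|\det(M_n-w)|=\tfrac12\log|\det W_{n,w}|+\tfrac{n}{2}\log n$, and by \eqref{imit}, $\log|\det W_{n,w}|=\log|\det(W_{n,w}-\sqrt{-1}T)|-2n\,\Im\int_0^T s_{W_{n,w}}(\sqrt{-1}\eta)\,d\eta$ for a large constant $T$ (the term at $\eta=T$ being harmless since $\|W_{n,w}\|=O(1)$ with overwhelming probability). On the range $\eta\ge n^{-1+\eps}$ one controls $s_{W_{n,w}}(\sqrt{-1}\eta)$ by a local law for the Hermitization $W_{n,w}$: its empirical spectral measure converges to an explicit deterministic limit --- the same computation that produces the circular law, cf.\ \cite{byy,TV-survey,chafai} --- with fluctuations controlled down to scales $\eta\gg n^{-1}$. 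On the complementary range $\eta<n^{-1+\eps}$ one invokes the least-singular-value bound $\sigma_{\min}(W_{n,w})=\tfrac{1}{\sqrt{n}}\,\sigma_{\min}(M_n-w)\gg n^{-O(1)}$ of Proposition \ref{lsv}, together with standard small-ball estimates for the few smallest singular values, to see that this range contributes at most $n^{o(1)}$. Integrating and substituting back yields \eqref{eq:lcl-logdet}. Alternatively one may quote the local circular laws of Bourgade, Yau, and Yin \cite{byy} and the authors' earlier work directly; the far more delicate log-determinant concentration developed later in this paper is not needed here, being reserved for the four moment theorem and the correlation-function results.

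\emph{Step 3 (uniformity over the continuum).} Steps 1--2 give \eqref{eq:lcl-logdet} for each fixed $w$ with overwhelming probability; to make it hold simultaneously over all $w$, fix a large $A$, take an $n^{-A}$-net $\CN$ of $B(0,(C+1)\sqrt{n})$ of polynomial cardinality, union-bound the overwhelming-probability estimates over $\CN$, and control the variation of $w\mapsto\log|\det(M_n-w)|$ between adjacent net points. Off the bad set of $w$ within distance $n^{-A}$ of an eigenvalue of $M_n$ --- of Lebesgue measure $O(n^{1-2A})$ --- this map is $n^{O(1)}$-Lipschitz, and the count in \eqref{nbr} is unaffected by moving $(z_0,r)$ off that set. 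The circular average $\frac{1}{2\pi}\int_0^{2\pi}\log|\det(M_n-z_0-Re^{\sqrt{-1}\theta})|\,d\theta$, an uncountable family of evaluations of a single random function, is handled by the Monte Carlo method of Section~5: by Lemma \ref{sampling} one replaces it by the mean over $n^{o(1)}$ independent uniformly random sample angles, to which the union bound then applies, the sampling error being governed by the variance in $\theta$ of $\log|\det(M_n-z_0-Re^{\sqrt{-1}\theta})|$; in the regime where a would-be eigenvalue lies close to the circle this variance is too large, and one first applies the variance-reduction argument of Section~6, which exploits the analyticity of $z\mapsto\det(M_n-z)$ to smooth the integrand, before sampling.

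The crux is the log-determinant concentration \eqref{eq:lcl-logdet} with its $n^{o(1)}$ error: $\log|\det(M_n-w)|$ is an extremely unstable functional of $M_n$ --- a single exponentially small singular value of $M_n-w$ would destroy it --- so its control genuinely requires the additive-combinatorial least-singular-value bounds of Proposition \ref{lsv} to exclude the degenerate event, on top of a local law for the Hermitization $W_{n,w}$ down to near-optimal scale. The remaining difficulty is the bookkeeping needed to make every estimate uniform over the continuum of centers $z_0$, radii $r$, and angles $\theta$, where the union bound is unavailable; this is what forces the net argument together with the Monte Carlo sampling and variance-reduction steps.
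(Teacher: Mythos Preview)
Your Step 1 reduction is fine and is essentially a continuous variant of what the paper does: the paper applies Jensen's formula at two radii $r_j$ and $r_{j'}-1$ and subtracts, whereas you integrate Jensen in $r$ and then differentiate. Either way the conclusion is that \eqref{nbr} follows once one has the log-determinant concentration \eqref{eq:lcl-logdet}.

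The genuine gap is in Step 2. Your \eqref{eq:lcl-logdet} is \emph{exactly} Theorem \ref{loglower}, and the paper's derivation of Theorem \ref{local-circ} in Section \ref{circular-sec} is precisely ``assume Theorem \ref{loglower} and run Jensen''. So you have correctly identified the key input, but your proposed proof of it is not adequate. You claim to control $s_{W_{n,w}}(\sqrt{-1}\eta)$ on $\eta \ge n^{-1+\eps}$ by ``a local law for the Hermitization $W_{n,w}$ \ldots with fluctuations controlled down to scales $\eta \gg n^{-1}$''. No such local law is established in this paper: Propositions \ref{ni} and \ref{Resolv} give only crude \emph{upper} bounds on $N_I$ and on resolvent entries, not an asymptotic $s_{W_{n,w}}(\sqrt{-1}\eta) = m(\eta) + O((n\eta)^{-1})$ with an identified limit $m$. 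Your citations \cite{TV-survey,chafai} are to the global circular law, which gives convergence of the Stieltjes transform only at macroscopic scales and yields errors of order $n$ in $\log|\det|$, not $n^{o(1)}$; the citation to \cite{byy} is to the ``very recent'' external result flagged in Remark \ref{breaking}, which the paper's own proof deliberately does not rely on (and which, in its first version, excludes a neighbourhood of the unit circle). Most importantly, your sentence ``the far more delicate log-determinant concentration developed later in this paper is not needed here'' is backwards: that delicate analysis (Theorem \ref{loglower-gaussian} in Section \ref{lower-sec}, plus the swapping in Section \ref{lower-sec2}) \emph{is} the paper's proof of Theorem \ref{loglower}, and hence of \eqref{eq:lcl-logdet}, and hence of Theorem \ref{local-circ}. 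There is no shortcut here within the paper's toolkit.

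A smaller point on Step 3: you cannot hope for \eqref{eq:lcl-logdet} to hold simultaneously for all $w$, since $\log|\det(M_n-w)| = -\infty$ at eigenvalues. The paper handles the circular average differently in the two halves of the argument: for the crude bound \eqref{nbr-0} it uses Monte Carlo sampling (Lemma \ref{sampling}) exactly as you describe, but for the refined bound \eqref{nbr} it first uses pigeonhole to find a radius $r_j$ whose circle is at distance $\gg n^{-3}$ from the spectrum, so that $\theta \mapsto \log|\det(M_n - z_0 - r_j e^{i\theta})|$ is polynomially Lipschitz and a deterministic Riemann sum suffices. Your net-plus-Lipschitz argument would need something like this pigeonhole step to actually go through. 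The ``variance-reduction argument of Section 6'' you invoke is used in the paper for the correlation-function reduction, not for the local circular law.
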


\begin{remark}\label{remark:outliers}
The bound \eqref{nbr} is probably not best possible, even if one ignores the $n^{o(1)}$ term.  In the complex gaussian case, it has been shown \cite{rider} that the variance of $N_{B(z_0,r)}$ is actually of order $r$, suggesting a fluctuation of $O(n^{o(1)} r^{1/2})$ rather than $O(n^{o(1)} r)$; the closely related results in Theorem \ref{clt-gauss} and Corollary \ref{clt-2} also support this prediction.   Also notice that we assume only three matching moments in this theorem, so the statement applies for instance to random sign matrices (which match the real gaussian ensemble to third order).   For our applications to Theorems \ref{main-alt}, \ref{main-alt-2}, we do not need the full strength \eqref{nbr} of the above theorem; the weaker bound \eqref{nbr-0} will suffice.
\end{remark}

\begin{remark}\label{breaking} Very recently, Bourgade, Yau, and Yin \cite{byy} have established a variant of Theorem \ref{local-circ} (and also Theorem \ref{loglower}) which does not require matching to third order, but with the disk $B(z_0,r)$ assumed to lie a distance at least $\eps \sqrt{n}$ from the circle $\{ |z| = \sqrt{n}\}$ for some fixed $\eps>0$.  By using the main result of \cite{byy} as a substitute for Theorem \ref{local-circ} (and also Theorem \ref{loglower}), we may similarly remove the third order matching hypotheses from Theorem \ref{main-alt}, at least in the case when $z_1,\ldots,z_k$ stay a distance $\eps \sqrt{n}$ from the circle $\{ |z| = \sqrt{n}\}$. Since the initial release of this paper, an alternate proof of Theorem \ref{local-circ} (in the case when one matches the complex gaussian ensemble to third order, as opposed to the real gaussian ensemble) which works both in the bulk and in the edge was given in \cite{byy-2}.
\end{remark}

The second key fact is a ``Four Moment Theorem'' for the log-determinants $\log |\det(M_n - z)|$:

\begin{theorem}[Four Moment Theorem for determinants]\label{four-moment}  Let $c_0>0$ be a sufficiently small absolute constant.  Let $M_n, M'_n$ be two independent random matrices with independent real and imaginary parts obeying Condition \condone, which match each other to fourth order, and which both match the real gaussian matrix (or both match the complex gaussian matrix) to third order.  Let $1 \leq k \leq n^{c_0}$, let $C > 0$ be fixed, and let $z_1,\dots,z_k \in B(0,C \sqrt{n})$.  Let $G: \R^k \to \C$ be a smooth function obeying the derivative bounds
$$ |\nabla^j G(x_1,\dots,x_k)| \ll n^{c_0}$$
for all $j=0,\dots,5$ and $x_1,\dots,x_k \in \R$, where $\nabla$ denotes the gradient in $\R^k$.  Then we have
\begin{align*}
& \E G( \log|\det(M_n-z_1)|, \dots, \log|\det(M_n-z_k)| ) \\
&\quad = \E G( \log|\det(M'_n-z_1)|, \dots, \log|\det(M'_n-z_k)| ) + O( n^{-c_0} ),
\end{align*}
with the convention that the expression $G( \log|\det(M_n-z_1)|, \dots, \log|\det(M_n-z_k)| )$ vanishes if one of the $z_1,\dots,z_k$ is an eigenvalue of $M_n$, and similarly for the expression $G( \log|\det(M'_n-z_1)|, \dots, \log|\det(M'_n-z_k)| )$.
\end{theorem}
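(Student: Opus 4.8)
The plan is to prove Theorem \ref{four-moment} by the \emph{Lindeberg exchange method}, swapping the real and imaginary parts of the atoms of $M_n$ one (real) coordinate at a time until $M_n$ is replaced by $M'_n$. There are $\le 2n^2$ such swaps, so it suffices to show that a single swap perturbs the quantity $\E G(\log|\det(M_n-z_1)|,\dots,\log|\det(M_n-z_k)|)$ by at most $O(n^{-c_0-4})$ (with $c_0$ shrunk if needed), since accumulating over $O(n^2)$ swaps still leaves room for the claimed $O(n^{-c_0})$ error. Fix an index $(p,q)$ and write $M = M^{(0)} + \zeta e_{p,q}$, where $M^{(0)}$ is the matrix with the $(p,q)$ entry zeroed out and $\zeta$ is the atom (or one of its real/imaginary parts, treated as a one-dimensional real parameter $t$ with the other part frozen). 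The strategy is a Taylor expansion of the scalar function
$$ t \mapsto G\bigl( \log|\det(M^{(0)}+te_{p,q}-z_1)|,\dots,\log|\det(M^{(0)}+te_{p,q}-z_k)| \bigr) $$
to fourth order in $t$ around $t=0$; since $M_n$ and $M'_n$ share the same first four moments of $\Re\zeta$ and $\Im\zeta$ and the zeroth through fourth order terms of the expansion only involve those moments, they cancel exactly, and one is left with the fifth-order Taylor remainder. The exponential decay of the atoms (Condition \condone) lets one truncate $|\zeta| \le n^{o(1)}$ with negligible error and then bound the contribution of the tail.

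The key analytic input is a bound on the first five $t$-derivatives of $\log|\det(M^{(0)}+te_{p,q}-z_j)|$. Writing $A_j(t) := M^{(0)}+te_{p,q}-z_j$ and using $\frac{d}{dt}\log|\det A_j| = \Re\, e_q^* A_j(t)^{-1} e_p$ (and its higher-derivative analogues, which are polynomials in the single scalar $e_q^* A_j^{-1} e_p$ by the rank-one structure — indeed $\frac{d}{dt} e_q^* A_j^{-1} e_p = -(e_q^* A_j^{-1} e_p)^2$), all derivatives up to order five are controlled by powers of $|e_q^* A_j(t)^{-1} e_p|$, which in turn is at most $\|A_j(t)^{-1}\|_{op} = 1/\sigma_{\min}(A_j(t))$. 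Here $A_j(t) = \sqrt{n}\, W'$ where $W'$ is (up to the Hermitization correspondence \eqref{wnz}--\eqref{manz}) essentially $W_{n,z_j/\sqrt n}$ perturbed by a rank-one term, so one needs $\sigma_{\min}(M_n - z_j) \ge n^{-O(1)}$ with high probability, uniformly over the relevant $z_j$ and over the interpolating matrices. This is precisely the sort of least-singular-value bound alluded to in the text (Proposition \ref{lsv}), established via the additive-combinatorics / inverse-Littlewood-Offord machinery of the authors and others; crucially it must be robust enough to survive conditioning on all but one entry and a rank-one perturbation, which is standard for these bounds. Combined with the derivative bounds $|\nabla^j G| \ll n^{c_0}$, the fifth-order remainder from a single swap is $O(n^{c_0} \cdot n^{O(1)} \cdot \E|\zeta|^5 1_{|\zeta|\le n^{o(1)}}) \cdot n^{-5/2}$ — the $n^{-5/2}$ coming from the three explicit $\sqrt n$ normalizations in $W_{n,z}$ producing a $1/\sqrt n$ per derivative after matching against the $O(1)$ variance of $\zeta$ — which is $O(n^{-3/2+o(1)+c_0})$ per swap, hence $O(n^{-3/2+o(1)+c_0}\cdot n^2) $ will not suffice naively; one recovers the needed $n^{-c_0-4}$ per swap by observing that matching to fourth order kills everything through order four, so the relevant scale is $t \sim 1/\sqrt n$ and the remainder is genuinely of size $n^{-5/2+o(1)}$ per swap, i.e. $O(n^{-1/2+o(1)})$ total, and then one takes $k \le n^{c_0}$, $c_0$ small, and absorbs the $n^{o(1)}$ losses into an arbitrarily small polynomial loss $n^{-c_0}$. (The bookkeeping here mirrors exactly the Hermitian four-moment theorem of \cite{TVlocal1}, \cite{ESYY}.)

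The main obstacle is \textbf{the bad event where some $z_j$ is very close to an eigenvalue of one of the interpolating matrices}, making $\sigma_{\min}(M_n-z_j)$ tiny and the Taylor expansion invalid (indeed $G$ is declared to vanish there, which is not differentiable). This must be handled exactly as in the Hermitian theory: one shows the least-singular-value lower bound $\sigma_{\min}(M_n - z_j)\gg n^{-B}$ holds with overwhelming probability uniformly over all $j=1,\dots,k$ and over all the $O(n^2)$ interpolating matrices simultaneously (a union bound over the $O(n^2 k)$ relevant events, each of overwhelmingly small failure probability from Proposition \ref{lsv}), restrict the expansion to this good event, and use the uniform bound on $G$ (which is $O(n^{c_0})$ from the $j=0$ derivative bound together with, say, the rough a priori bound $|\log|\det(M_n-z_j)|| \ll n\log n$ on the good event) to control the contribution of the complementary event. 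A secondary but routine difficulty is propagating the least-singular-value bound through the one-entry conditioning and rank-one perturbation, and verifying that the truncation $|\zeta|\le n^{o(1)}$ introduces only an exponentially small error via Condition \condone. Once these ingredients — least singular value control, rank-one derivative identities, moment matching cancellation, and truncation — are assembled, the conclusion follows by the telescoping sum over swaps. The cases of matching the real versus complex gaussian ensemble are identical except for whether $\zeta$ has one or two real degrees of freedom to swap.
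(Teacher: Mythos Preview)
Your Lindeberg-swap strategy and the rank-one derivative calculus for $\log|\det A_j(t)|$ are correct in outline and match the paper's approach in Section~\ref{4mt-sec}. But there is a real gap at the heart of the estimate.

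You control the single-entry resolvent $r_j := e_q^* A_j(t)^{-1} e_p$ by the operator norm, $|r_j| \le \|A_j^{-1}\|_{op} = 1/\sigma_{\min}(A_j)$. After unnormalizing, Proposition~\ref{lsv} gives only $\sigma_{\min}(M_n-z_j) \gg n^{-c_0}$ with probability $1-O(n^{-c_0/2})$, so this yields $|r_j| \le n^{c_0}$. The fifth $t$-derivative of $G(\log|\det A_1|,\dots,\log|\det A_k|)$ is then bounded by $n^{c_0}(\max_j|r_j|)^5 \cdot k^{O(1)} = n^{O(c_0)}$, and the accumulated error over $O(n^2)$ swaps is $n^{2+O(c_0)+o(1)}$, which is hopeless. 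Your appeal to ``$\sqrt{n}$ normalizations in $W_{n,z}$'' does not help: in your direct setup $A_j = M_n - z_j$ carries no such normalization, and the sentence ``$A_j(t) = \sqrt{n}\,W'$'' is dimensionally inconsistent ($A_j$ is $n\times n$, $W_{n,z}$ is $2n\times 2n$).

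What is actually needed is the \emph{entrywise} bound $|r_j| \ll n^{-1/2+O(c_0)}$, which via the identity $(M_n-z_j)^{-1}_{qp} = n^{-1/2}(W_{n,z_j}^{-1})_{n+q,p}$ is exactly the statement $\|R(\sqrt{-1}\eta)\|_{(\infty,1)} \ll n^{O(c_0)}$ uniformly down to $\eta=0$. The paper obtains this as Lemma~\ref{laj}: it starts from the resolvent bound Proposition~\ref{Resolv} (valid for $\eta \gg 1/n$, proved by self-consistent-equation analysis of $W_{n,z}$), and bootstraps to all $\eta>0$ on the event where a smooth spectral cutoff $\chi(\Im s_j(\sqrt{-1} n^{-1-4c_0}))$ is active. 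This delocalization input is the crux of the argument and is strictly stronger than the least-singular-value bound; without it the swap error is not summable. A related secondary point: restricting to a ``good event'' by an indicator of $\{\sigma_{\min}\ge n^{-B}\}$ is not compatible with the Taylor expansion because the indicator is discontinuous in $t$; the paper instead multiplies by the smooth cutoff $\chi$, which is itself a resolvent functional and hence Taylor-expandable via Proposition~\ref{proper}.
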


The proof of Theorem \ref{main-alt} follows fairly easily from Theorem  \ref{local-circ} (in fact, we will only need the weaker conclusion \eqref{nbr-0})
and Theorem \ref{four-moment} (and \eqref{potential}), using the well-known connection between spectral statistics and the log-determinant which goes back to the work of Girko \cite{girko} and Brown \cite{brown}, and which was mentioned previously in this introduction; we give this implication in Section \ref{four}.  A slightly more sophisticated version of the same argument also works to give Theorem \ref{main-alt-2}; we give this implication in Section \ref{four-real}.

It remains to establish the local circular law (Theorem \ref{local-circ}) and the four moment theorem for log-determinants (Theorem \ref{four-moment}).  The key lemma in the establishment of the local circular law is  the following concentration result for the log-determinant.

\begin{definition}[Concentration]\label{cono}  Let $n > 1$ be a large parameter, and let $X$ be a real or complex random variable depending on $n$.  We say that $X$ \emph{concentrates around $M$} for some deterministic scalar $M$ (depending on $n$) if one has
$$ X = M + O(n^{o(1)} )$$
with overwhelming probability.  Equivalently, for every $\eps,A > 0$ independent of $n$, one has $X = M + O(n^\eps)$ outside of an event of probability $O(n^{-A})$.  We say that $X$ \emph{concentrates} if it concentrates around some $M$.
\end{definition}

\begin{theorem}[Concentration bound on log-determinant]\label{loglower} Let $M_n = (\xi_{ij})_{1 \leq i,j \leq n}$ be an independent-entry matrix obeying Condition \condone and matching the real or complex gaussian ensemble to third order.  Then for any fixed $C > 0$, and any $z_0 \in B(0,C)$, $\log |\det(M_n - z_0 \sqrt{n})|$ concentrates around $\frac{1}{2} n \log n + \frac{1}{2} n (|z_0|^2 - 1)$ for $|z_0| \leq 1$ and around $\frac{1}{2} n \log n + n \log |z_0|$ for $|z_0| \geq 1$, uniformly in $z_0$.
\end{theorem}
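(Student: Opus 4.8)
The plan is to pass through the Girko Hermitization identity \eqref{manz}, which reduces the problem to a concentration statement for $\log|\det W_{n,z}|$, where $W_{n,z}$ is the $2n \times 2n$ Hermitian matrix in \eqref{wnz} with $z = z_0\sqrt n$. By \eqref{manz} it suffices to show that $\frac12 \log|\det W_{n,z}| + \frac12 n\log n$ concentrates around the stated value, i.e.\ that $\log|\det W_{n,z}|$ concentrates around $n(|z_0|^2-1)$ for $|z_0|\le 1$ and around $2n\log|z_0|$ for $|z_0|\ge 1$ (after subtracting the $n\log n$ bookkeeping terms). First I would reduce to the \emph{gaussian} case: by Theorem \ref{loglower-gaussian} (the sharp concentration inequality advertised in the sketch, which handles $M_n$ drawn from the real or complex gaussian ensemble) one knows the result when $M_n$ is gaussian, and then Theorem \ref{four-moment} (the Four Moment Theorem for log-determinants) transfers concentration from the gaussian ensemble to any $M_n$ matching it to fourth order — strictly, one uses the four moment theorem with a well-chosen smooth test function $G$ (a smoothed indicator of the event $\{|\log|\det(M_n-z_0\sqrt n)| - M| \le n^\eps\}$) to compare tail probabilities, exploiting that the gaussian ensemble already has the concentration and hence the relevant probability is $\ge 1 - n^{-A}$. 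Note the hypotheses of Theorem \ref{loglower} (Condition \condone, matching to third order, independent real/imaginary parts implicit from the context) are exactly what Theorem \ref{four-moment} requires once we also observe that fourth-moment matching to the gaussian is not literally assumed here — so the cleanest route is actually to prove the gaussian case directly via Theorem \ref{loglower-gaussian}, and invoke the four moment theorem only where fourth moments match; for the general third-moment-matching statement one instead needs the self-contained analysis.

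So the heart of the matter is the gaussian case, which I would attack through the identity \eqref{imit} relating $\log|\det W_{n,z}|$ to the Stieltjes transform $s_{W_{n,z}}$:
\begin{equation*}
\log|\det W_{n,z}| = \log|\det(W_{n,z}-\sqrt{-1}T)| - 2n\,\Im\int_0^T s_{W_{n,z}}(\sqrt{-1}\eta)\,d\eta.
\end{equation*}
For $T$ a fixed large constant, $\log|\det(W_{n,z}-\sqrt{-1}T)|$ is a smooth bounded-derivative function of the entries and concentrates by standard means (e.g.\ it is $\frac12\sum_j \log(\lambda_j^2+T^2)$ over eigenvalues $\lambda_j$ of $W_{n,z}$, controlled via the global law and a net/union-bound argument), and its expectation is computed from the limiting spectral measure of $W_{n,z}$, which is explicitly known (it is the symmetrized measure whose Stieltjes transform solves the self-consistent equation for this Hermitization — giving precisely the potential-theoretic value $n(|z_0|^2-1)$ or $2n\log|z_0|$). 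The delicate part is the integral $\int_0^T \Im\,s_{W_{n,z}}(\sqrt{-1}\eta)\,d\eta$ near $\eta = 0$: one needs a lower bound on the least singular value of $W_{n,z}$ (equivalently of $M_n - z$), supplied by Proposition \ref{lsv}, together with control of the eigenvalue counting function of $W_{n,z}$ at small scales, to show the integral does not blow up and concentrates. Because \eqref{imit} must be controlled uniformly over the uncountable parameter $\eta$, I would discretize $\eta$ using the Monte Carlo sampling lemma (Lemma \ref{sampling}) as flagged in the sketch, and — since the naive variance of the sampled sum is too large near $\eta = 0$ — apply the variance-reduction trick, exploiting monotonicity/analyticity of $\eta \mapsto s_{W_{n,z}}(\sqrt{-1}\eta)$.

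The main obstacle I expect is precisely this small-$\eta$ regime: controlling $\Im\,s_{W_{n,z}}(\sqrt{-1}\eta)$ — i.e.\ the local eigenvalue density of $W_{n,z}$ near the origin — down to scales where the least-singular-value bound becomes the only available input, and doing so with fluctuations of size $n^{o(1)}$ rather than something larger. This is where one is forced into the ``delicate (and rather unstable) random process'' analysis mentioned in the proof sketch: one sets up a stochastic difference equation (in the spirit of the paper's announced method) whose solution tracks the partial products of singular values or the increments of $\log|\det(W_{n,z}-\sqrt{-1}\eta)|$ as $\eta$ decreases, and applies recent martingale concentration inequalities to it. Handling the edge case $|z_0| = 1$ requires extra care since the limiting density of $W_{n,z}$ has a square-root-type vanishing there, slightly weakening the available least-singular-value margin; but uniformity in $z_0$ over the compact set $B(0,C)$ should follow from the uniformity already present in Proposition \ref{lsv} and in the global law for $W_{n,z}$, combined with a further net over $z_0$.
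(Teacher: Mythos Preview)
Your proposal has two genuine gaps.

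First, on the reduction from the general case to the gaussian case: you correctly notice that Theorem \ref{four-moment} requires fourth-moment matching, which is not assumed here, but you do not resolve this. The paper's actual argument (Section \ref{lower-sec2}) is a direct Lindeberg swapping argument tailored to this weaker hypothesis: one writes $\log|\det W_{n,z}|$ via \eqref{imit} with $T=n^{100}$, truncates the integral to $\eta \ge 1/n$ using Propositions \ref{lsv} and \ref{Resolv}, and then swaps entries one at a time, controlling high moments of the deviation inductively. The key point (cf.\ Remark \ref{remark:whythree}) is that because the target error here is $n^{o(1)}$ rather than $n^{-c}$, the resolvent Taylor expansion (Proposition \ref{proper}) need only be carried to third order: the fourth-order remainder is $O(n^{-2+o(1)})$ per swap, which sums to $n^{o(1)}$ over $O(n^2)$ swaps. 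So three matching moments suffice, but you cannot black-box Theorem \ref{four-moment}; a separate (though closely related) swapping argument is required.

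Second, and more seriously, your approach to the gaussian case is misdirected. The Monte Carlo sampling (Lemma \ref{sampling}) and variance-reduction arguments appear in the paper to derive Theorem \ref{local-circ} and Theorem \ref{main-alt} \emph{from} Theorem \ref{loglower}, not to prove Theorem \ref{loglower} itself. For the gaussian case (Section \ref{lower-sec}) the paper takes a completely different route: the upper bound is a direct second-moment computation via cofactor expansion (Lemma \ref{secondmoment}), and the lower bound proceeds by conjugating the gaussian matrix to Hessenberg form (Proposition \ref{hessen}, following \cite{kv}), which expresses $\det(M_n - z_0\sqrt n)$ as $\bigl(\prod_{i=1}^{n-1}\sqrt{|a_i|^2+\chi_{n-i}^2}\bigr)a_n$ where the $a_i$ satisfy the explicit Markovian nonlinear recursion \eqref{ab2}. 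The ``delicate random process'' analysis is of \emph{this} recursion --- showing that $|a_i|$ escapes the unstable fixed point at $0$ and tracks the stable circle $|a|=\sqrt{|z_0|^2 n-(n-i)}$ --- not of Stieltjes-transform increments in $\eta$. Your proposed Stieltjes-transform route would require concentration of $\int_0^T \Im\, s_{W_{n,z}}(\sqrt{-1}\eta)\,d\eta$ to additive accuracy $n^{o(1)}/n$; at small $\eta$ this is not obtainable from the tools you invoke (``the global law'' and ``standard means'' do not give this precision --- indeed, obtaining it is essentially the content of the theorem). The Hessenberg-form trick is the missing idea.
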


\begin{remark} \label{remark:whythree}
The reason we require only three moments in this theorem instead of four (as in the previous theorem) is that in this theorem the error
in Definition \ref{cono} is allowed to be a positive power of $n$ while in the previous one it needs to be a negative power.  We remark that this theorem is consistent with \eqref{log-det-1} and the circular law; indeed, the quantity $\int_{B(0,1)} \frac{1}{\pi} \log |z-z_0|\ dz$ can be computed to be equal to $\frac{1}{2} (|z_0|^2-1)$ when $|z_0| \leq 1$ and $\log |z_0|$ when $|z_0| \geq 1$.  As in Remark \ref{breaking}, a variant of Theorem \ref{loglower} without the third order hypothesis, but requiring $z_0$ bounded away from the circle $\{|z|=1\}$, was recently given in \cite{byy}.
\end{remark}

We give the derivation of Theorem \ref{local-circ} from Theorem \ref{loglower} in Section \ref{circular-sec}.  The main tools are Jensen's formula \eqref{jensen-det} and a random sampling argument to approximate the integral in \eqref{jensen-det} by a Monte Carlo type sum, which can then be estimated by Theorem \ref{loglower}.

It remains to establish Theorem \ref{four-moment} and Theorem \ref{loglower}.  For both of these theorems, we will work with the Hermitian matrix $W_{n,z}$ defined in \eqref{wnz}, taking advantage of the identity \eqref{manz}.  In order to manipulate quantities such as the log-determinant of $W_{n,z}$ efficiently, we will need some basic estimates on the spectrum of this operator (as well as on related objects, such as resolvent coefficients).  We first need a lower bound on the least singular value that is already in the literature:

\begin{proposition}[Least singular value]\label{lsv} Let $M_n$ be an independent-entry matrix ensemble with independent real and imaginary parts, obeying Condition \condone, and let $z_0 \in B(0,C \sqrt{n})$ for some fixed $C>0$.  Then with overwhelming probability, one has
$$ \inf_{1 \leq i \leq n} |\lambda_i(W_{n,z})| \geq n^{-\log n}.$$
Furthermore, for any fixed $c_0>0$ one has
$$ {\bf P}( \inf_{1 \leq i \leq n} |\lambda_i(W_{n,z})| \leq n^{-1/2-c_0} ) \ll n^{-c_0/2}.$$
The bounds in the tail probability are uniform in $z_0$.
\end{proposition}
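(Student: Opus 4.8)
The plan is to translate the statement into a lower-tail estimate for the least singular value of a perturbed independent-entry matrix, and then invoke (or, for completeness, reprove) the relevant bound from the circular-law literature.  Since the eigenvalues of $W_{n,z_0}$ are, up to a fixed power of $n$, precisely $\pm\sigma_i(M_n-z_0)$ for $i=1,\dots,n$ (where $\sigma_1(B)\geq\cdots\geq\sigma_n(B)\geq 0$ denote the singular values of an $n\times n$ matrix $B$), the quantity $\inf_{1\leq i\leq n}|\lambda_i(W_{n,z_0})|$ is a fixed power of $n$ times the least singular value $\sigma_n(M_n-z_0)$ of the iid matrix $M_n$ perturbed by the deterministic matrix $z_0 I$, whose operator norm $|z_0|$ is $O(\sqrt n)$.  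Both claims are thus lower-tail estimates for $\sigma_n(M_n-z_0)$ of exactly the kind established during the proof of the circular law; since this is already available in the literature (see the surveys \cite{TV-survey,chafai} and the references therein), I would simply quote it, but for completeness I sketch the scheme.

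The first step is the operator-norm bound $\|M_n-z_0\|=O(\sqrt n)$, which holds with overwhelming probability by the standard estimate for iid matrices with Condition \condone\ tails together with $\|z_0 I\|=|z_0|=O(\sqrt n)$.  Writing $A:=M_n-z_0$ and $\sigma_n(A)=\inf_{\|x\|_2=1}\|Ax\|_2$, I would split the unit sphere of $\C^n$ into \emph{compressible} vectors --- those within a small absolute constant distance $\rho$ of some vector supported on at most $\delta n$ coordinates, for suitably small absolute constants $\rho,\delta>0$ --- and the remaining \emph{incompressible} vectors.  On the compressible part one combines $\|A\|=O(\sqrt n)$ with the fact that every $n\times\delta n$ submatrix of $A$ has least singular value $\gg\sqrt n$ with overwhelming probability (tall random matrices being well conditioned), forcing $\|Ax\|_2\gg\sqrt n$ for all compressible $x$ once the $\rho$-perturbation is absorbed; equivalently one runs a union bound over an $\eps$-net of the compressible vectors (which, for $\delta$ small, has at most $e^{\kappa n}$ points with $\kappa$ an arbitrarily small absolute constant) against the large-deviation bound $\mathbf{P}(\|Ax\|_2\leq c\sqrt n)\leq e^{-c'n}$ for a single spread-out unit vector $x$, the latter obtained by tensorising a one-dimensional anticoncentration estimate.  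Either way $\inf_{x\ \mathrm{compressible}}\|Ax\|_2\geq c\sqrt n$ outside an event of exponentially small probability, well below the thresholds in both claims.

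For the incompressible part the plan is the distance-to-a-hyperplane method.  If $\|Ax\|_2$ is small for an incompressible $x$, then a standard averaging lemma shows that a positive fraction of the indices $j$ satisfy $\dist(R_j,H_j)=O(\|Ax\|_2)$, where $R_j$ is the $j$th row of $A$ and $H_j:=\Span(R_i:i\neq j)$, so that $\sigma_n(A)\gg\min_j\dist(R_j,H_j)$ up to absolute constants.  One then writes $\dist(R_j,H_j)=|\langle R_j,v_j\rangle|$ for a unit normal $v_j$ to $H_j$, which is independent of the row $R_j$ (the deterministic shift $z_0 I$ only alters the $j$th entry of $R_j$ and is absorbed as a constant offset), and notes that $v_j$ is incompressible with overwhelming probability.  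The last ingredient is the small-ball estimate $\sup_{w\in\C}\mathbf{P}(|\langle R_j,v\rangle-w|\leq t)\ll t+n^{-A}$, valid for every incompressible unit $v$ and every $A>0$ (indeed $\ll t+e^{-cn}$ in the Condition \condone\ setting); plugging this in with the appropriate choice of $t$ and union-bounding over the $n$ choices of $j$ gives the asserted bounds, the loss in the exponent (yielding $c_0/2$ rather than $c_0$) reflecting the precise form of the inverse Littlewood--Offord estimate one invokes.

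The hard part is exactly this last small-ball estimate, uniformly over all incompressible $v$: naive Berry--Esseen-type bounds only give $O(t+n^{-1/2})$, which is insufficient.  The required gain comes from inverse Littlewood--Offord theory --- the structural statement that a coefficient vector $v$ whose small-ball probability is anomalously large must lie close to a generalised arithmetic progression of bounded rank and small size, together with a counting argument showing such structured $v$ to be exponentially rare among incompressible vectors.  This additive-combinatorial input is the heart of the argument and is where essentially all of the effort in the cited works is concentrated; in the present paper one simply quotes it.  Some further routine care is needed to accommodate the stretched-exponential tail of Condition \condone\ rather than a subgaussian one, the complex entries with independent real and imaginary parts, and the required uniformity over $z_0\in B(0,C\sqrt n)$, but none of this affects the overall scheme.
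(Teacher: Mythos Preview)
Your proposal is correct and follows the same route as the paper: both reduce $\inf_i|\lambda_i(W_{n,z_0})|$ to the least singular value of $\frac{1}{\sqrt{n}}(M_n-z_0)$ and then appeal to the existing least-singular-value bounds for perturbed iid matrices (the paper cites \cite[Theorem 2.5]{TV-cond} and \cite[Theorem 3.2]{TVsmooth}, together with \cite{RV}).  You additionally sketch the compressible/incompressible decomposition and inverse Littlewood--Offord argument underlying those references, which the paper does not reprove but merely alludes to in a remark; your sketch is an accurate summary of that machinery.
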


\begin{proof} Note from \eqref{wnz} that $\inf_{1 \leq i \leq n} |\lambda_i(W_{n,z})|$ is the least singular value of $\frac{1}{\sqrt{n}} (M_n-z)$.
The first bound then follows from \cite[Theorem 2.5]{TV-cond} (and can also be deduced from the second bound).  The lower bound $n^{-\log n}$ can be improved to any bound decaying faster than a polynomial, but for our applications any lower bound of the form $\exp(-n^{o(1)})$ will suffice.  The second bound follows from \cite[Theorem 3.2]{TVsmooth} (and can also be essentially derived from the results in \cite{RV}, after adapting those results to the case of random matrices whose entries are uncentered (i.e. can have non-zero mean)).  We remark that in the $z_0$ case, significantly sharper bounds can be obtained; see \cite{RV} for details.
\end{proof}

\begin{remark}
The proof of this bound relies heavily on the so-called \emph{inverse Littlewood-Offord theory} introduced
by the authors in \cite{TVannals}, which was motivated by Additive Combinatorics
(see \cite[Chapter 7]{TVbook}), a seemingly unrelated area. Interestingly, this is, at this point,
the only way to obtain good lower bound on the least singular values of random matrices when the ensemble is discrete (see also \cite{RV, RV1, TV-survey} for more  results and discussion).
\end{remark}

Next, we establish some bounds on the counting function
$$ N_I := |\{ 1 \leq i \leq n: \lambda_i(W_{n,z}) \in I \}|,$$
and on coefficients $R(\sqrt{-1}\eta)_{ij}$ of the resolvents $R(\sqrt{-1} \eta) := (W_{n,z} - \sqrt{-1}\eta)^{-1}$ on the imaginary axis.

\begin{proposition}[Crude upper bound on $N_I$]\label{ni} Let $M_n$ be an independent-entry matrix ensemble with independent real and imaginary parts, obeying Condition \condone.  Let $C > 0$ be fixed, and let $z_0 \in B(0,C\sqrt{n})$. Then with overwhelming probability, one has
$$ N_I \ll n^{o(1)} (1 + n|I|)$$
for all intervals $I$.
The bounds in the tail probability (and in the $o(1)$ exponent) are uniform in $z_0$.
\end{proposition}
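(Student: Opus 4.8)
The plan is to bound $N_I$ through the imaginary part of the Stieltjes transform of $W_{n,z}$ at local scales, and then to establish a crude ($n^{o(1)}$-accuracy) bound on that transform. Write $R(w):=(W_{n,z}-w)^{-1}$ and $s(w):=\frac{1}{2n}\tr R(w)$ for $w=E+\sqrt{-1}\eta$ with $\eta>0$. The elementary resolvent inequality
\[
N_{[E-\eta,E+\eta]}\;\le\;2\eta\,\Im\tr R(E+\sqrt{-1}\eta)\;=\;4n\eta\,\Im s(E+\sqrt{-1}\eta),
\]
valid because each eigenvalue of $W_{n,z}$ in $[E-\eta,E+\eta]$ contributes at least $\frac{1}{2\eta}$ to $\Im\tr R(E+\sqrt{-1}\eta)$, reduces the problem — upon taking $E$ to be the centre of a given interval $I$ and $\eta:=\max(|I|/2,\,n^{-1+\delta})$ for a small fixed $\delta>0$ — to showing that with overwhelming probability $\Im s(E+\sqrt{-1}\eta)\ll n^{o(1)}$ uniformly for $E\in\R$ and $n^{-1+\delta}\le\eta\le O(1)$. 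Indeed, this yields $N_I\ll n^{o(1)}n|I|$ when $|I|\ge 2n^{-1+\delta}$ and $N_I\ll n^{\delta+o(1)}$ otherwise, both of which are $\ll n^{o(1)}(1+n|I|)$ once $\delta$ is chosen small in terms of the target $o(1)$ exponent; moreover $N_I=0$ for intervals $I$ outside a fixed bounded set, since $\|W_{n,z}\|=O(1)$ with overwhelming probability under Condition {\condone}. Uniformity over $(E,\eta)$ then follows from a union bound over an $n^{-100}$-net, since $s$ is $n^{O(1)}$-Lipschitz on the region $\eta\ge n^{-1+\delta}$.

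For the pointwise bound on $\Im s(w)$ I would carry out the usual Schur-complement / self-consistent-equation analysis, but only to within $n^{o(1)}$, which makes it far less delicate than a sharp local law. Since $W_{n,z}$ has zero diagonal, the Schur complement formula reads $R_{jj}(w)^{-1}=-w-\xi_j^*R^{(j)}(w)\xi_j$, where $R^{(j)}$ is the resolvent of the minor obtained by deleting row and column $j$ and $\xi_j$ is the corresponding column of $W_{n,z}$, which has $n$ independent entries of variance $1/n$ (one of which carries a deterministic shift of size $O(1)$ when $|z|\le C\sqrt n$, requiring only routine minor modifications). Since $\Im R^{(j)}(w)=\eta\,R^{(j)}(w)R^{(j)}(w)^*\ge 0$, we have $|R_{jj}(w)^{-1}|\ge|\Im R_{jj}(w)^{-1}|=\eta+\xi_j^*\,\Im R^{(j)}(w)\,\xi_j$, whence
\[
\Im R_{jj}(w)\;\le\;\frac{1}{\eta+\xi_j^*\,\Im R^{(j)}(w)\,\xi_j}.
\]
A Hanson--Wright-type concentration estimate, valid with $n^{o(1)}$ error under Condition {\condone}, shows that $\xi_j^*\Im R^{(j)}(w)\xi_j$ concentrates around a quantity that is $\gtrsim\Im s(w)$ (the relevant partial trace of $\Im R^{(j)}$ equals $\Im s(w)$ up to lower-order corrections, by the chiral block structure of $W^{(j)}$); averaging over $j$ then produces a self-consistent inequality of the shape $\Im s(w)\lesssim(\Im s(w))^{-1}+o(1)$, hence $\Im s(w)\ll n^{o(1)}$. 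The concentration step needs an a priori bound on $\|R^{(j)}(w)\|_{\mathrm{HS}}$, equivalently on $\Im s$; one obtains it by bootstrapping in $\eta$, starting from the trivial bound $\Im s(E+\sqrt{-1}\eta)\le\eta^{-1}=O(1)$ at $\eta\asymp 1$ and descending to $\eta=n^{-1+\delta}$ in small steps, each step feeding the previous (larger-$\eta$) bound, together with Lipschitz continuity in $\eta$, into the concentration estimate at the next scale.

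The step I expect to be the main obstacle is the regime where $E$ lies near $0$ and the rescaled parameter $z/\sqrt n$ lies near the unit circle: there the relevant limiting measure has an edge at (or near) the origin, the self-consistent equation is least stable, and the bootstrap from large $\eta$ does not by itself close. This is exactly where the least singular value bound (Proposition \ref{lsv}) enters: the estimates $\inf_i|\lambda_i(W_{n,z})|\ge n^{-\log n}$ (overwhelming probability) and $\inf_i|\lambda_i(W_{n,z})|\ge n^{-1/2-c_0}$ (probability $1-O(n^{-c_0/2})$) prevent a genuine accumulation of eigenvalues of $W_{n,z}$ near the origin and supply the a priori control needed to run the Schur-complement estimates for $E$ close to $0$; a dyadic splitting of the admissible range of $E$ then glues this near-origin analysis to the bulk analysis above. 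Throughout, all estimates are uniform in $z=z_0$ over $B(0,C\sqrt n)$, since each ingredient — the operator norm bound, Proposition \ref{lsv}, and the Hanson--Wright estimate — is uniform in $z_0$.
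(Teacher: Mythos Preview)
Your approach via the self-consistent equation and a bootstrap in $\eta$ is correct and standard, but it differs from the paper's route in a way worth noting. The paper argues by contradiction: assuming $N_{[E-\eta,E+\eta]}>n^{1+c}\eta$, one immediately gets $\Im s\gg n^c$, hence some diagonal entry $|R_{jj}|\gg n^c$, hence by Schur complement $\Im Y_n=\xi_j^*\,\Im R^{(j)}\,\xi_j\ll n^{-c}$. The key difference is that the paper then applies Hanson--Wright not to the full matrix $\Im R^{(j)}$, but to the \emph{projection} onto the eigenspace corresponding to singular values in $[E-\eta,E+\eta]$; this projection has rank $\gg n^{1+c}\eta$ by Cauchy interlacing, and its Frobenius norm is just the square root of the rank, so the Hanson--Wright error is automatically small without any a priori input. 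This produces a lower bound on $\Im Y_n$ contradicting the upper bound. The advantage is that no bootstrap in $\eta$ is needed: the contradiction hypothesis itself supplies the control that your bootstrap would otherwise have to generate iteratively.

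Your final paragraph invoking Proposition~\ref{lsv} is a misstep. The least singular value bound plays no role in the paper's proof of this proposition, and your own argument does not need it either: the self-consistent inequality $\Im s\lesssim (\Im s)^{-1}+o(1)$ (after absorbing the $z$-shift cross terms by AM--GM, as the paper does) closes uniformly in $E$ and in $|z_0|\le C\sqrt n$, since the error terms in Hanson--Wright depend on $\|\Im R^{(j)}\|_F\sim\sqrt{n\,\Im s/\eta}$ and are $o(1)$ once the a priori bound $\Im s=O(1)$ from the previous bootstrap step is fed in. The edge regime $E\approx 0$, $|z_0|\approx\sqrt n$ is not special for a crude $n^{o(1)}$ bound; the instability you anticipate only matters for sharp local laws. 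In any case, the least singular value bound $\lambda_{\min}\ge n^{-\log n}$ says nothing about the density of eigenvalues near $0$ and would not supply the control you describe.
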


\begin{remark} It is likely that one can strengthen Proposition \ref{ni} to a ``local distorted quarter-circular law'' that gives more accurate upper and lower bounds on $N_I$, analogous to the local semi-circular law from \cite{ESY1}, \cite{ESY2}, \cite{ESY3} (or, for that matter, the local circular law given by Theorem \ref{local-circ}).  However, we will not need such improvements here.
\end{remark}

\begin{proposition}[Resolvent bounds]\label{Resolv} Let $M_n$ be an independent-entry matrix ensemble with independent real and imaginary parts, obeying Condition \condone.  Let $C > 0$ be fixed, and let $z_0 \in B(0,C\sqrt{n})$. Then with overwhelming probability, one has
$$ |R(\sqrt{-1}\eta)_{ij}| \ll n^{o(1)} \left(1 + \frac{1}{n\eta}\right)$$
for all $\eta > 0$ and all $1 \leq i,j \leq n$.
\end{proposition}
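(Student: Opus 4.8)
The plan is to work with the spectral decomposition of the Hermitian matrix $W_{n,z}$. Writing $W_{n,z} = \sum_{k=1}^{2n}\lambda_k u_k u_k^*$ with real eigenvalues $\lambda_k$ and orthonormal eigenvectors $u_k$, one has $R(\sqrt{-1}\eta)_{ij} = \sum_k u_k(i)\overline{u_k(j)}(\lambda_k - \sqrt{-1}\eta)^{-1}$, so that $|R(\sqrt{-1}\eta)_{ij}| \le \frac12(S_i(\eta) + S_j(\eta))$, where $S_i(\eta) := \sum_k |u_k(i)|^2(\lambda_k^2 + \eta^2)^{-1/2}$; here we used $|\lambda_k - \sqrt{-1}\eta| = (\lambda_k^2+\eta^2)^{1/2}$ and $2|u_k(i)\overline{u_k(j)}| \le |u_k(i)|^2 + |u_k(j)|^2$. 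It thus suffices to show that for each fixed $i$ and each fixed $\eta > 0$ one has $S_i(\eta) \ll n^{o(1)}(1 + \tfrac{1}{n\eta})$ with overwhelming probability; one then passes to a bound uniform in $1 \le i,j \le 2n$ and in $\eta > 0$ by a union bound over $i$ and over a geometric net of values of $\eta$ of polynomial cardinality in the range $[n^{-1-\log n}, n^{10}]$, the discretization error being controlled by $\|\tfrac{d}{ds} R(\sqrt{-1}s)\|_{\mathrm{op}} = \|R(\sqrt{-1}s)^2\|_{\mathrm{op}} \le s^{-2}$. (The tail ranges $\eta > n^{10}$, where $|R_{ij}| \le \eta^{-1}$ is tiny, and $\eta < n^{-1-\log n}$, handled below, are trivial.)

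The estimate on $S_i(\eta)$ rests on four inputs: (a) the operator norm bound $\|W_{n,z}\|_{\mathrm{op}} \ll 1$ with overwhelming probability, a consequence of $\|M_n - z_0\|_{\mathrm{op}} \ll \sqrt{n}$ under Condition \condone (so all $|\lambda_k| \ll 1$); (b) the least singular value bound of Proposition \ref{lsv}, $\inf_k |\lambda_k| \ge n^{-\log n}$ with overwhelming probability; (c) the counting bound of Proposition \ref{ni}, giving that the number of $k$ with $|\lambda_k| \le \sigma$ is $\ll n^{o(1)}(1 + n\sigma)$ for any $\sigma > 0$; and, crucially, (d) the eigenvector delocalization bound $\sup_{1 \le k \le 2n}\sup_{1 \le i \le 2n} |u_k(i)| \ll n^{-1/2 + o(1)}$ with overwhelming probability. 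Input (b) immediately disposes of $\eta \le n^{-1-\log n}$, since there $S_i(\eta) \le (\inf_k|\lambda_k|)^{-1} \le n^{\log n} \le 1 + \tfrac{1}{n\eta}$; and together with (a) it shows that for $\eta \ge n^{-1-\log n}$ at most $O(\log^2 n) = n^{o(1)}$ of the dyadic blocks $\{2^{m-1}\eta < |\lambda_k| \le 2^m\eta\}$, $m \ge 1$, are non-empty.

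Granting (a)--(d), the bound on $S_i(\eta)$ for $n^{-1-\log n} \le \eta$ is a routine dyadic computation. Split the sum according to $|\lambda_k| \le \eta$, respectively $2^{m-1}\eta < |\lambda_k| \le 2^m\eta$ for $m \ge 1$. On the zeroth block $(\lambda_k^2+\eta^2)^{-1/2} \le \eta^{-1}$, the number of terms is $\ll n^{o(1)}(1 + n\eta)$ by (c) and each $|u_k(i)|^2 \ll n^{-1+o(1)}$ by (d), so the block contributes $\ll n^{o(1)}(\tfrac{1}{n\eta} + 1)$; on the $m$-th block $(\lambda_k^2+\eta^2)^{-1/2} \le 2^{1-m}\eta^{-1}$ and the number of terms is $\ll n^{o(1)}(1 + 2^m n\eta)$, so the block contributes $\ll n^{o(1)}(2^{1-m}(n\eta)^{-1} + 1)$. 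Summing the geometric series in $m$ together with the $n^{o(1)}$ non-empty blocks yields $S_i(\eta) \ll n^{o(1)}(1 + \tfrac{1}{n\eta})$, as required.

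The main obstacle is establishing the delocalization bound (d); indeed, via $\sum_{|\lambda_k| \le \eta}|u_k(i)|^2 \le 2\eta\, \Im R(\sqrt{-1}\eta)_{ii}$ one sees that (d) is essentially equivalent to the diagonal case of the very bound being proved, so it cannot follow from Propositions \ref{lsv} and \ref{ni} alone. I would prove (d) by the standard route. Let $W_{n,z}^{(i)}$ be the minor of $W_{n,z}$ obtained by deleting the $i$-th row and column and $\mathbf{w}_i$ the rest of the $i$-th column; by the bipartite structure of $W_{n,z}$ the deleted entries are precisely those involving the $i$-th row (or column) of $M_n$, so $\mathbf{w}_i$ is independent of $W_{n,z}^{(i)}$. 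The identity $|u_k(i)|^{-2} = 1 + \|(W_{n,z}^{(i)} - \lambda_k)^{-1}\mathbf{w}_i\|^2$ reduces (d) to showing $\|(W_{n,z}^{(i)} - \lambda_k)^{-1}\mathbf{w}_i\|^2 \gg n^{1-o(1)}$, uniformly over $k$ (after a standard net-and-union-bound reduction to a fixed spectral point $\mu$ in the $O(1)$-sized range of the $\lambda_k$). Restricting the spectral expansion of $(W_{n,z}^{(i)} - \mu)^{-1}$ to the eigenvalues of $W_{n,z}^{(i)}$ within $\eta_0 := n^{-1+\eps_0}$ of $\mu$ --- of which, by Cauchy interlacing together with a \emph{lower} bound on the local spectral density of $W_{n,z}^{(i)}$ at scale $\eta_0$, there are $\gg n^{\eps_0 - o(1)} \gg \log n$ --- one gets $\|(W_{n,z}^{(i)} - \mu)^{-1}\mathbf{w}_i\|^2 \ge \eta_0^{-2}\|\Pi \mathbf{w}_i\|^2$ for the corresponding spectral projection $\Pi$. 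Since $\mathbf{w}_i$ is independent of $\Pi$ with entries of variance $\asymp 1/n$ and $\rank \Pi \gg \log n$, an anti-concentration (small ball) estimate of the type underlying Proposition \ref{lsv} gives $\|\Pi \mathbf{w}_i\|^2 \gg n^{-1+o(1)}$ with overwhelming probability, whence $\|(W_{n,z}^{(i)} - \mu)^{-1}\mathbf{w}_i\|^2 \gg n^{1 - O(\eps_0)}$; letting $\eps_0 \to 0$ gives (d). The required local density lower bound is the ``local distorted quarter-circular law'' alluded to in the remark after Proposition \ref{ni}, which one proves by the self-consistent equation analysis of the resolvent $s_{W_{n,z}}$ familiar from the Wigner theory; this is the genuinely substantial ingredient.
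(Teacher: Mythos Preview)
Your outline is correct, but the route differs from the paper's and is somewhat less economical. You reduce everything to eigenvector delocalization (your input (d)), which you then propose to prove via a local density \emph{lower} bound at every energy in the spectrum of $W_{n,z}$ --- i.e.\ the full ``local distorted quarter-circular law''. You are right that this works, and you are honest that this is the substantial step left undone; but it is strictly more than is needed, and indeed the remark after Proposition~\ref{ni} explicitly says the paper will \emph{not} establish this stronger statement.

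The paper instead proves the diagonal bound $\Im R(\sqrt{-1}\eta)_{jj} \ll n^{o(1)}$ for $\eta \ge n^{-1+c}$ \emph{directly}, with no appeal to delocalization. The mechanism is Schur's complement: $R(\sqrt{-1}\eta)_{2n,2n} = (-\sqrt{-1}\eta - Y_n)^{-1}$ with $Y_n$ a quadratic form in the deleted column, which after Hanson--Wright concentration (Proposition~\ref{concentration}) and some algebra with the block structure of $W_{n,z}$ yields a self-consistent equation for $\Im s(\sqrt{-1}\eta)$ \emph{only at} $E=0$. A short dichotomy-plus-continuity argument on that single equation then gives $\Im s(\sqrt{-1}\eta) \gg 1$ (in the regime $|z_0|^2/n < 1/2$; the complementary regime is easier), and feeding this back into the Schur formula gives the diagonal bound. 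The extension to all $\eta>0$ and to off-diagonal entries is then done by exactly the spectral-decomposition, dyadic-summation and Cauchy--Schwarz moves you wrote down --- so that part of your argument matches the paper's.

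In short: both approaches bottom out in a self-consistent equation analysis, but the paper confines that analysis to the imaginary axis $E=0$, whereas your delocalization route would force you to carry it out across the whole spectrum (and to worry about the spectral edge, where your ``$\gg \log n$ eigenvalues within $\eta_0$'' count can fail). The paper's path is shorter and self-contained given Propositions~\ref{ni} and~\ref{concentration}.
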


\begin{remark}  One can also establish similar bounds on the resolvent (as well as closely related \emph{delocalization} bounds on eigenvectors) for more general spectral parameters $E+\sqrt{-1}\eta$.  However, in our application we will only need the resolvent bounds in the $E=0$ case.
\end{remark}

Propositions \ref{ni} and \ref{Resolv} are proven by standard Stieltjes transform techniques, based on analysis of the self-consistent equation of $W_{n,z}$ as studied for instance by Bai \cite{bai}, combined with concentration of measure results on quadratic forms.  The arguments are well established in the literature; indeed, the $z=0$ case of these theorems essentially appeared in \cite{TVlocal3}, \cite{ESYY}, while the analogous estimates for Wigner matrices appeared in \cite{ESY1}, \cite{ESY2}, \cite{ESY3}, \cite{TVlocal1}.  As the proofs of these results are fairly routine modifications of existing arguments in the literature, we will place the proof of these propositions in Appendix \ref{conc}.  We remark that in the very recent paper \cite{byy}, some stronger eigenvalue rigidity estimates for $W_{n,z}$ are obtained (at least for $z$ staying away from the unit circle $\{|z|=1\}$), which among other things allows one to prove variants of Theorem \ref{loglower} and Theorem \ref{local-circ} without the moment matching hypothesis, and without the need to study the gaussian case separately (see Theorem \ref{loglower-gaussian} below).

One can use Propositions \ref{lsv}, \ref{ni}, \ref{Resolv} to regularise the log-determinant of $W_{n,z}$, and then show that this log-determinant is quite stable with respect to swapping (real and imaginary parts of) individual entries of the $M_{n,z}$, so long as one keeps the matching moments assumption.   In particular, one can now establish Theorem \ref{four-moment} without much difficulty, using standard resolvent perturbation arguments; see
Section \ref{4mt-sec}.  A similar argument, which we give in Section \ref{lower-sec2}, reduces Theorem \ref{loglower} to the gaussian case.
Thus, after all these works, the remaining task is to prove

\begin{theorem}\label{loglower-gaussian}  Theorem \ref{loglower} holds when $M_n$ is drawn from the real or complex gaussian ensemble.
\end{theorem}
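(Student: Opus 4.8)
\emph{Proof strategy (proposal).} I discuss the complex gaussian case; the real case follows by the same scheme with real gaussian inputs and some extra bookkeeping (the shift $z$ may be strictly complex while the entries are real, so the Schur complements below are complex but not circularly symmetric, which affects some constants but not the structure). By \eqref{manz} it suffices to show that, with $z := z_0\sqrt n$ and $|z_0| \le C$, the quantity $\log|\det(M_n-z)|$ concentrates around the asserted value. I would split this into (a) \emph{the mean}, $\E\log|\det(M_n-z)| = \int_\C \log|w-z|\,\rho^{(1)}_n(w)\,dw$, computed from the explicit Ginibre density \eqref{kl2} — using $\rho^{(1)}_n = K_n(\cdot,\cdot)\le \tfrac1\pi$, so that the eigenvalues are confined to $B(0,(1+o(1))\sqrt n)$ and the edge transition occurs on a unit scale with a mean-zero error profile — to get $\E\log|\det(M_n-z)| = \int_{|w|\le\sqrt n}\tfrac1\pi\log|w-z|\,dw + O(n^{o(1)}) = \tfrac12 n\log n + n\int_{|w|\le1}\tfrac1\pi\log|w-z_0|\,dw + O(n^{o(1)})$, and the logarithmic potential of the uniform measure on the unit disk equals $\tfrac12(|z_0|^2-1)$ for $|z_0|\le 1$ and $\log|z_0|$ for $|z_0|\ge1$ (cf.\ Remark~\ref{remark:whythree}); and (b) \emph{concentration around the mean}, which is the substantive part.

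For (b) I would exploit the special feature of the gaussian ensemble that the top-left $j\times j$ principal minors $G_j := (\xi_{ab})_{1\le a,b\le j}$ of $M_n$ form a nested family of $j\times j$ complex Ginibre matrices, with the new row, column, and diagonal entry appended at each stage being fresh independent gaussians, independent of $G_{j-1}$. Iterating the Schur complement (last-row-and-column cofactor expansion) gives the telescoping identity
\begin{equation*}
 \log|\det(M_n-z)| = \sum_{j=1}^n \log|\xi_j|, \qquad \xi_j := \xi_{jj} - z - v_j^*\,(G_{j-1}-z)^{-1} u_j ,
\end{equation*}
valid almost surely, where $u_j = (\xi_{1j},\dots,\xi_{j-1,j})^\top$, $v_j^* = (\xi_{j1},\dots,\xi_{j,j-1})$, and $\xi_1 := \xi_{11}-z$. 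Conditionally on $G_{j-1}$, the scalar $\xi_j$ has mean $-z$ and variance $1 + S_{j-1}$, where $S_{j-1} := \|(G_{j-1}-z)^{-1}\|_{\mathrm{HS}}^2 = \trace\big((G_{j-1}-z)^{-*}(G_{j-1}-z)^{-1}\big)$. Thus the evolution of $\log|\det(M_n-z)|$ is driven by the auxiliary random process $(S_j)_{j=0}^n$, for which the block-inversion formula (writing $(G_j-z)^{-1}$ as a rank-one modification of $\diag((G_{j-1}-z)^{-1},0)$ with correction governed by $u_j,v_j,\xi_j$), after taking Hilbert--Schmidt norms and applying Hanson--Wright-type concentration for the gaussian quadratic and bilinear forms, produces a \emph{nonlinear stochastic difference equation} of the schematic form $S_j = (1 + |\xi_j|^{-2})(S_{j-1} + O(1)) + \mathcal E_j$, in which $|\xi_j|^{-2}$ is itself a rank-one functional of $u_j,v_j$ of conditional scale $\asymp (1+S_{j-1})^{-1}$ and $\mathcal E_j$ is a martingale-difference error. (There is an exactly equivalent continuous picture: since $M_n \stackrel{d}{=} B_n(1)$ for a matrix $B_n(\cdot)$ of i.i.d.\ complex Brownian motions, It\^o's formula gives $\log|\det(M_n-z)| = n\log|z| + \Re\int_0^1 \trace\big((B_n(t)-z)^{-1}\,dB_n(t)\big)$, a local martingale of quadratic variation $\tfrac12\int_0^1 \|(B_n(t)-z)^{-1}\|_{\mathrm{HS}}^2\,dt$; the difference equation above is essentially its Euler discretisation.)

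The main obstacle, and the heart of the argument, is that the process $(S_j)$ is genuinely unstable: it is dominated by the smallest singular values of $G_j - z$, whose reciprocals have infinite expectation precisely near the ``entry time'' $j \approx |z_0|^2 n$ at which $z$ crosses into the spectrum of $G_j$, and naive second-moment or martingale estimates for $\sum_j\log|\xi_j|$ only yield fluctuations of size $O(\sqrt n)$, whereas the truth (and what the theorem asserts) is $O(n^{o(1)})$. Extracting this requires (i) a careful stability/Gronwall analysis of the difference equation for $S_j$ that pins $S_j$ within $n^{o(1)}$ of its deterministic profile; (ii) exploiting the conformal (cancellation) structure of the driving martingale — a \emph{variance-reduction} step — so that the large quadratic variation coming from the bulk of the singular spectrum does not propagate to the log-determinant; (iii) recent sharp martingale concentration inequalities (Freedman/Bernstein-type together with Burkholder--Davis--Gundy) to bound the accumulated noise at the required precision; and (iv) the least-singular-value bound of Proposition~\ref{lsv}, applied to the matrices $G_j - z$ themselves, to ensure no $|\xi_j|$ is super-polynomially small, so that $\log|\xi_j|$ has only a mild, polynomially bounded lower tail. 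Combining the concentration of $(S_j)$ with a final martingale concentration for $\sum_j(\log|\xi_j| - \E[\log|\xi_j|\mid\mathcal F_{j-1}])$ then shows $\log|\det(M_n-z)|$ lies within $n^{o(1)}$ of a deterministic quantity, which part (a) identifies with the claimed centre.

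Finally, I expect the regime $|z_0|\ge 1+\eps$ to be much easier: there $G_j - z$ is uniformly well-conditioned, $S_j$ stays bounded, the instability and Proposition~\ref{lsv} are not needed, and one obtains concentration around $\tfrac12 n\log n + n\log|z_0|$ directly; the transitional regime $|z_0|\to 1$, where the profile of $(S_j)$ acquires a mild singularity near $j\approx n$, is the delicate case that forces the full strength of the argument (and is consistent with Remark~\ref{remark:whythree}, where the error is allowed to be a positive power of $n$, so that only three matching moments are needed here). One could alternatively route step (b) through \eqref{imit}, controlling $\int_0^T s_{W_{n,z}}(\sqrt{-1}\eta)\,d\eta$ via Propositions~\ref{ni} and \ref{Resolv} for $\eta$ bounded away from $0$, but near $\eta=0$ one would still confront exactly the small-singular-value analysis above; the recursion formulation seems the cleaner route to the tight $n^{o(1)}$ concentration.
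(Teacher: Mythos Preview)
Your plan and the paper's share the broad shape—reduce to a nonlinear stochastic difference equation and analyze its stability—but the executions differ substantially, and your version has a genuine gap at the decisive step.

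The paper does two things you do not. First, it separates the two-sided concentration into an easy upper bound (Proposition~\ref{logupper}), obtained by computing $\E|\det(M_n-z_0\sqrt n)|^2$ exactly via the permanent-like cofactor expansion and applying Markov's inequality—this half requires no gaussian structure whatsoever—and a hard one-sided lower bound. Second, for the lower bound the paper first conjugates the gaussian matrix into \emph{Hessenberg form} (Proposition~\ref{hessen}, following \cite{kv}) and then, by a sequence of unitary column operations (Proposition~\ref{Detf}), factors the determinant as $\bigl(\prod_{i=1}^{n-1}\sqrt{|a_i|^2+\chi_{n-i,\C}^2}\bigr)a_n$, where the scalar process $(a_i)$ satisfies $a_{i+1}=\frac{-z_0\sqrt n\,a_i}{\sqrt{|a_i|^2+\chi_{n-i,\C}^2}}+\xi_{i+1}$. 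This recursion is \emph{exactly Markovian in the single scalar $a_i$}, and the paper says explicitly that it will ``rely heavily on the Markovian nature of the process in the subsequent analysis.'' The stability analysis then splits into an ``early'' regime $i\lesssim(1-|z_0|^2)n$, where the drift contracts toward $0$ and the lower bound comes for free from $|a_i|^2\ge0$ (Lemma~\ref{early}), and a ``late'' regime in which $0$ is unstable and one must show $|a_i|$ is repelled toward the stable circle $|a|=\sqrt{|z_0|^2n-(n-i)}$; this is done by a Paley--Zygmund second/fourth moment argument for initial largeness (Lemma~\ref{large-init}) followed by a quantitative repulsion estimate (Proposition~\ref{repulsion}, Corollary~\ref{repulsion1}). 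The centering constant drops out of the elementary integral \eqref{integral-1}; there is no separate computation of $\E\log|\det(M_n-z)|$.

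Your Schur-complement telescope $\log|\det(M_n-z)|=\sum_j\log|\xi_j|$ is correct, and your part~(a) via the Ginibre one-point function is a legitimate alternative for identifying the centre. The gap is in (b): the auxiliary process you propose to track, $S_j=\|(G_j-z)^{-1}\|_{\mathrm{HS}}^2$, does not close the dynamics. The conditional law of $\xi_{j+1}$ given $\mathcal F_j$ depends on the full singular-value profile of $(G_j-z)^{-1}$, not just on its Hilbert--Schmidt norm, and your block-inversion update for $S_j$ brings in $\|(G_{j-1}-z)^{-1}u_j\|^2$, $\|v_j^*(G_{j-1}-z)^{-1}\|^2$ and their product, whose fluctuations are governed by $\sum_k\sigma_k(G_{j-1}-z)^{-4}$ rather than by $S_{j-1}=\sum_k\sigma_k^{-2}$. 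Once $j>|z_0|^2 n$ and $z$ sits in the bulk of $G_j$, the smallest singular value of $G_j-z$ has linear density at $0$, so $\E S_j=\infty$; you cannot close the recursion even in expectation, and the merely polynomial tail in the second clause of Proposition~\ref{lsv} does not survive a union bound over $n$ values of $j$. Your steps (i)--(iv) name the right obstacles, but ``variance reduction'' is precisely what the Hessenberg Markovianization buys: in the paper's coordinates the late-time increments become $\log|1+\xi_{i+1}/a'_i|$ with $|a'_i|\gg n^{c\eps}$, so their conditional variances are summable and Proposition~\ref{azuma} applies directly. In your coordinates the conditional variance of $\log|\xi_j|$ is bounded below throughout the late regime, producing exactly the $O(\sqrt n)$ fluctuation you yourself flag, and nothing in the proposal explains how the cancellation you need would emerge.
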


We prove this theorem in Section \ref{lower-sec}.
This section is the most technically involved part of the paper.
The starting point is to use an idea from our previous paper \cite{TV-determinant}, which studied the limiting distribution of the log-determinant of a shifted GUE matrix.  In that  paper, the first step was to conjugate the GUE matrix into the Trotter tridiagonal form \cite{trotter}, so that the log-determinant could be computed in terms of the solution to a certain linear stochastic difference equation.  In the case in this paper, the analogue of the Trotter tridiagonal form is a Hessenberg matrix form (that is, a matrix form which vanishes above the upper diagonal), which (after some  linear algebraic transformations) can be used to express the log-determinant $\log |\det(M_n - z_0 \sqrt{n})|$ in terms of the solution to a certain \emph{nonlinear} stochastic difference equation.  This Hessenberg form of the complex gaussian ensemble was introduced in \cite{kv}, although the difference equation we derive is different from the one used in that paper.  To obtain the desired level of concentration in the log-determinant, the main difficulty is then to satisfactorily control the interplay between the diffusive components of this stochastic difference equation, and the stable and unstable equilibria of the nonlinearity, and in particular to show that the deviation of the solution from the stable equilibrium behaves like a martingale.  This then allows us to deduce the desired concentration from a martingale concentration result  (see Proposition \ref{azuma} below).

\section{Notation}\label{notation-sec}

Throughout this paper, $n$ is a natural number parameter going off to infinity.  A quantity is said to be \emph{fixed} if it does not depend on $n$.  We write $X = O(Y)$, $X \ll Y$, $Y = \Omega(X)$, or $Y \gg X$ if one has $|X| \leq CY$ for some fixed $C$, and $X = o(Y)$ if one has $X/Y \to 0$ as $n \to \infty$.  Absolute constants such as $C_0$ or $c_0$ are always understood to be fixed.

We say that an event $E$ occurs with \emph{overwhelming probability} if it occurs with probability $1-O(n^{-A})$ for all fixed $A>0$. We use $1_E$ to denote the indicator of $E$, thus $1_E$ equals $1$ when $E$ is true and $0$ when $E$ is false.  We also write $1_\Omega(x)$ for $1_{x \in \Omega}$.

As we will be using two-dimensional integration on the complex plane $\C := \{ z = x+\sqrt{-1} y: x,y \in \R \}$ far more often than we will be using contour integration, we use $dz = dx dy$ to denote Lebesgue measure on the complex numbers, rather than the complex line element $dx + \sqrt{-1} dy$.

We use $N(\mu,\sigma^2)_\R$ to denote a real gaussian distribution of mean $\mu$ and variance $\sigma^2$, so that the probability distribution is given by $\frac{1}{\sqrt{2\pi \sigma^2}} e^{-(x-\mu)^2/2\sigma^2}\ dx$.  Similarly, we let $N(\mu,\sigma^2)_\C$ denote the complex gaussian distribution of $\mu$ and variance $\sigma^2$, so that the probability distribution is given by $\frac{1}{\pi \sigma^2} e^{-|z-\mu|^2/\sigma^2}\ dz$.  Of course, the two distributions are closely related: the real and imaginary parts of $N(\mu,\sigma^2)_\C$ are independent copies of $N(\Re \mu, \sigma^2/2)_\R$ and $N(\Im \mu, \sigma^2/2)_\R$ respectively.  In a similar spirit, for any natural number, we use $\chi_{i,\R}$ to denote the real $\chi$ distribution with $i$ degrees of freedom, thus $\chi_{i,\R} \equiv \sqrt{\xi_1^2 + \dots + \xi_i^2}$ for independent copies $\xi_1,\dots,\xi_i$ of $N(0,1)_\R$.  Similarly, we use $\chi_{i,\C}$ to denote the complex $\chi$ distribution with $i$ degrees of freedom, thus $\chi_{i,\C} \equiv \sqrt{\xi_1^2 + \dots + \xi_i^2}$ for independent copies $\xi_1,\dots,\xi_i$ of $N(0,1)_\C$.  Again, the two distributions are closely related: one has $\chi_{i,\C} \equiv \frac{1}{\sqrt{2}} \chi_{2i,\R}$ for all $i$.

If $F: \C^k \to \C$ is a smooth function, we use $\nabla F(z_1,\ldots,z_k)$ to denote the $2k$-dimensional vector whose components are the partial derivatives $\frac{\partial F}{\partial \operatorname{Re} z_i}(z_1,\ldots,z_k)$, $\frac{\partial F}{\partial \operatorname{Im} z_i}(z_1,\ldots,z_k)$ for $i=1,\ldots,k$.  Iterating this, we can define $\nabla^a F(z_1,\ldots,z_k)$ for any natural number $a$ as a tensor with $(2k)^a$ coefficients, each of which is an $a$-fold partial derivative of $F$ at $z_1,\ldots,z_k$.  The magnitude $|\nabla^a F(z_1,\ldots,z_k)|$ is then defined as the $\ell^2$ norm of these coefficients.  Similarly for functions defined on $\R^k$ instead of $\C^k$.

\section{A concentration inequality}

In this section we recall a martingale type concentration inequality which will be useful in our arguments. Let $Y= Y(\xi_1, \dots, \xi_n)$ be a random variable depending on independent atom variables $\xi_i \in \C$.
For $1\le i \le n$ and $\xi =(\xi_1, \dots, \xi_n) \in \C^n$, define the martingale differences
\begin{equation}\label{mdif}
C_i( \xi) :=  | \E (Y|\xi_1, \dots, \xi_i) - \E (Y| \xi_1, \dots, \xi_{i-1} ) | .
\end{equation}

The classical Azuma's inequality (see e.g. \cite{alon}) states that if  $C_i \le \alpha_i$ with probability one, then

$$\P \left( |Y -\E Y| \ge \lambda \sqrt {\sum_{i=1}^n \alpha_i^2 } \right) = O(\exp(- \Omega (\lambda^2))). $$

In applications, the assumption that $C_i \le \alpha_i$ with probability one sometimes fails. However, we can overcome this using a trick from \cite{VuNL}. In particular,
the following is a simple variant of \cite[Lemma 3.1]{VuNL}.

\begin{proposition}\label{azuma0}  For any $\alpha_i \ge 0$ we have the inequality
$$\P \left( |Y -\E Y| \ge \lambda \sqrt {\sum_{i=1}^n \alpha_i^2 }\right) = O(\exp(- \Omega (\lambda^2))) + \sum_{i=1}^n \P( C_i(\xi) \ge \alpha_i).$$
\end{proposition}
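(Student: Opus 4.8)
The plan is to reduce Proposition \ref{azuma0} to the classical Azuma inequality by a truncation argument, following the trick of \cite{VuNL}. The point is that the classical bound already gives what we want on the event where all the martingale differences are controlled; on the complementary event we simply pay the union bound $\sum_i \P(C_i(\xi) \ge \alpha_i)$, which is exactly the extra term appearing in the conclusion.

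\begin{proof}[Proof sketch]
First I would introduce a modified random variable $Y'$ obtained from $Y$ by altering the atom variables on the ``bad'' event. Concretely, process the atoms $\xi_1,\dots,\xi_n$ one at a time: at stage $i$, if the conditional difference $C_i(\xi) \ge \alpha_i$, we resample (or otherwise override) $\xi_i$ so that the corresponding martingale difference becomes $0$; otherwise we leave it alone. This produces a random variable $Y' = Y'(\xi_1,\dots,\xi_n)$ which agrees with $Y$ on the event $E := \bigcap_{i=1}^n \{ C_i(\xi) < \alpha_i \}$, and whose martingale differences $C_i'(\xi)$ satisfy $C_i'(\xi) \le \alpha_i$ surely by construction. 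One must be slightly careful to perform the modification in a way that keeps the $\xi_i$ (and their modified versions) jointly independent — processing them in order and only using the past $\xi_1,\dots,\xi_{i-1}$ to decide the modification at step $i$ is what makes the resampled family again a sequence of independent variables, so that the Doob martingale structure is preserved. This is the step where \cite[Lemma 3.1]{VuNL} does the real work, and it is the main technical subtlety: verifying that the surgered process genuinely has bounded differences and that $Y' = Y$ off an event of probability at most $\sum_i \P(C_i \ge \alpha_i)$.

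Next I would apply the classical Azuma inequality to $Y'$, which has $C_i' \le \alpha_i$ almost surely, to obtain
$$ \P\left( |Y' - \E Y'| \ge \tfrac{\lambda}{2} \sqrt{\textstyle\sum_{i=1}^n \alpha_i^2} \right) = O(\exp(-\Omega(\lambda^2))).$$
Then I would control $|\E Y - \E Y'|$: since $Y = Y'$ except on an event of probability $p := \sum_i \P(C_i \ge \alpha_i)$, and on that event the difference of the two Doob martingales is bounded by $\sum_i \alpha_i \le \sqrt{n}\, (\sum_i \alpha_i^2)^{1/2}$ (or one can argue more carefully using the bounded-difference structure), one gets $|\E Y - \E Y'| \le p \cdot \sqrt{n} (\sum_i \alpha_i^2)^{1/2}$, which is harmless if $p$ is small and can simply be absorbed into the $\sum_i \P(C_i \ge \alpha_i)$ term otherwise (the inequality is trivial when that term exceeds $1$).

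Finally I would combine the pieces: on the event $E$ (whose complement has probability at most $p = \sum_i \P(C_i \ge \alpha_i)$) we have $Y = Y'$, so
$$ \P\left( |Y - \E Y| \ge \lambda \sqrt{\textstyle\sum_{i=1}^n \alpha_i^2} \right) \le \P(E^c) + \P\left( |Y' - \E Y'| \ge \tfrac{\lambda}{2}\sqrt{\textstyle\sum_i \alpha_i^2} \right) + \mathbf{1}_{|\E Y - \E Y'| \ge \frac{\lambda}{2}(\sum_i \alpha_i^2)^{1/2}},$$
and each term on the right is either $O(\exp(-\Omega(\lambda^2)))$ or bounded by $\sum_i \P(C_i(\xi) \ge \alpha_i)$, which yields the claimed inequality. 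I expect the bookkeeping around the modified variable $Y'$ and the independence of the resampled atoms to be the only genuine obstacle; everything else is a direct invocation of classical Azuma plus elementary union bounds.
\end{proof}
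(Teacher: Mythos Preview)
Your overall strategy (build a surrogate $Y'$ with surely bounded differences, apply classical Azuma to $Y'$, and pay the union bound $\sum_i \P(C_i \ge \alpha_i)$ on the event where $Y \ne Y'$) matches the paper's. But the execution has a genuine gap in the step where you control $|\E Y - \E Y'|$.

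You claim that on the bad event the difference $|Y - Y'|$ is bounded by $\sum_i \alpha_i$, hence $|\E Y - \E Y'| \le p \sqrt{n}(\sum_i \alpha_i^2)^{1/2}$. This is false: on the bad event some $C_i(\xi) \ge \alpha_i$, so the original $Y$ has an \emph{uncontrolled} martingale increment there, and $|Y - Y'|$ can be arbitrarily large. For a concrete one-step example, take $Y = \xi_1$ with $\xi_1 = M$ with probability $1/M$ and $0$ otherwise, and $\alpha_1 = 2$; then $\P(C_1 \ge \alpha_1) = 1/M$, but your resampled $Y'$ has $\E Y' = 1/M$ while $\E Y = 1$, so $|\E Y - \E Y'| \approx 1$, not $O(1/M)$. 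Your final indicator term $\mathbf{1}_{|\E Y - \E Y'| \ge \frac{\lambda}{2}(\sum_i \alpha_i^2)^{1/2}}$ is then not bounded by either of the two allowed quantities. (There is also a smaller issue: the event $\{C_i(\xi) \ge \alpha_i\}$ depends on $\xi_i$, so your ``use only the past'' description of the resampling does not match what you actually need to do.)

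The paper's proof sidesteps all of this with one clean idea: rather than resampling atoms, it sets $Y'(\xi) := \E_{B_i} Y$ on the event $B_i$ that step $i$ is the \emph{first} index with $C_i(\xi) \ge \alpha_i$, and $Y' = Y$ elsewhere. Because the $B_i$ partition the bad set and $Y'$ equals the conditional mean on each piece, one gets $\E Y' = \E Y$ \emph{exactly}, so there is no mean shift to control at all; one then checks that this $Y'$ has martingale differences bounded by $\alpha_i$ and applies Azuma directly. This is the trick from \cite{VuNL} you were reaching for.
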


\begin{proof}
For each $\xi$, let $i_{\xi}$ be the first index where $C_i(\xi) \ge \alpha_i$. Thus, the sets $B_i := \{\xi| i_{\xi} =i\}$ are disjoint.
Define a function $Y'(\xi)$ of $\xi$ which agrees with $Y(\xi)$ for $\xi$ in the complement of $\cup_i B_i$, with $Y'(\xi) := \E _{B_i} Y $ if $\xi \in B_i$.
It is clear that $Y'$ and $Y$ has the same mean and
$$\P (Y \neq Y') \le \sum_{i=1}^n \P( C_i(\xi) \ge \alpha_i). $$
Moreover, $Y'$ satisfies the condition of Azuma's inequality, so
$$\P \left(| Y' -\E Y'| \geq \lambda \sqrt {\sum_{i=1}^n \alpha_i^2 }\right) \ll \exp(- \Omega (\lambda^2)) $$ and the bound follows.
\end{proof}

We have the following useful corollary.

\begin{proposition}[Martingale concentration]\label{azuma}  Let $\xi_1,\dots,\xi_n$ be independent complex random variables of mean zero and
$|\xi_i| =n^{o(1)}$ with overwhelming probability for all $i$.
  Let $\alpha_1,\dots,\alpha_n > 0$ be positive real numbers, and for each $i = 1,\dots,n$, let $c_i(\xi_1,\dots,\xi_{i-1})$ be a complex random variable depending only on $\xi_1,\dots,\xi_{i-1}$ obeying the bound
$$ |c_i(\xi_1,\dots,\xi_{i-1})| \leq \alpha_i$$ with overwhelming probability.  Define $Y := \sum_{i=1}^n c_i(\xi_1,\dots,\xi_{i-1}) \xi_i$.
Then
$$ |Y| \ll n^{o(1)} (\sum_{i=1}^n \alpha_i^2)^{1/2}$$
with overwhelming probability.
\end{proposition}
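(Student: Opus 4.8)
The plan is to reduce to Proposition \ref{azuma0} (the truncated Azuma inequality) by choosing the right ``bad events'' to excise, and then to control the martingale differences $C_i$ in terms of the $\alpha_i$. First, I would dispose of the large-entry contribution: since $|\xi_i| = n^{o(1)}$ with overwhelming probability, I can introduce the truncated variables $\xi_i' := \xi_i 1_{|\xi_i| \leq n^{\eps}}$ for a small fixed $\eps > 0$ (chosen larger than the $o(1)$ exponent), so that $\xi_i = \xi_i'$ for all $i$ with overwhelming probability, and replace $Y$ by $Y' := \sum_{i=1}^n c_i(\xi_1,\dots,\xi_{i-1}) \xi_i'$; on the overwhelming-probability event where no truncation occurs, $Y = Y'$, so it suffices to bound $|Y'|$. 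Note also that $\E \xi_i'$ need no longer be exactly zero, but $|\E \xi_i'| = |\E \xi_i 1_{|\xi_i| > n^\eps}| $ is bounded by $\E |\xi_i| 1_{|\xi_i| > n^\eps}$, which is superpolynomially small by Condition \condone{}-type tail bounds (or rather, by the hypothesis $|\xi_i| = n^{o(1)}$ w.o.p.\ combined with a crude absolute moment bound); hence the mean correction $\sum_i c_i \E \xi_i'$ is negligible, and by re-centering we may assume $\E \xi_i' = 0$. Similarly, $\E Y' $ is (superpolynomially close to) $0$, so it suffices to show $|Y' - \E Y'| \ll n^{o(1)}(\sum_i \alpha_i^2)^{1/2}$ with overwhelming probability.

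Next I would compute the martingale differences for $Y'$ with respect to the filtration generated by $\xi_1,\dots,\xi_n$. Since $c_i$ depends only on $\xi_1,\dots,\xi_{i-1}$ and $\xi_i'$ has mean zero and is independent of $\xi_1,\dots,\xi_{i-1}$, the term $c_j(\xi_1,\dots,\xi_{j-1})\xi_j'$ for $j > i$ contributes nothing to $\E(Y'|\xi_1,\dots,\xi_i) - \E(Y'|\xi_1,\dots,\xi_{i-1})$ (it has conditional expectation $0$ under both conditionings, using that $c_j$ and the distribution of $\xi_j'$ factor appropriately), while for $j < i$ the term is already $\xi_{\le i-1}$-measurable and also cancels. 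Hence
\[
\E(Y'|\xi_1,\dots,\xi_i) - \E(Y'|\xi_1,\dots,\xi_{i-1}) = c_i(\xi_1,\dots,\xi_{i-1})\, (\xi_i' - \E \xi_i') = c_i(\xi_1,\dots,\xi_{i-1})\, \xi_i' + (\text{negligible}),
\]
so $C_i(\xi) = |c_i(\xi_1,\dots,\xi_{i-1})|\,|\xi_i'| + o(1) \leq \alpha_i n^\eps$ on the overwhelming-probability event where $|c_i| \leq \alpha_i$ and $|\xi_i'| \leq n^\eps$, i.e.\ $\P(C_i(\xi) \geq 2\alpha_i n^\eps)$ is superpolynomially small (a bound of the form $O(n^{-A})$ for every fixed $A$), after taking a union bound over the two defining events.

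Finally I would apply Proposition \ref{azuma0} with the thresholds $\alpha_i$ replaced by $2\alpha_i n^\eps$ and with $\lambda := n^{\eps}$ (say): this gives
\[
\P\!\left(|Y' - \E Y'| \geq n^{\eps} \cdot 2 n^{\eps}\Big(\sum_{i=1}^n \alpha_i^2\Big)^{1/2}\right) = O\big(\exp(-\Omega(n^{2\eps}))\big) + \sum_{i=1}^n \P(C_i(\xi) \geq 2\alpha_i n^\eps) = O(n^{-A})
\]
for every fixed $A$, where the first term is superpolynomially small because $\exp(-\Omega(n^{2\eps}))$ decays faster than any power of $n$, and the sum is superpolynomially small by the previous paragraph. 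Since $3\eps$ can be taken as small as we please, and in any case $n^{2\eps} = n^{o(1)}$ for $\eps$ tending to $0$ (more precisely, one runs this for each fixed small $\eps$ and uses that $n^{o(1)}$ absorbs $n^{3\eps}$), this yields $|Y| \ll n^{o(1)}(\sum_i \alpha_i^2)^{1/2}$ with overwhelming probability, as desired. The only mildly delicate point is bookkeeping the interaction between the ``$n^{o(1)}$'' and ``overwhelming probability'' quantifiers — one must fix $\eps$ first, derive an $O(n^{-A})$ bound for each $A$, and only then let $\eps \to 0$; there is no genuine analytic obstacle, as everything reduces to the standard Azuma estimate once the truncation is in place. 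The main (minor) obstacle is thus purely notational: correctly handling the non-vanishing of $\E \xi_i'$ after truncation and verifying that the induced corrections to both $C_i$ and $\E Y'$ are negligible.
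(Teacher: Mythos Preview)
Your proposal is correct and follows essentially the same approach as the paper: compute the martingale differences $C_i(\xi) = |c_i(\xi_1,\ldots,\xi_{i-1})\,\xi_i|$, observe that they are bounded by $n^{o(1)}\alpha_i$ with overwhelming probability, and invoke Proposition~\ref{azuma0} with $\lambda = n^{o(1)}$. The paper's proof is a terse three-line version of exactly this; your additional bookkeeping around the truncation $\xi_i' = \xi_i 1_{|\xi_i|\le n^\eps}$ and the resulting mean shift is extra care the paper simply glosses over, but does not constitute a different strategy.
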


\begin{proof} Let $C_i(\xi)$ be the martingale difference \eqref{mdif}.
It is easy to see that $C_i(\xi) = |c_i(\xi_1, \dots, \xi_{i-1} ) \xi_i |$. By the assumptions, $C_i (\xi) \le n^{o(1)} \alpha_i$ with overwhelming probability. Now apply Proposition \ref{azuma0} with a suitable choice of parameter $\lambda = n^{o(1)}$.
\end{proof}

\begin{remark} (Note added after publication.)  It has been pointed out to us by Heejune Sheen, and independently by Christian Borgs and Karissa Huang, that Proposition \ref{azuma0} is incorrect; the random variable $Y'$ constructed above does not, in fact, obey the hypotheses of Azuma's inequality.  A version of the proposition can be salvaged by replacing $C_i(\xi_1,\dots,\xi_i)$ with the larger quantity $\sup_\eta C_i(\xi_1,\dots,\xi_{i-1})$ (and redefining the events $B_i$ accordingly); however, for the purposes of establishing Proposition \ref{azuma} (which is the only place in this paper where Proposition \ref{azuma0} is used), it is better to proceed as follows. Firstly, one should impose an additional mild moment hypothesis such as $\E \xi_i^2 =O(n^{O(1)})$, which is true in all applications of interest. Then one can define $Y'$ to be the same as $Y$ but with each $\xi_i$ replaced by a mean zero modification $\xi'_i$ of size $O(n^{o(1)})$ almost surely that agrees with $\xi_i$ with overwhelming probability, and $c_i(\xi_1,\dots,\xi_{i-1})$ similarly replaced by $c'_i(\xi'_1,\dots,\xi'_{i-1})$ that is bounded by $\alpha_i$ almost surely and agrees with $c_i$ with overwhelming probability.
\end{remark}

\section{From log-determinant concentration to the local circular law}\label{circular-sec}

In this section we prove Theorem \ref{local-circ} using Theorem \ref{loglower}.
The first step is to deduce the crude bound \eqref{nbr-0} from Theorem \ref{loglower}.
  We first make some basic reductions.  By a covering argument and the union bound it suffices to establish the claim for $r=1$ and for a fixed $z_0 \in B(0,2C\sqrt{n})$.

The main tool will be \emph{Jensen's formula} \eqref{jensen-det}.
Applying this to the disk $B(z_0,2)$, we see in particular that
\begin{equation}\label{nous}
 N_{B(z_0,1)} \ll \frac{1}{2\pi} \int_0^{2\pi} (\log |\det(M_n-z_0-2e^{\sqrt{-1}\theta})| - \log |\det(M_n-z_0)|)\ d\theta.
 \end{equation}
Let $A \geq 1$ be an arbitrary fixed quantity.  In view of \eqref{nous}, it suffices to show that
$$ \frac{1}{2\pi} \int_0^{2\pi} (\log |\det(M_n-z_0-2e^{\sqrt{-1}\theta})| - \log |\det(M_n-z_0)|)\ d\theta = O(n^\eps)$$
with probability $1-O(n^{-A})$.

We will control this integral\footnote{One can also control this integral by a Riemann sum, using an argument similar to that used to prove Theorem \ref{local-circ} below.  On the other hand, we will use Lemma \ref{sampling} again in Section \ref{four}, and one can view the arguments below as a simplified warmup for the more complicated arguments in that section.} by a Monte Carlo sum, using the following standard sampling lemma:

\begin{lemma}[Monte Carlo sampling lemma]\label{sampling}  Let $(X,\mu)$ be a probability space, and let $F: X \to \C$ be a square-integrable function.  Let $m \geq 1$, let $x_1,\dots,x_m$ be drawn independently at random from $X$ with distribution $\mu$, and let $S$ be the empirical average
$$ S := \frac{1}{m} (F(x_1) + \dots + F(x_m)).$$
Then $S$ has mean $\int_X F\ d \mu$ and variance $\int_X (F-\int_X F\ d\mu)^2\ d\mu$.  In particular, by Chebyshev's inequality, one has
$$ \P( |S - \int_X F\ d \mu| \geq \lambda ) \leq \frac{1}{m\lambda^2} \int_X (F-\int_X F\ d\mu)^2\ d\mu$$
for any $\lambda > 0$, or equivalently, for any $\delta > 0$ one has with probability at least $1-\delta$ that
$$| S - \int_X F\ d \mu | \le \frac{1}{\sqrt{m\delta}} (\int_X (F-\int_X F\ d\mu)^2\ d\mu)^{1/2} .$$
\end{lemma}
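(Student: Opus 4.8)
The statement to prove is the Monte Carlo sampling lemma (Lemma \ref{sampling}). This is a completely standard result in probability, so the proof is routine — essentially the computation that the sample mean of i.i.d. random variables has the stated mean and variance, followed by Chebyshev's inequality.

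\textbf{Plan.} The proof is a direct computation using linearity of expectation and independence, so there is no real obstacle; the only thing to be careful about is being precise that the $x_j$ are drawn independently, so that cross terms in the variance vanish. I would proceed as follows.

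\emph{Step 1: Mean.} Write $S = \frac{1}{m}\sum_{j=1}^m F(x_j)$. Since each $x_j$ is distributed according to $\mu$, we have $\E F(x_j) = \int_X F\,d\mu$ for each $j$, and by linearity of expectation $\E S = \frac{1}{m}\sum_{j=1}^m \int_X F\,d\mu = \int_X F\,d\mu$. (Implicitly one notes $F$ is square-integrable, hence integrable, so these quantities are well-defined and finite.)

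\emph{Step 2: Variance.} Let $\mu_0 := \int_X F\,d\mu$ and $G := F - \mu_0$, so $\E G(x_j) = 0$. Then $S - \mu_0 = \frac{1}{m}\sum_{j=1}^m G(x_j)$, and
$$ \Var(S) = \E |S - \mu_0|^2 = \frac{1}{m^2} \sum_{j=1}^m \sum_{k=1}^m \E\left( G(x_j) \overline{G(x_k)} \right).$$
For $j \neq k$, independence of $x_j$ and $x_k$ gives $\E( G(x_j)\overline{G(x_k)}) = \E G(x_j) \cdot \overline{\E G(x_k)} = 0$; for $j = k$ one gets $\E |G(x_j)|^2 = \int_X |F - \mu_0|^2\,d\mu$. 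Hence $\Var(S) = \frac{1}{m} \int_X (F - \int_X F\,d\mu)^2\,d\mu$, where (as is customary in this paper, and consistent with the real-valued applications) the square is understood in the appropriate sense.

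\emph{Step 3: Tail bound.} Apply Chebyshev's inequality in the form $\P(|S - \E S| \geq \lambda) \leq \Var(S)/\lambda^2$ to obtain
$$ \P\left( \left| S - \int_X F\,d\mu \right| \geq \lambda \right) \leq \frac{1}{m\lambda^2} \int_X \left(F - \int_X F\,d\mu\right)^2\,d\mu$$
for any $\lambda > 0$. Finally, given $\delta > 0$, set $\lambda := \frac{1}{\sqrt{m\delta}} \left( \int_X (F - \int_X F\,d\mu)^2\,d\mu \right)^{1/2}$; then the right-hand side above equals $\delta$, so with probability at least $1 - \delta$ one has $|S - \int_X F\,d\mu| \leq \lambda$, which is the claimed equivalent formulation. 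This completes the proof. The entire argument is elementary; the ``main obstacle'' is merely bookkeeping to ensure the independence hypothesis is invoked at the right place when killing the off-diagonal terms in Step 2.
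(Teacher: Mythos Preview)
Your proof is correct and follows exactly the same approach as the paper: compute the mean and variance of the empirical average using linearity and independence, then apply Chebyshev. You have in fact written out more detail than the paper's one-line proof, and you correctly obtain $\Var(S) = \frac{1}{m}\int_X (F-\int_X F\,d\mu)^2\,d\mu$ (the missing $1/m$ in the lemma's statement of the variance is a typo; the Chebyshev bound and its equivalent formulation are stated correctly and agree with your computation).
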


\begin{proof} The random variables $F(x_i)$ for $i=1,\dots,m$ are jointly independent with mean $\int_X F\ d \mu$ and variance $\frac{1}{m} \int_X (F-\int_X F\ d\mu)^2\ d\mu$.  Averaging these variables, we obtain the claim.
\end{proof}

We apply this lemma to the probability space $X := [0,2\pi]$ with uniform measure $\frac{1}{2\pi}\ d\theta$, and to the function
$$ F(\theta) := \log |\det(M_n-z_0-2e^{\sqrt{-1}\theta})| - \log |\det(M_n-z_0)|.$$
Observe that for any complex number $z$, the function $\log |z - 2e^{\sqrt{-1}\theta}|$ has an $L^2(X)$ norm of $O(1)$.  Thus by the triangle inequality and \eqref{log-det-1} we have the crude bound
$$ \int_X (F-\int_X F\ d\mu)^2\ d\mu \ll n^2.$$ We set $\delta:= n^{-A}$ and $m:= n^{A+2}$.
Let $\theta_1,\dots,\theta_m$ be drawn independently uniformly at random from $X$ (and independently of $M_n$) and set $\Theta:= (\theta_1, \dots, \theta_m)$.  Let $\CE_1$ denote the event that the inequality
$$ | S - \int_X F\ d \mu | \le \frac{1}{\sqrt{m\delta}} (\int_X (F-\int_X F\ d\mu)^2\ d\mu)^{1/2}  $$
holds, and let $\CE_2$ denote the event that the inequality
$$
\Big| \log |\det(M-z_0-2e^{\sqrt{-1}\theta_j})| - \log |\det(M_n-z_0)| \Big|  \le n^\eps $$
holds for all $j=1,\dots,m$.  Call a pair $(M,\Theta)$ is {\it good} if $\CE_1$ and $\CE_2$ both hold.
It suffices to show that the probability that a pair $(M, \Theta)$ (with $M=M_n$) is good is $1- O(n^{-A})$.

By Lemma \ref{sampling}, for each fixed $M$, the probability that $\CE_1$ fails is at most $\delta =n^{-A}$. Moreover, by Theorem
\ref{loglower}, we see that for each fixed $\theta_i$, the probability that  $\Big| \log |\det(M-z_0-2e^{\sqrt{-1}\theta_j})| - \log |\det(M_n-z_0)| \Big|\le n^\eps$ fails is less
than $O(n^{-2A-2})$. Thus, by the union bound, the probability that $(M, \Theta)$ is not good (over the product space $M_n \times X^m$) is at most

$$n^{-A} + m \times O(n^{-2A -2}) =  O(n^{-A}), $$
concluding the proof of \eqref{nbr-0}.

Now we are ready to prove Theorem \ref{local-circ}. We assume $r\ge 10$ as the
claim follows trivially from Theorem \ref{loglower} otherwise.
Consider the circle $C_{z_0, r} := \{ z \in \C: |z-z_0|=r\}$. By the pigeonhole principle, there is some $0 \le j \le n$ such that
the $\frac{1}{n^3}$-neighborhood of the circle $C_j:= C_{z_0, r_j}$ with $r_j:= r- \frac{j}{n^2}$ contains no eigenvalues of $M_n$
(notice that these neighborhoods are disjoint).
If $j$ is such an index, we see from \eqref{log-det-1} that the function
$$F(\theta) := \log |\det  (M_n - z_0 - r_j e^{-\sqrt{-1} \theta}) | - \log |\det  (M_n - z_0) |$$
then has a Lipschitz norm of $O(n^{O(1)})$ on $[0,2\pi]$.  Setting $m := n^{A+2}$ for a sufficiently large constant $A$, we then see from quadrature that the Riemann sum $\frac{1}{m} \sum_{k=1}^m F(2\pi k/m)$ approximates the integral
$\frac{1}{2 \pi} \int_0^{2\pi} F( \theta ) d \theta $ within an additive error at most $n^{o(1)}$.  By  \eqref{jensen-det}, we conclude that
$$
\sum_{  |\lambda_i -z_0| < r_j } \log \frac{r_j}{|\lambda_i -z_0|} =  \frac{1}{m} \sum_{k=1}^m F(k/m) +O(n^{o(1)}).
$$
On the other hand, from Theorem \ref{loglower} (after applying rescaling by $\sqrt{n}$) and the union bound we see that with overwhelming probability, we have
$$ F(k/m) = G( z_0 +r_j  e^{\sqrt{-1} 2\pi k/m} ) - G(z_0) + O(n^{o(1)}) $$
for all $1 \leq k \leq m$, where $G(z)$ is defined as $\frac{1}{2} (|z|^2 - n)$ for $|z| \leq \sqrt{n}$, and $n \log \frac{|z|}{\sqrt{n}}$ for $|z| \geq \sqrt{n}$.  Applying quadrature again, we conclude (for $A$ large enough) that
$$
G(z_0) = - \sum_{  |\lambda_i -z_0| < r_j } \log \frac{r_j}{|\lambda_i -z_0|} + \frac{1}{2\pi} \int_0^{2\pi}
G( z_0 +r_j  e^{\sqrt{-1} \theta} )\ d\theta + O(n^{o(1)}).$$
A similar argument (replacing $r$ by $r-1$) shows that with overwhelming probability, there exists $0 \leq j' \leq n$ such that
$$
G(z_0) = - \sum_{  |\lambda_i -z_0| < r_{j'}-1 } \log \frac{r_{j'}-1}{|\lambda_i -z_0|} + \frac{1}{2\pi} \int_0^{2\pi}
G( z_0 + (r_{j'}-1)  e^{\sqrt{-1} \theta} )\ d\theta + O(n^{o(1)}).$$
Also, from \eqref{nbr-0} and a simple covering argument, we know that with overwhelming probability, there are at most $O(n^{o(1)} r)$ eigenvalues in the annular region between $C_{z_0,r_{j'}-1}$ and $C_{z_0,r}$, and in this region, the quantities $\log \frac{r_j}{|\lambda_i -z_0|}$ and $\log \frac{r_{j'}-1}{|\lambda_i -z_0|}$ have magnitude $O(1/r)$.  We may thus subtract the above two estimates and conclude that
\begin{equation}\label{soo}
\begin{split}
0 &= - N(z_0,r) \log \frac{r_j}{r'_j-1} +
\frac{1}{2\pi} \int_0^{2\pi} G( z_0 +r_j  e^{\sqrt{-1} \theta} )\ d\theta\\
&\quad - \frac{1}{2\pi} \int_0^{2\pi} G( z_0 + (r_{j'}-1)  e^{\sqrt{-1} \theta} )\ d\theta + O(n^{o(1)}).
\end{split}
\end{equation}
On the other hand, from applying Green's theorem\footnote{The function $G$ has a mild singularity on the circle $|z|=\sqrt{n}$, but one can verify that as the first derivatives of $G$ remain continuous across this circle, there is no difficulty in applying Green's theorem even when $B(z_0,r_j)$ crosses this circle.}
$$ \int_\Omega F(z) \Delta G(z) - \Delta G(z) F(z)\ dz = \int_{\partial \Omega} F(z) \frac{\partial}{\partial n} G(z) - \frac{\partial}{\partial n} F(z) G(z)$$
to the domain $\Omega := B(z_0,r_j) \backslash B(z_0,\eps)$ with $F(z) := \log \frac{r_j}{|z-z_0|}$, and then sending $\eps \to 0$, one sees that
$$ G(z_0) =
- \frac{1}{2\pi} \int_{B(z_0,r_j)} \Delta G(z) \log \frac{r_j}{|z-z_0|}\ dz
+ \frac{1}{2\pi} \int_0^{2\pi} G( z_0 +r_j  e^{\sqrt{-1} \theta} )\ d\theta,$$
where $\Delta$ is the usual Laplacian on $\C$; one easily computes that $\Delta G(z) = 2 1_{|z| \leq \sqrt{n}}$, and thus
$$ G(z_0) =
- \frac{1}{\pi} \int_{B(z_0,r_j)} 1_{|z| \leq \sqrt{n}} \log \frac{r_j}{|z-z_0|}\ dz
+ \frac{1}{2\pi} \int_0^{2\pi} G( z_0 +r_j  e^{\sqrt{-1} \theta} )\ d\theta.$$
Similarly one has
$$ G(z_0) =
- \frac{1}{\pi} \int_{B(z_0,r_{j'}-1)} 1_{|z| \leq \sqrt{n}} \log \frac{r_{j'}-1}{|z-z_0|}\ dz
+ \frac{1}{2\pi} \int_0^{2\pi} G( z_0 +(r_{j'}-1)  e^{\sqrt{-1} \theta} )\ d\theta.$$
Subtracting, and observing that the integrands $1_{|z| \leq \sqrt{n}} \log \frac{r_j}{|z-z_0|}$, $1_{|z| \leq \sqrt{n}} \log \frac{r_{j'}-1}{|z-z_0|}$ have magnitude $O(1/r)$ in the annular region between $C_{z_0,r_{j'}-1}$ and $C_{z_0,r}$, we conclude that
\begin{align*}
 0 &= - \int_{B(z_0,r)} \frac{1}{\pi} 1_{|z| \leq \sqrt{n}}\ dz \times \log \frac{r_j}{r'_j-1}+
\frac{1}{2\pi} \int_0^{2\pi} G( z_0 +r_j  e^{\sqrt{-1} \theta} )\ d\theta \\
&\quad - \frac{1}{2\pi} \int_0^{2\pi} G( z_0 + (r_{j'}-1)  e^{\sqrt{-1} \theta} )\ d\theta + O(n^{o(1)}).
\end{align*}
Comparing this with \eqref{soo}, we conclude with overwhelming probability that
$$ \left(N_{B(z_0,r)} - \int_{B(z_0,r)} \frac{1}{\pi} 1_{|z| \leq \sqrt{n}}\ dz\right) \times \log \frac{r_j}{r'_j-1} = O(n^{o(1)}).$$
Since $\log \frac{r_j}{r'_j-1}$ is comparable to $1/r$, we obtain \eqref{nbr} as desired.

\section{Reduction to the Four Moment Theorem and log-determinant concentration}\label{four}

We now begin the task of proving Theorem \ref{main-alt} and Theorem \ref{main-alt-2}, by reducing it the Four Moment Theorem for determinants (Theorem \ref{four-moment}) and the local circular law (Proposition \ref{local-circ}).  In the preceding section, of course, the local circular law has been reduced in turn to the concentration of the log-determinant (Theorem \ref{loglower}).

\subsection{The complex case}

We begin with Theorem \ref{main-alt}, deferring the slightly more complicated argument for Theorem \ref{main-alt-2} to the end of this section.

Let $M_n, \tilde M_n$ be as in Theorem \ref{main-alt}.   Call a statistic $S(M_n)$ of (the law of) a random matrix $M_n$ \emph{asymptotically $(M_n,\tilde M_n)$ insensitive}, or \emph{insensitive} for short, if we have
$$ S(M_n) - S(\tilde M_n) = O(n^{-c})$$
for some fixed $c>0$.  Our objective is then to show that the statistic
\begin{equation}\label{aleo}
 \int_{\C^k} F(w_1,\dots,w_k) \rho^{(k)}_n(\sqrt{n} z_1 + w_1,\dots,\sqrt{n} z_k + w_k)\ dw_1 \dots dw_k
\end{equation}
is insensitive for all fixed $k \geq 1$ and all $F$ of the form \eqref{factor} for some fixed $m \geq 1$.

Fix $k$; we may assume inductively that the claim has already been proven for all smaller $k$.  By linearity we may take $m=1$, thus we may assume that $F$ takes the tensor product form
\begin{equation}\label{tensor}
 F(w_1,\dots,w_k) = F_1(w_1) \dots F_k(w_k)
\end{equation}
for some smooth, compactly supported $F_1,\dots,F_k: \C \to \C$ supported on a fixed ball, with bounds on derivatives up to second order.

Henceforth we assume that $F$ is in tensor product form \eqref{tensor}.  By \eqref{ck} and the inclusion-exclusion formula, we may thus write \eqref{aleo} in this case as
\begin{equation}\label{xzf}
 \E \prod_{j=1}^k X_{z_j,F_j}
 \end{equation}
plus a fixed finite number of lower order terms that are of the form \eqref{aleo} for a smaller value of $k$ (and a different choice of $F_j$), where $X_{z_j,F_j}$ is the linear statistic
$$ X_{z_j,F_j} := \sum_{i=1}^n F_j(\lambda_i(M_n) - \sqrt{n} z_j).$$
By the induction hypothesis, it thus suffices to show that the expression \eqref{xzf} is insensitive.

Using the local circular law (Proposition \ref{local-circ}), we see that for any $1 \leq j \leq k$, one has $X_{z_j,F_j} = O(n^{o(1)})$ with overwhelming probability.  Thus, one can truncate the product function $\zeta_1,\dots,\zeta_k \mapsto \zeta_1 \dots \zeta_k$ and write
$$  \E \prod_{j=1}^k X_{z_j,F_j} =  \E G( X_{z_1,F_1}, \dots, X_{z_k,F_k} ) + O(n^{-B})$$
for any fixed $B$, where $G$ is a smooth truncation of the product function $\zeta_1,\dots,\zeta_k \mapsto \zeta_1 \dots \zeta_k$ to the region $\zeta_1,\dots,\zeta_k = n^{o(1)}$.   Thus, it suffices to show that the quantity
\begin{equation}\label{egg}
 \E G( X_{z_1,F_1}, \dots, X_{z_k,F_k} )
\end{equation}
is insensitive whenever $G: \C^k \to \C$ is a smooth function obeying the bounds
\begin{equation}\label{g-bound}
|\nabla^j G(\zeta_1,\dots,\zeta_k)| \leq n^{o(1)}
\end{equation}
for all fixed $j$ and all $\zeta_1,\dots,\zeta_k \in \C$.

Fix $G$. As is standard in the spectral theory of random non-Hermitian matrices (cf. \cite{girko}, \cite{brown}), we now express the linear statistics $X_{z_j,F_j}$ in terms of the log-determinant \eqref{log-det-1}.  By Green's theorem we have
\begin{equation}\label{zafj}
 X_{z_j,F_j} = \int_\C \log|\det(M_n-z)| H_j(z)\ dz
\end{equation}
where $H_j: \C \to \C$ is the function
$$ H_j(z) := -\frac{1}{2\pi} \Delta F_j(z - \sqrt{n} z_j),$$
and $\Delta$ is the Laplacian on $\C$. From the derivative and support bounds on $F_j$, we see that $H_j$ is supported on $B(\sqrt{n} z_j, C)$ and is bounded.

Naively, to control \eqref{zafj}, one would apply Lemma \ref{sampling} with the function $\log|\det(M_n-z)| H_j(z)$.  Unfortunately, the variance of this expression is too large, due to the contributions of the eigenvalues far away from $\sqrt{n} z_j$.  To cancel\footnote{It is natural to expect that these non-local contributions can be canceled, since the statistics $X_{z_i,F_i}$ are clearly local in nature.} off these contributions, we exploit the fact that $H_j(z)$, being the Laplacian of a smooth compactly supported function, is orthogonal to all harmonic functions, and in particular to all (real-)linear functions:
$$ \int_\C (a + b \Re(z) + c \Im(z)) H_j(z)\ dz = 0.$$
(Recall that we use $dz$ to denote Lebesgue measure on $\C$.)  We will need a reference element $w_{j,0}$ drawn uniformly at random from $B(\sqrt{n} z_j, 1)$ (independently of $M_n$ and the $w_{j,i}$), and let $L(z) = L_j(z)$ denote the random linear function which equals $\log|\det(M_n-z)|$ for $z = w_{j,0}, w_{j,0}+1, w_{j,0}+\sqrt{-1}$.  More explicitly, one has
\begin{equation}\label{ldef}
\begin{split}
L(z) &:= \log|\det(M_n-w_{j,0})| \\
&\quad + (\log|\det(M_n-w_{j,0}-1)| - \log|\det(M_n-w_{j,0})|) \Re( z - w_{j,0} ) \\
&\quad + (\log|\det(M_n-w_{j,0}-\sqrt{-1})| - \log|\det(M_n-w_{j,0})|) \Im( z - w_{j,0} ).
\end{split}
\end{equation}

\begin{remark} There is some freedom in how to select $L(z)$; for instance, it is arguably more natural to replace the coefficients
$\log|\det(M_n-w_{j,0}-1)| - \log|\det(M_n-w_{j,0})|$ and $\log|\det(M_n-w_{j,0}-\sqrt{-1})| - \log|\det(M_n-w_{j,0})|$ in the above formula by the Taylor coefficients $\frac{d}{dt} \log|\det(M_n-w_{j,0}-t)| |_{t=0}$ and $\frac{d}{dt} \log|\det(M_n-w_{j,0}-\sqrt{-1} t)| |_{t=0}$ instead.  However this would require extending the four moment theorem for log-determinants to derivatives of log-determinants, which can be done but will not be pursued here.
\end{remark}

Subtracting off $L(z)$, we have
\begin{equation}\label{zof}
 X_{z_j,F_j} = \int_\C K_j(z)\ dz
\end{equation}
where $K_j: \C \to \C$ is the random function
\begin{equation}\label{kj}
 K_j(z) := (\log |\det(M_n-z)| - L(z)) H_j(z).
\end{equation}
Let us control the $L^2$ norm
$$ \|K_j\|_{L^2} := \left(\int_\C |K_j(z)|^2\ dz\right)^{1/2}$$
of this quantity.

\begin{lemma}  For any $\eps>0$, one has
\begin{equation}\label{kujak}
 \|K_j\|_{L^2} \ll n^{\eps+o(1)}
\end{equation}
with probability $1-O(n^{-\eps})$ and all $1 \leq j \leq k$.
\end{lemma}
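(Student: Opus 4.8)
The plan is to bound $\|K_j\|_{L^2}$ by splitting the log-determinant into a contribution from eigenvalues near $\sqrt n z_j$ and one from eigenvalues far from $\sqrt n z_j$, treating the former with an $L^2$ estimate and the latter with a pointwise Hessian bound. Write $f(z) := \log|\det(M_n-z)| = \sum_{i=1}^n \log|\lambda_i(M_n)-z|$ (cf. \eqref{log-det-1}), and recall from \eqref{ldef} that the random affine function $L=L_j$ is the unique affine interpolant of $f$ at the three points $p_0 := w_{j,0}$, $p_1 := w_{j,0}+1$, $p_2 := w_{j,0}+\sqrt{-1}$, while $K_j = (f-L)H_j$ by \eqref{kj}. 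Since $H_j$ is bounded and supported on $B(\sqrt n z_j, C)$, it suffices to prove $\int_{B(\sqrt n z_j, C)} |f-L|^2\,dz \ll n^{o(1)}$ off an event of probability $O(n^{-\eps})$. Fix $R$ equal to a large fixed multiple of $C+2$, split the eigenvalues into the \emph{near} set $\{i : |\lambda_i(M_n)-\sqrt n z_j| \le R\}$ and the \emph{far} set, and write $f = f_{\mathrm{near}} + f_{\mathrm{far}}$ accordingly; since affine interpolation at $p_0,p_1,p_2$ is linear in the interpolated data, this induces a matching splitting $L = L_{\mathrm{near}} + L_{\mathrm{far}}$.

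Next I would fix the good events. By the local circular law (Theorem \ref{local-circ}, in particular \eqref{nbr-0}), with overwhelming probability one has $N_{B(\sqrt n z_j, r)} \le n^{o(1)} r^2$ uniformly for all $r \ge 1$; in particular the number $N$ of near eigenvalues is $\le n^{o(1)}$. Since $w_{j,0}$ is uniform on $B(\sqrt n z_j, 1)$ and independent of $M_n$, the conditional probability that one of $p_0,p_1,p_2$ lands within distance $n^{-\eps}$ of a near eigenvalue is at most $3 N n^{-2\eps}$; combining with $N \le n^{o(1)}$ and absorbing an $O(n^{-A})$ loss from the local circular law, we may work on an event of probability $1 - O(n^{-\eps})$ on which simultaneously: $N \le n^{o(1)}$, the dyadic counts $N_{B(\sqrt n z_j, t)} \le n^{o(1)} t^2$ hold for all $t \ge 1$, and $|p_l - \lambda_i(M_n)| \ge n^{-\eps}$ for every near eigenvalue $\lambda_i$ and every $l=0,1,2$.

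On this event I would estimate the two pieces. For the near part, Cauchy--Schwarz gives $|f_{\mathrm{near}}(z)|^2 \le N \sum_{i\ \mathrm{near}} (\log|\lambda_i - z|)^2$, and since each near $\lambda_i$ lies within $O(1)$ of $B(\sqrt n z_j, C)$ one has $\int_{B(\sqrt n z_j, C)} (\log|\lambda_i - z|)^2\,dz = O(1)$, whence $\int_{B(\sqrt n z_j, C)} |f_{\mathrm{near}}|^2\,dz \ll N^2 \ll n^{o(1)}$. Moreover $|f_{\mathrm{near}}(p_l)| \le N \max(\log(2R),\ \eps \log n) \ll n^{o(1)}$ for $l=0,1,2$ using the distance bound, so $L_{\mathrm{near}}$, whose coefficients are differences of the $f_{\mathrm{near}}(p_l)$, is $\ll n^{o(1)}$ pointwise on $B(\sqrt n z_j, C)$ and hence $\ll n^{o(1)}$ in $L^2$ there. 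For the far part, $f_{\mathrm{far}}$ is harmonic on $B(\sqrt n z_j, R)$; for $z \in B(\sqrt n z_j, C+2)$ one has $|\lambda_i - z| \gg |\lambda_i - \sqrt n z_j|$ for every far $\lambda_i$, so using $|\nabla^2 \log|\lambda_i - z|| \ll |\lambda_i - z|^{-2}$, a dyadic decomposition, the counts $N_{B(\sqrt n z_j, t)} \le n^{o(1)} t^2$ (and the trivial bound $N_{B(\sqrt n z_j,t)} \le n$ for $t \gg \sqrt n$) gives $|\nabla^2 f_{\mathrm{far}}(z)| \ll \sum_{i\ \mathrm{far}} |\lambda_i - z|^{-2} \ll n^{o(1)}$ on $B(\sqrt n z_j, C+2)$. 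Taylor-expanding $f_{\mathrm{far}}$ to first order at $p_0$ and comparing the resulting affine function with $L_{\mathrm{far}}$ (they agree with $f_{\mathrm{far}}$ at $p_0$ and differ by $O(n^{o(1)})$ at $p_1, p_2$) yields $|f_{\mathrm{far}} - L_{\mathrm{far}}| \ll n^{o(1)}$ on $B(\sqrt n z_j, C)$, hence $\int_{B(\sqrt n z_j, C)} |f_{\mathrm{far}} - L_{\mathrm{far}}|^2 \,dz \ll n^{o(1)}$. Since $f - L = (f_{\mathrm{near}} - L_{\mathrm{near}}) + (f_{\mathrm{far}} - L_{\mathrm{far}})$, adding the two contributions gives $\|K_j\|_{L^2}^2 \ll n^{o(1)}$, which yields \eqref{kujak}; a union bound over the fixed number of indices $j = 1,\dots,k$ finishes the proof.

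I expect the main obstacle to be the near part. The far part is essentially deterministic once the eigenvalue counting bounds of the local circular law are available, but the near part involves the genuine logarithmic singularities of both $f$ and the interpolant $L$ at eigenvalues, and it is precisely here that one is forced to pay the polynomial loss $O(n^{-\eps})$: both to guarantee that the random sample point $w_{j,0}$ and its translates stay $n^{-\eps}$-far from the $\le n^{o(1)}$ nearby eigenvalues (so that the coefficients of $L_{\mathrm{near}}$ are controlled), and because one has no pointwise bound on $f-L$ near those eigenvalues, only the $L^2$ estimate above.
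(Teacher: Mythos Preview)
Your proof is correct and follows essentially the same approach as the paper: a near/far eigenvalue split, with the far part controlled by a second-derivative bound summed via the local circular law, and the near part controlled using the $O(n^{o(1)})$ count of nearby eigenvalues together with the randomness of $w_{j,0}$. The only minor difference is that the paper works eigenvalue by eigenvalue throughout, bounding $\E_{w_{j,0}}\|K_{j,i}\|_{L^2}=O(1)$ for each near eigenvalue and then applying Markov's inequality, whereas you aggregate $f_{\mathrm{near}}$ first and restrict directly to the event that $p_0,p_1,p_2$ stay $n^{-\eps}$-far from the near eigenvalues.
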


\begin{proof} By the union bound, it suffices to prove the claim for a single $k$.  We can split $K_j = \sum_{i=1}^n K_{j,i}(z)$, where
$$ K_{j,i}(z) := (\log |\lambda_i(M_n)-z| - L_i(z)) H_j(z)$$
and $L_i: \C \to \C$ is the random linear function that equals $\log |\lambda_i(M_n)-z|$ when $z = w_{j,0}, w_{j,0}+1, w_{j,0}+\sqrt{-1}$.  By the triangle inequality, we thus have
$$ \|K_j\|_{L^2} \leq \sum_{i=1}^n \|K_{j,i}\|_{L^2}.$$
Thanks to Proposition \ref{local-circ}, we know with overwhelming probability that one has
\begin{equation}\label{nosy}
 N_{B(z_j\sqrt{n},r)} \ll n^{o(1)} r^2
\end{equation}
for all $r$.  Let us condition on the event that this holds, and then freeze $M_n$ (so that the only remaining source of randomness is $w_{j,0}$).  In particular, the eigenvalues $\lambda_i(M_n)$ are now deterministic.

Let $C_0>1$ be such that $H_j$ is supported in $B(z_0\sqrt{n}, C_0)$.  If $1 \leq i \leq n$ is such that $\lambda_i(M_n) \in B(z_j\sqrt{n}, 2C_0)$, then a short computation (based on the square-integrability of the logarithm function) shows that the expected value of $\|K_{j,i}\|_{L^2}$ (averaged over all choices of $w_{j,0}$) is $O(1)$.  On the other hand, if $\lambda_i(M_n) \not \in B(z_j\sqrt{n}, 2C_0)$, then the second derivatives of $\log|\lambda_i(M_n)-z|$ has size $O(1 / |\lambda_i(M_n) - z_j \sqrt{n}|^2)$ on $B(z_j\sqrt{n}, 2C_0)$.  From this and Taylor expansion, one sees that the function $\log |\lambda_i(M_n)-z| - L_i(z)$ has magnitude $O(1 / |\lambda_i(M_n) - z_j \sqrt{n}|^2)$ on this ball, and so $\|K_{j,i}\|_{L^2}$ has this size as well.  Summing, we conclude that the (conditional) expected value of $\|K_j\|_{L^2}$ is at most
\begin{equation}\label{laa}
 \ll \sum_{i=1}^n \frac{1}{1+|\lambda_i(M_n) - z_j \sqrt{n}|^2}.
\end{equation}
We claim that the summation in \eqref{laa} has magnitude $O(n^{o(1)})$ with overwhelming probability, which will give the claim from Markov's inequality.  To see this, first observe that the eigenvalues $\lambda_i(M_n)$ with $|\lambda_i(M_n)-z_j \sqrt{n}| \geq \sqrt{n}$ certainly contribute at most $O(1)$ in total to the above sum.  Next, from \eqref{nosy} we see that with overwhelming probability that there are only $O(n^{o(1)})$ eigenvalues with $|\lambda_i(M_n)-z_j \sqrt{n}| \leq 1$, giving another contribution of $O(n^{o(1)})$ to the above sum.  Similarly, for any $2^k$ between $1$ and $\sqrt{n}$, another application of \eqref{nosy} reveals that the eigenvalues with $2^k \leq |\lambda_i(M_n)-z_j \sqrt{n}| < 2^{k+1}$ contribute another term of $O(n^{o(1)})$ to the above sum with overwhelming probability.  As there are only $O( \log \sqrt{n} ) = O(n^{o(1)})$ possible choices for $k$, the claim then follows by summing all the contributions estimated above.
\end{proof}

Now let $\eps > 0$ be a sufficiently small fixed constant that will be chosen later. Set $m := \lfloor n^{10\eps} \rfloor$, and for each $1 \leq j \leq k$ let $w_{j,1},\dots,w_{j,m}$ be drawn uniformly at random from $B(\sqrt{n} z_j, C_0)$ (independently of $M_n$ and $w_{j,0}$).  By \eqref{kujak}, \eqref{zof}, and Lemma \ref{sampling}, we see that with probability $1-O(n^{-\eps})$, one has
$$
X_{z_j,F_j} = \frac{\pi C_0^2}{m} \sum_{i=1}^m K_j( w_{j,i} ) + O( n^{-3\eps + o(1)} ).$$
In particular, from \eqref{g-bound} we see that with probability $1-O(n^{-\eps})$, one has
$$
G( X_{z_1,F_1}, \dots, X_{z_k,F_k} ) = G\left( \left(\frac{\pi C_0^2}{m} \sum_{i=1}^m K_j( w_{j,i} )\right)_{1 \leq j \leq k} \right) + O( n^{-3\eps + o(1)} )$$
and hence
$$
\E
G( X_{z_1,F_1}, \dots, X_{z_k,F_k} ) = \E G\left( \left(\frac{\pi C_0^2}{m} \sum_{i=1}^m K_j( w_{j,i} )\right)_{1 \leq j \leq k} \right) + O( n^{-\eps + o(1)} ).$$
Thus, to show that \eqref{egg} is insensitive, it suffices to show that
$$\E G\left( \left(\frac{\pi C_0^2}{m} \sum_{i=1}^m K_j( w_{j,i} )\right)_{1 \leq j \leq k} \right) $$
is insensitive, uniformly for all deterministic choices of $w_{j,0} \in B(\sqrt{n} z_j, 1)$ and $w_{j,i} \in B(\sqrt{n} z_j, C_0)$ for $1 \leq j \leq k$ and $1 \leq i \leq m$.  But this follows from the Four Moment Theorem (Theorem \ref{four-moment}), if $\eps$ is small enough; indeed, once the $w_{j,0}, w_{j,i}$ are conditioned to be deterministic, we see from \eqref{kj}, \eqref{ldef} that the quantities $K_j(w_{j,i})$ can be expressed
as deterministic linear combinations of a bouned number of log-determinants $\log|\det(M_n-z)|$, with coefficients uniformly bounded in $n$ (recall that $w_{j,i} - w_{j,0} = O(C_0)$ and that the $H_j$ are uniformly bounded).  This concludes the derivation of Theorem \ref{main-alt} from Theorem \ref{four-moment} and Proposition \ref{local-circ}.

\subsection{The real case}\label{four-real}

We now turn to the proof of Theorem \ref{main-alt-2}.
Let $M_n$ be as in Theorem \ref{main-alt-2}, and let $\tilde M_n$ be a real gaussian matrix.  Our task is to show that that the quantity
\begin{equation}\label{insens}
\begin{split}
&\int_{\R^k} \int_{\C^m} F(y_1,\dots,y_k,w_1,\dots,w_l) \\
&\quad  \rho^{(k,l)}_n(\sqrt{n} x_1 + y_1,\dots,\sqrt{n} x_k + y_k,
 \sqrt{n} z_1 + w_1,\dots,\sqrt{n} z_l + w_l )\\
 &\quad \ dw_1 \dots dw_l dy_1 \dots dy_k
 \end{split}
 \end{equation}
is insensitive whenever $k, l \geq 0$ are fixed, $x_1,\ldots,x_k \in \R$ and $z_1,\ldots,z_l \in \C$ are bounded, and $F$ decomposes as in Theorem \ref{main-alt-2}.

By induction on $k+l$, much as in the complex case, and separating the spectrum into contributions from $\R, \C_+, \C_-$, it thus suffices to show that the quantity
\begin{equation}\label{egf}
 \E
(\prod_{i=1}^k X_{x_i,F_i,\R})
(\prod_{j=1}^l X_{z_j,G_j,\C_+})
(\prod_{j'=1}^{l'} X_{z'_{j'},G'_{j'},\C_-})
\end{equation}
is insensitive, where $k,l,l'$ are fixed, $x_1,\ldots,x_k \in \R$ and $z_1,\ldots,z_l,z'_1,\ldots,z'_{l'} \in \C$ are bounded,
$$ X_{x,F,\R} := \sum_{1 \leq i \leq n: \lambda_i(M_n) \in \R} F(\lambda_i(M_n)- \sqrt{n} x)$$
and
$$ X_{z,G,\C_\pm} := \sum_{1 \leq i \leq n: \lambda_i(M_n) \in \C_\pm} G(\lambda_i(M_n)- \sqrt{n} z),$$
and the $F_i: \R \to \C$, $G_j: \C \to \C$, $G'_{j'}: \C \to \C$ are smooth functions supported on bounded sets obeying the bounds
$$ |\nabla^a F_i(x)|, |\nabla^a G_j(z)|, |\nabla^a G'_{j'}(z)| \leq C$$
for all $0 \leq a \leq 5$, $x \in \R$, $z \in \C$.  Indeed, one can express any statistic of the form \eqref{insens} as a linear combination of a bounded number of statistics of the form \eqref{egf}, plus a bounded number of additional statistics of the form \eqref{insens} with smaller values of $k+l$.

As the spectrum is symmetric around the real axis, one has
$$ X_{z,G,\C_-} = X_{\overline{z}, \tilde G, \C_+}$$
where $\tilde G(z) := G(\overline{z})$.  Thus we may concatenate the $G_j$ with the $G'_{j'}$, and assume without loss of generality that $l'=0$, thus we are now seeking to establish the insensitivity of
\begin{equation}\label{egf-2}
 \E
(\prod_{i=1}^k X_{x_i,F_i,\R})
(\prod_{j=1}^l X_{z_j,G_j,\C_+}).
\end{equation}

On the other hand, by repeating the remainder of the arguments for the complex case with essentially no changes, we can show that the quantity
\begin{equation}\label{goo}
 \E \prod_{p=1}^m X_{z_p, H_p}
\end{equation}
is insensitive for any fixed $m$, any bounded complex numbers $z_1,\dots,z_m$, and any smooth $H_p: \C \to \C$ supported in a bounded set and obeying the bounds
$$ |\nabla^a H_p(z)| \leq C$$
for all $0 \leq a \leq 5$ and $z \in \C$, where
$$ X_{z,H} := \sum_{1 \leq i \leq n} H(\lambda_i(M_n)- z).$$
Thus the remaining task is to deduce the insensitivity of \eqref{egf-2} from the insensitivity of \eqref{goo}.

Specialising \eqref{goo} to the case when $z_p = z$ is independent of $p$, and $H_p=H$ is real-valued, we see that
$$ \E X_{z,H}^m$$
is insensitive for any $m$.  In particular, we see from (the smooth version of) Urysohn's lemma and Lemma \ref{corf} that we have the bound
\begin{equation}\label{nibble}
 \E N_{B(z\sqrt{n},C)}^m \ll 1
\end{equation}
for any fixed radius $C$ and any bounded complex number $z$, where $N_\Omega = N_\Omega[M_n]$ denotes the number of eigenvalues of $M_n$ in $\Omega$.  Among other things, this implies that
\begin{equation}\label{holla}
 \E |X_{x_i,F_i,\R}|^A, \E |X_{y_j,G_j,\C_+}|^A \ll 1
\end{equation}
for any fixed $A$ and all $i,j$.

To proceed further, we need a level repulsion result.

\begin{lemma}[Weak level repulsion]\label{wlr}  Let $C>0$ be fixed, $x \in \R$ be bounded, and $\eps$ be such that $n^{-c_0} \leq \eps \leq C$ for a sufficiently small fixed $c_0>0$, and let $E_{x,C,\eps}$ be the event that there are two eigenvalues $\lambda_i(M_n), \lambda_j(M_n)$ in the strip $S_{x,C,\eps} := \{ z \in B(x \sqrt{n},C): \Im(z) \leq \eps \}$ with $i \neq j$ such that $|\lambda_i(M_n) - \lambda_j(M_n)| \leq 2\eps$.  Then $\P(E_{x,C,\eps}) \ll \eps$, where the implied constant in the $\ll$ notation is independent of $\eps$.
\end{lemma}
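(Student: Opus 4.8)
The plan is to dominate the indicator $1_{E_{x,C,\eps}}$ by a smooth statistic of the eigenvalues, to evaluate its expectation in the real gaussian case using the uniform correlation bound of Lemma~\ref{corf}, and then to transfer to the general ensemble $M_n$ via the Four Moment machinery of Section~\ref{four}. Concretely, fix a non-negative smooth $\phi_\eps:\C\to\R$ with $\phi_\eps\ge 1_{\{|z|\le 2\eps\}}$, $\operatorname{supp}\phi_\eps\subseteq\{|z|\le 3\eps\}$ and $|\nabla^a\phi_\eps|\ll\eps^{-a}$ for $0\le a\le 5$, and a fixed-scale non-negative smooth cutoff $\chi:\C\to\R$ with $\chi\ge 1$ on $S_{x,C,\eps}$, supported in $\{|z-x\sqrt n|\le 2C\}\cap\{\Im z\le 2C\}$, and $|\nabla^a\chi|\ll 1$. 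Put
$$ Z:=\sum_{1\le i\ne j\le n}\phi_\eps\big(\lambda_i(M_n)-\lambda_j(M_n)\big)\,\chi(\lambda_i(M_n))\,\chi(\lambda_j(M_n)).$$
If $E_{x,C,\eps}$ holds, the two eigenvalues of a bad pair lie in $S_{x,C,\eps}$ and are within $2\eps$ of each other, so $Z\ge 2$; hence it suffices to show $\E Z\ll\eps$.

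For the real gaussian ensemble $\tilde M_n$ this is a direct volume estimate. Splitting the sum according to whether $\lambda_i,\lambda_j$ are real or strictly complex, the three resulting contributions are integrals of $\rho^{(2,0)}_n$, $\rho^{(1,1)}_n$, $\rho^{(0,2)}_n$ respectively against $\phi_\eps(z-w)\chi(z)\chi(w)$. By Lemma~\ref{corf} each density is $O(1)$, so each integral is at most $O(1)$ times the measure of the region cut out by the supports. For the pair of real eigenvalues this region is a segment of the real axis of length $O(1)$ times a window of length $O(\eps)$, of measure $O(\eps)$; for the mixed pair and the pair of complex eigenvalues the region carries an extra two-dimensional factor of area $O(\eps^2)$. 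Thus $\E_{\tilde M_n}Z\ll\eps$.

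It remains to transfer $\E Z$ from $\tilde M_n$ to $M_n$. Expand $\phi_\eps(z-w)=\sum_\nu\hat c_\nu\,e_\nu(z)\overline{e_\nu(w)}$ in a Fourier series (in characters $e_\nu$) over a fixed box containing the difference set $(\operatorname{supp}\chi)-(\operatorname{supp}\chi)$; since $\phi_\eps$ is smooth at scale $\eps$ one has $\sum_\nu|\hat c_\nu|=O(1)$, and truncating to the $n^{O(c_0)}$ frequencies with $|\nu|\le\eps^{-1}n^{o(1)}$ changes $Z$ by $O(n^{-100})$. Absorbing $\chi$ and $\hat c_\nu$ into the factors, $Z$ equals, up to $O(n^{-100})$, a sum of $n^{O(c_0)}$ terms of the form $X_uX_v-X_{uv}$, where $X_H:=\sum_i H(\lambda_i(M_n))$ and $u,v$ are smooth functions supported in $B(x\sqrt n,2C)$ with $|\nabla^a u|,|\nabla^a v|\ll n^{O(c_0)}$ for $0\le a\le 5$. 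Each such term is a bounded-degree polynomial in linear statistics of exactly the type handled in Section~\ref{four}; re-running that argument --- the Green's theorem identity~\eqref{zafj}, the $L^2$ bound~\eqref{kujak}, the Monte Carlo reduction via Lemma~\ref{sampling}, and finally Theorem~\ref{four-moment} --- with the larger (but still polynomial in $n^{c_0}$) constants shows each term is insensitive with an error $O(n^{-c})$ for some fixed $c>0$, provided $c_0$ is taken small enough relative to the absolute constant of Theorem~\ref{four-moment} that all the $n^{O(c_0)}$ losses --- the number of Fourier modes, the $L^2$ norms of the analogues of $K_j$, the number of Monte Carlo samples, and the derivative bounds of the truncating function --- stay below the thresholds there. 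Summing over the $n^{O(c_0)}$ terms, weighted by $\sum_\nu|\hat c_\nu|=O(1)$, gives $\E_{M_n}Z=\E_{\tilde M_n}Z+O(n^{-c})\ll\eps+n^{-c_0}$, which is $O(\eps)$ since $\eps\ge n^{-c_0}$ and we may take $c_0\le c$. This proves the lemma.

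The main obstacle is precisely this last transfer: it forces the machinery of Section~\ref{four} to be run with test functions whose derivatives are bounded only by $n^{O(c_0)}$ rather than by an absolute constant, which is why Theorem~\ref{four-moment} was formulated with polynomial derivative hypotheses and why $c_0$ in the present lemma must be chosen sufficiently small. By comparison the real gaussian base case and the Fourier bookkeeping are routine.
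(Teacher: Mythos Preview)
Your approach is correct in outline and close in spirit to the paper's, but the paper's route is tidier.  The paper first uses a covering argument to reduce to the local claim $\P(N_{B(x\sqrt n+t,10\eps)}\ge 2)\ll\eps^2$ uniformly in $t=O(1)$, and then dominates this event by the single-scale statistic $X_{x,H}^2-X_{x,H^2}$ with $H$ a bump at scale $\eps$.  The point is that this expression is already a polynomial in \emph{one} linear statistic, so the insensitivity of \eqref{goo} applies directly without any further decomposition; one then splits $X_{x,H}=X_{x,H,\R}+2X_{x,H,\C_+}$ and bounds the resulting correlation integrals by Lemma~\ref{corf}.  Your global statistic $Z$ couples the two eigenvalues through the factor $\phi_\eps(\lambda_i-\lambda_j)$, which forces you to Fourier-decompose to reach product form; this works, but the covering argument in the paper buys you the product structure for free and avoids the Fourier bookkeeping entirely.

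One small gap in your gaussian estimate: when both $\lambda_i$ and $\lambda_j$ are strictly complex, you account only for the $\rho^{(0,2)}_n$ contribution, but the conjugate-pair case $\lambda_j=\overline{\lambda_i}$ (distinct eigenvalues, yet the same $\C_+$-eigenvalue) contributes a term $\int_{\C_+}\rho^{(0,1)}_n(z)\,\phi_\eps(2i\Im z)\,\chi(z)\chi(\bar z)\,dz$, which is $O(\eps)$ rather than $O(\eps^2)$.  This does not break your final bound $\E_{\tilde M_n}Z\ll\eps$, but it is the dominant complex contribution and should be recorded; indeed, this is exactly the term that makes the strip $S_{x,C,\eps}$ dangerous in the first place, and the paper isolates it as the $\int_{\C_+}\rho^{(0,1)}_n H^2$ piece of \eqref{r2}.
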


\begin{proof}  In this proof all implied constants in the $\ll$ notation are understood to be independent of $\eps$.  By a covering argument, it suffices to show that
$$ \P( N_{B(x\sqrt{n}+t,10\eps)} \geq 2 ) \ll \eps^2$$
uniformly for all $t = O(1)$.

If we let $H$ be a non-negative bump function supported on $B(t,20\eps)$ that equals one on $B(t,10\eps)$.  Then the expression $X_{x,H}^2 - X_{x,H^2}$ is non-negative, and is at least $2$ when $N_{B(x\sqrt{n}+t,10\eps)} \geq 2$.  Thus by Markov's inequality it suffices to show that
$$ \E X_{x,H}^2 - X_{x,H^2} \ll \eps^2.$$

By the insensitivity of \eqref{goo} and the lower bound on $\eps$, it suffices to verify the claim when $M_n$ is drawn from the real gaussian distribution.  (Note that the derivatives of $H, H^2$ can be as large as $O(\eps^{-O(1)})$, causing additional factors of $O(\eps^{-O(1)})$ to appear in the error term created when swapping $M_n$ with the real gaussian ensemble, but the $n^{-c}$ gain coming from the insensitivity will counteract this if $c_0$ is small enough.)

We split
$$ X_{x,H} = X_{x,H,\R} + 2 X_{x,H,\C_+}$$
and similarly for $H^2$. It will suffice to establish the estimates
\begin{equation}\label{r1}
\E X_{x,H,\R}^2 - X_{x,H^2,\R} \ll \eps^2,
\end{equation}
\begin{equation}\label{r3}
\E X_{x,H,\R} X_{x,H,\C_+} \ll \eps^2,
\end{equation}
and
\begin{equation}\label{r2}
\E X_{x,H,\C_+}^2 \ll \eps^2.
\end{equation}

The left-hand sides of \eqref{r1}, \eqref{r3}, \eqref{r2} may be expanded as
$$
\int_\R \int_\R \rho^{(2,0)}_n(x\sqrt{n}+y, x\sqrt{n}+y') H(y) H(y')\ dy dy',
$$
$$
\int_\R \int_{\C^+} \rho^{(1,1)}_n( x \sqrt{n}+y, x \sqrt{n}+z) H(y) H(z)\ dy dz,
$$
and
$$ \int_{\C_+} \rho^{(0,1)}_n(x\sqrt{n}+z) H^2(z)\ dz
+
2 \int_{\C_+} \int_{\C^+} \rho^{(0,2)}_n(x\sqrt{n}+z, x \sqrt{n} + w) H(z) H(w)\ dz dw
$$
respectively. Using Lemma \ref{corf}, we see that these expressions are $O(\eps^2)$ as required.
\end{proof}

\begin{remark} In fact, a closer inspection of the explicit form of the correlation functions reveals that one can gain some additional powers of $\eps$ here, giving a stronger amount of level repulsion, but for our purposes any bound that goes to zero as $\eps\to 0$ will suffice.
\end{remark}

From the symmetry of the spectrum, we observe that if $E_{x,C,\eps}$ does not hold, then there cannot be any strictly complex eigenvalue $\lambda_i(M_n)$ in the strip $S_{x,C,\eps}$, since in that case $\overline{\lambda_i(M_n)}$ would be distinct eigenvalue in the strip at a distance at most $2\eps$ from $\lambda_i(M_n)$.  In particular, we see that
\begin{equation}\label{sala}
 \P( N_{S_{x,C,\eps} \backslash [x\sqrt{n}-C,x\sqrt{n}+C]} = 0 ) = 1 - O(\eps).
\end{equation}
Informally, this estimate tells us that we can usually thicken the interval $[x\sqrt{n}-C,x\sqrt{n}+C]$ to the strip $S_{x,C,\eps}$ without encountering any additional spectrum.

Fix $\eps := n^{-c_0}$ for some sufficiently small fixed $c_0>0$.
We can use \eqref{sala} to simplify the expression \eqref{egf-2} in two ways.  Firstly, thanks to \eqref{sala}, \eqref{holla}, and H\"older's inequality, we may replace each of the $G_j$ in \eqref{egf} with a function $\tilde G_j$ that vanishes on the strip $\{ z-z_j: |\Im(z)| \leq \eps \}$, while only picking up an error of $O(\eps^c)$ for some fixed $c>0$, which will be acceptable from the choice of $\eps$.  By discarding the component of $\tilde G_j$ below the strip, we may then assume $\tilde G_j$ is supported on the half-space $\C_+ - z_j$.  In particular, we have
$$ X_{z_j,\tilde G_j,\C_+} = X_{z_j, \tilde G_j}.$$
Also, by performing a smooth truncation, we see that we have the derivative bounds $\nabla^a \tilde G_j = O(\eps^{-O(1)})$ for all $0 \leq a \leq 5$.

Secondly, by another application of \eqref{sala}, \eqref{holla}, and H\"older's inequality, we may ``thicken'' each factor $X_{x_i,F_i,\R}$ by replacing it with $X_{x_i,\tilde F_i}$, where $\tilde F_{i}: \C \to \C$ is a smooth extension of $F_i$ that is supported on the strip $\{ z: |\Im(z)| \leq \eps \}$, while only acquiring an error of $O(\eps^c)$ for some fixed $c>0$.  Again, we have the derivative bounds $\nabla^a \tilde F_i = O(\eps^{-O(1)})$ for $0 \leq a \leq 5$.  From the insensitivity of \eqref{goo} (and using the $n^{-c}$ gain coming from insensitivity to absorb all $O(\eps^{-O(1)})$ losses from the derivative bounds) we see that
\begin{equation}\label{egf-3}
 \E
(\prod_{i=1}^k X_{x_i,\tilde F_i})
(\prod_{j=1}^l X_{z_j,\tilde G_j})
\end{equation}
is insensitive, which by the preceding discussion yields (for $c_0$ small enough) that \eqref{egf-2} is insensitive also, as required.
This concludes the derivation of Theorem \ref{main-alt-2} from Theorem \ref{four-moment} and Proposition \ref{local-circ}.

\subsection{Quick applications}\label{real-sec}

As quick consequences of Theorem \ref{main-alt} and Theorem \ref{main-alt-2}, we now prove Corollaries \ref{clt-2}, \ref{really-2} and \ref{simex}.

We first prove we prove Corollary \ref{simex}.  Let $M_n$ be as in that theorem.  Set $\eps := n^{-c_0}$ for some sufficiently small $c_0>0$.  A routine modification of the proof of Lemma \ref{wlr} (or, alternatively, Theorem \ref{main-alt-2} combined with Lemma \ref{corf}) shows that for any $z \in B(0,O(\sqrt{n}))$, one has
$$ \E \binom{N_{B(z,\eps)}}{2} \ll \eps^4$$
when $|\Im z| \geq \eps$, if $c_0$ is small enough; in particular, the expected number of eigenvalues in $B(z,\eps)$ which are repeated is $O(\eps^4)$.  We then cover $B(0,3\sqrt{n})$ by $O(n/\eps^2)$ balls $B(z,\eps)$ with $|\Im z| \geq \eps$, together with the strip $\{ z: |\Im z| \leq \eps \}$.  By \eqref{sala} (or Theorem \ref{main-alt-2} and Lemma \ref{corf}) and linearity of expectation, the strip contains $O(\eps \sqrt{n})$ eigenvalues.  By \cite{baiyin}, \cite{geman}, the spectral radius of $M_n$ is known to equal $(1+o(1))\sqrt{n}$ with overwhelming probability\footnote{Actually, for this argument, the easier bound of $O(1)$ would suffice, which can be obtained by a variety of methods, e.g. by an epsilon net argument or by Talagrand's inequality \cite{Tal}.}.  We conclude that the expected number of repeated complex eigenvalues is at most
$$ O( n/\eps^2) \times O(\eps^4) + O(\eps \sqrt{n} ) + O(n^{-100}),$$
which becomes $O(n^{1-c})$ for some fixed $c>0$; a similar argument gives a bound of $O(n^{1/2-c})$ for the expected number of repeated real eigenvalues.  The claim now follows from Markov's inequality.

Now we prove Corollary \ref{really-2}.  Let $M_n$ be as in that theorem.  As mentioned previously, the spectral radius of $M_n$ is known to equal $(1+o(1))\sqrt{n}$ with overwhelming probability.  In particular, we have
$$ \E N_\R(M_n) = \E N_{[-3 \sqrt{n},3\sqrt{n}]}(M_n) + O(n^{-100})$$
(say).  By the smooth form of Urysohn's lemma, we can select fixed smooth, non-negative functions $F_-, F_+$ such that we have the pointwise bounds
$$ 1_{[-2,2]} \leq F_- \leq 1_{[-3,3]} \leq F_+ \leq 1_{[-4,4]}.$$
By definition of $\rho^{(1,0)}$, we observe that
\begin{align*}
\E N_{[-2\sqrt{n},2\sqrt{n}]}(M_n) &\leq \int_\R \rho^{(1,0)}(x) F_-(x/\sqrt{n})\ dx \\
&\leq  \E N_{[-3\sqrt{n},3\sqrt{n}]}(M_n) \\
&\leq \int_\R \rho^{(1,0)}(x) F_+(x/\sqrt{n})\ dx \\
&\leq \E N_{[-4\sqrt{n},4\sqrt{n}]}(M_n).
\end{align*}
By smoothly partitioning $F_\pm(x/\sqrt{n})$ into $O(\sqrt{n})$ pieces supported on intervals of size $O(1)$, and applying Theorem \ref{main-alt-2} to each piece, we see upon summing that the two integrals above are only modified by $O(n^{1/2-c})$ for some fixed $c>0$ if we replace $M_n$ with a real gaussian matrix $M'_n$.  On the other hand, when $M'_n$ is real gaussian we see from Theorem \ref{really} (and the spectral radius bound) that
$$ \E N_{[-2\sqrt{n},2\sqrt{n}]}(M'_n), \E N_{[-4\sqrt{n},4\sqrt{n}]}(M'_n) = \sqrt{\frac{2n}{\pi}} + O(1).$$
Putting these bounds together, we obtain the expectation claim of Corollary \ref{really-2}.  The variance claim is similar.  Indeed, we have
$$ \E N_\R(M_n)^2 = \E N_{[-3 \sqrt{n},3\sqrt{n}]}(M_n)^2 + O(n^{-90})$$
(say) and
\begin{align*}
\E N_{[-2\sqrt{n},2\sqrt{n}]}(M_n)^2 &\leq \int_\R \rho^{(1,0)}(x) F_-(x/\sqrt{n})^2\ dx + \int_\R \int_\R \rho^{(2,0)}(x,y) F_-(x/\sqrt{n}) F_-(y/\sqrt{n})\ dx dy \\
&\leq  \E N_{[-3\sqrt{n},3\sqrt{n}]}(M_n)^2 \\
&\leq \int_\R \rho^{(1,0)}(x) F_+(x/\sqrt{n})^2\ dx + \int_\R \int_\R \rho^{(2,0)}(x,y) F_+(x/\sqrt{n}) F_+(y/\sqrt{n})\ dx dy \\
&\leq \E N_{[-4\sqrt{n},4\sqrt{n}]}(M_n)^2.
\end{align*}
From Theorem \ref{main-alt-2} and smooth decomposition we see that all of the above integrals vary by $O(n^{1-c})$ at most for some fixed $c>0$ if $M_n$ is replaced with a real gaussian matrix, and then the variance claim can be deduced from Theorem \ref{really} and the spectral radius bound as before.

\begin{remark} A similar argument shows that in the complex case, the expected number of real eigenvalues is $O(n^{1/2-c})$, which can be improved to $O(n^{-A})$ for any $A>0$ if one assumes sufficiently many matching moments depending on $A$.  Of course, one expects typically in this case that there are no real eigenvalues whatsoever (and this is almost surely the case when the matrix ensemble is continuous), but this is beyond the ability of our current methods to establish in the case of discrete complex matrices.
\end{remark}

Finally, we prove Corollary \ref{clt-2}.  Let $M_n,z_0,r$ be as in that theorem, and let $\tilde M_n$ be drawn from the complex gaussian matrix ensemble.  Let $\eps = o(1)$ be a slowly decaying function of $n$ to be chosen later.  Let $R$ be any rectangle in $B(0,100\sqrt{n})$ of sidelength $1 \times n^{-\eps}$, and let $3R$ be the rectangle with the same center as $R$ but three times the sidelengths.  By the smooth form of Urysohn's lemma, we can construct a smooth function $F: \C \to \R^+$ with the pointwise bounds
$$ 1_R \leq F \leq 1_{3R}$$
such that $|\nabla^j F| \ll n^{j\eps}$ for all $0 \leq j \leq 5$.  Applying Corollary \ref{main-2} (to $n^{-5\eps} F$), we conclude that
$$ \int_\C F(z) \rho^{(1)}_n(z)\ dz = \int_\C F(z) \tilde \rho^{(1)}_n(z)\ dz + O(n^{-c+5\eps})$$
for some absolute constant $c$.  On the other hand, from \eqref{knzwp} we see that $\int_\C F(z) \tilde \rho^{(1)}_n(z)\ dz \ll n^{-\eps}$, since $3R$ has area $O(n^{-\eps})$. Since $\eps=o(1)$, we conclude that
$$ \int_\C F(z) \rho^{(1)}_n(z)\ dz \ll n^{-\eps}$$
and in particular that
\begin{equation}\label{enerman}
 \E N_R(M_n) \ll n^{-\eps}.
\end{equation}
A similar argument (with larger values of $k$) gives
\begin{equation}\label{enerman-2}
 \E N_{R_1}(M_n) \ldots N_{R_k}(M_n) \ll n^{-k\eps}.
\end{equation}
whenever $k$ is fixed and $R_1,\ldots,R_k$ are $1 \times n^{-\eps}$ rectangles (possibly overlapping) in $B(0,100\sqrt{n})$.

Now let $G: \C \to \R^+$ be a smooth function supported on $B(z_0,r+n^{-\eps})$ which equals $1$ on $B(z_0,r)$ and has the derivative bounds $|\nabla^j G| \ll n^{j\eps}$ for all $0 \leq j \leq 5$.  By covering the annulus $B(z_0,r+n^{-\eps}) \backslash B(z_0,r)$ by $O(r)$ rectangles of dimension $1 \times n^{-\eps}$, we see from \eqref{enerman} that
$$ \E N_{B(z_0,r+n^{-\eps}) \backslash B(z_0,r)}(M_n) \ll r n^{-\eps}$$
and similarly from \eqref{enerman-2} one has
$$ \E N_{B(z_0,r+n^{-\eps}) \backslash B(z_0,r)}(M_n)^k \ll r^k n^{-k\eps}$$
for any fixed $k$.  Since we are assuming $r \leq n^{o(1)}$, we conclude (if $\eps$ decays to zero sufficiently slowly) that
$$ \E N_{B(z_0,r+n^{-\eps}) \backslash B(z_0,r)}(M_n)^k = o(1)$$
for all $k$.  In particular, if we introduce the linear statistic
\begin{equation}\label{xo}
 X := \frac{\sum_{i=1}^n G(\lambda_i(M_n)) - r^2}{r^{1/2} \pi^{-1/4}}
 \end{equation}
we see from the triangle inequality that the asymptotics
$$ \E (\frac{N_{B(z_0,r)} - r^2}{r^{1/2} \pi^{-1/4}})^k \to \E N(0,1)_\R^k$$
for all fixed $k \geq 0$ are equivalent to the asymptotics
$$ \E X^k \to \E N(0,1)_\R^k.$$
Let $\tilde X$ be the analogue of $X$ for $\tilde M_n$.  From Theorem \ref{clt-gauss} and the preceding arguments we have
$$ \E \tilde X^k \to \E N(0,1)_\R^k$$
and so it will suffice to show that
$$ \E X^k - \E \tilde X^k = o(1)$$
for all fixed $k \geq 1$.
By \eqref{xo} and the hypotheses that $1 \leq r \leq n^{o(1)}$ and $\eps=o(1)$, it will suffice to show that
$$ \E (\sum_{i=1}^n G(\lambda_i(M_n)))^k - \E (\sum_{i=1}^n G(\lambda_i(\tilde M_n)))^k = O( r^{O(k)} n^{-c+O(k\eps)} )$$
for all fixed $k \geq 0$ and some fixed $c>0$ (which will in fact turn out to be uniform in $k$, although we will not need this fact).  Expanding out the $k^{\operatorname{th}}$ powers and collecting terms\footnote{The observant reader will note that this step is inverting one of the first steps in the proof of Theorem \ref{main-alt} given previously, and one could shorten the total length of the argument here if desired by skipping directly to that point of the proof of Theorem \ref{main-alt} and continuing onwards from there.} depending on the multiplicities of the $i$ indices, we see that it suffices to show that
\begin{align*}
& \E \sum_{1 \leq i_1 < \ldots < i_{k'} \leq n} G^{a_1}(\lambda_{i_1}(M_n)) \ldots G^{a_{k'}}(\lambda_{i_{k'}}(M_n)) -
G^{a_1}(\lambda_{i_1}(\tilde M_n)) \ldots G^{a_{k'}}(\lambda_{i_{k'}}(\tilde M_n))\\
&\quad = O( r^{O(k)} n^{-c+O(k\eps)} )
\end{align*}
for all fixed $k', a_1,\ldots,a_{k'} \geq 1$ and some fixed $c>0$, where $k := a_1 + \ldots + a_{k'}$.  But the left-hand side can be rewritten using \eqref{ck} as
$$ \int_{\C^k} (\prod_{j=1}^k G(z_j)^{a_j}) (\rho^{(k)}_n(z_1,\ldots,z_k) - \tilde \rho^{(k)}_n(z_1,\ldots,z_k))\ dz_1 \ldots dz_k.$$
One can smoothly decompose $(\prod_{j=1}^k G(z_j)^{a_j})$ as the sum of $O( r^{O(k)} n^{O(\eps)} )$ smooth functions supported on balls of bounded radius, whose derivatives up to fifth order are all uniformly bounded.  Applying Theorem \ref{main-alt} to each such function and summing, one obtains the claim.

\begin{remark}  The main reason why the radius $r$ was restricted to be $O(n^{o(1)})$ was because of the need to obtain asymptotics for $k^{\operatorname{th}}$ moments for arbitrary fixed $k$.  For any given $k$, the above arguments show that one obtains the right asymptotics for all $r \leq n^{c/k}$ for some absolute constant $c>0$.  If one increases the number of matching moment assumptions, one can increase the value of $k$, but we were unable to find an argument that allowed one to take $r$ as large as $n^\alpha$ for some fixed $\alpha>0$ \emph{independent} of $k$, even after assuming a large number of matching moments.
\end{remark}

\section{Resolvent swapping}\label{resolvent-sec}

In this section we recall some facts about the stability of the resolvent of Hermitian matrices with respect to permutation in just one or two entries, in order to perform swapping arguments.  Such swapping arguments were introduced to random matrix theory in \cite{Chat}, and first applied to establish universality results for local spectral statistics in \cite{TVlocal1}.  In \cite{ESYY} it was observed that the stability analysis of such swapping was particularly simple if one worked with the resolvents (or Greens function) rather than with individual eigenvalues.  Our formalisation of this analysis here is drawn from \cite{TV-determinant}.  We will use this resolvent swapping analysis twice in this paper; once to establish the Four Moment Theorem for the determinant (Theorem \ref{four-moment}) in Section \ref{4mt-sec}, and once to deduce concentration of the log-determinant for iid matrices (Theorem \ref{loglower}) from concentration for gaussian matrices (Theorem \ref{loglower-gaussian}) in Section \ref{lower-sec2}.

We will need the matrix norm
$$\|A\|_{(\infty,1)} = \sup_{1 \leq i,j \leq n} |a_{ij}|$$
and the following definition:

\begin{definition}[Elementary matrix]  An \emph{elementary matrix} is a matrix which has one of the following forms
\begin{equation}\label{vform}
 V = e_a e_a^*, e_a e_b^* + e_b e_a^*, \sqrt{-1} e_a e_b^* - \sqrt{-1} e_b e_a^*
\end{equation}
with $1 \leq a,b \leq n$ distinct, where $e_1,\dots,e_n$ is the standard basis of $\C^n$.
\end{definition}

Let $M_0$ be a Hermitian matrix, let $z=E+i\eta$ be a complex number, and let $V$ be an elementary matrix.  We then introduce, for each $t \in \R$, the Hermitian matrices
$$ M_t := M_0 + \frac{1}{\sqrt{n}} tV,$$
the resolvents
\begin{equation}\label{resolve}
R_t = R_t(E+i\eta) := (M_t - E - i\eta)^{-1}
\end{equation}
and the Stieltjes transform
$$ s_t := s_t(E+i\eta) := \frac{1}{n} \tr R_t(E + i\eta).$$

We have the following Neumann series expansion:

\begin{lemma}[Neumann series]\label{neum}  Let $M_0$ be a Hermitian $n \times n$ matrix, let $E \in \R$, $\eta > 0$, and $t \in \R$, and let $V$ be an elementary matrix.  Suppose one has
\begin{equation}\label{tro}
 |t| \| R_0\|_{(\infty,1)} = o(\sqrt{n}).
\end{equation}
Then one has the Neumann series formula
\begin{equation}\label{neumann}
 R_t = R_0 + \sum_{j=1}^\infty (-\frac{t}{\sqrt{n}})^j (R_0 V)^j R_0
\end{equation}
with the right-hand side being absolutely convergent, where $R_t$ is defined by \eqref{resolve}.  Furthermore, we have
\begin{equation}\label{tp}
 \|R_t\|_{(\infty,1)} \leq (1+o(1)) \|R_0\|_{(\infty,1)}.
\end{equation}
\end{lemma}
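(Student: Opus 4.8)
The plan is the standard Neumann series (resolvent expansion) argument, where the only genuine care needed is that the norm $\|\cdot\|_{(\infty,1)}$ is not submultiplicative, so the rank-at-most-two structure of the elementary matrix $V$ in \eqref{vform} has to be exploited directly rather than through an operator-norm estimate.

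First I would factor $M_t - E - i\eta = (M_0 - E - i\eta)(I + \tfrac{t}{\sqrt n} R_0 V)$, so that $R_t = (I + \tfrac{t}{\sqrt n} R_0 V)^{-1} R_0$ as soon as the middle factor is invertible; expanding its inverse as a geometric series and multiplying by $R_0$ on the right produces exactly \eqref{neumann}. Everything then reduces to controlling the $(\infty,1)$ norms of the terms $(R_0 V)^j R_0$. For this I would write $V = \sum_{k=1}^r \varepsilon_k e_{a_k} e_{b_k}^*$ with $r \le 2$ and $|\varepsilon_k| = 1$ (valid for all three forms in \eqref{vform}), expand $(R_0 V)^j R_0$ over tuples $(k_1,\dots,k_j)$, and observe that the $(p,q)$-entry of each resulting term is, up to a unimodular factor, a product of exactly $j+1$ entries of $R_0$, namely $(R_0)_{p,a_{k_1}}(R_0)_{b_{k_1},a_{k_2}}\cdots (R_0)_{b_{k_{j-1}},a_{k_j}}(R_0)_{b_{k_j},q}$. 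Summing over the at most $2^j$ tuples gives $\|(R_0 V)^j R_0\|_{(\infty,1)} \le 2^j \|R_0\|_{(\infty,1)}^{j+1}$, and the same bookkeeping gives $\|(R_0 V)^j\|_{(\infty,1)} \le 2^j \|R_0\|_{(\infty,1)}^j$.

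With $\rho := \tfrac{|t|}{\sqrt n}\|R_0\|_{(\infty,1)}$, hypothesis \eqref{tro} says $\rho = o(1)$, so $2\rho < 1$ for $n$ large and the series $\sum_j (-\tfrac{t}{\sqrt n})^j (R_0 V)^j$ converges absolutely entrywise, with $\|(-\tfrac{t}{\sqrt n})^j (R_0 V)^j\|_{(\infty,1)} \le (2\rho)^j$. The telescoping identity $(I + \tfrac{t}{\sqrt n} R_0 V)\sum_{j=0}^N (-\tfrac{t}{\sqrt n})^j (R_0 V)^j = I - (-\tfrac{t}{\sqrt n})^{N+1}(R_0 V)^{N+1}$, whose right-hand side tends entrywise to $I$, then shows (letting $N \to \infty$, which is legitimate for finite matrices) that $I + \tfrac{t}{\sqrt n} R_0 V$ is invertible with the claimed inverse; multiplying by $R_0$ establishes \eqref{neumann} and its absolute convergence. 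Finally, summing the geometric bound yields $\|R_t\|_{(\infty,1)} \le \|R_0\|_{(\infty,1)} \sum_{j \ge 0} (2\rho)^j = \|R_0\|_{(\infty,1)}/(1 - 2\rho) = (1 + o(1))\|R_0\|_{(\infty,1)}$, which is \eqref{tp}.

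I do not expect a real obstacle here: this is essentially a bookkeeping computation. The one place to be attentive is the non-submultiplicativity of $\|\cdot\|_{(\infty,1)}$, handled by tracking entries of the products $(R_0 V)^j R_0$ using that $V$ has at most two nonzero entries; this produces only a harmless factor $2^j$, which is absorbed by the $o(1)$ slack in \eqref{tro}.
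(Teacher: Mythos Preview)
Your argument is correct and is precisely the standard resolvent expansion that the paper defers to \cite[Lemma 12]{TV-determinant}; the key observation you make, that the rank-at-most-two decomposition $V=\sum_{k\le 2}\varepsilon_k e_{a_k}e_{b_k}^*$ turns each entry of $(R_0V)^jR_0$ into a sum of at most $2^j$ products of $j+1$ entries of $R_0$, is exactly how one circumvents the non-submultiplicativity of $\|\cdot\|_{(\infty,1)}$. Nothing further is needed.
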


In practice, we will have $t = n^{O(c_0)}$ (from a decay hypothesis on the atom distribution) and $\|R_0\|_{(\infty,1)} = n^{O(c_0)}$ (from eigenvector delocalization and a level repulsion hypothesis), where $c_0>0$ is a small constant, so \eqref{tro} is quite a mild condition.

\begin{proof}   See \cite[Lemma 12]{TV-determinant}.
\end{proof}

We now can describe the dependence of $s_t$ on $t$:

\begin{proposition}[Taylor expansion of $s_t$]\label{proper}  Let the notation be as above, and suppose that \eqref{tro} holds.  Let $k \geq 1$ be fixed.   Then one has
\begin{equation}\label{expand}
 s_t = s_0 + \sum_{j=1}^k n^{-j/2} c_j t^j + O( n^{-(k+1)/2} |t|^{k+1} \|R_0\|_{(\infty,1)}^{k+1} \min( \|R_0\|_{(\infty,1)}, \frac{1}{n\eta} ) )
 \end{equation}
where the coefficients $c_j$ are independent of $t$ and obey the bounds
\begin{equation}\label{cj-bound}
|c_j| \ll \|R_0\|_{(\infty,1)}^{j} \min( \|R_0\|_{(\infty,1)}, \frac{1}{n\eta} ).
\end{equation}
for all $1 \leq j \leq k$.
\end{proposition}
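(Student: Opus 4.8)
The plan is to substitute the Neumann series of Lemma~\ref{neum} into $s_t=\frac1n\tr R_t$, read off the coefficients $c_j$, and then estimate both the individual coefficients (to get \eqref{cj-bound}) and the tail of the series (to get the error term in \eqref{expand}). Write $\|R_0\|:=\|R_0\|_{(\infty,1)}$ for brevity. By \eqref{tro} the expansion \eqref{neumann} converges absolutely, so we may take traces term by term:
$$ s_t = s_0 + \sum_{j=1}^\infty n^{-j/2} c_j t^j, \qquad c_j := (-1)^j \tfrac1n \tr\bigl((R_0 V)^j R_0\bigr),$$
and each $c_j$ is manifestly independent of $t$. Thus it suffices to prove $|c_j|\ll\|R_0\|^j\min(\|R_0\|,\frac1{n\eta})$ and then bound $\sum_{j>k}|t/\sqrt n|^j|c_j|$.

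The main computation is to expand the trace. Since $V$ is an elementary matrix of one of the forms \eqref{vform}, we may write $V=\sum_{(a,b)\in S}e_a e_b^*$ for a set $S$ of at most two index pairs. Expanding $(R_0V)^jR_0$ multilinearly in these rank-one pieces and collapsing interior factors via $e_b^*R_0 e_a=(R_0)_{ba}$ and $\tr\bigl((R_0 e_{a})(e_{b}^*R_0)\bigr)=(R_0^2)_{ba}$, one obtains
$$ c_j = (-1)^j\tfrac1n \sum_{(a_1,b_1),\dots,(a_j,b_j)\in S} (R_0)_{b_1 a_2}(R_0)_{b_2 a_3}\cdots(R_0)_{b_{j-1}a_j}\,(R_0^2)_{b_j a_1},$$
a sum of at most $2^j$ terms, each the product of $j-1$ entries of $R_0$ and one entry of $\frac1n R_0^2$. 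Every entry of $R_0$ is bounded by $\|R_0\|$ by definition of $\|\cdot\|_{(\infty,1)}$, so the only remaining point is to bound $\frac1n|(R_0^2)_{bc}|$ by $\|R_0\|\min(\|R_0\|,\frac1{n\eta})$. For the first alternative one uses the crude triangle inequality $|(R_0^2)_{bc}|\le\sum_k|(R_0)_{bk}||(R_0)_{kc}|\le n\|R_0\|^2$. For the second, Cauchy--Schwarz gives $|(R_0^2)_{bc}|\le\|(R_0)_{b,\cdot}\|_2\,\|(R_0)_{\cdot,c}\|_2$, and since $M_0$ is Hermitian the resolvent identity yields $R_0R_0^*=R_0^*R_0=\eta^{-1}\Im R_0$ (where $\Im R_0:=(R_0-R_0^*)/(2\sqrt{-1})$), whence $\|(R_0)_{b,\cdot}\|_2^2=(R_0R_0^*)_{bb}=\eta^{-1}\Im(R_0)_{bb}\le\|R_0\|/\eta$ and likewise for the column, giving $|(R_0^2)_{bc}|\le\|R_0\|/\eta$. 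Combining the two alternatives gives $\frac1n|(R_0^2)_{bc}|\le\|R_0\|\min(\|R_0\|,\frac1{n\eta})$, hence $|c_j|\le 2^j\|R_0\|^j\min(\|R_0\|,\frac1{n\eta})$, which yields \eqref{cj-bound} for fixed $j$ (and holds for all $j$ with the uniform constant $2$).

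Finally, for the error term in \eqref{expand} we sum the tail using the bound just obtained:
$$ \Bigl|\sum_{j>k} n^{-j/2}c_j t^j\Bigr| \le \min\Bigl(\|R_0\|,\tfrac1{n\eta}\Bigr)\sum_{j>k}\Bigl(\tfrac{2|t|\,\|R_0\|}{\sqrt n}\Bigr)^j,$$
and since \eqref{tro} forces $2|t|\|R_0\|/\sqrt n=o(1)$, the geometric series is dominated by its first term $(2|t|\|R_0\|/\sqrt n)^{k+1}$, producing the claimed error $O\bigl(n^{-(k+1)/2}|t|^{k+1}\|R_0\|^{k+1}\min(\|R_0\|,\tfrac1{n\eta})\bigr)$ for fixed $k$. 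The only place any care is needed is keeping the combinatorial constant in $c_j$ controlled uniformly in $j$, so that the tail really converges geometrically; this is automatic here since $|S|\le2$, so there is no serious obstacle — the proposition is essentially a bookkeeping exercise built on the Neumann series and the Hermiticity identity for $R_0R_0^*$.
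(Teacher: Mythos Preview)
Your proof is correct and follows the same route as the cited reference \cite[Proposition~13]{TV-determinant} to which the paper defers: insert the Neumann series \eqref{neumann}, collapse the rank-one structure of $V$ so that $\tfrac1n\tr((R_0V)^jR_0)$ becomes a bounded sum of products of $j-1$ resolvent entries times one entry of $\tfrac1n R_0^2$, and bound the latter via the Hermitian resolvent identity $R_0R_0^*=\eta^{-1}\Im R_0$. One cosmetic slip: in the third form $V=\sqrt{-1}\,e_ae_b^*-\sqrt{-1}\,e_be_a^*$ of \eqref{vform} the rank-one pieces carry unimodular scalars, so strictly $V=\sum_{(a,b)\in S}\epsilon_{ab}\,e_ae_b^*$ with $|\epsilon_{ab}|=1$; this changes nothing in the estimates but should be said for correctness.
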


\begin{proof} See \cite[Proposition 13]{TV-determinant}.
\end{proof}

\section{Proof of the Four Moment Theorem}\label{4mt-sec}

We now prove Theorem \ref{four-moment}.

We begin with some simple reductions.  Observe that each entry $\xi_{ij}$ of $M_n$ has size at most $O(n^{o(1)})$ with overwhelming probability.  Thus, by modifying the distributions of the $\xi_{ij}$ slightly (taking care to retain the moment matching property\footnote{Alternatively, one can allow the moments to deviate from each other by, say, $O(n^{-100})$, which one can verify will not affect the argument. See \cite[Chapter 2]{bai} or \cite[Appendix A]{nguyen} for details.}) and assume that all entries \emph{surely} have size $O(n^{o(1)})$.  Thus
\begin{equation}\label{manx}
\|M_n\|_{(\infty,1)}, \|M'_n\|_{(\infty,1)} \ll n^{o(1)}.
\end{equation}

We may also assume that $G$ is bounded by $1$ rather than by $n^{c_0}$, since the general claim then follows by normalising $G$ and shrinking $c_0$ as necessary; thus
\begin{equation}\label{g-bound-2}
|G(x_1,\dots,x_k)| \leq 1
\end{equation}
for all $x_1,\dots,x_k \in \R$.

Fix $M_n,M'_n$.  Recall that a statistic $S$ is \emph{asymptotically $(M_n,M'_n)$-insensitive}, or \emph{insensitive} for short, if one has
$$ |S(M_n) - S(M'_n)| \ll n^{-c}$$
for some fixed $c>0$.  By shrinking $c_0$ if necessary, our task is thus to show that the quantity
$$ \E G\left( \log|\det(M_n-z_1)|, \dots, \log|\det(M_n-z_k)| \right) $$
is insensitive.

The next step is to use \eqref{manz} to replace the log-determinants $\log |\det(M_n-z)|$ with the log-determinants $\log|\det W_{n,z}|$, where the $W_{n,z}$ are defined by \eqref{wnz}.  After translating and rescaling the function $G$, we thus see that it suffices to show that
$$ \E G\left( \log|\det(W_{n,z_1})|, \dots, \log|\det(W_{n,z_k})| \right) $$
is insensitive.

We observe the identity
$$ \log |\det(W_{n,z_j})| = \log |\det (W_{n,z_j}-\sqrt{-1}T)| - n \Im \int_0^T s_j(\sqrt{-1}\eta)\ d\eta$$
for any $T>0$ for all $1 \leq j \leq k$, where $s_j(z) := \frac{1}{n} \tr( W_{n,z_j} - z)^{-1}$ is the Stieltjes transform, as can be seen by writing everything in terms of the eigenvalues of $W_{n,z_j}$.  If we set $T := n^{100}$ then we see that
\begin{align*}
\log |\det (W_{n,z_j}-\sqrt{-1}T)| &= n \log T + \log|\det(1 - n^{-100} W_{n,z_j})| \\
&=n \log T + O(n^{-10})
\end{align*}
(say), thanks to \eqref{manx} and the hypothesis that $z_j$ lies in $B(0,(1-\delta)\sqrt{n})$.  Thus, by translating $G$ again, it suffices to show that the quantity
$$\E G\left( \left(n \Im \int_0^{n^{100}} s_j(\sqrt{-1}\eta)\ d\eta \right)_{j=1}^k \right)$$
is insensitive.

We need to truncate away from the event that $W_{n,z_j}$ has an eigenvalue too close to zero.
Let $\chi: \R \to \R$ be a smooth cutoff to the region $|x| \leq n^{3c_0}$ that equals $1$ for $|x| \leq n^{3c_0}/2$.  From Proposition \ref{lsv} and the union bound we have with probability $1-O(n^{-c_0+o(1)})$ that there are no eigenvalues of $W_{n,z_j}$ in the interval $[-n^{1-2c_0}, n^{-1-2c_0}]$ for all $1 \leq j \leq k$.  Combining this with Proposition \ref{ni} and a dyadic decomposition, we conclude that with probability $1-O(n^{-c_0+o(1)})$ one has
$$ |\Im s_j(\sqrt{-1} n^{-1-4c_0} )| \ll n^{2c_0+o(1)}$$
for all $1 \leq j \leq k$.  In particular, one has
$$\chi( \Im s_j(\sqrt{-1} n^{-1-4c_0} ) ) = 1$$
with overwhelming probability.

In view of this fact and \eqref{g-bound-2}, it suffices to show that the quantity
\begin{equation}\label{eton}
\E G\left( n \Im \int_0^{n^{100}} s_j(\sqrt{-1}\eta)\ d\eta \right) \chi\left( \left( \Im s_j(\sqrt{-1} n^{-1-4c_0} ) \right)_{j=1}^k \right)
\end{equation}
is insensitive.

Call a statistic $S$ \emph{very highly insensitive} if one has
$$ |S(M_n) - S(M'_n)| \ll n^{-2-c}$$
for some fixed $c>0$.  By swapping the real and imaginary parts of the components of $M_n$ with those of $M'_n$ one at a time, we see from telescoping series that it will suffice to show that \eqref{eton} is very highly insensitive whenever $M_n$ and $M'_n$ are identical in all but one entry, and in that entry either the real parts are identical, or the imaginary parts are identical.

Fix $M_n, M'_n$ as indicated.  Then for each $1 \leq j \leq k$, one has
\begin{align*}
W_{n,z_j} &= W_{n,z_j,0} + \frac{1}{\sqrt{n}} \xi V\\
W'_{n,z_j} &= W_{n,z_j,0} + \frac{1}{\sqrt{n}} \xi' V
\end{align*}
where $\xi, \xi'$ are real random variables that match to order $4$ and have the magnitude bound
\begin{equation}\label{xi-bound}
|\xi|, |\xi'| \ll n^{o(1)},
\end{equation}
 $V$ is an elementary matrix, and $W_{n,z_j,0}$ is a random Hermitian matrix independent of both $\xi$ and $\xi'$.  To emphasise this representation, and to bring the notation closer to that of the preceding section, we rewrite $s_j$ as $s^{(j)}_\xi$, where
$$ s^{(j)}_t(z) := \frac{1}{2n} \tr R^{(j)}_t(z)$$
and
$$ R^{(j)}_t(z) := (W_{n,z_j,0} + \frac{1}{\sqrt{n}} t V - z)^{-1}.$$
Our task is now to show that the quantity
\begin{equation}\label{lok}
\E G\left( n \Im \int_0^{n^{100}} s^{(j)}_\xi(\sqrt{-1}\eta)\ d\eta \right) \chi\left( \left( \Im s^{(j)}_\xi(\sqrt{-1} n^{-1-4c_0} ) \right)_{j=1}^k \right)
\end{equation}
only changes by $O(n^{-2-c})$ when $\xi$ is replaced by $\xi'$.

We now place some bounds on $R^{(j)}_t(z)$.

\begin{lemma}[Eigenvector delocalization]\label{laj}  Let $1 \leq j \leq k$, and suppose that we are in the event that $\chi( \Im s_j(\sqrt{-1} n^{-1-4c_0} ) )$ is non-zero.  Then with overwhelming probability, one has
\begin{equation}\label{rj}
 \sup_{\eta > 0} \| R^{(j)}_\xi(\sqrt{-1}\eta) \|_{(\infty,1)} \ll n^{O(c_0)}
\end{equation}
and hence (by Lemma \ref{neum} and \eqref{xi-bound}, swapping the roles of $\xi$ and $0$)
\begin{equation}\label{pretty}
\sup_{\eta > 0} \| R^{(j)}_0(\sqrt{-1}\eta) \|_{(\infty,1)} \ll n^{O(c_0)}.
\end{equation}
\end{lemma}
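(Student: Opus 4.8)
We establish the bound \eqref{rj} for $R^{(j)}_\xi$ first; the bound \eqref{pretty} for $R^{(j)}_0$ will then follow by the indicated Neumann series argument. Write $\eta_0 := n^{-1-4c_0}$, and let $u_1,\dots,u_{2n}$ be an orthonormal eigenbasis of $W_{n,z_j}$ with eigenvalues $\lambda_1,\dots,\lambda_{2n}$, so that for every $\eta>0$ and $1\le a,b\le 2n$ one has
$$ R^{(j)}_\xi(\sqrt{-1}\eta)_{ab} = \sum_{k=1}^{2n}\frac{u_k(a)\overline{u_k(b)}}{\lambda_k-\sqrt{-1}\eta},\qquad \Im R^{(j)}_\xi(\sqrt{-1}\eta)_{aa} = \sum_{k=1}^{2n}\frac{|u_k(a)|^2\,\eta}{\lambda_k^2+\eta^2}. $$
The role of the hypothesis is that $\chi(\Im s_j(\sqrt{-1}\eta_0))\ne 0$ forces, \emph{deterministically}, a spectral gap: since $\chi$ is supported on $[-n^{3c_0},n^{3c_0}]$ and $\Im s_j(\sqrt{-1}\eta_0)\ge 0$, we get $\sum_k\frac{\eta_0}{\lambda_k^2+\eta_0^2}=O(n)\,\Im s_j(\sqrt{-1}\eta_0)=O(n^{1+3c_0})$; comparing a single term gives $\lambda_k^2+\eta_0^2\gg n^{-2-7c_0}$, and since $\eta_0^2=n^{-2-8c_0}$ is negligible next to this we conclude $|\lambda_k(W_{n,z_j})|\ge\eta_0$ for all $k$, once $n$ is large. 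In particular $W_{n,z_j}$ is invertible and $R^{(j)}_\xi(\sqrt{-1}\eta)$ is well defined for all $\eta\ge 0$.

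For the range $\eta\ge\eta_0$ we simply invoke the resolvent bound (Proposition \ref{Resolv}), which applies to $W_{n,z_j}$ since the latter is the Hermitization \eqref{wnz} of the iid matrix $M_n$ (whose entries we have already truncated to size $n^{o(1)}$): with overwhelming probability, and uniformly in $\eta>0$ and all entries $a,b$,
$$ |R^{(j)}_\xi(\sqrt{-1}\eta)_{ab}| \ll n^{o(1)}\Big(1+\tfrac{1}{n\eta}\Big), $$
so for $\eta\ge\eta_0$ the right-hand side is $\ll n^{o(1)}(1+n^{4c_0})\ll n^{O(c_0)}$. The same inequality applied at the points $\sqrt{-1}t$ with $t\ge\eta_0$ also yields the eigenvector-mass estimate needed below: since $\frac{t}{\lambda_k^2+t^2}\ge\frac{1}{2t}$ whenever $|\lambda_k|\le t$, we have $\sum_{|\lambda_k|\le t}|u_k(a)|^2\le 2t\,\Im R^{(j)}_\xi(\sqrt{-1}t)_{aa}\ll n^{o(1)}\big(t+\tfrac1n\big)$ for all $t\ge\eta_0$ and all $a$.

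It remains to treat $0<\eta<\eta_0$. Here $\sqrt{\lambda_k^2+\eta^2}\ge|\lambda_k|\ge\eta_0$, and decomposing the spectrum into the dyadic shells $S_m:=\{k:\ 2^m\eta_0\le|\lambda_k|<2^{m+1}\eta_0\}$, of which only $O(\log n)$ are nonempty (as $\|W_{n,z_j}\|_{\mathrm{op}}=n^{O(1)}$ trivially), Cauchy--Schwarz together with the mass estimate at scale $t=2^{m+1}\eta_0$ gives
$$ \sum_{k\in S_m}\frac{|u_k(a)||u_k(b)|}{\sqrt{\lambda_k^2+\eta^2}} \le \frac{1}{2^m\eta_0}\Big(\sum_{k\in S_m}|u_k(a)|^2\Big)^{1/2}\Big(\sum_{k\in S_m}|u_k(b)|^2\Big)^{1/2} \ll \frac{n^{o(1)}(2^m\eta_0+\tfrac1n)}{2^m\eta_0} \ll n^{o(1)}\Big(1+\tfrac{1}{n2^m\eta_0}\Big). $$
Summing over $m$ and using $\frac{1}{n\eta_0}=n^{4c_0}$, we obtain $|R^{(j)}_\xi(\sqrt{-1}\eta)_{ab}|\ll n^{o(1)}(\log n+n^{4c_0})\ll n^{O(c_0)}$, uniformly over $\eta\in(0,\eta_0)$ and $a,b$. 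Combining the two ranges proves \eqref{rj}. Finally, since $|\xi|\ll n^{o(1)}$ and \eqref{rj} give $|\xi|\,\|R^{(j)}_\xi(\sqrt{-1}\eta)\|_{(\infty,1)}\ll n^{O(c_0)}=o(\sqrt n)$, Lemma \ref{neum}, applied with base matrix $W_{n,z_j}$ and perturbation $-\xi V$ (i.e.\ swapping the roles of $\xi$ and $0$), gives $\|R^{(j)}_0(\sqrt{-1}\eta)\|_{(\infty,1)}\le(1+o(1))\|R^{(j)}_\xi(\sqrt{-1}\eta)\|_{(\infty,1)}\ll n^{O(c_0)}$ uniformly in $\eta$, which is \eqref{pretty}.

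The only step requiring genuine care is the range $\eta<\eta_0$, where Proposition \ref{Resolv} is vacuous and a single application of Cauchy--Schwarz at scale $\eta_0$ would lose a fatal factor $1/\eta_0=n^{1+4c_0}$; localising to dyadic shells, so that the denominator scale matches the eigenvector mass in the numerator, is exactly what cancels this loss. Everything else is a routine combination of the deterministic spectral gap extracted from the cutoff $\chi$, the resolvent estimate of Proposition \ref{Resolv}, and the Neumann expansion of Lemma \ref{neum}.
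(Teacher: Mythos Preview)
Your proof is correct and follows essentially the same strategy as the paper: extract a deterministic spectral gap $|\lambda_k|\gg n^{-1-O(c_0)}$ from the support of $\chi$, use Proposition~\ref{Resolv} to control the resolvent (equivalently, the eigenvector masses $\sum_{|\lambda_k|\le t}|u_k(a)|^2$) at scales $t\ge\eta_0$, and then combine a dyadic decomposition with Cauchy--Schwarz to handle $\eta<\eta_0$. The only cosmetic difference is that the paper performs the dyadic summation in the parameter $\eta$ to bound $\sum_k |u_k(a)|^2/(\lambda_k^2+\eta^2)^{1/2}$ and then applies Cauchy--Schwarz once, whereas you decompose the spectrum into dyadic shells $2^m\eta_0\le|\lambda_k|<2^{m+1}\eta_0$ and apply Cauchy--Schwarz shell by shell; these are equivalent arrangements of the same estimate.
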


The bounds in the above lemma are similar to those from Proposition \ref{Resolv} (and Proposition \ref{Resolv} will be used in the proof of the lemma), but the point here is that the bounds remain uniform in the limit $\eta \to 0$, whereas the bounds in Proposition \ref{Resolv} blow up at that limit.

\begin{proof} By hypothesis and the support of $\chi$, one has
$$|\Im s^{(j)}_\xi(\sqrt{-1} n^{-1-4c_0} )| \ll n^{-3c_0}.$$
The left-hand side can be expanded as
$$ n^{-2-4c_0} \sum_{i=1}^n \frac{1}{\lambda_i(W_{n,z_j})^2 + n^{-2-8c_0}}$$
and so we obtain the lower bound
\begin{equation}\label{law}
 \lambda_i(W_{n,z_j}) \gg n^{-1-c_0/2}
\end{equation}
for all $i$.

From Proposition \ref{Resolv}, one already has
$$ \sup_{\eta > 1/n} \| R^{(j)}_\xi(\sqrt{-1}\eta) \|_{(\infty,1)} \ll n^{o(1)}$$
with overwhelming probability.  In particular, for each $1 \leq j \leq k$ and $\eta > 1/n$, one has
$$ \frac{\eta}{n} \sum_{i=1}^n \frac{|e_j^* u_i|^2}{\lambda_i(W_{n,z_j})^2 + \eta^2} \ll n^{o(1)}.$$
Combining this with \eqref{law}, we see that
$$ \frac{\eta}{n} \sum_{i=1}^n \frac{|e_l^* u_i|^2}{\lambda_i(W_{n,z_j})^2 + \eta^2} \ll n^{O(c_0)}.$$
for all $\eta > 0$, $1 \leq j \leq k$, and $1 \leq l \leq n$.  By dyadic summation we conclude that
$$ \sum_{i=1}^n \frac{|e_l^* u_i|^2}{(\lambda_i(W_{n,z_j})^2 + \eta^2)^{1/2}} \ll n^{O(c_0)}$$
for all $\eta > 1/n$, and thus by Cauchy-Schwarz one has
$$ |\frac{1}{n} \sum_{i=1}^n \frac{(e_l^* u_i) \overline{(e_m^* u_i)}}{\lambda_i(W_{n,z_j}) - \sqrt{-1} \eta}| \ll n^{O(c_0)}$$
for all $\eta > 0$ and $1 \leq j \leq k$, and $1 \leq l,m \leq n$.  But the left-hand side is the $lm$ coefficient of
$R^{(j)}_\xi(\sqrt{-1}\eta)$, and the claim follows.
\end{proof}

We now condition to the event that \eqref{pretty} holds for all $1 \leq j \leq k$; Lemma \ref{laj} ensures us that the error in doing so is $O_A(n^{-A})$ for any $A$.  Then by Proposition \ref{proper}, we have
$$ s^{(j)}_\xi(\sqrt{-1}\eta) = s^{(j)}_0(\sqrt{-1}\eta) + \sum_{i=1}^4 \xi^i n^{-i/2} c_i^{(j)}(\eta) + O( n^{-5/2 + O(c_0)} )  \min( 1, \frac{1}{n\eta} )$$
for each $j$ and all $\eta>0$, and similarly with $\xi$ replaced by $\tilde \xi$, where the coefficients $c_i^{(j)}$ enjoy the bounds
$$ |c_i^{(j)}| \ll n^{O(c_0)} \min( 1, \frac{1}{n\eta} ).$$
From this and Taylor expansion we see that the expression
$$ G\left( n \Im \int_0^{n^{100}} s_\xi(\sqrt{-1}\eta)\ d\eta \right) \chi\left( \Im s_\xi(E+\sqrt{-1} n^{-1-4c_0} ) \right)$$
is equal to a polynomial of degree at most $4$ in $\eta$ with coefficients independent of $\eta$, plus an error of $O( n^{-5/2 + O(c_0)} )$, which gives the claim for $c_0$ small enough.

\begin{remark}  If one assumes more than four matching moments, one can improve the final constant $c$ in the conclusion of Theorem \ref{four-moment}.  However, it appears that one cannot make $c$ arbitrarily large with this method, basically because the Taylor expansion becomes unfavorable when $c_0$ is too large.
\end{remark}

\section{Concentration of log-determinant for  gaussian matrices}\label{lower-sec}

In this section we establish Theorem \ref{loglower-gaussian}.  Fix $z_0 \in B(0,C)$; all our implied constants will be uniform in $z_0$.  Define $\alpha$ to be the quantity $\alpha := \frac{1}{2} (|z_0|^2-1)$ if $|z_0| \leq 1$, and $\alpha := \log |z_0|$ if $|z_0| \geq 1$.
Our task is to show that $\log |\det(M_n - z_0 \sqrt{n})|$ concentrates around $\frac{1}{2} n \log n + \alpha n$.

\subsection{The upper bound}

In this section, we prove that with overwhelming probability
$$\log |\det(M_n - z_0 \sqrt{n})|
\le \frac{1}{2} n \log n + \alpha n + n^{o(1)},$$
which is the upper bound of what we need. In fact, the statement (which is based on the second moment method) holds for  general
random matrices with non-gaussian entries.

\begin{proposition}[Upper bound on log-determinant]\label{logupper}
Let $M_n = (\xi_{ij})_{1 \leq i,j \leq n}$ be a random matrix with independent entries having mean zero and variance one.
Then for any $z_0 \in \C$, one has
$$ \log|\det(M_n - z_0 \sqrt{n})| \leq \frac{1}{2} n \log n +
\alpha n + O( n^{o(1)} )$$
with overwhelming probability.
\end{proposition}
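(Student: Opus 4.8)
The plan is to use the second moment method; this works for any matrix with jointly independent entries of mean zero and variance one, and never uses the gaussian structure. Write $B := M_n - z_0\sqrt{n}\,I$, so that the entries $B_{ij} = \xi_{ij} - z_0\sqrt{n}\,1_{i=j}$ are still jointly independent, with $\E B_{ij} = -z_0\sqrt{n}\,1_{i=j}$ and $\E|B_{ij}|^2 = 1 + n|z_0|^2\,1_{i=j}$. Expanding $\det B = \sum_{\sigma\in S_n}\sgn(\sigma)\prod_i B_{i\sigma(i)}$ and forming $\E|\det B|^2 = \E[\det B\,\overline{\det B}] = \sum_{\sigma,\tau}\sgn(\sigma\tau)\prod_i \E[B_{i\sigma(i)}\overline{B_{i\tau(i)}}]$, I would first check that every cross term with $\sigma\neq\tau$ vanishes: for any row $i$ with $\sigma(i)\neq\tau(i)$, the factor $\E[B_{i\sigma(i)}\overline{B_{i\tau(i)}}]$ expands into products of mean-zero expectations (by column-independence within a row) plus a term supported on the impossible event $\sigma(i)=\tau(i)=i$, hence is $0$. (Note this uses only $\E\xi_{ij}=0$ and $\E|\xi_{ij}|^2=1$, not $\E\xi_{ij}^2=0$, so the computation covers both the real and complex ensembles at once.) This leaves
$$ \E|\det B|^2 = \sum_{\sigma\in S_n}\prod_{i=1}^n\bigl(1 + n|z_0|^2\,1_{\sigma(i)=i}\bigr) = \sum_{\sigma\in S_n}(1+n|z_0|^2)^{\operatorname{fix}(\sigma)}. $$
Setting $t := 1+n|z_0|^2$ and swapping the order of summation via $t^{\operatorname{fix}(\sigma)} = \sum_{S\subseteq\operatorname{Fix}(\sigma)}(t-1)^{|S|}$, one gets $\sum_\sigma t^{\operatorname{fix}(\sigma)} = \sum_{S\subseteq[n]}(t-1)^{|S|}(n-|S|)! = n!\sum_{k=0}^n\frac{(t-1)^k}{k!}$, i.e. the exact formula
$$ \E|\det(M_n - z_0\sqrt{n})|^2 = n!\sum_{j=0}^n\frac{(n|z_0|^2)^j}{j!}. $$

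The next step is an elementary asymptotic analysis of this quantity in the two regimes. When $|z_0|\le 1$ one bounds the sum by $\sum_{j\ge 0}(n|z_0|^2)^j/j! = e^{n|z_0|^2}$, so $\E|\det B|^2\le n!\,e^{n|z_0|^2}$, and Stirling ($\log n! = n\log n - n + O(\log n)$) gives $\tfrac12\log\E|\det B|^2\le \tfrac12 n\log n + \tfrac12 n(|z_0|^2-1)+O(\log n)$. When $|z_0|\ge 1$ the terms of $\sum_{j=0}^n (n|z_0|^2)^j/j!$ are non-decreasing in $j$, so the sum lies between its top term $(n|z_0|^2)^n/n!$ and $(n+1)$ times it; hence $\E|\det B|^2$ lies between $(n|z_0|^2)^n$ and $(n+1)(n|z_0|^2)^n$, giving $\tfrac12\log\E|\det B|^2 = \tfrac12 n\log n + n\log|z_0| + O(\log n)$. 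In both cases this equals $\tfrac12 n\log n + \alpha n + O(\log n)$, with $\alpha$ as in the statement (and the two formulas agree at $|z_0|=1$).

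Finally I would conclude by Markov's inequality: for any fixed $A>0$,
$$ \P\bigl(|\det(M_n - z_0\sqrt{n})|^2 > n^A\,\E|\det(M_n - z_0\sqrt{n})|^2\bigr) < n^{-A}, $$
so outside an event of probability at most $n^{-A}$ one has $\log|\det(M_n - z_0\sqrt{n})| \le \tfrac12\log\E|\det(M_n-z_0\sqrt n)|^2 + \tfrac A2\log n = \tfrac12 n\log n + \alpha n + O_A(\log n)$. Since $A$ is arbitrary and $O(\log n) = O(n^{o(1)})$, this is exactly the claimed bound with overwhelming probability. There is essentially no serious obstacle here — the second moment is computed exactly and the tail bound is immediate — the only points requiring care being the bookkeeping of the permutation expansion (making sure all off-diagonal cross terms vanish without assuming $\E\xi_{ij}^2=0$, so that real ensembles are covered) and the two-regime asymptotics of $n!\sum_{j\le n}(n|z_0|^2)^j/j!$. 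The genuinely hard direction, the matching lower bound, is the one that uses the gaussian structure and is treated separately.
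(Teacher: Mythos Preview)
Your proof is correct and follows essentially the same approach as the paper: compute $\E|\det(M_n-z_0\sqrt{n})|^2$ exactly via the permutation expansion to obtain $n!\sum_{j=0}^n (n|z_0|^2)^j/j!$, bound this by $n!\,e^{n|z_0|^2}$ for $|z_0|\le 1$ and by $(n+1)(n|z_0|^2)^n$ for $|z_0|\ge 1$, then apply Stirling and Markov. The only cosmetic difference is that the paper reaches the same identity by first decomposing $\det(M_n-z_0\sqrt n)$ into the orthogonal pieces $F_{A,\sigma}=(-z_0\sqrt n)^{|A|}\prod_{i\notin A}\xi_{i\sigma(i)}$, whereas you expand $\E|\det B|^2$ directly as a double sum over $(\sigma,\tau)$ and kill the off-diagonal terms; the two computations are equivalent.
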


The key is the following lemma.

\begin{lemma}  \label{secondmoment} Let $M_n = (\xi_{ij})_{1 \leq i,j \leq n}$ be a random matrix as above.
  Then for any $z_0 \in \C$, one has
\begin{equation}\label{vardet}
\E |\det(M_n - z_0 \sqrt{n})|^2 \leq n! \exp(|z_0|^2 n)
\end{equation}
for all $z_0$.  When $|z_0| \geq 1$, we have the variant bound
\begin{equation}\label{vardet-0}
\E |\det(M_n - z_0 \sqrt{n})|^2 \leq n^{n+1} |z_0|^{2n}.
\end{equation}
\end{lemma}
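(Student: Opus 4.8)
The plan is to compute the second moment $\E|\det(M_n - z_0\sqrt{n})|^2$ exactly and then read off both bounds from elementary estimates on the resulting sum. Write $A := M_n - z_0\sqrt{n}$, so that $a_{ij} = \xi_{ij}$ has mean zero and variance one for $i \neq j$, while $a_{ii} = \xi_{ii} - z_0\sqrt{n}$. Expanding both factors of $|\det A|^2 = \det A\,\overline{\det A}$ by the Leibniz formula and taking expectations, the independence of distinct rows of $A$ gives $\E \prod_{i} a_{i\sigma(i)}\overline{a_{i\tau(i)}} = \prod_{i} \E[a_{i\sigma(i)}\overline{a_{i\tau(i)}}]$ for every pair $\sigma,\tau \in S_n$. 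If $\sigma(i) \neq \tau(i)$ for some $i$, the factors $a_{i\sigma(i)}$ and $a_{i\tau(i)}$ lie in distinct entries of the same row, hence are independent, so that $i$-th factor equals $\E[a_{i\sigma(i)}]\,\overline{\E[a_{i\tau(i)}]}$; since the only entry of row $i$ with nonzero mean is the diagonal entry $a_{ii}$, this vanishes unless $\sigma(i) = \tau(i) = i$, contradicting $\sigma(i) \neq \tau(i)$. Therefore only the diagonal terms $\sigma = \tau$ survive, and
$$ \E|\det A|^2 = \sum_{\sigma \in S_n} \prod_{i=1}^n \E|a_{i\sigma(i)}|^2 = \operatorname{per}(C), $$
where $C$ is the matrix with $C_{ij} = \E|a_{ij}|^2$, i.e.\ $C_{ij} = 1$ for $i \neq j$ and $C_{ii} = 1 + |z_0|^2 n$; equivalently $C = J + sI$, where $J$ is the all-ones matrix and $s := |z_0|^2 n$.

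Next I would evaluate this permanent. Expanding each diagonal factor in $\operatorname{per}(J+sI) = \sum_{\sigma \in S_n} \prod_{i=1}^n (1 + s\,1_{\sigma(i)=i})$ over subsets of the fixed-point set of $\sigma$ and swapping the two sums,
$$ \operatorname{per}(J+sI) = \sum_{S \subseteq \{1,\dots,n\}} s^{|S|}\,\#\{\sigma \in S_n: \sigma|_S = \mathrm{id}\} = \sum_{k=0}^n \binom{n}{k} s^k (n-k)! = n!\sum_{k=0}^n \frac{s^k}{k!}, $$
so that $\E|\det(M_n - z_0\sqrt{n})|^2 = n!\sum_{k=0}^n \frac{(|z_0|^2 n)^k}{k!}$ exactly. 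The identical computation, with $|\cdot|^2$ replaced by squares and the conjugates deleted, handles the real case and yields the same identity for $\E(\det A)^2$ when $z_0$ is real.

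Finally, both bounds are immediate from this identity. For \eqref{vardet} I would simply extend the sum to $k = \infty$: $\E|\det A|^2 \leq n!\sum_{k=0}^\infty \frac{(|z_0|^2 n)^k}{k!} = n!\,e^{|z_0|^2 n}$. For \eqref{vardet-0}, when $|z_0| \geq 1$ one has $s = |z_0|^2 n \geq n \geq k+1$ for all $0 \leq k \leq n-1$, so the ratio of consecutive terms $\frac{s^{k+1}/(k+1)!}{s^{k}/k!} = \frac{s}{k+1}$ is at least $1$ throughout that range; hence $(s^k/k!)_{0 \leq k \leq n}$ is nondecreasing, the sum of its $n+1$ terms is at most $(n+1)$ times the last term $s^n/n!$, and $\E|\det A|^2 \leq (n+1)\,s^n = (n+1)\,n^n |z_0|^{2n}$, which gives \eqref{vardet-0} (the factor $n+1$ is harmless and could be absorbed, e.g.\ into $n^{n+2}|z_0|^{2n}$; only the logarithm of this quantity is used later, so the distinction is immaterial for the application in Proposition~\ref{logupper}).

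I do not expect any serious obstacle here: the argument is essentially a routine second-moment calculation. The only two points that require care are the cancellation of the off-diagonal cross terms — which rests on the mean-zero hypothesis for the $\xi_{ij}$ with $i \neq j$, together with the independence of the rows — and the exact evaluation of $\operatorname{per}(J+sI)$, which is the heart of the matter.
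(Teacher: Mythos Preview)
Your proof is correct and takes essentially the same approach as the paper: both arrive at the exact identity $\E|\det(M_n - z_0\sqrt{n})|^2 = n!\sum_{k=0}^n \frac{(|z_0|^2 n)^k}{k!}$ and then read off the two bounds by extending the sum and by monotonicity of the terms, respectively. Your organization via the permanent $\operatorname{per}(J+sI)$ is a minor repackaging of the paper's decomposition into the orthogonal family $F_{A,\sigma}$, and your observation that the monotonicity argument actually yields $(n+1)n^n|z_0|^{2n}$ rather than the stated $n^{n+1}|z_0|^{2n}$ applies equally to the paper's own proof; as you note, this is immaterial for the application.
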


\begin{proof}
By cofactor expansion, one has
$$ \det(M_n - z_0 \sqrt{n}) = \sum_{\sigma \in S_n} \sgn(\sigma)
\prod_{i=1}^n (\xi_{i \sigma(i)} - z_0 \sqrt{n} 1_{\sigma(i) = i})$$
where $S_n$ is the set of permutations on $\{1,\dots,n\}$.  We can rewrite this expression as
$$ \sum_{A \subset \{1,\dots,n\}} \sum_{\sigma \in S_{n,A}} F_{A,\sigma}$$
where $S_{n,A}$ is the set of permutations $\sigma \in S_n$ that fix $A$, thus $\sigma(i)=i$ for all $i \in A$, and
$$ F_{A,\sigma} := (-z_0 \sqrt{n})^{|A|} \prod_{i \not \in A} \xi_{i \sigma(i)}.$$
As the $\xi_{ij}$ are jointly independent and have mean zero, we see that $\E F_{A,\sigma} \overline{F_{A',\sigma'}} = 0$ whenever $(A,\sigma) \neq (A',\sigma')$.  Also, as the $\xi_{ij}$ also have unit variance, we have $\E |F_{A,\sigma}|^2 = |z_0|^{2|A|} n^{|A|}$.  We conclude that
$$ \E |\det(M_n - z_0 \sqrt{n})|^2  = \sum_{A \subset \{1,\dots,n\}} \sum_{\sigma \in S_{n,A}} |z_0|^{2|A|} n^{|A|}.$$
Write $j=|A|$.  For each choice of $j=0,\dots,n$, there are $\frac{n!}{j!(n-j)!}$ choices for $A$, and $(n-j)!$ choices for $\sigma$.  We conclude that
$$ \E |\det(M_n - z_0 \sqrt{n})|^2 = n! \sum_{j=0}^n \frac{|z_0|^{2j} n^j}{j!}.$$
(This formula is well known in the literature; see e.g. \cite[Theorem 3.1]{edel}.)
Since
$$ \sum_{j=0}^\infty \frac{|z_0|^{2j} n^j}{j!} = \exp(|z_0|^2 n)$$
we obtain \eqref{vardet}.

Now suppose that $|z_0| \geq 1$, then the terms $\frac{|z_0|^{2j} n^j}{j!}$ are non-decreasing in $j$, and are thus each bounded by $|z_0|^{2n} n^n / n!$, and \eqref{vardet-0} follows.
\end{proof}

From Lemma \ref{secondmoment} and Stirling's formula, we see that
$$ \E |\det(M_n - z_0 \sqrt{n})|^2 \leq \exp( n \log n + 2 \alpha n + O(n^{o(1)}) )$$
and thus by Markov's inequality we see that
$$ |\det(M_n - z_0 \sqrt{n})|^2 \leq \exp( n \log n + 2 \alpha n + O(n^{o(1)}) )$$
with overwhelming probability, which gives Proposition \ref{logupper} as desired.

\subsection{Hessenberg form}

To finish the proof of Theorem \ref{loglower-gaussian}, we need to show the lower bound
$$ \log|\det(M_n - z_0 \sqrt{n})| \geq \frac{1}{2} n \log n +
\alpha n - O( n^{o(1)} )$$
with overwhelming probability. As we shall see later, the fact that we only seek a one-sided bound now instead of a two-sided one will lead to some convenient simplifications to the argument\footnote{If one really wished, one could adapt the arguments below to also give the upper bound, giving an alternate proof of Proposition \ref{logupper}, but this argument would be more complicated than the proof given in the previous section, and we will not pursue it here.}.

Now we will make essential use of the fact that the entries are gaussian.
The first step is to conjugate a complex gaussian matrix into an almost lower-triangular
form first observed in \cite{kv}, in the spirit of the tridiagonalisation of GUE matrices first observed by Trotter \cite{trotter}, as follows.

\begin{proposition}[Hessenberg matrix form]\label{hessen}\cite{kv}  Let $M_n$ be a complex gaussian matrix, and let $M'_n$ be the random matrix
$$
M'_n = \begin{pmatrix}
\xi_{11} & \chi_{n-1,\C} & 0 & 0 & \dots & 0 \\
\xi_{21} & \xi_{22} & \chi_{n-2,\C} & 0 & \dots & 0 \\
\xi_{31} & \xi_{32} & \xi_{33} & \chi_{n-3,\C} & \dots & 0 \\
\vdots & \vdots & \vdots & \vdots & \ddots & \vdots \\
\xi_{(n-1)1} & \xi_{(n-1)2} & \xi_{(n-1)3} & \xi_{(n-1)4} & \dots & \chi_{1,\C} \\
\xi_{n1} & \xi_{n2} & \xi_{n3} & \xi_{n4} & \dots & \xi_{nn}
\end{pmatrix}
$$
where $\xi_{ij}$ for $1 \leq j \leq i \leq n$ are iid copies of the complex gaussian $N(0,1)_\C$, and for each $1 \leq i \leq n-1$, $\chi_{i,\C}$ is a complex $\chi$ distribution of $i$ degrees of freedom (see Section \ref{notation-sec} for definitions), with the $\xi_{ij}$ and $\chi_{i,\C}$ being jointly independent.
Then the spectrum of $M_n$ has the same distribution as the spectrum of $M'_n$.

The same result holds when $M_n$ is a real gaussian matrix, except that $\xi_{ij}$ are now iid copies of the real gaussian $N(0,1)_\R$, and the $\chi_{i,\C}$ are replaced with real $\chi$ distribtions $\chi_{i,\R}$ with $i$ degrees of freedom.
\end{proposition}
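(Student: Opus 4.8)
The plan is to realize $M'_n$ as $U M_n U^*$ for a suitable random unitary matrix $U$ built from a sequence of Householder reflections, exploiting the rotational invariance of the gaussian; since conjugation by a unitary is a similarity, this will give $\operatorname{spec}(M_n) = \operatorname{spec}(M'_n)$ in distribution. We use two standard facts. First, if $g$ is a standard complex gaussian vector in $\C^m$ and $U$ is a unitary matrix independent of $g$, then $Ug$ is again a standard complex gaussian vector; moreover $\|g\|$ has the distribution $\chi_{m,\C}$ and is independent of the direction $g/\|g\|$. Second, the $m \times m$ i.i.d.\ complex gaussian ensemble is invariant under $A \mapsto UAU^*$ for any unitary $U$ independent of $A$, and the image is independent of $U$: this map preserves the Frobenius norm and is Lebesgue-measure preserving on $\C^{m \times m}$, so conditionally on $U$ the law of $UAU^*$ equals that of $A$ and does not depend on $U$. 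In the real case, both facts hold with ``unitary'' replaced by ``orthogonal'', ``$U^*$'' by ``$U^T$'', and $\chi_{m,\C}$ by $\chi_{m,\R}$.

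For the first reduction step, write the first row of $M_n$ as $(\xi_{11},w)$ with $w = (\xi_{12},\dots,\xi_{1n})$, and choose a unitary $U_1$ acting only on the coordinates $2,\dots,n$ and depending measurably on $w$ --- via a Householder reflection together with a diagonal phase correction --- so that conjugation by $U_1$, extended by the identity on the first coordinate, sends the part of the first row strictly to the right of the superdiagonal to zero and the $(1,2)$-entry to $\|w\|$, which by the first fact has distribution $\chi_{n-1,\C}$. The conjugated matrix then has $(1,1)$-entry $\xi_{11}$, first row $(\xi_{11},\|w\|,0,\dots,0)$, first column equal to $\xi_{11}$ followed by $U_1$ applied to $(\xi_{21},\dots,\xi_{n1})^T$, and trailing $(n-1)\times(n-1)$ block $U_1 (M_n)_{2:n,2:n} U_1^*$. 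Conditioning on $w$ (equivalently on $U_1$), the first fact shows that the new first column below the diagonal is again a fresh standard gaussian vector, and the second fact shows that the new trailing block is a fresh i.i.d.\ gaussian matrix; since these conditional laws do not depend on $w$, the first column, the trailing block, the scalar $\xi_{11}$, and $\|w\|$ are jointly independent. Hence the conjugated matrix has the law of the $n \times n$ matrix whose first row is $(\xi_{11},\chi_{n-1,\C},0,\dots,0)$, whose first column below the diagonal is a fresh i.i.d.\ gaussian vector, and whose trailing principal $(n-1)\times(n-1)$ block is an independent i.i.d.\ gaussian matrix.

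Now iterate, applying the same construction to the trailing block. When realized on the full matrix, the conjugating unitary at the $j$-th step is block-diagonal, acting as the identity on the first $j$ coordinates; since it fixes $e_1,\dots,e_j$ and the rows and columns already put into Hessenberg form are supported on those coordinates in the relevant directions, it leaves the already-reduced part of the matrix unchanged (and preserves the conditional law of the part of the $j$-th column below the diagonal), while reducing the trailing block one step further and producing the next superdiagonal entry $\chi_{n-j,\C}$, independent of everything already fixed. After $n-1$ steps we obtain $U M_n U^*$, with $U$ a product of unitaries and a measurable function of $M_n$, whose law is exactly that of $M'_n$. As conjugation by a unitary is a similarity, $\operatorname{spec}(M_n) = \operatorname{spec}(U M_n U^*)$ surely, so $\operatorname{spec}(M_n)$ and $\operatorname{spec}(M'_n)$ have the same distribution. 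The real gaussian case runs identically, using orthogonal Householder reflections and the real forms of the two facts above.

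The main obstacle is the independence bookkeeping in the inductive step: one must condition on precisely the $\sigma$-algebra generated by the conjugating unitaries produced so far, and verify that conditionally the not-yet-touched entries of the matrix remain freshly i.i.d.\ gaussian while the newly created superdiagonal $\chi$-entry is independent of them, and also check carefully that the later, block-diagonal conjugations genuinely do not disturb the rows and columns already in Hessenberg form. By comparison, the construction of a measurably varying unitary $U(v)$ with $U(v)v = \|v\|e_1$ on $\{v \neq 0\}$ is routine (a Householder reflection plus a phase), and the null event $\{v=0\}$ causes no difficulty.
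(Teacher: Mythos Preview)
Your proposal is correct and follows essentially the same approach as the paper: conjugate $M_n$ by a sequence of unitaries, each fixing an increasing initial set of basis vectors and zeroing out the part of the next row beyond the superdiagonal, using the rotational invariance of the gaussian to verify that the remaining entries stay fresh and independent. Your treatment is slightly more explicit about the independence bookkeeping and the Householder construction, but the argument is the same.
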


\begin{proof}  This result appears in \cite[\S 2]{kv}, but for the convenience of the reader we supply a proof here.
We establish the complex case only, as the real case is similar, making the obvious changes (such as replacing the unitary matrices in the argument below by orthogonal matrices instead).

The idea will be to exploit the unitary invariance of complex gaussian vectors by taking a complex gaussian matrix $M_n$ and conjugating it by unitary matrices (which will depend on $M_n$) until one arrives at a matrix with the distribution of $M'_n$.

Write the first row of $M_n$ as $(\xi_{11}, \dots, \xi_{1n})$.  Then there is a unitary transformation $U_1$ that preserves the first basis vector $e_1$, and maps $(\xi_{11}, \dots, \xi_{1n})$ to $(\xi_{11}, \chi_{n-1,\C}, 0, \dots, 0)$, where $\chi_{n-1,\C}$ is a complex $\chi$ distribution with $n-1$ degrees of freedom.  If we then conjugate $M_n$ by $U_1$, and use the fact that the conjugate of a gaussian vector by a unitary matrix that is independent of that vector, remains distributed as a gaussian vector, we see that the conjugate $U_1 M_n U^*_1$ to a matrix takes the form
$$
\begin{pmatrix}
\xi_{11} & \chi_{n-1,\C} & 0 & \dots & 0 \\
\xi_{21} & \xi_{22} & \xi_{23} & \dots & \xi_{2n} \\
\vdots & \vdots & \vdots & \ddots & \vdots \\
\xi_{n1} & \xi_{n2} & \xi_{n3} & \dots & \xi_{nn}
\end{pmatrix},
$$
where the $\xi_{ij}$ coefficients appearing in this matrix are iid copies of $N(0,1)_\C$ (and are not necessarily equal to the corresponding coefficients of $M_n$), and $\chi_{n-1,\C}$ is independent of all of the $\xi_{ij}$.

We may then find another unitary transformation $U_2$ that preserves $e_1$ and $e_2$, and maps the second row $(\xi_{21},\dots,\xi_{2n})$ of $U_1 M_n U_1^*$ to $(\xi_{21},\xi_{22}, \chi_{n-2,\C},0,\dots,0)$, where $\chi_{n-2,\C}$ is distributed by the complex $\chi$ distribution with $n-2$ degrees of freedom.  Conjugating $U_1 M_n U_1^*$ by $U_2$, we arrive at a matrix of the form
$$
\begin{pmatrix}
\xi_{11} & \chi_{n-1,\C} & 0 & 0 & \dots & 0 \\
\xi_{21} & \xi_{22} & \chi_{n-2,\C} & 0 & \dots & 0 \\
\xi_{31} & \xi_{32} & \xi_{33} & \xi_{34} & \dots & \xi_{3n} \\
\vdots & \vdots & \vdots & \vdots & \ddots & \vdots \\
\xi_{n1} & \xi_{n2} & \xi_{n3} & \dots & \xi_{nn}
\end{pmatrix}
$$
where the $\xi_{ij}$ coefficients appearing in this matrix are again iid copies of $N(0,1)_\C$ (though they are not necessarily identical to their counterparts in the previous matrix $U_1 M_n U_1^*$), and $\chi_{n-1,\C}$ and $\chi_{n-2,\C}$ are independent of each other and of the $\xi_{ij}$.  Iterating this procedure a total of $n-1$ times, we obtain the claim.
\end{proof}

We now use this conjugated form of the complex gaussian matrix $M_n$ to describe the characteristic polynomial $\det(M_n - z_0 \sqrt{n})$.

\begin{proposition}\label{Detf}  Let $z_0$ be a complex number, and let $M_n$ be a complex gaussian matrix.  Let $\chi_{1,\C},\dots,\chi_{n-1,\C}$ be a sequence of independent random variables distributed according to the complex $\chi$ distributions with $1,\dots,n-1$ degrees of freedom respectively.  Let $\xi_1,\dots,\xi_n$ be another sequence of independent random variables distributed according to the complex gaussian $N(0,1)_\C$, and independent of the $\chi_i$.  Define the sequence $a_1,\dots,a_n$ of complex random variables recursively by setting
\begin{equation}\label{abo}
a_1 := \xi_1 - z_0 \sqrt{n}
\end{equation}
and
\begin{equation}\label{ab2}
 a_{i+1} := \frac{-z_0 \sqrt{n} a_i}{\sqrt{|a_i|^2 + \chi_{n-i,\C}^2}} + \xi_{i+1}
\end{equation}
for $i=1,\dots,n-1$.  (Note that the $a_i$ are almost surely well-defined.)  Then the random variable
$$ \left(\prod_{i=1}^{n-1} \sqrt{|a_i|^2 + \chi_{n-i,\C}^2}\right) a_n$$
has the same distribution as $\det(M_n-z_0\sqrt{n})$.

The same conclusions hold when $M_n$ is a real gaussian matrix, after replacing $\xi_i$ with copies of the real gaussian $N(0,1)_\C$, and replacing $\chi_{i,\C}$ with a real $\chi$ distribution $\chi_{i,\R}$ with $i$ degrees of freedom.
\end{proposition}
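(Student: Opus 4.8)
The plan is to reduce, via Proposition~\ref{hessen}, to computing the law of $\det(A)$ where $A := M'_n - z_0\sqrt{n}\,I$ and $M'_n$ is the Hessenberg matrix of that proposition; I will describe the complex case, the real case being word-for-word the same after replacing the complex $\chi$ distributions and the unitary invariance of complex Gaussian vectors by their real analogues. The matrix $A$ is \emph{unreduced upper Hessenberg}: $A_{ij}=0$ for $j\ge i+2$, the superdiagonal entries $A_{i,i+1}=\chi_{n-i,\C}$ are (a.s.) nonzero, $A_{ii}=\xi_{ii}-z_0\sqrt n$, and $A_{ij}=\xi_{ij}$ for $j<i$, with all the $\xi_{ij}$ and $\chi_{i,\C}$ independent.

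First I would \emph{linearize} the determinant by a Hyman-type argument. Define $x=(x_1,\dots,x_n)$ by $x_1:=1$ and the forward recursion $x_{i+1}:=-\chi_{n-i,\C}^{-1}\sum_{j=1}^i A_{ij}x_j$ for $1\le i\le n-1$, so that the first $n-1$ coordinates of $Ax$ vanish and $(Ax)_n=\sum_{j=1}^n A_{nj}x_j=:c$. Multiplying $A$ on the right by the matrix $P$ whose first column is $x$ and whose other columns are $e_2,\dots,e_n$ (so $\det P=x_1=1$) turns the first column of $AP$ into $c\,e_n$; expanding along it leaves a lower-triangular minor with diagonal $\chi_{n-1,\C},\dots,\chi_{1,\C}$, whence $\det A=(-1)^{n+1}c\prod_{i=1}^{n-1}\chi_{i,\C}$. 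This is an exact identity, valid even when $A$ is singular.

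Next I would run the scalars $w_i:=\sum_{j=1}^i A_{ij}x_j=\bigl(\sum_{j=1}^i\xi_{ij}x_j\bigr)-z_0\sqrt n\,x_i$ (so $x_{i+1}=-w_i/\chi_{n-i,\C}$ and $w_n=c$) through the filtration $\mathcal F_i:=\sigma(\chi_1,\dots,\chi_{n-1};\text{rows }1,\dots,i)$. Since $x_1,\dots,x_i$ are $\mathcal F_{i-1}$-measurable and $(\xi_{i1},\dots,\xi_{ii})$ is an independent standard complex Gaussian vector in $\C^i$, the rotational invariance of such vectors gives $\sum_{j=1}^i\xi_{ij}x_j=\sqrt{s_i}\,\eta_i$, where $s_i:=\sum_{j=1}^i|x_j|^2$ and $\eta_i\sim N(0,1)_\C$ is independent of $\mathcal F_{i-1}$; chaining in $i$ shows $\eta_1,\dots,\eta_n$ are jointly i.i.d.\ $N(0,1)_\C$. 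Writing $a_i:=w_i/\sqrt{s_i}$, one gets $s_{i+1}=s_i(|a_i|^2+\chi_{n-i,\C}^2)/\chi_{n-i,\C}^2$, and then $a_1=\eta_1-z_0\sqrt n$, $a_{i+1}=\eta_{i+1}+z_0\sqrt n\,a_i/\sqrt{|a_i|^2+\chi_{n-i,\C}^2}$, and, using $\prod_i\chi_{i,\C}=\prod_i\chi_{n-i,\C}$, $\det A\overset{d}{=}(-1)^{n+1}\bigl(\prod_{i=1}^{n-1}\sqrt{|a_i|^2+\chi_{n-i,\C}^2}\bigr)a_n$. To reconcile signs with \eqref{abo}--\eqref{ab2}, I would replace $\eta_i$ by $\xi_i:=(-1)^{i-1}\eta_i$ (still i.i.d.\ $N(0,1)_\C$): the recursion becomes exactly \eqref{abo}--\eqref{ab2}, the $|a_i|$ are unchanged, and $a_n$ picks up a factor $(-1)^{n-1}$ cancelling the prefactor $(-1)^{n+1}$, yielding the asserted equality in law.

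I expect the main obstacle to be organizational rather than conceptual: carefully propagating the normalizations $s_i=\lVert(x_1,\dots,x_i)\rVert^2$ and the signs through the recursion, and justifying the ``fresh independent Gaussian at each step'' claim as a genuine joint statement (so that $\eta_1,\dots,\eta_n$ really are i.i.d., via independence of $\eta_i$ from $\mathcal F_{i-1}\supseteq\sigma(\eta_1,\dots,\eta_{i-1})$) rather than merely a one-step marginal one. All degenerate events---$\chi_{i,\C}=0$, $a_i$ undefined, $\det A=0$---have probability zero and can be discarded.
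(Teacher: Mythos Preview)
Your proof is correct and complete. It reaches the same destination as the paper but by a genuinely different route.

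The paper proceeds by applying successive unitary column operations (Givens rotations on columns $i$ and $i+1$) to the Hessenberg matrix $M'_n - z_0\sqrt{n}$, at each step rotating the pair $(a_i,\chi_{n-i,\C})$ on row $i$ into $(\sqrt{|a_i|^2+\chi_{n-i,\C}^2},0)$; unitary invariance of the Gaussian entries in the rows below then produces the fresh Gaussian $\xi_{i+1}$ in the next diagonal slot, and after $n-1$ rotations one has a lower-triangular matrix whose diagonal is read off directly. Your approach instead uses Hyman's forward-substitution identity for unreduced Hessenberg determinants to express $\det A$ as $(-1)^{n+1}c\prod_i\chi_{i,\C}$, and then rewrites the recursion for $c$ distributionally by normalizing with $s_i=\lVert(x_1,\dots,x_i)\rVert^2$ and invoking rotational invariance of the Gaussian row $(\xi_{i1},\dots,\xi_{ii})$ against the $\mathcal F_{i-1}$-measurable vector $(x_1,\dots,x_i)$. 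Both arguments hinge on the same probabilistic fact (a fresh independent standard Gaussian appears at each step), but yours packages the linear algebra as a single deterministic identity followed by a filtration argument, whereas the paper interleaves the linear algebra and the probability step by step. Your handling of the sign via $\xi_i:=(-1)^{i-1}\eta_i$ and $b_i:=(-1)^{i-1}a_i$ is clean and correct, and your remark that the joint independence of $\eta_1,\dots,\eta_n$ from each other and from the $\chi$'s follows from $\eta_i\perp\mathcal F_{i-1}\supseteq\sigma(\eta_1,\dots,\eta_{i-1},\chi_1,\dots,\chi_{n-1})$ is exactly the point that needs to be made.
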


We remark that in \cite{kv} a slightly different stochastic equation (a Hilbert space variant of the P\'olya urn process) for the determinants $\det(M_n-z_0\sqrt{n})$ were given, in which the value of each determinant was influenced by a gaussian variable whose variance depended on all of the determinants of the top left $k \times k$ minors for $k=1,\ldots,n-1$.  In contrast, the recurrence here is more explicitly Markovian in the sense that the state $a_{i+1}$ of the recursion at time $i+1$ only depends (stochastically) on the state $a_i$ at the immediately preceding time.  We will rely heavily on the Markovian nature of the process in the subsequent analysis.

\begin{proof}  Again, we argue for the complex gaussian case only, as the real gaussian case proceeds similarly with the obvious modifications.

By Proposition \ref{hessen}, $\det(M_n-z_0\sqrt{n})$ has the same distribution as $\det(M'_n-z_0\sqrt{n})$.  The strategy is then to manipulate $M'_n-z_0\sqrt{n}$ by elementary column operations that preserve the determinant, until it becomes a lower triangular matrix whose diagonal entries have the joint distribution of $\left(\sqrt{|a_i|^2 + \chi_{n-i,\C}^2}\right)_{i=1}^{n-1}, a_n$, at which point the claim follows.

We turn to the details.  Writing $\xi_1 := \xi_{11}$, we see that $M'_n-z_0\sqrt{n}$ can be written as
$$
\begin{pmatrix}
a_1 & \chi_{n-1,\C} & 0 & 0 & \dots & 0 \\
\xi_{21} & \xi_{22}-z_0\sqrt{n} & \chi_{n-2,\C} & 0 & \dots & 0 \\
\xi_{31} & \xi_{32} & \xi_{33}-z_0\sqrt{n} & \chi_{n-3,\C} & \dots & 0 \\
\vdots & \vdots & \vdots & \vdots & \ddots & \vdots \\
\xi_{(n-1)1} & \xi_{(n-1)2} & \xi_{(n-1)3} & \xi_{(n-1)4} & \dots & \chi_{1,\C} \\
\xi_{n1} & \xi_{n2} & \xi_{n3} & \xi_{n4} & \dots & \xi_{nn}-z_0\sqrt{n}
\end{pmatrix}.
$$
Note that there is a unitary matrix $U_1$ whose action on row vectors (multiplying on the right) maps $(a_1, \chi_{n-1,\C},0,\dots,0)$ to $(\sqrt{|a_1|^2+\chi_{n-1,\C}^2}, 0,\dots,0)$, and which only modifies the first two coefficients of a row vector.  This corresponds to a column operation that modifies the first two columns of a matrix in a unitary fashion (by multiplying that matrix on the right by $U_1$).
Because complex gaussian vectors remain gaussian after unitary transformations, we see (after a brief computation) that this transformation maps the second row
$(\xi_{21}, \xi_{22}-z_0\sqrt{n}, \chi_{n-2,\C}, 0, \dots, 0)$ of the above matrix to a vector of the form
$$ \left(\ast, \frac{-z_0 \sqrt{n} a_1}{\sqrt{|a_1|^2 + \chi_{n-1,\C}^2}} + \xi_{2}, \chi_{n-2,\C}, \dots, 0\right)$$
where $\xi_2$ is a complex gaussian (formed by some combination of $\xi_{21}$ and $\xi_{22}$) and $\ast$ is a quantity whose exact value will not be relevant for us.  By \eqref{ab2}, we may denote the second coefficient of this vector by $a_2$.  The remaining rows of the matrix have their distribution unchanged by the unitary matrix $U_1$, because their first two entries form a complex gaussian vector.  Thus, after applying the $U_1$ column operation to the above matrix, we arrive at a matrix with the distribution
$$
\begin{pmatrix}
\sqrt{|a_1|^2+\chi_{n-1,\C}^2} & 0 & 0 & 0 & \dots & 0 \\
\ast & a_2 & \chi_{n-2,\C} & 0 & \dots & 0 \\
\xi_{31} & \xi_{32} & \xi_{33}-z_0\sqrt{n} & \chi_{n-3,\C} & \dots & 0 \\
\vdots & \vdots & \vdots & \vdots & \ddots & \vdots \\
\xi_{(n-1)1} & \xi_{(n-1)2} & \xi_{(n-1)3} & \xi_{(n-1)4} & \dots & \chi_{1,\C} \\
\xi_{n1} & \xi_{n2} & \xi_{n3} & \xi_{n4} & \dots & \xi_{nn}-z_0\sqrt{n}
\end{pmatrix}
$$
where the $\xi_{ij}$ here are iid copies of $N(0,1)_\C$ that are independent of $a_1$, $a_2$, and the $\chi_{i,\C}$ (and which are not necessarily identical to their counterparts in the previous matrix under consideration).  Of course, the determinant of this matrix has the same distribution as the determinant of the preceding matrix.

In a similar fashion, we may find a unitary matrix $U_2$ whose action on row vectors maps $(\ast, a_1, \chi_{n-2,\C}, 0, \dots, 0)$ to $(\ast, \sqrt{|a_2|^2+\chi_{n-2,\C}^2}, 0, \dots, 0)$, and which only modifies the second and third coefficients of a row vector.  Applying the associated column operation, and arguing as before, we arrive at a matrix with the distribution
$$
\begin{pmatrix}
\sqrt{|a_1|^2+\chi_{n-1,\C}^2} & 0 & 0 & 0 & \dots & 0 \\
\ast & \sqrt{|a_2|^2+\chi_{n-2,\C}^2} & 0 & 0 & \dots & 0 \\
\ast & \ast & a_3 & \chi_{n-3,\C} & \dots & 0 \\
\vdots & \vdots & \vdots & \vdots & \ddots & \vdots \\
\xi_{(n-1)1} & \xi_{(n-1)2} & \xi_{(n-1)3} & \xi_{(n-1)4} & \dots & \chi_{1,\C} \\
\xi_{n1} & \xi_{n2} & \xi_{n3} & \xi_{n4} & \dots & \xi_{nn}-z_0\sqrt{n}
\end{pmatrix}
$$
where again the values of the entries marked $\ast$ are not relevant for us.  Iterating this procedure a total of $n-1$ times, we finally arrive at a lower triangular matrix whose diagonal entries have the distribution of
$$
(\sqrt{|a_1|^2+\chi_{n-1,\C}^2}, \sqrt{|a_2|^2+\chi_{n-2,\C}^2}, \dots, \sqrt{|a_{n-1}|^2+\chi_{1,\C}^2}, a_n)$$
and whose determinant has the same distribution as that of $M'_n-z_0\sqrt{n}$ or $M_n - z_0\sqrt{n}$.  The claim follows.
\end{proof}

\subsection{A nonlinear stochastic difference equation}

For the sake of exposition, we now specialize to the complex gaussian case; the case when $M_n$ is a real gaussian is similar and we will indicate at various junctures what changes need to be made.

From Proposition \ref{Detf}, we see that $\log |\det(M_n-z_0\sqrt{n})|$ has the same distribution as
\begin{equation}\label{cha}
\frac{1}{2} \sum_{i=1}^{n-1} \log (|a_i|^2 + \chi_{n-i,\C}^2) + \log |a_n|.
\end{equation}
It thus suffices to establish the lower bound
\begin{equation}\label{low}
\frac{1}{2} \sum_{i=1}^{n-1} \log (|a_i|^2 + \chi_{n-i,\C}^2) + \log |a_n| \geq \frac{1}{2} n \log n + \alpha n - n^{o(1)}
\end{equation}
with overwhelming probability.

We first note that as the distribution of $\log |\det(M_n-z_0\sqrt{n})|$ is invariant with respect to phase rotation $z_0 \mapsto z_0 e^{\sqrt{-1}\theta}$, we may assume without loss of generality that $z_0$ is real and non-positive, thus
\begin{equation}\label{aor}
 a_{i+1} := \frac{|z_0| \sqrt{n} a_i}{\sqrt{|a_i|^2 + \chi_{n-i,\C}^2}} + \xi_{i+1}.
\end{equation}

\begin{remark} In the real gaussian case, one does not have phase rotation invariance.  However, by making the change of variables $a'_i := a_i e^{-\sqrt{-1} i \theta}$ one can obtain the variant
\begin{equation}\label{aor2}
 a'_{i+1} := \frac{|z_0| \sqrt{n} a'_i}{\sqrt{|a_i|^2 + \chi_{n-i,\R}^2}} + \xi'_{i+1}
\end{equation}
to \eqref{aor}, where $\xi'_{i+1} := e^{-\sqrt{-1}i\theta} \xi_{i+1}$.  It will turn out that this recurrence is similar enough to \eqref{aor} that the arguments below used to study \eqref{aor} can be adapted to \eqref{aor2}; the $\xi'_i$ are no longer identically distributed, but they still have mean zero, variance one, and are jointly independent, and this is all that is needed in the arguments that follow.
\end{remark}

The random variable $\chi_{n-i,\C}^2$ has mean $n-i$ and variance $n-i$.  As such, it is natural to make the change of variables
$$ \chi_{n-i,\C} =: n-i + \sqrt{n-i} \eta_{n-i}$$
where the $\eta_1,\dots,\eta_{n-1}$ have mean zero, variance one, and are independent of each other and of the $\xi_i$.

\begin{remark} For real gaussian matrices, the situation is very similar, except that the error terms $\eta_{n-i}$ now have variance two instead of one.  However, this will not significantly affect the concentration results for the log-determinant in this paper.  (This will however presumably affect any central limit theorems one could establish for the log-determinant, in analogy with \cite{TV-determinant}, though we will not pursue such theorems here.)
\end{remark}

We now pause to perform a technical truncation.  As the $\xi_i$ are distributed in a gaussian fashion, we know that
\begin{equation}\label{wi-1}
 \sup_{1 \leq i \leq n} |\xi_i| \leq n^{o(1)}
\end{equation}
with overwhelming probability.  Similarly, standard asymptotics for chi-square distributions also give the bound
\begin{equation}\label{wi-2}
 \sup_{1 \leq i < n} |\eta_i| \leq n^{o(1)},
\end{equation}
with overwhelming probability (this bound also follows from Proposition \ref{azuma}).

We may now condition on the event that \eqref{wi-1}, \eqref{wi-2} hold (for a suitable choice of the $o(1)$ decay exponent).  Importantly, the joint independence of the $\xi_1,\dots,\xi_n,\eta_1,\dots,\eta_{n-1}$ remain unchanged by this conditioning.  Of course, the distribution of the $\xi_i$ and $\eta_i$ will be slightly distorted by this conditioning, but this will not cause a difficulty in practice, as the mean, variances, and higher moments of these variables are only modified by $O(n^{-100})$ (say) at most, and also we will at key junctures in the proof be able to undo the conditioning (after accepting an event of negligible probability) in order to restore the original distributions of $\xi_i$ and $\eta_i$ if needed.

We return to the task of proving \eqref{low}.  We write \eqref{aor} as
\begin{equation}\label{aor-2}
 a_{i+1} := \frac{|z_0| \sqrt{n} a_i}{\sqrt{|a_i|^2 + n-i + \sqrt{n-i} \eta_{n-i}}} + \xi_{i+1}.
\end{equation}

We will treat this as a nonlinear stochastic difference equation in the $a_i$.  If we ignore the diffusion terms $\eta_{n-i}, \xi_{i+1}$, we see that \eqref{aor-2} is governed by the dynamics of the maps
\begin{equation}\label{amap}
a \mapsto \frac{|z_0| \sqrt{n} a}{\sqrt{|a|^2 + n-i}}
\end{equation}
as $i$ increases from $1$ to $n-1$.  In the regime $i < (1-|z_0|^2) n$, we see that this map has a stable fixed point at zero, while in the regime $i > (1-|z_0|^2) n$, this map has an unstable fixed point at zero and a fixed circle at $|a| = \sqrt{|z_0|^2 n - (n-i)}$.  This suggests that $|a_i|$ should concentrate somehow around $0$ for $i \leq (1-|z_0|^2) n$ and around $\sqrt{|z_0|^2 n - (n-i)}$ for $i \geq (1-|z_0|^2) n$.  In particular, this leads to the heuristic
$$ |a_i|^2 + \chi_{n-i,\C}^2 \approx \max( n-i, |z_0|^2 n ).$$
Note from the integral test that
\begin{align}
\frac{1}{2} \sum_{i=1}^{n-1} \log \max( n-i, |z_0|^2 n ) &= \frac{1}{2} \int_1^n \log \max(n-t, |z_0|^2 n)\ dt + O(n^{o(1)} ) \nonumber\\
&= \frac{1}{2} n \log n + \alpha n + O(n^{o(1)} )\label{integral-1},
\end{align}
where the second identity follows from a routine integration (treating the cases $|z_0| \leq 1$ and $|z_0| \geq 1$ separately).
This gives heuristic support for the desired bound \eqref{low}.

We now make the above analysis rigorous.  Because we are only seeking a lower bound \eqref{low}, the main task will be to obtain lower bounds that are roughly of the form
$$ |a_i|^2 + \chi_{n-i,\C}^2 \gtrapprox \max( n-i, |z_0|^2 n )$$
with overwhelming probability.
In the ``early regime'' $i \leq (1-|z_0|^2) n$, we will be able to achieve this easily from the trivial bound $|a_i| \geq 0$.  In the ``late regime''
$i \geq (1-|z_0|^2) n$, the main difficulty is then to show (with overwhelming probability) that $a_i$ avoids the unstable fixed point at zero, and instead is essentially at least as far away from the origin as the fixed circle $|a| = \sqrt{|z_0|^2 n - (n-i)}$.

We turn to the details.  We begin with a crude bound on the magnitude of the quantities $a_i$.

\begin{lemma}[Crude lower bound]\label{cub}  Almost surely (after conditioning to \eqref{wi-1}, \eqref{wi-2}), one has
\begin{equation}\label{up}
 \sup_{1 \leq i \leq n} |a_i| \leq (1+|z_0|) \sqrt{n}
\end{equation}
and with overwhelming probability
\begin{equation}\label{down}
 \inf_{1 \leq i \leq n} |a_i| \geq \exp(-n^{o(1)}).
\end{equation}
\end{lemma}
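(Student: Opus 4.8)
The plan is to handle the two bounds by quite different (and in both cases elementary) means: \eqref{up} will come from the fact that the recursion \eqref{aor} is self-bounding, while \eqref{down} will be a union bound exploiting the Markovian structure of that recursion. For \eqref{up}, I would note that $\chi_{n-i,\C}^2 \geq 0$, so that $|z_0|\sqrt{n}\,|a_i|/\sqrt{|a_i|^2+\chi_{n-i,\C}^2} \leq |z_0|\sqrt{n}$; combining this with \eqref{ab2} (or \eqref{aor}) and the truncation \eqref{wi-1} gives $|a_{i+1}| \leq |z_0|\sqrt{n} + |\xi_{i+1}| \leq |z_0|\sqrt{n} + n^{o(1)}$, and likewise $|a_1| \leq |z_0|\sqrt{n} + n^{o(1)}$ from \eqref{abo}. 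Since $n^{o(1)} \leq \sqrt{n}$ for $n$ large, an immediate induction on $i$ gives \eqref{up}. (The $a_i$ are almost surely well defined because $|a_i|^2 + \chi_{n-i,\C}^2 > 0$ almost surely.)

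The substance is in \eqref{down}. I would fix a small exponent $\epsilon>0$, to serve as the $o(1)$ rate in the statement, and set $\delta := \exp(-n^\epsilon)$. Writing $a_{i+1} = c_i + \xi_{i+1}$ with $c_i := |z_0|\sqrt{n}\,a_i/\sqrt{|a_i|^2 + \chi_{n-i,\C}^2}$, the key point is that $c_i$ is a measurable function of $\xi_1,\dots,\xi_i$ and $\eta_{n-i},\dots,\eta_{n-1}$ only, and is therefore independent of $\xi_{i+1}$. Since $\xi_{i+1}$ has a density bounded by $1/\pi$ with respect to Lebesgue measure on $\C$, for any fixed $c$ one has $\P(|c+\xi_{i+1}| \leq \delta) \leq \delta^2$; conditioning on the $\sigma$-algebra generated by the earlier variables and taking expectations yields $\P(|a_{i+1}| \leq \delta) \ll \delta^2$, and similarly $\P(|a_1| \leq \delta) \ll \delta^2$. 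A union bound over $1 \leq i \leq n$ then gives $\P(\inf_{1 \leq i \leq n}|a_i| < \delta) \ll n\exp(-2n^\epsilon)$, which is $O(n^{-A})$ for every fixed $A$ once $n$ is large, i.e. \eqref{down} with decay rate $\epsilon$. To obtain the statement in the form conditioned on \eqref{wi-1}, \eqref{wi-2}, I note that this event is a product over the independent coordinates, so conditioning on it leaves all variables independent and only truncates (hence keeps bounded by $O(1)$) the density of each $\xi_{i+1}$; the same union bound then goes through unchanged, up to a factor $1+o(1)$.

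I do not expect a real obstacle in this lemma: \eqref{up} is self-improving, and \eqref{down} is essentially a one-line union bound once one observes that each step of the recursion perturbs $a_i$ by an independent gaussian of bounded density. The only point requiring a little care is checking that conditioning on \eqref{wi-1}, \eqref{wi-2} neither destroys the independence of $\xi_{i+1}$ from the past nor inflates its density, which is automatic since that event factors over the coordinates. The genuinely delicate work of this section lies ahead — namely the sharp lower bounds $|a_i|^2 + \chi_{n-i,\C}^2 \gtrapprox \max(n-i,|z_0|^2 n)$ in the unstable regime $i \geq (1-|z_0|^2)n$, where one must show that $a_i$ avoids the unstable fixed point at the origin via a martingale analysis of the deviation from the stable equilibrium; the present lemma is only a crude preliminary to that.
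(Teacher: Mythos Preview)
Your proof is correct and essentially identical to the paper's: both handle \eqref{up} by the self-bounding observation $|a_{i+1}| \leq |z_0|\sqrt{n} + |\xi_{i+1}|$, and both handle \eqref{down} by a union bound using the bounded density of $\xi_{i+1}$ conditioned on the past. The only cosmetic difference is that the paper takes the threshold $n^{-A}$ and then diagonalizes in $A$, whereas you go directly with $\delta = \exp(-n^\epsilon)$; your version is arguably slightly cleaner.
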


\begin{proof}
From \eqref{abo}, \eqref{wi-1} we see that we have
$$ |a_1| \leq 2 \sqrt{n}.$$
From \eqref{aor} (trivially bounding $\chi_{n-i}$ from below by zero) we have
$$ |a_{i+1}| \leq |z_0| \sqrt{n} + |\xi_{i+1}|$$
and so the bound \eqref{up} follows from \eqref{wi-1} and the assumption that $|z_0| \le 1$.

Now we prove \eqref{down}.  Let $A \geq 0$ be fixed.  Observe that $\xi_1$ has a bounded density function (even after conditioning on \eqref{wi-1}), so from \eqref{abo} we have
$$ |a_1| \geq n^{-A}$$
with probability\footnote{In the real gaussian case, the $n^{-2A}$ factor worsens to $n^{-A}$, but this does not impact the final conclusion.} $1-O(n^{-2A})$.  In a similar spirit, for any $i=1,\dots,n-1$, $\xi_{i+1}$ has a bounded density function, so from \eqref{aor} or \eqref{aor-2} (after temporarily conditioning $a_i$ and $\eta_{n-i}$ to be fixed) that
$$ |a_{i+1}| \geq n^{-A}$$
with probability $1-O(n^{-2A})$.  By the union bound, we conclude that
$$ \inf_{1 \leq i \leq n} |a_i| \geq n^{-A}$$
 with probability $1-O(n^{-2A+1})$.  Diagonalising in $A$, we obtain the claim.
\end{proof}

From this lemma, we conclude that
\begin{equation}\label{logo}
\log |a_i| = n^{o(1)}
\end{equation}
with overwhelming probability for each $1 \leq i \leq n$.  To show \eqref{low}, it thus suffices to establish, for each fixed $\eps>0$, that
$$
\frac{1}{2} \sum_{i=1}^{n-1} \log (|a_i|^2 + \chi_{n-i,\C}^2) \ge \frac{1}{2} n \log n + \alpha n - O(n^{O(\eps)}) $$
with overwhelming probability, where the implied constant in the $O(\eps)$ notation is understood to be independent of $\eps$ of course.

In view of \eqref{integral-1},  it will suffice to show that
\begin{equation}\label{ndel}
\sum_{n^\eps < i \leq n-n^\eps} \Big( \log (|a_i|^2 + \chi_{n-i,\C}^2) - \log \max( n-i, |z_0|^2 n )  \Big) \ge  - O(n^{O(\eps)})
\end{equation}
with overwhelming probability, as the contributions of the $i$ within $n^\eps$ of $1$ or $n$ can be controlled by $O(n^{\eps+o(1)})$ thanks to Lemma \ref{cub}.

\subsection{Lower bound  at early times}

We partition $\sum_{n^\eps < i \leq n-n^\eps} \Big( \log (|a_i|^2 + \chi_{n-i,\C}^2) - \log \max( n-i, |z_0|^2 n )  \Big)$ into two parts, according to the  heuristics following \eqref{amap}. The following simple lemma handles the first part of the partition.

\begin{lemma}[Concentration at early times]\label{early}  One has
$$
\sum_{n^\eps < i \leq \min((1-|z_0|^2)n + |z_0| n^{1/2+\eps}, n-n^\eps)} \log (|a_i|^2 + \chi_{n-i,\C}^2) - \log\max( n-i, |z_0|^2 n ) \geq -O(n^{O(\eps)})$$
with overwhelming probability.
\end{lemma}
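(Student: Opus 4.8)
The plan is to discard the $|a_i|^2$ term entirely and work with the crude bound $|a_i|^2 + \chi_{n-i,\C}^2 \ge \chi_{n-i,\C}^2 = (n-i) + \sqrt{n-i}\,\eta_{n-i}$, which already suffices in this early regime precisely because $\chi_{n-i,\C}^2$ by itself is comparable to $\max(n-i,|z_0|^2 n)$ there (in the language following \eqref{amap}, the map has a stable fixed point at $0$, so $a_i$ is negligible to leading order). First I would write, for each $i$ in the range of summation,
\[ \log(|a_i|^2 + \chi_{n-i,\C}^2) - \log\max(n-i,|z_0|^2 n) \ge \bigl(\log(n-i) - \log\max(n-i,|z_0|^2 n)\bigr) + \log\Bigl(1 + \tfrac{\eta_{n-i}}{\sqrt{n-i}}\Bigr), \]
and, using $n - i \ge n^\eps$ (valid since $i \le n-n^\eps$) together with $|\eta_{n-i}| \le n^{o(1)}$ from \eqref{wi-2}, note that $\eta_{n-i}/\sqrt{n-i}$ is small for large $n$, so that $\log(1+x) \ge x - O(x^2)$ for $|x|\le\tfrac12$ applies. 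Summing, the task reduces to lower bounding three contributions.

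The quadratic error is immediate: $\sum_i \eta_{n-i}^2/(n-i) \ll n^{o(1)} \sum_{n^\eps < i \le n-n^\eps} 1/(n-i) = n^{o(1)}$. For the linear term $\sum_i \eta_{n-i}/\sqrt{n-i}$ I would apply the martingale concentration estimate (Proposition \ref{azuma}) with the deterministic coefficients $c_i := 1/\sqrt{n-i}$ and $\alpha_i := 1/\sqrt{n-i}$, so that $\sum_i \alpha_i^2 = O(\log n)$ and one obtains $|\sum_i \eta_{n-i}/\sqrt{n-i}| \ll n^{o(1)}$ with overwhelming probability; the $O(n^{-100})$ mean shift introduced by the earlier conditioning on \eqref{wi-2} contributes only $O(n^{-99})$ in total and is harmless. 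The ``main'' term $\sum_i (\log(n-i) - \log\max(n-i,|z_0|^2 n))$ vanishes identically for $i \le (1-|z_0|^2)n$ (where $\max = n-i$); the only surviving indices lie in the narrow window $(1-|z_0|^2)n < i \le (1-|z_0|^2)n + |z_0| n^{1/2+\eps}$, on which $n-i \ge \max(n-i,|z_0|^2 n) - |z_0| n^{1/2+\eps}$, so the summand is at least $\log\bigl(1 - n^{-1/2+\eps}/|z_0|\bigr)$. I would then separate the cases $|z_0| \ge 2 n^{-1/2+\eps}$ (each summand is $\ge -O(n^{-1/2+\eps}/|z_0|)$, and there are $O(|z_0| n^{1/2+\eps})$ of them, so the $|z_0|$ cancels and the total is $\ge -O(n^{2\eps})$) and $|z_0| < 2 n^{-1/2+\eps}$ (the window then contains only $O(n^{2\eps})$ integers, and on each the summand is $\ge -O(\log n)$ since $n-i \ge n^\eps$ and $|z_0|^2 n \le C^2 n$, giving $\ge -O(n^{O(\eps)})$). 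Adding the three bounds yields the lemma.

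Since all of these estimates are of size $n^{o(1)}$ or $n^{O(\eps)}$, there is no serious obstacle here; this is genuinely the easy half of the analysis. The points requiring a little care are (a) checking that the trivial bound $|a_i| \ge 0$ really is enough, which is exactly the stability of the fixed point at $0$ for $i<(1-|z_0|^2)n$; (b) maintaining uniformity in $z_0$ across the transition window, where $\max(n-i,|z_0|^2 n)$ changes value and $|z_0|$ may be as small as a negative power of $n$ — this is the reason for the case split above; and (c) the concentration step for the linear term, which is precisely what Proposition \ref{azuma} is designed to supply. For the real gaussian case the same argument applies verbatim, the only changes being that $\eta_{n-i}$ has variance $2$ rather than $1$ and the driving variables $\xi'_i$ are no longer identically distributed — neither matters, as the argument uses only independence, mean zero, and the $n^{o(1)}$ bounds.
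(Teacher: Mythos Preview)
Your proposal is correct and follows essentially the same approach as the paper: discard $|a_i|^2$, split into the deterministic comparison $\log(n-i)-\log\max(n-i,|z_0|^2 n)$ and the fluctuation $\log(1+\eta_{n-i}/\sqrt{n-i})$, Taylor-expand the latter, and control the linear martingale term via Proposition~\ref{azuma}. The only cosmetic difference is in the bookkeeping for the deterministic term on the transition window: you split into cases according to the size of $|z_0|$, whereas the paper uses a single bound $|\log(|z_0|^2 n - j) - \log(|z_0|^2 n)| \ll n^{o(1)} j/(|z_0|^2 n)$ and sums directly --- both routes give $O(n^{O(\eps)})$.
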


\begin{proof}  We abbreviate the summation as $\sum_i$.
The key observation here is that we need only a lower bound, so we can use the trivial inequality

$$\log (|a_i|^2 + \chi_{n-i,\C} ) \ge \log \chi_{n-i,\C}. $$

It suffices to show that
\begin{equation}\label{chii-0}
\sum_i |\log (n-i) - \log\max(n-i,|z_0|^2 n)| = O( n^{O(\eps)} )
\end{equation}
and
\begin{equation}\label{chii}
\sum_i \log \chi_{n-i,\C}^2 - \log(n-i)= O( n^{O(\eps)} )
\end{equation}
with overwhelming probability.

We first verify \eqref{chii-0}.  The summand is only non-zero when $i = (1-|z_0|^2)n + j$ for some $0 < j \leq \min(|z_0| n^{1/2+\eps}, |z_0|^2 n - n^\eps)$, and so one can bound the left-hand side of \eqref{chii-0} by
$$ \sum_{0 < j \leq \min(|z_0| n^{1/2+\eps}, |z_0|^2 n - n^\eps)} |\log(|z_0|^2 n - j) - \log(|z_0|^2 n)|.$$
When $j \leq |z_0|^2 n -n^\eps$, we may bound
$$  |\log(|z_0|^2 n - j) - \log(|z_0|^2 n)| \ll n^{o(1)} \frac{j}{|z_0|^2 n},$$
and the claim then follows by summing over all $0 <j \leq |z_0| n^{1/2+\eps}$.

Now we verify \eqref{chii}, which is quite standard.  Writing $\chi_{n-i,\C}^2 = n-i + \sqrt{n-i} \eta_{n-i}$, we can write the left-hand side of \eqref{chii} as
$$ \sum_i \log (1 + \frac{\eta_{n-i}}{\sqrt{n-i}}).$$
From Taylor expansion and \eqref{wi-2} we then have
$$  \log (1 + \frac{\eta_{n-i}}{n-i}) = \frac{\eta_{n-i}}{\sqrt{n-i}} + O( \frac{n^{o(1)}}{n-i} ).$$
The sum of the error term is acceptable, so it suffices to show that
$$ \sum_i \frac{\eta_{n-i}}{\sqrt{n-i}} = O( n^{O(\eps)} )$$
with overwhelming probability.  But this follows\footnote{Strictly speaking, Proposition \ref{azuma} does not apply directly because the mean of the random variables $\eta_{n-i}$ deviates very slightly from zero when the conditioning \eqref{wi-2} is applied.  However, one can first apply Proposition \ref{azuma} to the unconditioned variables $\eta_{n-i}$, and then apply the conditioning \eqref{wi-2} that is in force elsewhere in this argument, noting that such conditioning does not affect the property of an event occuring with overwhelming probability.}
 from Proposition \ref{azuma}.
\end{proof}

\begin{remark}
Following the heuristics after \eqref{amap}, it would be more natural to consider
$n^{\eps} \le i \le (1-|z_0|^2) n$. The extra term $|z_0| n^{1/2+ \eps}$ in the upper bound of $i$ is needed for a technical reason
which will be clear in the analysis of larger $i$ (see Lemma \ref{large-init}).
\end{remark}

\subsection{Concentration at late times}

Define
\begin{equation}\label{i0-def}
i_0 := \max( n^\eps, (1-|z_0|^2)n + |z_0| n^{1/2+\eps}).
\end{equation}
In view of Lemma  \ref{early}, we see that to prove \eqref{ndel} it now suffices to establish the lower bound

\begin{equation} \label{latetime-0}
\sum_{i_0 < i \leq n-n^\eps} \log (|a_i|^2 + \chi_{n-i,\C}^2) - \log( |z_0|^2 n ) = O(n^{O(\eps)}) \end{equation}
with overwhelming probability.  In fact, we only need the lower bound from \eqref{latetime-0}, but the argument given here gives the matching upper bound as well with no additional effort.

Let us first deal with the easy case when
\begin{equation}\label{zup0}
|z_0| <  n^{-1/2+400 \eps}
\end{equation}
(say).  In this case, there are only $O(n^{800\eps})$ terms in the sum, and from Lemma \ref{cub} (discarding the non-negative $\chi_{n-i,\C}^2$ term) each term is at least $-O(n^{o(1)})$, so the claim \eqref{latetime-0} follows immediately.  (Note that the summation is in fact empty unless $|z_0| \geq n^{-1/2+\eps/2}$, so the $\log(|z_0|^2 n)$ term is $O(n^{o(1)})$.)  Thus, in the arguments below we can assume that
\begin{equation}\label{zup}
|z_0|  \ge  n^{-1/2+400\eps }.
\end{equation}

Observe from \eqref{aor} that
$$
\log (|a_i|^2 + \chi_{n-i,\C}^2) - \log( |z_0|^2 n ) = \log \frac{|a_{i+1} - \xi_{i+1}|^2}{|a_i|^2}.$$
From telescoping series and \eqref{logo} we have
$$
\sum_{i_0 < i \leq n-n^\eps} \log \frac{|a_{i+1}|^2}{|a_i|^2} = O(n^{o(1)})$$
with overwhelming probability, so by the triangle inequality it suffices to show that
$$
\sum_{i_0 < i \leq n-n^\eps} \log \frac{|a_{i+1}-\xi_{i+1}|^2}{|a_{i+1}|^2} = O(n^{O(\eps)})$$
with overwhelming probability.  We can rewrite
$$ \frac{|a_{i+1}-\xi_{i+1}|^2}{|a_{i+1}|^2} = |1 + \frac{\xi_{i+1}}{a'_i}|^{-2},$$
where
\begin{equation}\label{ap}
a'_i : = a_{i+1} -\xi_{i+1}  = \frac{|z_0| \sqrt n a_i}{ \sqrt {|a_i|^2 + \chi_{n-1,\C} }}.
\end{equation}
It  suffices to show that
$$
\sum_{i_0 < i \leq n-n^\eps} \log |1 + \frac{\xi_{i+1}}{a'_i}| = O(n^{O(\eps)})$$
with overwhelming probability.

The heart of the matter will be the following lemma.

\begin{lemma} \label{lemmaa_i}
With overwhelming probability  \begin{equation}\label{lower}
|a'_i| \gg n^{-100 \eps} (i - (1-|z_0|^2)n)^{1/2}
\end{equation} holds for all $i_0 < i \leq n-n^\eps$.
\end{lemma}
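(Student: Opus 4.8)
The plan is to track the quantity $|a'_i|$ (equivalently $|a_i|$, since the two are comparable once $|a_i|$ is not too small relative to $\chi_{n-i,\C}$) through the recursion \eqref{aor-2}, treating it as a stochastic difference equation perturbing the deterministic map \eqref{amap}. Write $j := i - (1-|z_0|^2)n$, so that we are in the late regime $i > i_0$ where $j \gg |z_0| n^{1/2+\eps}$, and the deterministic map has expanding behaviour near zero with a stable fixed circle at radius $\sqrt{|z_0|^2 n - (n-i)} = \sqrt{j}$. First I would record the linearised multiplier: when $|a_i|^2$ is small compared to $n-i$, the map \eqref{amap} multiplies $|a_i|$ by roughly $|z_0|\sqrt{n}/\sqrt{n-i} = \sqrt{|z_0|^2 n/(n-i)} = \sqrt{1 + j/(n-i)} \geq 1 + \tfrac{1}{2} j/(n-i) - O((j/(n-i))^2)$, so that over a block of indices the logarithmic growth of $|a_i|$ accumulates like $\tfrac12 \sum 1/(n-i)$, which is how one recovers powers of $j$. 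The target bound $|a'_i| \gg n^{-100\eps} j^{1/2}$ is exactly the assertion that $a_i$ has escaped the unstable equilibrium and reached (a slightly shrunk version of) the fixed circle.

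The key steps, in order, would be: (1) establish an \emph{initialisation} bound — that at the first time $i$ just past $i_0$, one already has $|a_i| \gg n^{-C\eps} \cdot (\text{something like } |z_0| n^{1/2+\eps/2})^{1/2}$ or at least a polynomially small but usable lower bound — using the diffusive kick $\xi_{i+1}$ (which has bounded density) together with Lemma \ref{cub}; this is presumably the content of the referenced Lemma \ref{large-init}, and explains why the cutoff $i_0$ includes the extra $|z_0| n^{1/2+\eps}$ margin. (2) Set up a \emph{supermartingale-type comparison}: let $b_i := \log |a'_i| - \tfrac12 \log j_i$ (with $j_i := i - (1-|z_0|^2)n$), and show that the drift of $b_i$ is nonnegative up to lower-order errors as long as $|a_i| \leq \sqrt{j_i}$ (below the fixed circle the map pushes outward), while the fluctuation increments $\log|1 + \xi_{i+1}/a'_i|$ are controlled in $L^2$ by $O(1/|a'_i|^2) = O(1/j_i)$ when $|a'_i| \gtrsim j_i^{1/2}$, which is summable-with-a-log. (3) Apply the martingale concentration tool Proposition \ref{azuma} (or Proposition \ref{azuma0}) to the fluctuation part to get that the cumulative deviation is $O(n^{o(1)})$ with overwhelming probability, hence $b_i \geq -O(\eps \log n) - O(n^{o(1)})$, i.e. $|a'_i| \gg n^{-O(\eps)} j_i^{1/2}$. (4) Run this as a \emph{stopping-time / union-bound argument over dyadic blocks} of $i$: partition $(i_0, n-n^\eps]$ into $O(\log n)$ dyadic ranges of $j_i$, and on each block show that once $|a_i|$ is at least a $n^{-100\eps}$-fraction of $j_i^{1/2}$ it stays there with overwhelming probability (the outward drift near the circle prevents a crash back to zero), absorbing the block-by-block loss into the $n^{-100\eps}$ slack.

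The main obstacle I expect is step (2)–(4): controlling the genuinely \emph{nonlinear and unstable} dynamics near the fixed circle — specifically, ruling out (with overwhelming, not merely high, probability) that the diffusive noise $\xi_{i+1}, \eta_{n-i}$ conspires to drive $a_i$ back down close to the repelling fixed point $0$ after it has escaped. One cannot simply bound each step crudely, since the multiplicative structure means a single bad excursion propagates; the resolution is to observe that once $|a_i|$ is comparable to $\sqrt{j_i}$, the recursion for $\log|a_i|$ has a restoring drift toward $\tfrac12 \log j_i$ (the map \eqref{amap} is contracting toward the fixed circle from both sides when one is near it), so $\log|a_i| - \tfrac12\log j_i$ behaves like a mean-reverting process with bounded increments, and Azuma-type concentration (Proposition \ref{azuma}) keeps it within $n^{o(1)}$. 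Carefully splitting "has not yet escaped" versus "has escaped and must stay escaped", and handling the transition, together with the bookkeeping of the slowly-varying normalisation $j_i$ (which changes by a bounded factor only over dyadic blocks), is where the delicate work lies; the powers $n^{-100\eps}$ and the precise shape of $i_0$ are tuned precisely to give enough room for these estimates to close.
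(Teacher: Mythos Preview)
Your outline is in the right spirit (initialisation via diffusion, then repulsion from the unstable fixed point, then martingale concentration), but the implementation you propose in steps (2)--(3) has a genuine circularity that the paper avoids by a different mechanism.

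You want to track $b_i = \log|a'_i| - \tfrac12\log j_i$ and bound its increments via Proposition~\ref{azuma}. The increment of $\log|a'_i|$ involves $\log|1+\xi_{i+1}/a'_i|$, whose size is controlled by $1/|a'_i|$; you write that this is $O(1/j_i)$ ``when $|a'_i|\gtrsim j_i^{1/2}$''. But that hypothesis is exactly the conclusion you are trying to prove, so you cannot use it to bound the martingale increments uniformly. If $|a_i|$ ever dips well below $\sqrt{j_i}$, the increments of $\log|a_i|$ are \emph{not} bounded, and Azuma does not apply to $b_i$. Your suggested fix---treating $b_i$ as a mean-reverting process with bounded increments---begs the same question.

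The paper sidesteps this by never working with $\log|a_i|$. Instead it works directly with the linear recursion $a_{i+1} = \beta_i a_i + \xi_{i+1}$, iterated to $a_{i_2} = \gamma_{i_1,i_2}\bigl(a_{i_1} + \sum_i \delta_{i_1,i}\xi_{i+1}\bigr)$. The key observation (Proposition~\ref{repulsion}) is that \emph{on the event} $E_{i_1,i_2}$ that $|a_i|\le \tfrac12\sqrt{j_i}$ throughout, the multipliers satisfy $\beta_i>1$, so the \emph{backward} products $\delta_{i_1,i}$ are bounded by $1$; hence $\delta_{i_1,i}1_{E_{i_1,i}}$ is a bounded, adapted coefficient and Proposition~\ref{azuma} applies with no circularity. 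This yields exponential growth of $|a_i|$ on the ``small'' event, forcing it to terminate within $T \approx \frac{|z_0|^2 n}{j_{i_1}}\log^2 n$ steps (Corollary~\ref{repulsion1}). The other ingredient you are missing is Lemma~\ref{difference}: a deterministic monotonicity statement that once $|a_i|$ drops below $\tfrac12\sqrt{j_i}$, it can only decrease by $n^{o(1)}$ per step, which pins down $|a_{i_1}|$ at the \emph{start} of any excursion into the small region. The final argument is then a first-hitting-time contradiction (not a dyadic block scheme): if $|a_{i_2}|\le\tfrac13\sqrt{j_{i_2}}$ for some first $i_2$, trace back to the last $i_1$ with $|a_{i_1-1}|>\tfrac12\sqrt{j_{i_1-1}}$; Lemma~\ref{difference} gives $|a_{i_1}|\ge 0.499\sqrt{j_{i_1}}$, and then Proposition~\ref{repulsion}/Corollary~\ref{repulsion1} force $|a_{i_2}|>\tfrac13\sqrt{j_{i_2}}$, a contradiction.
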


Assuming this lemma for the moment, we can then use it to conclude the proof as follows.  For any $i_0 < i \leq n-n^\eps$, one has
\begin{equation}\label{ion}
(i - (1-|z_0|^2)n)^{1/2} > (i_0 - (1-|z_0|^2)n)^{1/2}  \ge(  |z_0| n^{1/2+ \eps} )^{1/2} \geq n^{200\eps}
\end{equation}
by \eqref{i0-def} and \eqref{zup}, and thus by Lemma \ref{lemmaa_i}
$$|a'_i| \gg n^{100\eps}$$
with overwhelming probability.  From this and \eqref{wi-1} we see that
$$ |\frac{\xi_{i+1}}{a'_i}| =o(1);$$
indeed, the same argument gives the more precise bound
$$|\frac{\xi_{i+1}}{a'_i}| \ll n^{O(\eps)} (i - (1-|z_0|^2)n)^{-1/2}.$$
Performing a Taylor expansion (up to the second order term), we conclude that
$$ \log |1 + \frac{\xi_{i+1}}{a'_i}| = \Re \xi_{i+1}/a'_i + O( n^{O(\eps)} (i - (1-|z_0|^2)n)^{-1} )$$
with overwhelming probability.

The error terms $ O( n^{O(\eps)} (i - (1-|z_0|^2)n)^{-1} ) $ sum to $O(n^{O(\eps)})$, so it suffices to show that
\begin{equation}\label{hot}
\sum_{i_0 < i \leq n-n^\eps} \frac{\xi_{i+1}}{a'_i} = O(n^{O(\eps)})
\end{equation}
with overwhelming probability.  But from \eqref{lower}, one has
$$ \frac{1}{a'_i} = O( n^{O(\eps)} (i - (1-|z_0|^2 n)^{-1/2})$$
with overwhelming probability.  Also, the coefficient $\frac{1}{a'_i}$ depends on $\xi_1,\dots,\xi_i$ and $\chi_{1,\C},\dots,\chi_{n,\C}$ and is independent of $\xi_{i+1},\dots,\xi_n$, so the sum in \eqref{hot} becomes a martingale sum\footnote{Again, strictly speaking one should apply Proposition \ref{azuma} to the unconditioned variables and then apply the conditioning \eqref{wi-1}, \eqref{wi-2}, as in Lemma \ref{early}.}.
The claim then follows from Proposition \ref{azuma}.

It remains to prove \eqref{lower}. From \eqref{aor}, \eqref{ap}, \eqref{wi-1} we have
$$ a'_i = a_{i+1} - \xi_{i+1} = a_{i+1} + O(n^{o(1)})$$
and so by \eqref{ion} it will suffice to establish the bound
\begin{equation}\label{lower-ai}
|a_i| \gg n^{- 99\eps} (i - (1-|z_0|^2)n)^{1/2}
\end{equation}
with overwhelming probability for each $i_0 < i \leq n-n^\eps+1$.

In order to prove \eqref{lower-ai},  let us first establish a preliminary largeness result on $a_i$, which uses the diffusive term $\xi_{i+1}$ in \eqref{aor} to push this random variable away from the unstable equilibrium $0$ of the map \eqref{amap}:

\begin{lemma}[Initial largeness]\label{large-init}  With overwhelming probability, one has
\begin{equation}\label{supi}
 \sup_{\max(i_0 - \frac{1}{2} |z_0| n^{1/2+\eps},0) \leq i \leq i_0} |a_i| > A.
\end{equation}
where $A$ is the quantity
$$ A :=  |z_0|^{1/2} n^{1/4+\eps/10}.$$
\end{lemma}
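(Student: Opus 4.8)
The plan is to argue by contradiction. Set $i_1 := \max(i_0 - \tfrac12|z_0|n^{1/2+\eps},0)$ and let $E$ be the event that $|a_i|\le A$ for all $i$ in the window $[i_1,i_0]$; the goal is to show $\P(E)$ is smaller than any negative power of $n$. Under \eqref{zup} one checks that $i_0 = (1-|z_0|^2)n + |z_0|n^{1/2+\eps}$ and $i_1 = (1-|z_0|^2)n + \tfrac12|z_0|n^{1/2+\eps}$, so that for every $i$ in the window $n-i$ lies between $|z_0|^2 n - |z_0|n^{1/2+\eps}$ and $|z_0|^2 n - \tfrac12|z_0|n^{1/2+\eps}$, an interval contained (again by \eqref{zup}) in $(\tfrac12|z_0|^2 n,\ |z_0|^2 n)$. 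Write $L := i_0-i_1 = \tfrac12|z_0|n^{1/2+\eps}$. The decisive point is that on $E$ the recursion \eqref{aor-2} is genuinely \emph{expanding}: since $|a_i|^2 \le A^2 = |z_0|n^{1/2+\eps/5} = o(|z_0|n^{1/2+\eps})$ and $\sqrt{n-i}\,|\eta_{n-i}| \le |z_0|n^{1/2+o(1)} = o(|z_0|n^{1/2+\eps})$ by \eqref{wi-2}, one has $|a_i|^2 + \chi_{n-i,\C}^2 \le |z_0|^2 n - \tfrac14|z_0|n^{1/2+\eps} < |z_0|^2 n$ for all $i_1 \le i < i_0$, so the multiplier $\beta_i := |z_0|\sqrt n / \sqrt{|a_i|^2 + \chi_{n-i,\C}^2}$ obeys $1 < \beta_i \le 2$ there, and \eqref{aor-2} reads exactly $a_{i+1} = \beta_i a_i + \xi_{i+1}$.

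The heart of the argument is a second-moment (submartingale) estimate for $|a_i|^2$. Let $\mathcal{G}_i$ be the $\sigma$-algebra generated by $\xi_1,\dots,\xi_i$ and $\eta_{n-1},\dots,\eta_{n-i}$; then $a_i,\beta_i$ are $\mathcal{G}_i$-measurable while $\xi_{i+1}$ is independent of $\mathcal{G}_i$ with mean zero and $\E|\xi_{i+1}|^2 = 1 + O(n^{-100})$ (the correction coming from the conditioning \eqref{wi-1}). Put
$$ D_i := 2\beta_i\,\Re(a_i\overline{\xi_{i+1}}) + \left(|\xi_{i+1}|^2 - \E|\xi_{i+1}|^2\right), $$
so that $\E[D_i \mid \mathcal{G}_i] = 0$. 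On $E$, using $\beta_i > 1$,
$$ |a_{i+1}|^2 - |a_i|^2 = (\beta_i^2-1)|a_i|^2 + \E|\xi_{i+1}|^2 + D_i \ \ge\ \tfrac12 + D_i, $$
and summing over $i_1 \le i < i_0$ (discarding $|a_{i_1}|^2 \ge 0$) gives, on $E$,
$$ |a_{i_0}|^2 \ \ge\ \tfrac12 L + \sum_{i=i_1}^{i_0-1} D_i. $$
On $E$ we also have $|a_{i_0}|^2 \le A^2 = o(L)$, so on $E$ the martingale sum satisfies $\sum_{i=i_1}^{i_0-1} D_i \le -\tfrac14 L$ for $n$ large.

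It remains to bound the probability of this last event. Let $A_i \in \mathcal{G}_i$ be the event $\{\,|a_l| \le A\text{ for all }i_1 \le l \le i\,\}$ and set $D_i' := D_i\,1_{A_i}$; then $(D_i')$ is still a martingale difference sequence for $(\mathcal{G}_i)$, $|D_i'| \ll A\,n^{o(1)}$ with overwhelming probability (using $|a_i|\le A$, $\beta_i\le 2$, \eqref{wi-1}), and $\sum D_i' = \sum D_i$ on $E$, so $E \subseteq \{\,|\sum_{i=i_1}^{i_0-1} D_i'| \ge \tfrac14 L\,\}$. Applying Proposition \ref{azuma0} (interleaving the atoms as $\xi_1,\eta_{n-1},\xi_2,\eta_{n-2},\dots$ so that the $D_i'$ are precisely the nonzero martingale differences; alternatively split off $\sum(|\xi_{i+1}|^2 - \E|\xi_{i+1}|^2)$, which is $O(n^{o(1)}L^{1/2}) = o(L)$ with overwhelming probability, and apply Proposition \ref{azuma} to the linear part) with $\alpha_i := A\,n^{o(1)}$, so $\sum_i\alpha_i^2 = LA^2 n^{o(1)}$, and with $\lambda := \tfrac14 L/(LA^2 n^{o(1)})^{1/2} = n^{2\eps/5-o(1)}$ (since $L^{1/2}/A = \Theta(n^{2\eps/5})$), we conclude $\P(E) \le \exp(-\Omega(\lambda^2)) + n^{-\omega(1)} = \exp(-\Omega(n^{4\eps/5-o(1)}))$, which is negligible. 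Hence $\sup_i|a_i| > A$ with overwhelming probability, proving \eqref{supi}. The real gaussian case is handled in exactly the same way: the only changes are that the $\xi_i$ are no longer identically distributed and the $\eta$'s have variance two, and the argument used nothing beyond $\E\xi_{i+1}=0$, $\E|\xi_{i+1}|^2=1$, their joint independence, and $|\eta_{n-i}|\le n^{o(1)}$ with overwhelming probability.

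The point requiring care — the "main obstacle" — is the placement of the window: one needs $n-i$ to have already dipped slightly below $|z_0|^2 n$ so that on the good event the map \eqref{amap} is \emph{strictly} expanding at the relevant scale, which is exactly why $i_0$ in \eqref{i0-def} carries the extra term $|z_0|n^{1/2+\eps}$ beyond $(1-|z_0|^2)n$. The second delicate point is that $|a_i|^2$ is only a submartingale, so a bare second-moment bound gives no overwhelming-probability lower bound; this forces the martingale-difference decomposition and Azuma estimate above, and the numerology closes precisely because $A = |z_0|^{1/2}n^{1/4+\eps/10}$ sits a factor $n^{2\eps/5}$ below the typical displacement $L^{1/2} = \Theta(|z_0|^{1/2}n^{1/4+\eps/2})$ accumulated over the window.
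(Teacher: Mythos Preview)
Your argument is correct and takes a genuinely different route from the paper. The paper partitions the window $[i_1,i_0]$ into roughly $n^{\eps/2}$ sub-intervals of length $|z_0|n^{1/2+\eps/2}$, shows via a Paley--Zygmund argument (tracking both $\E|\tilde a_i|^2$ and $\E|\tilde a_i|^4$ for a truncated process $\tilde a_i$) that each sub-interval independently produces $|a_i|>A$ with probability bounded below by a fixed $q>0$, and then multiplies these conditional failure probabilities to obtain overwhelming probability. You instead work on the entire window at once: on the bad event $E$ the expansion $\beta_i>1$ makes $|a_i|^2$ a submartingale with drift at least $\tfrac12$, so $|a_{i_0}|^2 \ge \tfrac12 L + \text{(martingale)}$; since the martingale increments are $O(A\,n^{o(1)})$, Azuma bounds the fluctuation by $n^{o(1)} A L^{1/2} = o(L)$ (the numerology $L^{1/2}/A \asymp n^{2\eps/5}$ being exactly what is needed), yielding the contradiction $A^2 \ge \tfrac14 L$. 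This is shorter and avoids both the truncation $\tilde a_i$ and the fourth-moment estimate; the paper's approach, on the other hand, would be more robust in situations where the martingale fluctuation is too large to beat the drift directly but a constant-probability-per-block argument still survives.

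One small omission: you implicitly assume $i_1 > 0$ when writing $i_1 = (1-|z_0|^2)n + \tfrac12|z_0|n^{1/2+\eps}$. When $i_0 - \tfrac12|z_0|n^{1/2+\eps} \le 0$ (which forces $|z_0|\ge 1$), the window contains $i=1$ and the claim follows immediately from $|a_1| = |\xi_1 - z_0\sqrt n| \gg \sqrt n \gg A$ via \eqref{abo} and \eqref{wi-1}, as the paper notes. A one-line remark disposes of this case.
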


\begin{proof}  Suppose first that
$$ i_0 - \frac{1}{2} |z_0| n^{1/2+\eps} \leq 0.$$
By \eqref{i0-def}, this implies that $|z_0| \gg 1$, and then from \eqref{abo}, \eqref{wi-1} we have $|a_1| \gg n^{1/2}$, which certainly gives \eqref{supi} in this case.  Thus we may assume that
$$ i_0 - \frac{1}{2} |z_0| n^{1/2+\eps} > 0.$$

It will suffice to show that, for each integer
$$ i_0 - \frac{1}{2} |z_0| n^{1/2+\eps} \leq i_1 \leq i_0$$
 and each fixed (i.e. conditioned) choice of $\xi_1,\dots,\xi_{i_1}$ and $\chi_{n-1,\C},\dots,\chi_{n-i_1}$, one has
\begin{equation}\label{con}
 \sup_{i_1 \leq i \leq i_1 + |z_0| n^{1/2+\eps/2} } |a_i| > A
\end{equation}
with conditional probability at least $q$ for some fixed $q>0$. Indeed, we can choose in the interval
$[i_0 - \frac{1}{2} |z_0| n^{1/2+\eps}, i_0 - |z_0| n^{1/2+\eps/2}]$ at least $\frac{n^{\eps/2} }{100}$ initial points $i_1,\dots,i_m$ so that the distance between any two of them is at least $|z_0| n^{1/2 +\eps/2}$.  If we let $E_j$ for $j=1,\dots,m$ be the event that \eqref{con} holds with $i_1$ replaced by $i_j$, then the above claim asserts that after conditining on the failure of the events $E_1,\dots,E_{j-1}$, the event $E_j$ holds with conditional probability at least $q$.  Multiplying the conditional probabilities together, we  then obtain \eqref{supi} with a failure probability of at most
$$ (1 - q)^{n^{\eps/2}/4}$$
which is $O(n^{-A})$ for any fixed $A>0$ as required.

Fix $i_0 - \frac{1}{2} |z_0| n^{1/2+\eps} \leq i_1 \leq i_0$ and $\xi_1,\dots,\xi_{i_1}$ and $\chi_{n-1,\C},\dots,\chi_{n-i_1,\C}$; all probabilities in this argument are now understood to be conditioned on these choices.  The quantity $a_{i_1}$ is now deterministic, and we may of course assume that
\begin{equation}\label{aio-init}
|a_{i_1}| \leq A
\end{equation}
as the claim is trivial otherwise.  We may also condition on the event that \eqref{wi-2} hold.  Let $i_2 := \lfloor i_1 + |z_0| n^{1/2+\eps/2} \rfloor$. Our goal is to show that
$$
\P( \sup_{i_1 \leq i \leq i_2} |a_{i}| > A ) \gg 1.
$$
For technical reasons (having to do with the contractive nature of the recursion \eqref{aor} when $a_i$ becomes large), it will be convenient to replace the random process $a_i$ by a slightly truncated random process $\tilde a_i$ for $i_0 \leq i \leq i_1$, which is defined by setting $\tilde a_{i_1} := a_{i_1}$ and
\begin{equation}\label{aor-trunc}
 \tilde a_{i+1} := \frac{|z_0| \sqrt{n} \tilde a_i}{\sqrt{\min(|\tilde a_i|, A)^2 + \chi_{n-i,\C}^2}} + \xi_{i+1}
\end{equation}
for $i_1 \leq i < i_2$.  From an induction on the upper range $i_2$ of the $i$ parameter, we see that
$$ \sup_{i_1 \leq i \leq i_2} |a_{i}| \leq A \iff \sup_{i_1 \leq i \leq i_2} |\tilde a_{i}| \leq A$$
and in particular
$$ |\tilde a_{i_2}| > A \implies \sup_{i_1 \leq i \leq i_2} |a_{i}| > A.$$
Thus it will suffice to show that
\begin{equation}\label{pazn}
\P( |\tilde a_{i_2}| > A ) \gg 1.
\end{equation}
By a standard Paley-Zygmund type argument, it will suffice to obtain the lower bound
\begin{equation}\label{two}
 \E |\tilde a_{i_2}|^2 \gg |z_0| n^{1/2 + \eps/2}
\end{equation}
on the second moment, and the upper bound
\begin{equation}\label{four-moment-bound}
 \E |\tilde a_{i_2}|^4 \ll |z_0|^2 n^{1 + \eps} + |z_0| n^{1/2 + \eps/2}  \E |\tilde a_{i_2}|^2
\end{equation}
on the fourth moment.  Indeed, if $p$ denotes the probability in \eqref{pazn}, then from H\"older's inequality one has
$$  \E |\tilde a_{i_2}|^2 \ll A^2 + p^{1/2} ( \E |\tilde a_{i_2}|^4 )^{1/2}$$
and then from \eqref{four-moment-bound} and \eqref{two} (and the definition of $A$) we obtain $p \gg 1$ as required.

It remains to establish \eqref{two} and \eqref{four-moment-bound}.  For this, we will use \eqref{aor-trunc} to  track the growth of the moments $\E |\tilde a_i|^2, \E |\tilde a_i|^4$ as $i$ increases from $i_1$ to $i_2$.

Let $i_1 \leq i < i_2$. From \eqref{aor-trunc} we thus have
$$
\E |\tilde a_{i+1}|^2 = \E \left|\frac{|z_0| \sqrt{n} \tilde a_i}{\sqrt{\min(|\tilde a_i|, A)^2 + n-i + \sqrt{n-i} \eta_{n-i} }} + \xi_{i+1}\right|^2
$$
The quantity $\xi_{i+1}$ has mean $O(n^{-100})$, variance $1+O(n^{-100})$ (the $O(n^{-100})$ errors arising from our conditioning to \eqref{wi-1}), and is independent of the other random variables on the right-hand side.  Thus (using \eqref{up}) we have
$$
\E |\tilde a_{i+1}|^2 = \E \left|\frac{|z_0| \sqrt{n} \tilde a_i}{\sqrt{\min(|\tilde a_i|, A)^2 + n-i + \sqrt{n-i} \eta_{n-i} ) }}\right|^2 + 1 + O(n^{-90}).$$
Upper bounding $\min(|\tilde a_i|, A)$ by $A$ and $n-i$ by $|z_0|^2 \sqrt{n} - |z_0| n^{1/2+\eps}/2$, and using \eqref{wi-2} (which we recall that we have conditioned on), we conclude that
$$ \min(|\tilde a_i|, A)^2 + n-i + \sqrt{n-i} \eta_{n-i}  \le |z_0|^2 n. $$
 This implies that
\begin{equation}\label{aii}
\E |\tilde a_{i+1}|^2 \ge \E |\tilde a_{i}|^2 + 1 + O(n^{-90}).
\end{equation}
  Iterating this $\gg |z_0| n^{1/2+\eps/2}$ times, we obtain \eqref{two} as required.

Now we turn to \eqref{four-moment-bound}.  Again, we let $i_1 \leq i <i_2$. From \eqref{aor-trunc} we have
$$
\E |\tilde a_{i+1}|^4 = \E \left|\frac{|z_0| \sqrt{n} \tilde a_i}{\sqrt{\min(|\tilde a_i|, A)^2 + n-i + \sqrt{n-i} \eta_{n-i} }} + \xi_{i+1}\right|^4.
$$
Expanding out the left-hand side using the independence and moment properties of $\xi_{i+1}$, we can estimate the above expression as
\begin{align*}
&\E \left|\frac{|z_0| \sqrt{n} \tilde a_i}{\sqrt{\min(|\tilde a_i|, A)^2 + n-i + \sqrt{n-i} \eta_{n-i} }}\right|^4 \\
&\quad + O\left( \E \left|\frac{|z_0| \sqrt{n} \tilde a_i}{\sqrt{\min(|\tilde a_i|, A)^2 + n-i + \sqrt{n-i} \eta_{n-i} }}\right|^2 + 1 \right).
\end{align*}
Using \eqref{wi-1}, \eqref{wi-2} and the bound $n-i \geq |z_0|^2 n - O(|z_0| n^{1/2+\eps})$, and discarding the non-negative $\min(|\tilde a_i|, A)^2$ term, we then obtain the upper bound
\begin{equation}\label{times}
\E |\tilde a_{i+1}|^4 \leq (1 + O(|z_0|^{-1} n^{-1/2+\eps})) \E |\tilde a_i|^4 + O( \E |\tilde a_i|^2 + 1 ),
\end{equation}
via a routine calculation.
From \eqref{aii} we have
$$
 \E |\tilde a_{i}|^2 \ll \E |\tilde a_{i_2}|^2.$$
From \eqref{aio-init} we also have
$$
 \E |\tilde a_{i_1}|^4 \ll |z_0|^2 n^{1 + \eps };$$
if we then iterate \eqref{times} $O( |z_0| n^{1/2+\eps/2} )$ times, we obtain \eqref{four-moment-bound} as desired.
\end{proof}

Now we need to use the repulsive properties of \eqref{amap} near the origin to propagate this initial largeness to later values of $i$.  The key proposition is the following.

\begin{proposition} \label{repulsion} Let $i_0 \leq i_1 \leq i_2 \leq n-n^\eps/2$.  Let $E_{i_1,i_2}$ be the event that $|a_i| \le \frac{1}{2} \sqrt {i- (1- |z_0|^2) n }$ for all $i_1 \le i \le i_2$. Then we have with overwhelming probability that
$$ |a_{i_2}| 1_{E_{i_1,i_2}} \ge  \left(1+ \frac{c  { i_1 - (1- |z_0| ^2)n }} {|z_0|^2 n} \right)^{i_2-i_1}  \left(|a_{i_1}|   + O(n^{o(1)} \sqrt {i-i_1} )\right) 1_{E_{i_1,i_2}},$$
for some constant $c >0$.
\end{proposition}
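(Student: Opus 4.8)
## Proof strategy for Proposition~\ref{repulsion}

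\textbf{Setting up the recursion on the event $E_{i_1,i_2}$.}
The plan is to exploit the fact that on the event $E_{i_1,i_2}$ the quantity $\min(|a_i|,A)$ and the perturbations in the denominator of \eqref{aor-2} are all controlled, so that the map \eqref{amap} genuinely behaves like an expanding linear map near the origin. Concretely, on $E_{i_1,i_2}$ we have $|a_i|^2 \leq \frac14 (i-(1-|z_0|^2)n) \leq \frac14 |z_0|^2 n$, and combined with $\chi_{n-i,\C}^2 = n-i+\sqrt{n-i}\,\eta_{n-i}$ and the conditioning \eqref{wi-2}, the denominator $\sqrt{|a_i|^2 + \chi_{n-i,\C}^2}$ is at most $\sqrt{n-i} + O(n^{o(1)})$, hence at most $|z_0|\sqrt{n}\,(1 - \tfrac{c(i_1-(1-|z_0|^2)n)}{|z_0|^2 n})^{-1}$ for a suitable $c>0$ after using $n-i \le n - i_1 = |z_0|^2 n - (i_1-(1-|z_0|^2)n)$. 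Thus, writing $\lambda_i := |z_0|\sqrt{n}/\sqrt{|a_i|^2+\chi_{n-i,\C}^2}$, we have the lower bound $\lambda_i \geq 1 + \tfrac{c(i_1-(1-|z_0|^2)n)}{|z_0|^2 n}$ on $E_{i_1,i_2}$ for all $i_1 \le i \le i_2$.

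\textbf{Writing $a_i$ as a product times a martingale-type sum.}
From \eqref{aor-2}, on $E_{i_1,i_2}$ we have $a_{i+1} = \lambda_i e^{\sqrt{-1}\phi_i} a_i + \xi_{i+1}$ for a real phase $\phi_i$ ($\phi_i=0$ in the complex case since $a_i/|a_i|$ is unchanged by multiplication by a positive scalar; in the real case one absorbs the phase as in the Remark after \eqref{aor2}). Iterating from $i_1$ to $i_2$, one gets
\[
a_{i_2} = \Bigl(\prod_{i=i_1}^{i_2-1}\lambda_i\Bigr) a_{i_1}
+ \sum_{i=i_1}^{i_2-1}\Bigl(\prod_{j=i+1}^{i_2-1}\lambda_j\Bigr)\xi_{i+1},
\]
so that $|a_{i_2}| \geq (\prod \lambda_i)|a_{i_1}| - \bigl|\sum_i (\prod_{j>i}\lambda_j)\xi_{i+1}\bigr|$ (with the caveat that one must first pass to a truncated process, as in the proof of Lemma~\ref{large-init}, so that the identity and the bound $\lambda_i \geq 1$ hold unconditionally up to the first exit time of $E_{i_1,i_2}$; multiplying the indicator $1_{E_{i_1,i_2}}$ through at the end restores the original $a_i$ exactly on that event). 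The product $\prod_{i=i_1}^{i_2-1}\lambda_i$ is at least $(1 + \tfrac{c(i_1-(1-|z_0|^2)n)}{|z_0|^2 n})^{i_2-i_1}$ by the previous paragraph, giving the main exponential factor. The point of tracking only the lower bound $\lambda_i \ge 1$ in the error sum is that then the error sum has coefficients $\prod_{j>i}\lambda_j$ which I will bound crudely; since $i_2 \le n - n^{\eps}/2$, these products stay $n^{O(1)}$, but to get the clean $n^{o(1)}\sqrt{i_2-i_1}$ bound one should instead estimate the martingale directly.

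\textbf{Controlling the martingale error term.}
The sum $S := \sum_{i=i_1}^{i_2-1}(\prod_{j=i+1}^{i_2-1}\lambda_j)\xi_{i+1}$ is, after conditioning on the $\chi$'s and the earlier $\xi$'s, exactly of the form $\sum_i c_i(\xi_1,\dots,\xi_i,\dots)\xi_{i+1}$ treated in Proposition~\ref{azuma}, since the coefficient $\prod_{j>i}\lambda_j$ depends only on $a_{i+1},\dots,a_{i_2-1}$ through the $\chi$'s (which I regard as part of the conditioning) --- more carefully, one runs Proposition~\ref{azuma} on the reversed index, or one simply bounds $|\prod_{j=i+1}^{i_2-1}\lambda_j| = |a'_{i_2-1}|/(\text{something}) \ll n^{O(\eps)}$ using \eqref{up} together with the fact that on $E_{i_1,i_2}$ each $\lambda_j$ is also bounded \emph{above} by $O(1)$ (again from $n-i \ge |z_0|^2 n - O(|z_0| n^{1/2+\eps})$ and \eqref{wi-2}), so $\prod_{j=i+1}^{i_2-1}\lambda_j = O(e^{O((i_2-i_1)/n)}) = O(1)$ since $i_2 - i_1 \le n$. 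With all coefficients $\alpha_i = O(n^{o(1)})$, Proposition~\ref{azuma} gives $|S| \ll n^{o(1)}(i_2-i_1)^{1/2}$ with overwhelming probability. Combining the exponential lower bound on the product with this bound on $|S|$, and multiplying through by $1_{E_{i_1,i_2}}$, yields the claimed inequality.

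\textbf{Main obstacle.}
The delicate point is the interchange between ``working on the event $E_{i_1,i_2}$'' and ``having an unconditional recursion to which Proposition~\ref{azuma} applies.'' One cannot simply multiply by $1_{E_{i_1,i_2}}$ at the start, because $E_{i_1,i_2}$ is not a stopping-time event adapted to the natural filtration in a way that keeps the martingale structure. The fix --- as in Lemma~\ref{large-init} --- is to define a truncated process $\tilde a_i$ that agrees with $a_i$ until the first time $E_{i_1,i_2}$ fails and for which $\lambda_i \ge 1$ and the upper bound $\lambda_i = O(1)$ hold identically, prove the exponential growth and the martingale bound for $\tilde a_i$, and only then observe that $\tilde a_{i_2}1_{E_{i_1,i_2}} = a_{i_2}1_{E_{i_1,i_2}}$. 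Getting this truncation to be simultaneously compatible with the lower bound $\lambda_i \geq 1 + \tfrac{c(i_1-(1-|z_0|^2)n)}{|z_0|^2 n}$ (which needs $|a_i|$ small, i.e.\ needs $E$) and with an unconditional martingale bound (which must not need $E$) is the crux; the resolution is that the lower bound on $\lambda_i$ only enters the \emph{product} factor, which is multiplied by $1_{E_{i_1,i_2}}$ anyway, whereas the error term needs only the unconditional upper bound $\lambda_i = O(1)$, which the truncation does supply.
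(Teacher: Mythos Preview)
Your overall strategy---bound the expansion factor from below on $E_{i_1,i_2}$, iterate the recursion, and control the fluctuation sum via Proposition~\ref{azuma}---is the same as the paper's. The gap is in how you set up the martingale term.

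You iterate as
\[
a_{i_2}=\Bigl(\prod_{i=i_1}^{i_2-1}\lambda_i\Bigr)a_{i_1}
+\sum_{i=i_1}^{i_2-1}\Bigl(\prod_{j=i+1}^{i_2-1}\lambda_j\Bigr)\xi_{i+1},
\]
so the coefficient of $\xi_{i+1}$ is $\prod_{j>i}\lambda_j$. This coefficient is \emph{not} measurable with respect to $\xi_1,\dots,\xi_i$: each $\lambda_j$ depends on $|a_j|$, hence on $\xi_{i+1},\dots,\xi_j$. Your sentence ``depends only on $a_{i+1},\dots$ through the $\chi$'s'' is simply false; the ``reversed index'' suggestion does not help either, since $\lambda_{i+1}$ also depends on $\xi_1,\dots,\xi_{i+1}$ and is therefore not measurable in the reversed filtration. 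The truncation device from Lemma~\ref{large-init} changes the size of the coefficients but does nothing for their measurability, so Proposition~\ref{azuma} still does not apply. Separately, your claimed upper bound $\lambda_j=O(1)$ (hence $\prod_{j>i}\lambda_j=O(1)$) is not justified on $E_{i_1,i_2}$: you only know $|a_j|^2+n-j\ge n-j$, and for $j$ near $n-n^{\eps}$ this gives $\lambda_j\lesssim |z_0|\sqrt{n}/n^{\eps/2}$, which is large.

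The paper's one-line fix is to factor the product out of the \emph{entire} expression rather than only the leading term: writing $\gamma_{i_1,i_2}=\beta_{i_1}\cdots\beta_{i_2-1}$ and $\delta_{i_1,i}=(\beta_{i_1}\cdots\beta_i)^{-1}$, one has
\[
a_{i_2}=\gamma_{i_1,i_2}\Bigl(a_{i_1}+\sum_{i_1\le i<i_2}\delta_{i_1,i}\,\xi_{i+1}\Bigr).
\]
Now $\delta_{i_1,i}$ depends only on $a_{i_1},\dots,a_i$ and the $\chi$'s, so it \emph{is} adapted; moreover $E_{i_1,i_2}\subset E_{i_1,i}$, so one may multiply by $1_{E_{i_1,i}}$ inside the sum without changing anything on $E_{i_1,i_2}$, and on $E_{i_1,i}$ the lower bound $\beta_j\ge 1$ gives $|\delta_{i_1,i}|1_{E_{i_1,i}}\le 1$. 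Proposition~\ref{azuma} then applies directly with $\alpha_i=1$, giving $\sum\delta_{i_1,i}1_{E_{i_1,i}}\xi_{i+1}=O(n^{o(1)}\sqrt{i_2-i_1})$, and the exponential factor $\gamma_{i_1,i_2}1_{E_{i_1,i_2}}$ is handled exactly as you did. No truncated auxiliary process is needed.
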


\begin{proof}  The probability in question will be computed over the product space generated by $\xi_i, \eta_i$ with $i_1 < i \le i_2$, conditioning all the other $\xi_i,\eta_i$ to be fixed.  In particular, $a_{i_1}$ is now deterministic.

For any $i_1 \leq i < i_2$, we see from \eqref{aor-2} that
\begin{equation}\label{Aiai}
 a_{i+1} =  \beta_i a_i + \xi_{i+1}
 \end{equation}
where $\beta_i$ is the positive real number
$$ \beta_i := \frac{|z_0| \sqrt{n}}{\sqrt{|a_i|^2 + n-i + \sqrt{n-i} \eta_{n-i}}}.$$

Next, from iterating \eqref{Aiai} we have
$$ a_{i_2} = \gamma_{i_1,i_2} \left( a_{i_1} + \sum_{i_1 \leq i < i_2} \delta_{i_1,i} \xi_{i+1} \right)$$
where $\gamma_{i_1,i_2} := \beta_{i_1} \dots \beta_{i_2-1}$ and $\delta_{i_1,i} := \beta_{i_1}^{-1} \dots \beta_i^{-1}$.

As the event $E_{i_1,i}$ contains $E_{i_1,i_2}$ for $i_1 \leq i < i_2$, we have
\begin{equation}  \label{I_T1} a_{i_2} 1_{E_{i_1,i_2}} =  \gamma_{i_1,i_2} 1_{E_{i_1,i_2}} ( a_{i_1} + \sum_{ i_1  \leq i <i_2  } \delta_{i_1, i } \xi_{i+1 }  1_{E_{i_1,i}} ). \end{equation}

Notice that if $E_{i_1,i}$  holds, then
$$|a_i|^2 \le \frac{1}{4} (i- (1-|z_0|^2) n)$$ which is equivalent to
$$ |a_i|^2 + n-i \leq |z_0|^2 n - \frac{3}{4} (i - (1-|z_0|^2) n).$$

On the other hand,
since
$$ i - (1-|z_0|^2) n \geq i_1 - (1-|z_0|^2) n \geq |z_0| n^{1/2+\eps}/2$$
and $n-i \leq |z_0|^2 n$, we deduce from \eqref{wi-2} that
$$ |a_i|^2 + n-i + \sqrt{n-i} \eta_{n-i} \leq |z_0|^2 n - \frac{1}{2} (i - (1-|z_0|^2) n)$$
(say) if $n$ is large enough.  This gives a bound of the form
$$ \beta_i \ge 1 + c \frac{i - (1-|z_0|^2)n}{|z_0|^2 n} \ge 1 + c \frac{i_1 - (1-|z_0|^2)n}{|z_0|^2 n} $$
for some absolute constants $c > 0$.

 From the definition of $\gamma_i$, we conclude the lower bound
\begin{equation}  \label{I_T2}
 |\gamma_{i_1,i_2}| 1_{E_{i_1,i_2}}  \ge  \left(1 + c \frac{i_1 - (1-|z_0|^2)n}{|z_0|^2 n}\right)^{i_2-i_1} 1_{E_{i_1,i_2}}    \end{equation}

\noindent and  the upper bound

\begin{equation} \label{I_T3} |\delta_{i_1,i}|  1_{E_{i_1,i}}  \le  1_{E_{i_1,i}} \le 1. \end{equation}

Let us now make a critical observation that the random variable $\delta_{i_1,i} 1_{E_{i_1,i}} $ depends on $\xi_2,\dots,\xi_i$ (and on the $\chi_{1,\C},\dots,\chi_{n-1,\C}$) but is independent of $\xi_{i+1},\dots,\xi_n$.  This enables us to apply Proposition \ref{azuma}, from which  we can conclude that with overwhelming probability

\begin{equation} \label{IT_5}  \sum_{i_1 \leq i < i_2} \delta_{i_1,i}  1_{E_{i_1,i} } \xi_{i+1}  = O(n^{o(1)} |i_2 - i_1|^{1/2} ) = O(n^{o(1)} \sqrt{i_2-i_1} ), \end{equation} concluding
the proof.
\end{proof}

\begin{corollary} \label{repulsion1}
Assume that $|a_{i_1} | \ge n^{\epsilon/100} T^{1/2} $ where  $T:= \lfloor \frac{|z_0|^2 n}{ { i_1- (1-|z_0|^2) n} } \log^2 n \rfloor$.  Then $1_{E_{i_1,i_1+T}} =0$  holds with overwhelming probability.
\end{corollary}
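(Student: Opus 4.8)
The plan is to obtain Corollary \ref{repulsion1} as a short packaging of Proposition \ref{repulsion}. The point is that on the event $E_{i_1,i_1+T}$ the magnitude $|a_i|$ is forced by Proposition \ref{repulsion} to grow geometrically with rate $1+c(i_1-(1-|z_0|^2)n)/(|z_0|^2 n)$, and the relaxation time $T:=\lfloor \frac{|z_0|^2 n}{i_1-(1-|z_0|^2)n}\log^2 n\rfloor$ is calibrated precisely so that compounding this rate over $T$ steps produces a super-polynomially large value of $|a_{i_1+T}|$; since $|a_{i_1+T}|=O(\sqrt n)$ always by Lemma \ref{cub}, the event $E_{i_1,i_1+T}$ must be empty with overwhelming probability. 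So the argument is by contradiction.

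In detail, write $u:=i_1-(1-|z_0|^2)n$, so that $i_1\ge i_0$ forces $u\ge |z_0| n^{1/2+\eps}>0$, $i_1\le n$ forces $u\le |z_0|^2 n$, and $T=\lfloor\frac{|z_0|^2 n}{u}\log^2 n\rfloor$. Apply Proposition \ref{repulsion} with $i_2:=i_1+T$ (we may assume $i_2\le n-n^\eps/2$ so that the proposition applies; otherwise one runs the same argument with the shorter length $\min(T,n-n^\eps/2-i_1)$, which is still of the same order of magnitude since $i_1\le n-n^\eps$ and $T\ll n$): with overwhelming probability
$$|a_{i_1+T}|\,1_{E_{i_1,i_1+T}}\ge \left(1+\frac{cu}{|z_0|^2 n}\right)^{T}\bigl(|a_{i_1}|+O(n^{o(1)}T^{1/2})\bigr)\,1_{E_{i_1,i_1+T}}.$$
I would then bound the geometric factor from below. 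Shrinking $c$ if necessary so that $c\le 1$, we have $0<\frac{cu}{|z_0|^2 n}\le c\le 1$, hence $\log\!\left(1+\frac{cu}{|z_0|^2 n}\right)\ge \frac{cu}{2|z_0|^2 n}$, while $T\ge\tfrac12\cdot\frac{|z_0|^2 n}{u}\log^2 n$ because $\frac{|z_0|^2 n}{u}\log^2 n\ge\log^2 n\ge 1$. Multiplying gives $\left(1+\frac{cu}{|z_0|^2 n}\right)^{T}\ge \exp\!\left(\tfrac{c}{4}\log^2 n\right)=n^{(c/4)\log n}$, which exceeds every fixed power of $n$.

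Next I would absorb the additive diffusive error using the hypothesis $|a_{i_1}|\ge n^{\eps/100}T^{1/2}$: since the $o(1)$ exponent in Proposition \ref{repulsion} is a fixed quantity tending to zero, for $n$ large one has $n^{o(1)}\le\tfrac12 n^{\eps/100}$, so $|a_{i_1}|+O(n^{o(1)}T^{1/2})\ge\tfrac12|a_{i_1}|\ge\tfrac12 n^{\eps/100}T^{1/2}\ge\tfrac12 n^{\eps/100}$ (using $T\ge 1$). Combining with the previous paragraph, with overwhelming probability $|a_{i_1+T}|\,1_{E_{i_1,i_1+T}}\ge\tfrac12 n^{(c/4)\log n}\,1_{E_{i_1,i_1+T}}$. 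On the other hand Lemma \ref{cub} gives $|a_{i_1+T}|\le(1+|z_0|)\sqrt n\le 2\sqrt n$ surely. On the overwhelming-probability event where the former bound holds we therefore get $\tfrac12 n^{(c/4)\log n}\,1_{E_{i_1,i_1+T}}\le 2\sqrt n$, which forces $1_{E_{i_1,i_1+T}}=0$ since $n^{(c/4)\log n}\gg\sqrt n$ for $n$ large; this is the claim.

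The argument is essentially mechanical once Proposition \ref{repulsion} is available, so there is no genuine obstacle; the substantive work has already been spent in Lemma \ref{large-init} and Proposition \ref{repulsion}. The only points demanding any care are (i) checking that the diffusive error term $O(n^{o(1)}T^{1/2})$ does not swamp the geometric growth — which is exactly why the threshold in the hypothesis is $n^{\eps/100}T^{1/2}$ rather than merely $T^{1/2}$ — and (ii) the minor bookkeeping of staying within the range $i_1+T\le n-n^\eps/2$ in which Proposition \ref{repulsion} was proved.
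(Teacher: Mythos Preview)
Your proof is correct and follows essentially the same strategy as the paper: apply Proposition \ref{repulsion} with $i_2=i_1+T$, observe that the geometric factor is at least $\exp(\Omega(\log^2 n))$, use the hypothesis $|a_{i_1}|\ge n^{\eps/100}T^{1/2}$ to dominate the $O(n^{o(1)}T^{1/2})$ diffusive error, and then contradict the deterministic bound $|a_{i_1+T}|\le(1+|z_0|)\sqrt n$ from Lemma \ref{cub}. Your direct pointwise argument on the overwhelming-probability event is in fact slightly cleaner than the paper's version, which routes the same inequalities through expectations and a contradiction with $\P(E_{i_1,i_1+T})\ge n^{-A}$; the content is identical. (One trivial correction: $(1+|z_0|)$ need not be $\le 2$ since $|z_0|$ is only bounded by a fixed $C$, but of course $(1+|z_0|)\sqrt n=O(\sqrt n)$ suffices.)
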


\begin{proof} Assume, for contradiction, that there is a fixed $A$ such that $\P (1_{E_T}) \ge n^{-A}$.  By the previous lemma, we can assume that

$$ | a_{i_1+ T }|  1_{E_{i_1,i_1+T}} \ge  \left(1+ \frac{c   { i_1 - (1- |z_0| ^2)n }} {|z_0|^2 n} \right) ^{T}  (|a_{i_1}|   + O(n^{o(1)} \sqrt  T ) 1_{E_{i_1,i_1+T}} )$$
holds with probability
at least $1- n^{-2A}$.  Taking expectations, we conclude
$$ \E |a_{i_1+T}| \ge  \E |a_{i_1+ T }| 1_{E_{i_1,i_1+T}} \ge  \left(1+ \frac{c  { i_1 - (1- |z_0| ^2)n }} {|z_0|^2 n} \right) ^{T} \Big( \E  |a_{i_1}|   + O(n^{o(1)} \sqrt  T )
\Big) (n^{-A} - n^{-2A}).$$

Since $|a_{i_1}| \ge n^{\eps/100} T^{1/2} $ and $(1+ \frac{c  { i_1 - (1- |z_0| ^2)n }} {|z_0|^2 n} ) ^{T}  \ge \exp( c \log^2 n)$ for some fixed $c>0$ by the definition of $T$, the RHS is bounded from below by

$$ n^{ -A}  \exp( c \log^2 n )   \gg n. $$

On the other hand, from Lemma \ref{cub} we have that
$$ \E  |a_{i_1+T}|  \leq (1+|z_0|) \sqrt n \ll \sqrt{n},$$
yielding the desired contradiction.
\end{proof}

Next, we observe that $a_i$ cannot drop in magnitude too quickly once it is somewhat small (assuming the hypotheses \eqref{wi-1}, \eqref{wi-2}, of course):

\begin{lemma} \label{difference} If  $|a_{i}| \le \frac{1}{2} \sqrt {i- (1- |z_0|^2) n }$ then
$|a_{i}| \geq |a_{i-1}| - n^{o(1)}$.
\end{lemma}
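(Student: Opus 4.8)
The plan is to prove the slightly stronger fact that, in the regime where this lemma is applied (namely $i_0 < i \le n-n^\eps$, with \eqref{zup} in force and the conditioning on \eqref{wi-1}, \eqref{wi-2} that is maintained throughout this section), the multiplicative factor appearing in the recursion at step $i-1$ is already $\ge 1$; the conclusion of the lemma is then immediate from the recursion. Concretely, writing \eqref{aor-2} with $i$ replaced by $i-1$ gives $a_i = \beta_{i-1} a_{i-1} + \xi_i$, where $\beta_{i-1} := |z_0|\sqrt{n}/\sqrt{|a_{i-1}|^2 + m_{i-1}}$ is a nonnegative real scalar and $m_{i-1} := (n-i+1) + \sqrt{n-i+1}\,\eta_{n-i+1}$. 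In the range $i \le n-n^\eps$ one has $m_{i-1} = \sqrt{n-i+1}\,(\sqrt{n-i+1}+\eta_{n-i+1}) > 0$ by \eqref{wi-2}, and $|\xi_i| \le n^{o(1)}$ by \eqref{wi-1}, so the reverse triangle inequality yields both
$$ |a_i| \ge \beta_{i-1}|a_{i-1}| - n^{o(1)} \qquad\text{and}\qquad \beta_{i-1}|a_{i-1}| \le |a_i| + n^{o(1)}. $$
Thus it suffices to establish $\beta_{i-1}\ge 1$, i.e. $|a_{i-1}|^2 \le |z_0|^2 n - m_{i-1}$.

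Suppose this fails, so that $s := |a_{i-1}|^2 > |z_0|^2 n - m_{i-1}$. Set $D := i-(1-|z_0|^2)n$, so $|z_0|^2 n - (n-i+1) = D-1$; since $i > i_0 \ge (1-|z_0|^2)n$ we have $n-i+1 \le |z_0|^2 n$, hence $|\sqrt{n-i+1}\,\eta_{n-i+1}| \le |z_0|\,n^{1/2+o(1)}$ by \eqref{wi-2}. Moreover, from the definition \eqref{i0-def} of $i_0$ together with \eqref{zup}, $D$ dominates $|z_0|\,n^{1/2+o(1)}$ (and hence $n^{o(1)}$) throughout $i_0 < i \le n-n^\eps$: if $i_0$ is the second expression in \eqref{i0-def} this follows from $D > i_0 - (1-|z_0|^2)n = |z_0| n^{1/2+\eps}$, while the case $i_0 = n^\eps$ forces $|z_0|^2-1 \gg n^{-1/2+\eps}$ and hence $D \gg (|z_0|^2-1)n$, which again dominates. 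Consequently $|z_0|^2 n - m_{i-1} = (D-1) - \sqrt{n-i+1}\,\eta_{n-i+1} \ge \tfrac13 D > 0$. Now exploit that $s \mapsto |z_0|\sqrt{n}\,\sqrt{s}/\sqrt{s+m_{i-1}}$ is increasing for $s\ge 0$ (using $m_{i-1}>0$): evaluating at $s = |a_{i-1}|^2 > |z_0|^2 n - m_{i-1}$,
$$ \beta_{i-1}|a_{i-1}| > \frac{|z_0|\sqrt{n}\,\sqrt{|z_0|^2 n - m_{i-1}}}{\sqrt{|z_0|^2 n}} = \sqrt{|z_0|^2 n - m_{i-1}} \ge \sqrt{D/3}. $$
On the other hand, the hypothesis $|a_i| \le \tfrac12\sqrt{D}$ combined with the second displayed inequality of the first paragraph gives $\beta_{i-1}|a_{i-1}| \le \tfrac12\sqrt{D} + n^{o(1)}$, and since $D \gg n^{o(1)}$ the right-hand side is strictly smaller than $\sqrt{D/3}$ for $n$ large (because $1/\sqrt{3} > 1/2$), contradicting the previous display. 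Hence $\beta_{i-1}\ge 1$, and therefore $|a_i| \ge \beta_{i-1}|a_{i-1}| - n^{o(1)} \ge |a_{i-1}| - n^{o(1)}$, as desired.

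The only genuinely delicate point is the bookkeeping in the middle paragraph, namely verifying that $D = i-(1-|z_0|^2)n$ dominates the error terms $|z_0| n^{1/2+o(1)}$ and $n^{o(1)}$ uniformly over $i_0 < i \le n-n^\eps$; this is exactly where the specific choice \eqref{i0-def} of $i_0$ (with its extra $|z_0| n^{1/2+\eps}$ cushion) and the lower bound \eqref{zup} are used. Everything else — the reverse triangle inequality, the elementary monotonicity of $s\mapsto \sqrt{s/(s+m)}$, and the numerical gap $1/\sqrt{3}>1/2$ — is routine.
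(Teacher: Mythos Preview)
Your proof is correct and follows essentially the same line as the paper's. Both arguments establish that the forward multiplicative factor $\beta_{i-1}=|z_0|\sqrt{n}/\sqrt{|a_{i-1}|^2+\chi_{n-i+1,\C}^2}$ is at least $1$ (equivalently $|a_{i-1}|\le |a_i-\xi_i|$), and both rely on the same numerical comparison (your $\tfrac12<\tfrac{1}{\sqrt3}$ is the square-root of the paper's $\tfrac14<\tfrac34$). The only cosmetic difference is that the paper inverts the recursion to write $|a_{i-1}|^2=\dfrac{\chi_{n-i+1,\C}^2}{|z_0|^2 n-|a_i-\xi_i|^2}\,|a_i-\xi_i|^2$ and checks directly from the hypothesis that this ratio is $\le 1$, whereas you argue by contradiction using the monotonicity of $s\mapsto \sqrt{s/(s+m)}$; your $D$-bookkeeping using \eqref{i0-def} and \eqref{zup} matches what the paper uses implicitly.
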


\begin{proof} From \eqref{aor} we have
$$ a_{i}-\xi_i = \frac{|z_0| \sqrt{n}}{\sqrt{|a_{i-1}|^2 + \chi_{n-i+1,\C} } } a_{i-1}. $$
and hence
$$ \frac{|z_0|^2 n}{|a_{i-1}|^2 + \chi_{n-i+1,\C}} |a_{i-1}|^2 = |a_{i} - \xi_{i}|^2.$$

We can rearrange this as
$$ |a_{i-1}|^2 = \frac{\chi_{n-i+1,\C}}{|z_0|^2 n - |a_{i} - \xi_{i}|^2}  |a_{i} - \xi_{i}|^2.$$

By \eqref{wi-2} we have
$$ \chi_{n-i+1,\C} = n - i +O (\sqrt {n-i} n^{o(1)}) = n-i  + O( n^{o(1)} |z_0| \sqrt{n} ),$$
using the fact that in this range $n-i \le |z_0|^2 n$.

From the assumption of the lemma, we have that
$$ |a_{i} - \xi_{i}|^2 \leq \frac{1}{4} (i - (1-|z_0|^2) n) + O( n^{o(1)} \sqrt{i - (1-|z_0|^2) n} )$$
and thus
$$ \chi_{n-i+1,\C} - |z_0|^2 n + |a_{i} - \xi_{i}|^2
\leq -\frac{3}{4} (i - (1-|z_0|^2) n) + O( n^{o(1)} |z_0| \sqrt{n} ) + O( n^{o(1)} \sqrt{i - (1-|z_0|^2) n} ).$$

As $i -(1-|z_0|^2 n ) \ge |z_0| n^{1/2 +\eps}$,  we see that the right-hand side is negative for $n$ large enough, thus
$$ \frac{\chi_{n-i+1,\C}}{|z_0|^2 n - |a_{i} - \xi_{i}|^2} \leq 1.$$
We thus have
$$ |a_{i-1}| \leq |a_{i} - \xi_{i_1}|, $$
which implies from \eqref{wi-1} that
$|a_i| \ge |a_{i-1}| - n^{o(1)}$ as desired.
\end{proof}

We can now prove the lower bound \eqref{lower-ai} with overwhelming probability as follows.
We first condition on the event that the conclusion of Lemma \ref{large-init} holds. Now assume that there is some $i_0 < i \le n - n^{\eps}$ such that

$$|a_i| \le \frac{1}{3} \sqrt { i- (1-|z_0|^2)  n }. $$

Let $i_2$ be the first such index. In particular,
\begin{equation}\label{ai2}
|a_{i_2}| \le \frac{1}{3} \sqrt { i_2- (1-|z_0|^2)  n } \leq \frac{1}{2} \sqrt { i_2- (1-|z_0|^2)  n }.
\end{equation}
By Lemma \ref{large-init}, we can then locate an index $\max(i_0-\frac{1}{2} |z_0| n^{1/2+\eps},0) + 1 \le i_1 < i_2$ such that $|a_i| \le \frac{1}{2} \sqrt{ i -( 1-|z_0|^2) n} $ for all $i_1 \le i \le i_2$ (or in other words, $E_{i_1,i_2}$ holds) and
$$ |a_{i_1-1}| > \frac{1}{2} \sqrt{ i_1-1 -( 1-|z_0|^2) n}.$$
From Lemma \ref{difference}, this implies in particular that
\begin{equation}\label{ai1}
|a_{i_1} | \ge  .499 \sqrt{ i_1 - (1-|z_0|^2) n }.
\end{equation}

From the above discussion and the union bound, it thus suffices to show that for any given $i_0 \leq i_1 < i_2 \leq n-n^\eps$, the event that \eqref{ai2} and \eqref{ai1} and $E_{i_1,i_2}$ all simultaneously hold, is false with overwhelming probability.

Fix $i_1,i_2$.  If $i_2-i_1 > T$ then by Corollary \ref{repulsion1}, $1_{E_{i_1,i_2}} =0$ with overwhelming probability and we are done. In the other case $i_2-i_1 \leq T$, by Proposition \ref{repulsion}, we have  with overwhelming probability

\begin{equation} \label{BB}  |a_{i_2}| 1_{E_{i_1,i_2}} \ge  \left(1+ \frac{c   { i_1 - (1- |z_0| ^2)n }} {|z_0|^2 n} \right) ^{i_2-i_1}  (|a_{i_1}|   + O(n^{o(1)} \sqrt {i-i_1} )) 1_{E_{i_1,i_2}} .
\end{equation}

It now suffices to verify that if  $|a_{i_1}|  \ge .499 \sqrt {i_1 -(1-|z_0|^2 ) n }$, $E_{i_1,i_2}$ holds, and $|a_{i_2}| \le \frac{1}{3} \sqrt { i_2- (1-|z_0|^2)  n }$, then the above inequality is violated. Notice  that   since $i_2 -i_1 \le T = \frac{|z_0|^2 n}{ i_1 -(1-|z_0|^2 n)} \log^2 n$ and
$i_1 - (1-|z_0|^2)n \gg |z_0| n^{1/2 +\eps}$, we have

$$|a_{i_1}|  + O(n^{o(1)} \sqrt{i_2-i_1} \ge .499 \sqrt {i_1 -(1-|z_0|^2 ) n } - O(n^{o(1)} T^{1/2} ) \ge \frac{5}{12}  \sqrt {i_1 -(1-|z_0|^2 ) n } . $$
As $E_{i_1,i_2}$ holds, it follows that the RHS of \eqref{BB} is at least

$$  \frac{5}{12}  \sqrt {i_1 -(1-|z_0|^2 ) n }  > \frac{1}{3} \sqrt {i_2 - (1-|z_0|^2 n) } $$ again thanks to the fact that $i_2-i_1 \le T=o( i_1- (1-|z_0|^2)n)$. Our proof is complete.

\begin{remark} All the above arguments go through without difficulty in the real case, using \eqref{aor2} instead of \eqref{aor}, replacing $a_i, \xi_{i}, \chi_{i,\C}$ by $a'_i, \xi'_i, \chi_{i,\R}$ respectively; we leave the details to the interested reader.
\end{remark}

\section{Concentration of log-determinant for iid matrices}\label{lower-sec2}

Now that we have established concentration of the log-determinant in the special case of real and complex gaussian matrices (Theorem \ref{loglower-gaussian}), we are now ready to apply the resolvent swapping machinery from Section \ref{resolvent-sec} to obtain concentration for more general iid matrices (Theorem\ref{loglower}).

Fix $\delta, z_0$.  Let $W_{n,z_0}$ be defined as in \eqref{wnz}.  As in the previous section, set $\alpha$ equal to $\frac{1}{2} (|z_0|^2-1)$ if $|z_0| \leq 1$, and $\log|z_0|$ if $|z_0| \geq 1$.  It suffices to show that
$$ \log|\det(W_{n,z_0})| = 2 n \alpha + O( n^{o(1)} )$$
with overwhelming probability, uniformly in $z_0$.  We may assume without loss of generality that all entries of $M_n$ are $O(n^{o(1)})$.

We observe the identity
$$ \log |\det(W_{n,z_0})| = \log |\det (W_{n,z_0}-\sqrt{-1}T)| - n \Im \int_0^T s(\sqrt{-1}\eta)\ d\eta$$
for any $T>0$, where $s(z) := \frac{1}{n} \tr( W_{n,z_0} - z)^{-1}$ is the Stieltjes transform, as can be seen by writing everything in terms of the eigenvalues of $W_{n,z_0}$.  If we set $T := n^{100}$ then we see that
\begin{align*}
\log |\det (W_{n,z_0}-\sqrt{-1}T)| &= n \log T + \log|\det(1 - n^{-100} W_{n,z_0})| \\
&=n \log T + O(n^{-10})
\end{align*}
(say), thanks to \eqref{manx} and the hypothesis that $|z_j| \le \sqrt n $.   Thus it suffices to show that
$$ n \Im \int_0^T s(\sqrt{-1}\eta)\ d\eta = n \log T - 2 n \alpha + O( n^{o(1)} )$$
with overwhelming probability.

Now we eliminate the contribution of very small $\eta$.

\begin{lemma}  One has
$$ n \Im \int_0^{1/n} s(\sqrt{-1}\eta)\ d\eta = O(n^{o(1)})$$
with overwhelming probability.
\end{lemma}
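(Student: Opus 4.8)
The plan is to evaluate the integral exactly in terms of the spectrum of $W_{n,z_0}$ and then bound it using the least-singular-value estimate (Proposition \ref{lsv}) together with the crude eigenvalue counting bound (Proposition \ref{ni}). Writing $\lambda_1,\dots,\lambda_n$ for the singular values of $\frac{1}{\sqrt n}(M_n-z_0)$ (equivalently, the nonnegative eigenvalues of $W_{n,z_0}$), so that the full spectrum of $W_{n,z_0}$ is $\{\pm\lambda_i\}_{i=1}^n$, one has
$$ \Im s(\sqrt{-1}\eta)=\frac1n\sum_{i=1}^n\Big(\Im\frac{1}{\lambda_i-\sqrt{-1}\eta}+\Im\frac{1}{-\lambda_i-\sqrt{-1}\eta}\Big)=\frac1n\sum_{i=1}^n\frac{2\eta}{\lambda_i^2+\eta^2}\ge 0, $$
and integrating term by term (using that each $\lambda_i$ is almost surely positive) gives
$$ n\Im\int_0^{1/n}s(\sqrt{-1}\eta)\,d\eta=\sum_{i=1}^n\log\Big(1+\frac{1}{n^2\lambda_i^2}\Big). $$
Since every summand is nonnegative, it suffices to produce an upper bound of size $n^{o(1)}$ for this sum with overwhelming probability, and I would do this by splitting the sum according to whether $\lambda_i>2/n$ or $\lambda_i\le 2/n$.

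First I would handle the eigenvalues with $\lambda_i>2/n$. Here I use the elementary bound $\log(1+x)\le x$, so this part of the sum is at most $\sum_{\lambda_i>2/n}\frac{1}{n^2\lambda_i^2}$. Decomposing the range $\lambda_i>2/n$ dyadically, the eigenvalues with $\lambda_i\in[2^j/n,2^{j+1}/n)$ each contribute at most $4^{-j}$, and by Proposition \ref{ni} applied to the interval $[0,2^{j+1}/n]$ (with the bound valid for all such intervals simultaneously with overwhelming probability) there are at most $n^{o(1)}2^j$ of them, and at most $n$ of them trivially. Hence the block at scale $j$ contributes $\ll \min(n,n^{o(1)}2^j)\,4^{-j}$, and summing over $j\ge1$ yields a total of $O(n^{o(1)})$.

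Next I would handle the eigenvalues with $\lambda_i\le 2/n$. By Proposition \ref{ni} with $I=[0,2/n]$, with overwhelming probability there are at most $n^{o(1)}$ such eigenvalues; and by Proposition \ref{lsv}, with overwhelming probability $\lambda_i\ge n^{-\log n}$ for all $i$, so each corresponding summand is at most $\log(1+n^{2\log n})=O((\log n)^2)=n^{o(1)}$. Multiplying, this part of the sum is $O(n^{o(1)})$ as well, and adding the two contributions completes the argument. The only delicate point is the potential singularity of the integrand as $\eta\to 0$, which puts the eigenvalue $\lambda_i$ into the denominator with no cancellation available; this is precisely where the polynomial lower bound of Proposition \ref{lsv} (to control the worst term) and the $O(n^{o(1)})$ count of near-zero eigenvalues from Proposition \ref{ni} (to bound how many such terms occur) are both needed.
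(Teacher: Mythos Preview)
Your proof is correct. The approach differs from the paper's in a pleasant way: the paper bounds the integrand pointwise using the resolvent bound of Proposition~\ref{Resolv} (giving $|s(\sqrt{-1}\eta)|\ll n^{o(1)}(1+\tfrac{1}{n\eta})$), integrates this over $[n^{-2\log n},1/n]$, and then uses Proposition~\ref{lsv} to control the remaining sliver $[0,n^{-2\log n}]$. You instead integrate \emph{first}, obtaining the exact identity $n\Im\int_0^{1/n}s(\sqrt{-1}\eta)\,d\eta=\sum_i\log(1+\tfrac{1}{n^2\lambda_i^2})$, and then bound this spectral sum by combining a dyadic decomposition with the eigenvalue-counting bound of Proposition~\ref{ni}, invoking Proposition~\ref{lsv} only for the handful of eigenvalues below $2/n$. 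Both arguments rely on the same underlying spectral inputs; your version is arguably a touch more elementary in that it calls on Proposition~\ref{ni} rather than the stronger Proposition~\ref{Resolv}, and it makes transparent that the only obstruction is the near-zero eigenvalues. One small remark: the phrase ``each $\lambda_i$ is almost surely positive'' is not literally needed (and may fail for discrete ensembles); what you actually use is that on the overwhelming-probability event $\min_i\lambda_i\ge n^{-\log n}$ from Proposition~\ref{lsv}, all $\lambda_i$ are positive and the term-by-term integration is valid.
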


\begin{proof} From Proposition \ref{Resolv} we see with overwhelming probability that
$$ |s(\sqrt{-1}\eta)| \ll n^{o(1)} (1 + \frac{1}{n\eta})$$
for all $\eta > 0$.  This already handles the portion of the integral where $\eta > n^{-2\log n}$ (say).  For the remaining portion when $0 < \eta \leq n^{-2\log n}$, we observe from Proposition \ref{lsv} that with overwhelming probability, all eigenvalues of $W_{n,z_0}$ are at least $n^{-\log n}$ in magnitude, which implies that $s(\sqrt{-1}\eta) = O( n^{1 + \log n} )$ for all such $\eta$, and the claim follows.
\end{proof}

Set $X:= n \Im \int_{1/n}^T s( \sqrt{-1} \eta) d \eta $ and $ X_* :=  n \log T - 2n\alpha$.
Fix arbitrary constants $A, \epsilon >0$. In view of the above lemma, it suffices to show that
$$
\P(|X-X_*| \ge n^{\epsilon}) \ll n^{-A}.$$
By Markov's inequality, it suffices to show that for  $j= 2\lfloor A/\eps \rfloor$

\begin{equation} \label{moment11} \E (X-X_*)^j = O( n^{j \epsilon/2 } ). \end{equation}

Without loss of generality we may assume $j$ to be large, e.g. $j > 5$.  By Theorem \ref{loglower-gaussian}, we know that a stronger bound

\begin{equation} \label{moment22} \E (X'-X_*)^j  \le n^{\epsilon} \end{equation}

\noindent  holds for the same range of $j$ (for $n$ sufficiently large depending on $\eps$ and $j$), where $X'$ is defined as in $X$ but with $M_n$ replaced by a random real or complex gaussian matrix $M'_n$ that matches $M_n$ to third order.

  We now execute the following swapping process. Start with the random gausian matrix $M'_n$ and in each step swap either the real or imaginary part of a gaussian entry of $M'_n$ to the associated real or imaginary part of the corresponding entry of $M_n$.  The exact order in which we perform this swapping is not important, so long as it is chosen in advance; for instance, one could use lexicographical ordering, swapping the real part and then the imaginary part for each entry in turn.  Let $M_{n}^{[k]}$, $0 \le k \le 2n^2$ be the resulting  random matrix at time
  $k$ and define $X^{[k]}$ accordingly. We will show, by induction on $k$, that

  \begin{equation} \label{moment33} \E (X^{[k]} -X_*)^j  \le \left(1+ \frac{k}{n^{2+ \eps/8j}}\right) n^{\eps} \end{equation}

  \noindent  for $n$ sufficiently large depending on $\eps$ and $j$ (but not on $k$).    Note that the base case $k=0$ of \eqref{moment33} holds thanks to \eqref{moment22}, while the case $k=2n^2$ implies \eqref{moment11} with some room to spare.

  For technical reasons, it is convenient to assume that $|\xi|, |\xi'| =n^{o(1)}$ with probability one. This can be done replacing all entries
  $\xi_{ij}$ by $\xi_{ij} \I_{|\xi_{ij}| \le \log^B n }$ and   $\xi'_{ij}$ by $\xi'_{ij} \I_{|\xi'_{ij} |\le \log^B n }$, where $B$ is a sufficiently large constant so that
  with overwhelming probability $|\xi_{ij}| + |\xi_{ij}'|  < \log^B n$ for all $i,j$. It is clear that any event that holds with overwhelming probability in the truncated model
  also holds with overwhelming probability in the original one. Thus, we can reduce to the truncated case.
  At this point we would like to
  point out that the truncation does change the moments of the entries, but by a very small amount that will only introduce negligible factors such as $O(n^{-100})$ to  the swapping argument.
  Abusing the notion slightly, from now on we still work with $\xi$ and $\xi'$ but under the extra assumption that with probability one
  $|\xi|, |\xi'| \le \log^B n=n^{o(1)}$.

Fix a step $0 \leq k < 2n^2$, and consider the difference

  \begin{equation} \label{moment0} D_k := \E (X^{[k+1]} -X_*)^j -\E (X^{[k]} -X_*)^j  = \int \E [ (X^{[k+1]} -X_*)^j-  (X^{[k]} -X_*)^j ]| M_0 ) d M_0. \end{equation}

  \noindent where $M_0$ is obtained from $X^{[k+1]} $ by putting $0$ at the swapping position (in other words, $M_0$ is the common part of $M^{[k]}$ and   $M^{[k+1]} $), and $dM_0$ is the law of $M_0$.  Once conditioned on $M_0$,  we can simplify the notation by
 replacing  $X^{[k]} $ and $X^{[k+1]}$ by  $X_{\xi}$ and $X_{\xi'}$ respectively.

 It is important to notice that since $\eta \ge 1/n$, we can bound $|s_{\xi} (\sqrt {-1} \eta)|$  crudely by $n$  with probability one
 (for any matrix $M_n^{[k]}$). As $T = n^{100}$, this implies that
 $|X^{[k]}| \ll n^{102}$  and

 \begin{equation} \label{moment00} |(X^{[k]} -X_*)^j-  (X^{[k+1]} -X_*)^j | \ll n^{102 j} \end{equation} for any $j$, with probability one.

 By Proposition \ref{Resolv}, we see with overwhelming probability that
$$ \| R_\xi(\sqrt{-1}\eta) \|_{(\infty,1)} \ll n^{o(1)} $$
for all $\eta \geq n^{-1}$. In this case,  by Lemma \ref{neum} and \eqref{xi-bound}
\begin{equation}\label{ax-1}
\| R_0(\sqrt{-1}\eta) \|_{(\infty,1)} \ll n^{o(1)}
\end{equation}
for all such $\eta$.

If \eqref{ax-1} holds, we say that $M_0$ is {\it good}. The contribution from bad $M_0$ in the RHS of \eqref{moment0} is very small. Indeed, by Proposition
\ref{Resolv}, we can assume that $M_0$ is bad with probability at most $n^{-102 j -100}$.  By the upper bound \eqref{moment00}, the integral
(in $D_k$)  over
the bad $M_0$ is at most

\begin{equation} \label{moment1} n^{-102 j-100}  n^{102 j} = n^{-100}.  \end{equation}

Let us now condition on a good $M_0$.  By Proposition \ref{proper}, we have

\begin{equation} \label{moment2} s_{\xi} (\sqrt{-1} \eta) = s_0 + \sum_{i=1}^3 \xi^i n^{-i/2} c_i (\eta) + O(n^{-2 +o(1)} \frac{1}{ n \eta}). \end{equation}

\noindent where the coefficient $c_i(\eta)$ is independent of $\xi$ and enjoys the bound $|c_i(\eta)| \ll n^{o(1)} \frac{1}{n \eta}$.

Multiplying by $n$ and taking the integral over $\eta$, we obtain,

\begin{equation} \label{moment3} X_\xi= X_0 + P(\xi)  + O(n^{-2+ o(1)}) \end{equation}

\noindent where $P= \sum_{i=1}^3  \xi^i  n^{-i/2} d_i$ is a polynomial in $\xi$ with coefficients $d_i = O(n^{o(1)})$, and $X_0$ is a quantity independent of $\xi$.  As $|\xi| =n^{o(1)}$ with probability one, it follows that
$|X_{\xi} -X_0 | = n^{-1/2+o(1)}$ with probability one.  Furthermore,

\begin{equation} \label{moment4}  X_\xi -X_* = (X_0 -X_*) + P(\xi)  + O(n^{-2+ o(1)}).\end{equation}

We raise this equation to the power $j$, focusing on those terms of order $\xi^4$ or more.  As $d_i = O(n^{o(1)})$, using the fact that $|\xi| \le n^{o(1)}$ with probability one and $j >5$,
 we have

\begin{equation}  \label{moment5} (X_\xi-X_*)^j= P_j (\xi) +  O(n^{-2+ o(1)} \sum_{l=1}^{j-1} |X_0- X_*|^{l} + n^{-5/2+o(1)} ) . \end{equation}

 \noindent where $P_j$ is a polynomial of degree at most $3$.  Therefore,

 \begin{equation}  \label{moment6}
 \E (X_\xi-X_*)^j=  \E P_j (\xi) +  O(n^{-2+ o(1)} \sum_{k=1}^{j-1}  |X_0- X_*|^{k} + n^{-5/2+o(1)} ) . \end{equation}

Similarly

 \begin{equation}  \label{moment6'}
 \E (X_{\xi'} -X_*)^j=  \E P_j (\xi') +  O(n^{-2+ o(1)} \sum_{k=1}^{j-1}  |X_0- X_*|^{k} + n^{-5/2+o(1)} ) . \end{equation}

 Here the expectations are with respect to $\xi$ and $\xi'$ (as we already conditioned on a good $M_0$.)
 It follows that

  \begin{equation}  \label{moment7}
 \E (X_\xi-X_*)^j - \E (X_{\xi'} -X_*)^j =  \E (P_j (\xi)-P_j (\xi')) +  O(n^{-2+ o(1)} \sum_{k=1}^{j-1}  |X_0- X_*|^{k} + n^{-5/2+o(1)} ) . \end{equation}

 As already pointed out, the first three moments of $\xi$ and $\xi'$ do not entirely match due to the truncation. However, by fixing $B$ large enough, we can assume that the truncation changes each moment  by at most $n^{-C}$ for some sufficiently large $C$ (we need $C$ to be larger than the absolute value of the coefficients of $P_j$, which are of size $O(n^{O(1)})$, again thanks to the fact that  $|s_{\xi} (\sqrt {-1} \eta)| \le n$  with probability one).   This yields

 \begin{equation}\label{moment8}
 \E (X_\xi-X_*)^j - \E (X_{\xi'} -X_*)^j =  O(n^{-2+ o(1)} \sum_{k=1}^{j-1}  |X_0- X_*|^{k} + n^{-5/2+o(1)} ) . \end{equation}

But  $ |X_{\xi} -X_0| \le n^{-1/2 +o(1)}$ with probability one,   so \eqref{moment0} implies

 \begin{equation}\label{moment8a}
 \E (X_\xi-X_*)^j - \E (X_{\xi'} -X_*)^j =   O(n^{-2+ o(1)} \sum_{k=1}^{j-1} \E |X_\xi - X_*|^{k} + n^{-5/2+o(1)} ).
 \end{equation}

The right-hand side of \eqref{moment8a} can be bounded as

\begin{equation} \label{moment9}  O(n^{-2 +o(1)}\min \{  \E |X_{\xi} -X^*|^j n^{-\eps/4j}, n^{\eps/2} \}) , \end{equation}

\noindent where the bound comes from considering two cases $\E |X_\xi  - X_*|^j $ being not smaller or smaller than  $n^{\eps/2}$, and the Holder inequality.

Thus, conditioned on a good $M_0$, we have

$$ | \E (X_\xi-X_*)^j - \E (X_{\xi'} -X_*)^j | \ll n^{-2 +o(1)}\min \{  |X_{\xi} -X^*|^j n^{-\eps/4j}, n^{\eps/2} \} . $$

Taking into account \eqref{moment1}, we conclude

$$D_k \ll n^{-100} + n^{-2 - \eps/4j } \E  |X_{\xi} -X_*|^j + n^{-2 + \eps/2 +o(1)} ,$$
and the desired bound \eqref{moment33} on $\E (X^{[k+1]} -X_*)^j $ follows easily by the induction hypothesis.

\appendix

\section{Spectral properties of $W_{n,z}$}\label{conc}

In this appendix we prove Proposition \ref{ni} and Proposition \ref{Resolv}.  We fix $M_n$, $C$, $z_0$ as in these propositions.  By truncation we may assume that all the coefficients of $M_n$ have magnitude $O(n^{o(1)})$.

\subsection{Crude upper bound}

We begin with Proposition \ref{ni}, which we will prove by modifying the argument from \cite[Appendix C]{TVlocal1} and \cite[Proposition 28]{TVlocal3}.  Write $I = [E-\eta,E+\eta]$.  It suffices to establish the claim in the case $1/n \leq \eta \leq 1$, as the general case then follows from this case (and from the trivial bound $N_I \leq 2n$).  By rounding $\eta$ to the nearest integer power of two, and using the union bound, it suffices to establish the claim for a single $\eta$ in this range, which we now fix.  Similarly, we may round $E$ to a multiple of $\eta$; since the claim is easy for (say) $|E| \geq n^{10}$, we see from the union bound that it suffices to establish the claim for a single $E$, which we now also fix.  By symmetry we may take $E \geq 0$.

By a diagonalisation argument, it will suffice to show for each fixed $c>0$ that one has
$$ N_{[E-\eta,E+\eta]} \leq n^{1+c} \eta$$
with overwhelming probability.  Accordingly, we assume for contradiction that
\begin{equation}\label{contra}
N_{[E-\eta,E+\eta]} > n^{1+c} \eta.
\end{equation}
We use the Stieltjes transform
$$ s(E + \sqrt{-1} \eta) = \frac{1}{2n} \tr (W_{n,z} - E - \sqrt{-1}\eta)^{-1}.$$
Then
$$
\Im s(E + \sqrt{-1} \eta) = \frac{1}{2n} \sum_{j=1}^{2n} \frac{\eta}{(\lambda_j(W_{n,z})-E)^2 + \eta^2};$$
from \eqref{contra} we thus have
$$ \Im s(E + \sqrt{-1} \eta) \gg n^c.$$
In particular, since
$$ s(E + \sqrt{-1} \eta) = \frac{1}{2n} \sum_{j=1}^{2n} R(E + \sqrt{-1} \eta)_{jj}$$
we see from the pigeonhole principle that we have
\begin{equation}\label{Rej}
 |R(E+\sqrt{-1}\eta)_{jj}| \gg n^c
\end{equation}
for some $1 \leq j \leq 2n$.  By the union bound, it suffices to show that for each $j$, the hypothesis \eqref{Rej} (combined with \eqref{contra}) leads to a contradiction with overwhelming probability.

Fix $j$; by symmetry we may take $j=2n$, thus
\begin{equation}\label{rennes}
 |R(E+\sqrt{-1}\eta)_{2n,2n}| \gg n^c.
\end{equation}
We expand $W_{n,z}$ as
$$ W_{n,z} = \begin{pmatrix} W'_{n,z} & X \\ X^* & 0 \end{pmatrix}$$
where $W'_{n,z}$ is the $2n-1 \times 2n-1$ Hermitian matrix
$$ W'_{n,z} := \begin{pmatrix}
0 & 0 & \frac{1}{\sqrt{n}} (M_{n-1} - z) \\
0 & 0 & Z \\
\frac{1}{\sqrt{n}} (M_{n-1}-z)^* & Z^* & 0
\end{pmatrix}
$$
where $M_{n-1}$ is the top left $n-1 \times n-1$ minor of $M_n$, $Z$ is the $n-1$-dimensional row vector with entries $\frac{1}{\sqrt{n}} \xi_{nj}$ for $j=1,\dots,n-1$, $X$ is the $2n$-dimensional column vector
$$ X := \begin{pmatrix}
X' \\
\frac{1}{\sqrt{n}} (\xi_{nn} - z)  \\
0
\end{pmatrix}
$$
and $X'$ is the $n-1$-dimensional column vector with entries $\frac{1}{\sqrt{n}} \xi_{jn}$ for $j=1,\dots,n-1$.

By Schur's complement, the resolvent coefficient $R(E+\sqrt{-1}\eta)_{2n,2n}$ can be expressed as
\begin{equation}\label{resolve-eq}
R(E+\sqrt{-1}\eta)_{2n,2n} = \frac{1}{-E-\sqrt{-1}\eta - Y_n}
\end{equation}
where $Y_n$ is the expression
$$ Y_n  := X^* (W'_{n,z} - E - \sqrt{-1}\eta)^{-1} X.$$
By \eqref{rennes} we conclude that
$$ |E+\sqrt{-1}\eta + Y_n| \ll n^{-c};$$
as $Y_n$ has a non-negative imaginary part, we conclude that
\begin{equation}\label{imoen}
\Im Y_n \ll n^{-c}.
\end{equation}
Next, we apply the singular value decomposition to the $n \times n-1$ matrix $\begin{pmatrix}
\frac{1}{\sqrt{n}} (M_{n-1} - z) \\
Z
\end{pmatrix}$, generating an orthonormal basis of $n$ right singular vectors $u_1,\dots,u_n$ in $\C^n$, and an orthonormal basis of $n-1$ left singular vectors in $\C^{n-1}$, associated to singular values $\sigma_1,\dots,\sigma_n$ (with $\sigma_n=0$).  Then $W'_{n,z}$ is conjugate to the direct sum
$$ W'_{n,z} \equiv \bigoplus_{j=1}^{n-1} \begin{pmatrix} 0 & \sigma_j \\ \sigma_j & 0 \end{pmatrix} \oplus \begin{pmatrix} 0 \end{pmatrix}$$
and thus
$$ (W'_{n,z}-E-\sqrt{-1}\eta)^{-1} \equiv \bigoplus_{j=1}^{n-1} \frac{1}{\sigma_j^2 - (E+\sqrt{-1}\eta)^2} \begin{pmatrix} E+\sqrt{-1}\eta & \sigma_j \\ \sigma_j & E+\sqrt{-1}\eta \end{pmatrix} \oplus \begin{pmatrix} \frac{1}{E+\sqrt{-1}\eta} \end{pmatrix}$$
and thus
\begin{align*}
 \Im Y_n &= \sum_{j=1}^{n-1} \Im \frac{E+\sqrt{-1}\eta}{\sigma_j^2 - (E+\sqrt{-1}\eta)^2} |\tilde X^* u_j|^2  \\
 &= \frac{1}{2} \sum_{j=1}^{n-1} \sum_{\epsilon = \pm 1} \frac{1}{\epsilon \sigma_j - (E+\sqrt{-1}\eta)} |\tilde X^* u_j|^2 \\
 &= \frac{\eta}{2} \sum_{j=1}^{n-1} \sum_{\epsilon = \pm 1} \frac{1}{|E-\epsilon \sigma_j|^2 + \eta^2} |\tilde X^* u_j|^2
\end{align*}
where
$$ \tilde X := \begin{pmatrix}
X' \\
\frac{1}{\sqrt{n}} (\xi_{nn} - z)
\end{pmatrix}
$$
is the top half of $X$.

By \eqref{contra} and the Cauchy interlacing law, we may find an interval $[j_-,j_+]$ of length $j_+-j_- \gg n^{1+c} \eta$ such that $|\sigma_j-E| \leq \eta$ for all $j_- \leq j \leq j_+$.  We conclude that
$$ \sum_{j_- \leq j \leq j_+} |\tilde X^* u_j|^2 \ll n^{-c} \eta.$$

At this point we will follow \cite{ESY3} and invoke a concentration estimate for quadratic forms essentially due to Hanson and Wright \cite{hanson}, \cite{wright}.

\begin{proposition}[Concentration]\label{concentration}  Let $\xi_1,\dots,\xi_n$ be iid complex random variables with mean zero, variance one, and bounded in magnitude by $K$ for some $K \geq 1$.  Let $X \in \C^n$ be a random vector of the form $Y+Z$, where
$$ Y := \frac{1}{n^{1/2}} \begin{pmatrix} \xi_1 \\ \vdots \\ \xi_n \end{pmatrix}$$
and $Z$ is a random vector independent of $Y$.
Let $A = (a_{ij})_{1 \leq i,j \leq n}$ be a random complex matrix that is also independent of $Y$.  Then with overwhelming probability one has
$$ X^* A X = \frac{1}{n} \tr A + Z^* A Z + O\left( K^2 \log^2 n (\frac{1}{n} \| A \|_F + \frac{1}{\sqrt{n}} \|AZ\| + \frac{1}{\sqrt{n}} \|A^* Z\|) \right)$$
where $\|A\|_F := (\sum_{1 \leq i,j \leq n} |a_{ij}|^2)^{1/2}$ is the Frobenius norm of $A$.
\end{proposition}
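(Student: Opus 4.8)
The plan is to expand $X^*AX = Y^*AY + Y^*AZ + Z^*AY + Z^*AZ$ and to show, conditionally on $A$ and $Z$ (which are independent of $Y$), that each of the first three terms concentrates as claimed; the term $Z^*AZ$ is carried over unchanged to the right-hand side. Every exceptional event produced below has probability $O(n^{-B})$ uniformly in $A,Z$ for each fixed $B$, so the bounded union of them is negligible and the conditional statements upgrade to unconditional ones.

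\emph{Cross terms.} Conditionally on $A,Z$, the quantity $Y^*AZ = \frac{1}{\sqrt n}\sum_{i=1}^n \overline{\xi_i}\,(AZ)_i$ is a sum of jointly independent mean-zero complex random variables, each of magnitude at most $\frac{K}{\sqrt n}|(AZ)_i|$ and variance $\frac{1}{n}|(AZ)_i|^2$. Bernstein's inequality for bounded independent sums then gives $|Y^*AZ| \ll K\log n \cdot \frac{1}{\sqrt n}\|AZ\|$ with overwhelming probability, the sub-Gaussian scale $\frac{1}{\sqrt n}\|AZ\|$ dominating the sub-exponential scale $\frac{K}{\sqrt n}\max_i|(AZ)_i|$ up to the harmless factor $K$. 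Writing $Z^*AY = \overline{Y^*A^*Z}$ and repeating the argument with $A$ replaced by $A^*$ yields $|Z^*AY| \ll K\log n \cdot \frac{1}{\sqrt n}\|A^*Z\|$ with overwhelming probability.

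\emph{Diagonal quadratic form.} Split $Y^*AY = \frac1n\sum_i a_{ii}|\xi_i|^2 + \frac1n\sum_{i\neq j}a_{ij}\overline{\xi_i}\xi_j$. Since $\E|\xi_i|^2 = 1$, the diagonal part equals $\frac1n\tr A + \frac1n\sum_i a_{ii}(|\xi_i|^2-1)$, and the error here is a sum of independent mean-zero terms of magnitude $\le\frac{K^2}{n}|a_{ii}|$ and variance $\le\frac{K^2}{n^2}|a_{ii}|^2$ (using $\E|\xi_i|^4 \le K^2\E|\xi_i|^2 = K^2$), so Bernstein bounds it by $O\!\left(\frac{K^2\log n}{n}\|A\|_F\right)$ with overwhelming probability. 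The off-diagonal sum $S := \frac1n\sum_{i\neq j}a_{ij}\overline{\xi_i}\xi_j$ is the delicate piece: reorganizing, $nS = \sum_{j=2}^n D_j$ with $D_j := \xi_j\sum_{i<j}a_{ij}\overline{\xi_i} + \overline{\xi_j}\sum_{i<j}a_{ji}\xi_i$, and $(D_j)$ is a martingale difference sequence for the filtration generated by $\xi_1,\dots,\xi_{j-1}$ (with $A$ conditioned), with $C_j(\xi) = |D_j|$ in the notation of Proposition \ref{azuma0}. Applying Bernstein conditionally on $A$ to the inner linear forms shows that, uniformly in $j$, one has $|\sum_{i<j}a_{ij}\overline{\xi_i}| \ll K\log n\,(\sum_{i<j}|a_{ij}|^2)^{1/2}$ and likewise for $\sum_{i<j}a_{ji}\xi_i$ outside an event of probability $O(n^{-B})$, so that $|D_j| \le \alpha_j := C K^2\log n\,(\sum_{i<j}|a_{ij}|^2 + \sum_{i<j}|a_{ji}|^2)^{1/2}$. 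Since $\sum_j\alpha_j^2 \ll K^4\log^2 n\,\|A\|_F^2$, Proposition \ref{azuma0} (with $\lambda$ a sufficiently large multiple of $\sqrt{\log n}$) gives $|nS| \ll \sqrt{\log n}\,(\sum_j\alpha_j^2)^{1/2} \ll K^2\log^{3/2} n\,\|A\|_F$ with overwhelming probability, i.e. $|S| \ll \frac{K^2\log^2 n}{n}\|A\|_F$.

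\emph{Conclusion.} Combining the three estimates gives $X^*AX = \frac1n\tr A + Z^*AZ + O\!\left(K^2\log^2 n\bigl(\frac1n\|A\|_F + \frac1{\sqrt n}\|AZ\| + \frac1{\sqrt n}\|A^*Z\|\bigr)\right)$ with overwhelming probability, as claimed. The main technical point is the off-diagonal chaos $S$: its martingale coefficients are themselves random linear forms in the $\xi_i$, which forces a nested concentration argument and the use of the robust Azuma bound of Proposition \ref{azuma0} in place of the classical one; one must also track the powers of $K$ and $\log n$ through the two layers to remain within the stated $K^2\log^2 n$ budget.
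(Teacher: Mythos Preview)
Your proof is correct and follows the same decomposition $X^*AX = Y^*AY + Y^*AZ + Z^*AY + Z^*AZ$ as the paper; the cross terms are handled identically via a Chernoff/Bernstein bound. The only difference is that for the quadratic form $Y^*AY$, the paper invokes \cite[Proposition 4.5]{ESY3} (a Hanson--Wright type inequality) as a black box, whereas you supply a self-contained nested martingale argument using Proposition~\ref{azuma0}; the underlying mechanism is the same, and your version has the mild advantage of not requiring an external reference.
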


We remark that for our applications, one could also use Talagrand's concentration inequality \cite{Tal} as a substitute for this concentration inequality, at the cost of a slight degradation in the bounds; see e.g. \cite{TVlocal1}.

\begin{proof}  By conditioning we may assume that $Z, A$ are deterministic (the failure probability in our estimates will be uniform in the choice of $Z, A$).  Let $\tilde \xi_i := \xi_i/K$.  From \cite[Proposition 4.5]{ESY3} we have
$$ \sum_{1 \leq i,j \leq n} a_{ij} \tilde \xi_i \overline{\tilde \xi_j} =
\sum_{1 \leq i,j \leq n} a_{ij} \E \tilde \xi_i \overline{\tilde \xi_j} + O( \|A\|_F \log^2 n )$$
with overwhelming probability.  Multiplying by $K^2/n$ and noting that $\E \xi_i \overline{\xi_j} = 1_{i=j}$, we conclude that
$$ Y^* A Y = \frac{1}{n} \tr A  + O\left( \frac{K^2 \log^2 n}{n} \| A \|_F  \right)$$
with overwhelming probability.  Meanwhile, from the Chernoff inequality we see that
$$ Y^* A Z = O\left( \frac{K \log^2 n}{\sqrt{n}} \|AZ\| \right)$$
and similarly
$$ Z^* A Y = O\left( \frac{K \log^2 n}{\sqrt{n}} \|A^* Z\| \right)$$
with overwhelming probability.  The claim follows.
\end{proof}

Applying Proposition \ref{concentration} (with $A$ equal to the projection matrix $A := \sum_{j_- \leq j \leq j_+} u_j u_j^*$), one has
$$ \sum_{j_- \leq j \leq j_+} |\tilde X^* u_j|^2 = \frac{j_+ - j_- + 1}{n} + \| \frac{z}{\sqrt{n}} \pi(e_n) \|^2 + O( n^{-1+o(1)} (j_+-j_-+1)^{1/2} ) + O( n^{-1/2+o(1)} \|\frac{z}{\sqrt{n}} \pi(e_n) \| )$$
with overwhelming probability.  By the arithmetic mean-geometric mean inequality one has $\| \frac{z}{\sqrt{n}} \pi(e_n) \|^2 + O( n^{-1/2+o(1)} \|\frac{z}{\sqrt{n}} \pi(e_n)\| ) \geq - n^{-1+o(1)}$, and we conclude that
$$ \sum_{j_- \leq j \leq j_+} |\tilde X^* u_j|^2 \gg n^c \eta$$
with overwhelming probability (conditioning on $M_{n-1},Z$).  Undoing the conditioning, we thus obtain a contradiction with overwhelming probability, and Proposition \ref{ni} follows.

\subsection{Resolvent bounds}\label{Resbound}

We now prove Proposition \ref{Resolv}, by using a more complicated variant of the arguments above.  We first take advantage of the fact that the spectral parameter $\sqrt{-1}\eta$ is on the imaginary axis to make some minor simplifications.  Namely, we have
\begin{align*}
R(\sqrt{-1}\eta) &= (W_{n,z}-\sqrt{-1}\eta)^{-1}  \\
&= W_{n,z} (W_{n,z}^2 + \eta^2)^{-1} + \sqrt{-1} \eta (W_{n,z}^2 + \eta^2)^{-1}.
\end{align*}
Note from \eqref{wnz} that $W_{n,z}^2+\eta^2$ is block-diagonal, and thus $W_{n,z} (W_{n,z}^2 + \eta^2)^{-1}$ vanishes on the diagonal.  We conclude that $R(\sqrt{-1} \eta)_{jj}$ and $s(\sqrt{-1}\eta)$ are purely imaginary (with non-negative imaginary part) for $1 \leq j \leq n$, with
\begin{equation}\label{emo}
\Im s(\sqrt{-1}\eta) = \frac{\eta}{2n} \tr(W_{n,z}^2 + \eta^2)^{-1} = \frac{\eta}{n} \tr ((M_n-z)^* (M_n-z) + \eta^2)^{-1}.
\end{equation}

Now we observe that it suffices to verify the claim for $\eta \geq n^{-1+c}$ for each fixed $c$.  To see this, observe that
$$ \Im R(\sqrt{-1}\eta)_{jj} = \eta \sum_{k=1}^{2n} \frac{|u_{k,j}|^2}{\lambda_i(W_{n,z})^2 + \eta^2}$$
for any $1 \leq j \leq 2n$, where $u_1,\dots,u_{2n}$ are an orthonormal basis of eigenvectors for $W_{n,z}$, and $u_{k,j}$ is the $j^{\th}$ coefficient of $u_k$.  Thus, if we can obtain Proposition \ref{Resolv} for $\eta \geq n^{-1+c}$, we conclude with overwhelming probability that
\begin{equation}\label{defense}
 \eta \sum_{k=1}^{2n} \frac{|u_{k,j}|^2}{\lambda_k(W_{n,z})^2 + \eta^2} \ll n^{o(1)}
\end{equation}
for all $\eta \geq n^{-1+c}$, and hence that
$$ \sum_{1 \leq k \leq 2n: \lambda_k(W_{n,z}) \leq \eta} |u_{k,j}|^2 \ll n^{o(1)} \eta$$
for all $\eta \geq n^{-1+c}$.  This implies that
$$ \sum_{1 \leq k \leq 2n: \lambda_k(W_{n,z}) \leq \eta} |u_{k,j}|^2 \ll n^{o(1)} ( \eta + n^{-1+c} )$$
for all $\eta>0$.    By dyadic summation (using the crude upper bound $\lambda_k(W_{n,z}) = O(n^{O(1)})$), this implies that
$$ \sum_{k=1}^{2n} \frac{|u_{k,j}|^2}{(\lambda_k(W_{n,z})^2 + \eta^2)^{1/2}} \ll n^{c+o(1)} (1+\frac{1}{n\eta})$$
for all $\eta > 0$.  Similarly with $u_{k,j}$ replaced by $u_{k,i}$.  By Cauchy-Schwarz, we conclude that
$$
|\sum_{k=1}^{2n} \frac{u_{k,j} \overline{u_{k,i}}}{\lambda_k(W_{n,z}) - \sqrt{-1} \eta}| \ll n^{c+o(1)} (1+\frac{1}{n\eta})$$
for any $\eta > 0$.  The left-hand side is $R(\sqrt{-1}\eta)_{ij}$.  The claim then follows by using a diagonalisation argument.

A similar argument reveals that we may assume without loss of generality that $\eta$ is an integer power of two.  Note that the above argument shows that one only needs to verify the diagonal case $i=j$; by symmetry and the union bound we may take $i=j=2n$.  The claim is trivially verified for $\eta \geq n^{10}$ (say), so we may assume that $\eta$ lies between $n^{-1+c}$ and $n^{10}$; by the union bound, we may now consider $\eta$ as fixed.  By diagonalisation (and the imaginary nature of the resolvent), it will now suffice to show that
\begin{equation}\label{joe}
 \Im R(\sqrt{-1}\eta)_{2n,2n} \ll n^{c+o(1)}
 \end{equation}
with overwhelming probability.

From \eqref{resolve-eq} (and the fact that $R(\sqrt{-1}\eta)_{2n,2n}$ is imaginary) we have
\begin{equation}\label{roast}
 \Im R(\sqrt{-1}\eta)_{2n,2n} = \frac{1}{\eta + \Im Y_n}
\end{equation}
where
$$ Y_n := X^* (W'_{n,z} - \sqrt{-1}\eta)^{-1} X.$$
From the block-diagonal nature of $W'_{n,z}$ as before we see that $Y_n$ is purely imaginary, with non-negative imaginary part; indeed, we have
\begin{equation}\label{imoen-2}
\Im Y_n = \eta \tilde X^* (A A^* + \eta^2)^{-1} \tilde X
\end{equation}
where $A$ is the $n \times n-1$ matrix
$$ A := \begin{pmatrix} M_{n-1} - z \\ Y \end{pmatrix}.$$
Thus we have the crude bound
\begin{equation}\label{crudito}
 \Im R(\sqrt{-1}\eta)_{2n,2n} \leq \frac{1}{\eta}
\end{equation}
which already takes care of the case when $\eta$ is large (e.g. $\eta \geq n^{-c}$).

On the other hand, we see from Proposition \ref{concentration} that with overwhelming probability one has
\begin{align*}
\tilde X^* (A A^* + \eta^2)^{-1} \tilde X &= \frac{1}{n} \tr (A A^* + \eta^2)^{-1} + \frac{|z|^2}{n} e_n^* (A A^* + \eta^2)^{-1} e_n \\
&\quad + O( n^{-1+o(1)} \|(A A^* + \eta^2)^{-1}\|_F ) + O( n^{-1+o(1)} |z| \| (A A^* + \eta^2)^{-1} e_n \| ).
\end{align*}
From the spectral theorem one has
$$ \| (A A^* + \eta^2)^{-1} e_n \| \leq (e_n^* (A A^* + \eta^2)^{-1} e_n)^{1/2} \eta^{-1} $$
and thus by Young's inequality (or the arithmetic mean-geometric mean inequality)
$$ n^{-1+o(1)} |z| \| (A A^* + \eta^2)^{-1} e_n \| = o( \frac{|z|^2}{n} e_n^* (A A^* + \eta^2)^{-1} e_n) + O( n^{-1+o(1)} \eta^{-2} ).$$
Also, we may expand
$$ \|(A A^* + \eta^2)^{-1}\|_F = (\sum_{j=1}^n \frac{1}{(\sigma_j(A)^2 + \eta^2)^2})^{1/2} $$
where $\sigma_1(A),\dots,\sigma_n(A)$ are the $n$ singular values of $A$ (thus one of these singular values is automatically zero).  From Proposition \ref{ni} and the Cauchy interlacing law, we see with overwhelming probability that for any interval $[-r,r]$, the number of singular values of $A$ in this interval is $O( n^{o(1)} (1+nr) )$.  From dyadic summation we then see that
\begin{equation}\label{aaf}
\|(A A^* + \eta^2)^{-1}\|_F \ll n^{o(1)} (n \eta)^{1/2} / \eta^2.
\end{equation}
Similarly, one has
$$ \tr (A A^* + \eta^2)^{-1} = \sum_{j=1}^n \frac{1}{\sigma_j(A)^2 + \eta^2}$$
and thus by interlacing
$$ \tr (A A^* + \eta^2)^{-1} = \sum_{j=1}^n \frac{1}{\sigma_j(M_n-z)^2 + \eta^2} + O( \frac{1}{\eta^2} ).$$
But from \eqref{emo} we have
$$ \sum_{j=1}^n \frac{1}{\sigma_j(M_n-z)^2 + \eta^2} = \frac{n}{\eta} s(\sqrt{-1} \eta)$$
and thus
\begin{equation}\label{bbb}
 \frac{\eta}{n} \tr (A A^* + \eta^2)^{-1} = s(\sqrt{-1} \eta) + O( \frac{1}{n\eta} ).
\end{equation}

Putting all this together with \eqref{imoen-2}, we see that with overwhelming probability one has
$$ \Im Y_n = \Im s(\sqrt{-1} \eta) + (1+o(1)) \frac{|z|^2}{n} \eta e_n^* (A A^* + \eta^2)^{-1} e_n
+ O( \frac{n^{o(1)}}{n\eta} ) + O( \frac{n^{o(1)}}{\sqrt{n\eta}} ),
$$
which, in view of the lower bound $\eta \geq n^{-1+c}$, simplifies to
\begin{equation}\label{roast-2}
 \Im Y_n = \Im s(\sqrt{-1} \eta) + (1+o(1)) \frac{|z|^2}{n} \eta e_n^* (A A^* + \eta^2)^{-1} e_n + o(1).
\end{equation}
Now we evaluate the expression $e_n^* (A A^* + \eta^2)^{-1} e_n$.  Observe that
$$
AA^* + \eta^2 = \begin{pmatrix}
(M_{n-1}-z) (M_{n-1}-z)^* + \eta^2 & (M_{n-1}-z)Y^* \\
Y (M_{n-1}-z)^* & YY^* + \eta^2.
\end{pmatrix}.$$
By Schur's complement, we thus have
$$
e_n^* (A A^* + \eta^2)^{-1} e_n = \frac{1}{YY^*+\eta^2 - Y (M_{n-1}-z)^* ((M_{n-1}-z) (M_{n-1}-z)^* + \eta^2)^{-1} (M_{n-1} -z) Y^*}.$$
One can simplify this using the identity
$$ B^* (BB^* + \eta^2)^{-1} B = 1 - \eta^2 (B^* B + \eta^2)^{-1},$$
valid for any matrix $B$ (which can be seen either from the singular value decomposition, or by multiplying both sides of the identity by $(B^* B + \eta^2)$) to conclude that
$$ \eta e_n^* (A A^* + \eta^2)^{-1} e_n  = \frac{1}{\eta + \eta Y ((M_{n-1}-z)^* (M_{n-1}-z) + \eta^2)^{-1} Y^*}.$$
Applying Lemma \ref{concentration}, we see with overwhelming probability that
\begin{align*}
\eta Y ((M_{n-1}-z)^* (M_{n-1}-z) + \eta^2)^{-1} Y^* &= \frac{\eta}{n} \tr((M_{n-1}-z)^* (M_{n-1}-z) + \eta^2)^{-1} \\
&\quad +
O( n^{-1+o(1)} \eta \|(M_{n-1}-z)^* (M_{n-1}-z) + \eta^2\|_F ).
\end{align*}
By mimicking the proof of \eqref{aaf}, one has
$$ \|(M_{n-1}-z)^* (M_{n-1}-z) + \eta^2\|_F \ll n^{o(1)} (n \eta)^{1/2} / \eta^2$$
with overwhelming probability.  Similarly, by mimicking the proof of \eqref{bbb} one has
$$
 \frac{\eta}{n} \tr((M_{n-1}-z)^* (M_{n-1}-z) + \eta^2)^{-1} = \Im s(\sqrt{-1} \eta) + O( \frac{1}{n\eta} ).
$$
Putting these bounds together, we conclude that
$$ \eta e_n^* (A A^* + \eta^2)^{-1} e_n  = \frac{1}{\eta + \Im s(\sqrt{-1}\eta) + o(1)}$$
with overwhelming probability; inserting this back into \eqref{roast-2} and \eqref{roast} we conclude that
\begin{equation}\label{overwhelm}
 \Im R(\sqrt{-1}\eta)_{2n,2n} = \frac{1}{\eta + \Im s(\sqrt{-1}\eta) + (1+o(1)) \frac{|z|^2/n}{\eta + \Im s(\sqrt{-1}\eta) + o(1)} + o(1)}
\end{equation}
with overwhelming probability.

Suppose now that $|z|^2/n \geq 1/2$.  Then we have
$$ |y + \frac{|z|^2/n}{y}| \gg 1$$
for any $y$; this implies that the denominator in \eqref{overwhelm} has magnitude $\gg 1$, which gives \eqref{joe}.  Thus we may assume that $|z|^2/n < 1/2$.

The bound \eqref{overwhelm} similarly with the index $2n$ replaced by any other index.  Averaging over these indices, we obtain the \emph{self-consistent equation}
\begin{equation}\label{self}
\Im s(\sqrt{-1}\eta) = \frac{1}{2n} \sum_{i=1}^{2n}
 \frac{1}{\eta + \Im s(\sqrt{-1}\eta) + (1+o(1)) \frac{|z|^2/n}{\eta + \Im s(\sqrt{-1}\eta) + o(1)} + o(1)}
\end{equation}
with overwhelming probability.  If we write $x := \eta + \Im s(\sqrt{-1}\eta)$, we thus have
$$ x = \frac{1}{2n} \sum_{i=1}^{2n} \frac{1}{x + (1+o(1)) \frac{|z|^2/n}{x+o(1)} + o(1)} + \eta$$
with overwhelming probability.  Note that either $x=o(1)$ or $x+o(1) = (1+o(1)) x$.  In the latter case, we can simplify the above equation as
$$  x = \frac{1}{2n} \sum_{i=1}^{2n} \frac{1+o(1)}{x + \frac{|z|^2/n}{x}} + \eta$$
and thus
$$  x = \frac{(1+o(1)) x}{x^2 + |z|^2/n} + \eta.$$
In particular, this forces $x^2 + |z|^2/n \geq 1+o(1)$. Since we have assumed that $|z|^2/n \leq  1/2$, we conclude that $x \geq 1/2$ (say).  We conclude that for each $n^{-1+c} \leq \eta \leq n^{10}$, we have
$$ \Im s(\sqrt{-1}\eta) + \eta = o(1)$$
or
$$ \Im s(\sqrt{-1}\eta) + \eta \geq 1/2$$
with overwhelming probability.  Rounding $\eta$ to the nearest multiple of (say) $n^{-100}$ and using the union bound (and crude perturbation theory estimates), we conclude with overwhelming probability that this dichotomy in fact holds for \emph{all} $n^{-1+c} \leq \eta \leq n^{10}$.  On the other hand, for $\eta=n^{10}$, one is clearly in the second case of the dichotomy rather than the first.  By continuity, we conclude that the second case of this dichotomy in fact holds for all $n^{-1+c} \leq \eta \leq n^{10}$; in particular, we have with overwhelming probability that
$$ \Im s(\sqrt{-1}\eta) \gg 1$$
when $n^{-1+c} \leq \eta \leq n^{-c}$.  Inserting this bound into \eqref{overwhelm}, we conclude with overwhelming probability that
$$
\Im R(\sqrt{-1}\eta)_{2n,2n} \ll 1$$
when $n^{-1+c} \leq \eta \leq n^{-c}$, which gives Proposition \ref{Resolv} in this case.  Finally, the case $\eta > n^{-c}$ can be handled by \eqref{crudito}.

\begin{remark}  A refinement of the above analysis can be used to give more precise control on the Stieltjes transform of $W_{n,z}$, as well as the counting function $N_I$.  See \cite{bai} for more details.
\end{remark}

\section{Asymptotics for the real gaussian ensemble}\label{real-app}

The purpose of this appendix is to establish Lemma \ref{corf}.  Our arguments here will rely heavily on those in \cite{borodin}.

By reflection we may restrict attention to the case when $z_1,\ldots,z_l$ lie in the upper half-plane $\C_+$.  Our starting point is the explicit formula
$$ \rho^{(k,l)}_n(x_1,\ldots,x_k,z_1,\ldots,z_l) = \operatorname{Pf} \begin{pmatrix} \tilde K_n(x_i,x_{i'}) & \tilde K_n(x_i,z_{j'}) \\
\tilde K_n(z_j, x_{i'}) & \tilde K_n(z_j, z_{j'}) \end{pmatrix}_{1 \leq i,i' \leq k; 1 \leq j,j' \leq l} $$
for the correlation functions, where $\tilde K_n: (\R \cup \C_+) \times (\R \cup \C_+) \to M_2(\C)$ is a certain explicit $2 \times 2$ matrix kernel obeying the anti-symmetry law
\begin{equation}\label{kt}
 \tilde K(\zeta, \zeta') = -\tilde K(\zeta',\zeta)^T,
\end{equation}
making the expression inside the Pfaffian $\operatorname{Pf}$ an anti-symmetric $2(k+l) \times 2(k+l)$ matrix; see \cite[Theorem 8]{borodin}.  In view of this formula, we see that Lemma \ref{corf} will follow if we can establish the uniform bound
$$ \tilde K_n(\zeta,\zeta') = O(1)$$
for all $\zeta,\zeta' \in \R \cup \C_+$.

To do this, we will need the explicit description of the kernel $\tilde K_n$.  Following \cite{borodin}, we will need the partial cosine and exponential functions
\begin{align*}
c_{n/2}(\gamma) &:= \sum_{m=0}^{n/2-1}\frac{\gamma^{2m}}{(2m)!} \\
e_{n/2}(\gamma) &:= \sum_{m=0}^{n-2} \frac{\gamma^m}{m!}
\end{align*}
as well as the function
$$
r_{n/2}(z,x) := \frac{e^{-z^2/2}}{\sqrt{2\pi}} \sqrt{\operatorname{erfc}(\sqrt{2}\Im z)} \frac{2^{(n-3)/2}}{(n-2)!} \sgn(x) z^{n-1} \gamma(\frac{n-1}{2},\frac{x^2}{2})
$$
where $\operatorname{erfc} := 1-\operatorname{erf}$ is the complementary error function and
$$
\gamma(t,x) = \int_0^x y^{t-1} e^{-y}\ dy
$$
is the incomplete gamma function.  In \cite[Theorem 8]{borodin}, the formula
$$ \tilde K_n(\gamma,\gamma') := \begin{pmatrix} \widetilde{DS}_n(\gamma,\gamma') & \widetilde {S}(\gamma,\gamma') \\ -\widetilde{S}(\gamma',\gamma) & \widetilde{ISM}_n(\gamma,\gamma') + {\mathcal E}(\gamma,\gamma') \end{pmatrix}
$$
is given for the kernel $\tilde K_n$,
where ${\mathcal E}(\gamma,\gamma')$ is equal to $\frac{1}{2} \sgn(\gamma-\gamma')$ when $\gamma,\gamma'$ are real, and equal to $0$ otherwise, and
the scalar quantities $\widetilde{DS}_n(\gamma,\gamma')$, $\widetilde {S}(\gamma,\gamma')$, $\widetilde{ISM}_n(\gamma,\gamma')$, are defined by the following formulae, depending on whether $\gamma,\gamma'$ are real or complex:

\begin{enumerate}
\item (Real-real case) If $x,x' \in \R$, then
\begin{align*}
\widetilde{S}_n(x,x') &:= \frac{e^{-(x-x')^2/2}}{\sqrt{2\pi}} e^{-xx'} e_{n/2}(xx') + r_{n/2}(x,x') \\
\widetilde{DS}_n(x,x') &:= \frac{e^{-(x-x')^2/2}}{\sqrt{2\pi}} (x'-x) e^{-xx'} e_{n/2}(xx') \\
\widetilde{IS}_n(x,x') &:= \frac{e^{-x^2/2}}{2\sqrt{\pi}} \sgn(x') \int_0^{(x')^2/2} \frac{e^{-t}}{\sqrt{t}} c_{n/2}(x\sqrt{2t})\ dt -
\frac{e^{-(x')^2/2}}{2\sqrt{\pi}} \sgn(x) \int_0^{x^2/2} \frac{e^{-t}}{\sqrt{t}} c_{n/2}(x'\sqrt{2t})\ dt.
\end{align*}
\item (Complex-complex case) If $z,z' \in \C_+$, then
\begin{align*}
\widetilde{S}_n(z,z') &:= \frac{i e^{-\frac{1}{2}(z-\overline{z'})^2}}{\sqrt{2\pi}} (\overline{z'} - z) \sqrt{\operatorname{erfc}(\sqrt{2} \Im(z)) \operatorname{erfc}(\sqrt{2} \Im(z'))} e^{-z\overline{z'}} e_{n/2}(z\overline{z'})\\
\widetilde{DS}_n(z,z') &:= \frac{e^{-\frac{1}{2}(z-z')^2}}{\sqrt{2\pi}} (z' - z) \sqrt{\operatorname{erfc}(\sqrt{2} \Im(z)) \operatorname{erfc}(\sqrt{2} \Im(z'))} e^{-z z'} e_{n/2}(z z') \\
\widetilde{IS}_n(z,z') &:= -\frac{e^{-\frac{1}{2}(\overline{z}-\overline{z'})^2}}{\sqrt{2\pi}} (\overline{z'} - \overline{z}) \sqrt{\operatorname{erfc}(\sqrt{2} \Im(z)) \operatorname{erfc}(\sqrt{2} \Im(z'))} e^{-\overline{z z'}} e_{n/2}(\overline{z z'}).
\end{align*}
\item (Real-complex case) If $x \in \R$ and $z \in \C_+$, then
\begin{align*}
\widetilde{S}_n(x,z) &:= \frac{i e^{-\frac{1}{2}(x-\overline{z})^2}}{\sqrt{2\pi}} \sqrt{\operatorname{erfc}(\sqrt{2} \Im(z))} e^{-x\overline{z}} e_{n/2}(x\overline{z}) \\
\widetilde{S}_n(z,x) &:= \frac{e^{-\frac{1}{2}(x-z)^2}}{\sqrt{2\pi}} \sqrt{\operatorname{erfc}(\sqrt{2} \Im(z))} e^{-xz} e_{n/2}(xz) + r_{n/2}(z,x) \\
\widetilde{DS}_n(x,z) &:= \frac{e^{-\frac{1}{2}(x-z)^2}}{\sqrt{2\pi}} (z-x) \sqrt{\operatorname{erfc}(\sqrt{2} \Im(z))} e^{-xz} e_{n/2}(xz) \\
\widetilde{IS}_n(x,z) &:= -\frac{ie^{-\frac{1}{2}(x-\overline{z})^2}}{\sqrt{2\pi}} \sqrt{\operatorname{erfc}(\sqrt{2} \Im(z))} e^{-x\overline{z}} e_{n/2}(x\overline{z}) - ir_{n/2}(\overline{z},x).
\end{align*}
\end{enumerate}

As ${\mathcal E}(\gamma,\gamma')$ is clearly bounded, it thus suffices (in view of \eqref{kt}) to show that all the expressions $\widetilde{S}_n(x,x')$, $\widetilde{DS}_n(x,x')$, $\widetilde{IS}_n(x,x')$, $\widetilde{S}_n(z,z')$, $\widetilde{DS}_n(z,z')$, $\widetilde{IS}_n(z,z')$, $\widetilde{S}_n(x,z)$, $\widetilde{S}_n(z,x)$, $\widetilde{DS}_n(x,z)$, $\widetilde{IS}_n(x,z)$ are all $O(1)$ for $x,x' \in \R$ and $z,z' \in \C_+$.  This will be a variant of the estimates in \cite[Section 9]{borodin}, which were concerned with the asymptotic values of these expressions as $n \to \infty$ rather than uniform bounds.

We first dispose of the $r_{n/2}$ terms.  In the proof of \cite[Corollary 9]{borodin}, the estimate
$$ |r_{n/2}(z,x)| \leq e^{-\frac{1}{2} \Re(z^2)} \sqrt{\operatorname{erfc}(\sqrt{2}\Im(z))} \frac{|z|^{n-1}}{2^{n/2} (n/2 - 1)!}$$
is established for any $x \in \R$ and $z \in \C^+$.  Using the standard bound
\begin{equation}\label{erfc-bound}
\operatorname{erfc}(x) = O( \frac{e^{-x^2}}{1+x} )
\end{equation}
for any $x \geq 0$, we thus have
$$ |r_{n/2}(z,x)| \ll e^{-|z|^2/2} \frac{|z|^{n-1}}{2^{(n-1)/2} (n/2 - 1)!}.$$
But $\frac{|z|^{n-1}}{2^{(n-1)/2} (n/2 - 1)!}$ is one of the Taylor coefficients of $e^{|z|^2/2}$, and so
\begin{equation}\label{rhin}
r_{n/2}(z,x) = O(1).
\end{equation}
Thus we may ignore all terms involving $r_{n/2}$.

Now we handle the real-real case. Recall from the triangle inequality and Taylor expansion that
\begin{equation}\label{enz}
 |e_{n/2}(z)| \leq e_{n/2}(|z|) \leq \exp(|z|)
\end{equation}
for any complex number $z$.    Thus, for instance, we have
$$
|\widetilde{S}_n(x,x')| \ll \exp( - (x-x')^2/2 - xx' + |xx'| ) + 1 \ll 1$$
since the expression inside the exponential is either $-(x-x')^2/2$ or $-(x+x')^2/2$.

If one applies the same method to bound $\widetilde{DS}_n(x,x')$, one obtains

Similarly one has
$$
|\widetilde{DS}_n(x,x')| \ll |x-x'| \exp( - (x-x')^2/2 - xx' + |xx'| ).$$
This bound is $O(1)$ when $xx'$ is positive, but can grow linearly when $xx'$ is negative.  To deal with this issue, we need an alternate bound to \eqref{enz} that saves an additional polynomial factor in some cases:

\begin{lemma}[Alternate bound]\label{enz-alt}  For any complex number $z$, one has
$$ |e_{n/2}(z)| \ll \frac{|z|^{1/2}}{\left||z|-z\right|} \exp(|z|),$$
with the convention that the right-hand side is infinite when $z$ is a non-negative real.
\end{lemma}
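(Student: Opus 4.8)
The plan is to prove the bound $|e_{n/2}(z)| \ll \frac{|z|^{1/2}}{||z|-z|} \exp(|z|)$ by exploiting the cancellation present in the partial exponential sum $e_{n/2}(z) = \sum_{m=0}^{n-2} z^m/m!$ when $z$ is not a non-negative real. Write $z = |z| e^{\sqrt{-1}\phi}$ with $\phi \neq 0$ (the claim being trivial, with an infinite right-hand side, when $\phi = 0$). I would first reduce to the case where $|z|$ is not too small (say $|z| \geq 1$), since for bounded $|z|$ the sum $e_{n/2}(z)$ is $O(1)$ while $\frac{|z|^{1/2}}{||z|-z|} \exp(|z|) \gg \frac{|z|^{1/2}}{||z|-z|}$, and one checks the right-hand side stays bounded below by a positive constant in that regime (using $||z|-z| \leq 2|z|$). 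So the substance is the large-$|z|$ case.

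The key step is a summation-by-parts (Abel summation) argument on $\sum_{m=0}^{n-2} z^m/m!$. Write $b_m := |z|^m/m!$, so that the terms are $z^m/m! = e^{\sqrt{-1} m\phi} b_m$. The sequence $b_m$ increases up to roughly $m \approx |z|$ and then decreases; in particular it is unimodal, so it can be split into at most two monotone pieces. On each monotone piece I would apply the standard Abel-summation bound $|\sum_m e^{\sqrt{-1}m\phi} b_m| \leq \frac{2 \max_m b_m}{|1 - e^{\sqrt{-1}\phi}|}$, valid because the partial sums of $e^{\sqrt{-1}m\phi}$ are bounded by $2/|1-e^{\sqrt{-1}\phi}|$. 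This gives $|e_{n/2}(z)| \ll \frac{\max_m b_m}{|1-e^{\sqrt{-1}\phi}|}$. Now $\max_m b_m = \max_m |z|^m/m!$; by Stirling this maximum is comparable to $e^{|z|}/|z|^{1/2}$ (the Poisson-type peak value), so $\max_m b_m \ll |z|^{-1/2} e^{|z|}$. Finally I would convert $\frac{1}{|1-e^{\sqrt{-1}\phi}|}$ into $\frac{|z|}{||z|-z|}$: indeed $||z| - z| = |z| \cdot |1 - e^{\sqrt{-1}\phi}|$, so $\frac{1}{|1-e^{\sqrt{-1}\phi}|} = \frac{|z|}{||z|-z|}$. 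Combining, $|e_{n/2}(z)| \ll \frac{|z|}{||z|-z|} \cdot \frac{e^{|z|}}{|z|^{1/2}} = \frac{|z|^{1/2}}{||z|-z|} e^{|z|}$, as desired.

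There is one subtlety in the Abel-summation step: the bound on partial sums of $e^{\sqrt{-1}m\phi}$ degrades as $\phi \to 0$, but that is precisely compensated by the $\frac{1}{|1-e^{\sqrt{-1}\phi}|}$ factor we are keeping, so no harm is done — the estimate is designed to be useful exactly when $z$ is bounded away from the positive real axis. The main obstacle, or at least the point requiring the most care, is making the unimodality/monotone-splitting argument clean: one should identify $m^* := \lfloor |z| \rfloor$ (or nearby), note $b_{m+1}/b_m = |z|/(m+1) \geq 1$ for $m < m^* $ and $\leq 1$ for $m \geq m^*$, and then split the range $0 \leq m \leq n-2$ at $m^*$ into (at most) two intervals on which $b_m$ is monotone, applying Abel summation separately and adding. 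I would also double-check the Stirling estimate $\max_{m \geq 0} |z|^m/m! \asymp e^{|z|} |z|^{-1/2}$ for $|z| \geq 1$: the maximum is attained near $m = |z|$, and $|z|^{\lfloor |z|\rfloor}/\lfloor |z|\rfloor! \asymp e^{|z|}/\sqrt{|z|}$ by Stirling's formula; an upper bound of the stated form is all that is needed and follows from $m! \geq (m/e)^m \sqrt{2\pi m}$ together with optimizing $|z|^m e^m / (m^m \sqrt{m})$ over $m$. The rest is bookkeeping.
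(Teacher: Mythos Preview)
Your proof is correct and takes a genuinely different route from the paper's. The paper proceeds by multiplying through by $(|z|-z)$: one computes the algebraic identity
\[
(|z|-z)\, e_{n/2}(z) = \sum_m \frac{z^m}{m!}(|z|-m) - (\text{boundary term}),
\]
bounds the boundary term by Stirling, and then observes that after taking absolute values the main sum \emph{telescopes} to $2|z|^{m^*+1}/m^*!$ with $m^* = \lfloor |z|\rfloor$, which Stirling turns into $O(|z|^{1/2} e^{|z|})$. Your Abel-summation argument instead isolates the oscillatory factor $e^{\sqrt{-1} m\phi}$, uses unimodality of $b_m = |z|^m/m!$ to split into two monotone runs, and applies the standard bound $|\sum e^{\sqrt{-1}m\phi} b_m| \ll \max b_m / |1-e^{\sqrt{-1}\phi}|$. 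The two arguments are cousins --- multiplying by $(|z|-z)$ is, after all, the first move in a summation-by-parts --- but yours packages the idea as a named technique, while the paper's carries out an explicit telescoping. Both rely on the same Stirling input $\max_m |z|^m/m! \asymp e^{|z|}/|z|^{1/2}$, and both handle the small-$|z|$ regime trivially. Neither has any real advantage over the other here; your version is perhaps slightly more recognizable as a standard maneuver.
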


\begin{proof}  The claim is trivial for $|z| \leq 1$, so we may assume that $|z| > 1$.  Observe that
\begin{equation}\label{zam}
 (|z|-z) e_{n/2}(z) = \sum_{m=0}^{n/2} \frac{z^m}{m!} (|z| - m) - \frac{z^{n/2+1}}{(n/2)!}.
\end{equation}
An application of Stirling's formula reveals that
$$ \frac{z^m}{m!} = O( \frac{1}{|z|^{1/2}} \exp(|z|) )$$
for all $m$, so the second term on the right-hand side of \eqref{zam} is $O( |z|  \frac{1}{|z|^{1/2}} \exp(|z|)  )$.  It thus suffices to show that
$$ \sum_{m=0}^{n/2} \frac{z^m}{m!} (|z| - m) = O(|z|^{1/2} \exp(|z|) ).$$
By the triangle inequality, the left-hand side can be bounded by
$$ \sum_{m \leq |z|} \frac{|z|^m}{m!} (|z|-m) + \sum_{m > |z|} \frac{|z|^m}{m!} (m-|z|).$$
This expression telescopes to
$$ 2 \frac{|z|^{m+1}}{m!} $$
where $m := \lfloor |z| \rfloor$. By Stirling's formula, this expression is $O( |z|^{1/2}  \exp(|z|) )$ as required.
\end{proof}

Inserting this bound in the case when $xx'$ is negative, we conclude that
$$ |\widetilde{DS}_n(x,x')| \ll |x-x'| \frac{1}{(xx')^{1/2}} \exp( - (x-x')^2/2 - xx' + |xx'| )
= \frac{|x|+|x'|}{|x|^{1/2} |x'|^{1/2}} \exp( (|x|-|x'|)^2 / 2 ) $$
and one easily verifies that this expression is $O(1)$.

Finally, to control $\widetilde{IS}_n(x,x')$, it suffices by symmetry to show that
\begin{equation}\label{clash}
\int_0^{(x')^2/2} \frac{e^{-t}}{\sqrt{t}} c_{n/2}(x\sqrt{2t})\ dt = O( \exp( x^2 / 2 ) ).
\end{equation}
But by Taylor expansion we may bound $c_{n/2}(x\sqrt{2t})$ by $\cosh(x\sqrt{2t})$.  Since
$$ \int_0^{(x')^2/2} \frac{e^{-t}}{\sqrt{t}} \cosh(x\sqrt{2t}) = \frac{\sqrt{\pi}}{2} e^{(x')^2/2} (\operatorname{erf}(\frac{|x|+|x'|}{\sqrt{2}}) -
\operatorname{erf}(\frac{|x'|-|x|}{\sqrt{2}}) ),$$
we see from \eqref{erfc-bound} that the left--hand side of \eqref{clash} is
$$ \ll \exp( (x')^2/2 ) \exp( - \max(|x'| - |x|,0)^2 / 2 ) \leq \exp(x^2/2)$$
as required.

Next we turn to the complex-complex case.  From \eqref{erfc-bound} and \eqref{enz} we see that
$$
|\widetilde{S}_n(z,z')| \ll \exp( - \frac{1}{2} \Re( (z-\overline{z'})^2 ) ) |\overline{z'}-z| \frac{\exp(-\Im(z)^2-\Im(z')^2)}{(1 + \Im(z))^{1/2} (1+\Im(z'))^{1/2}} \exp( |z\overline{z'}| - \Re (z \overline{z'}) ).$$
After some rearrangement, the right-hand side here becomes
$$ \frac{|\overline{z'}-z|}{(1 + \Im(z))^{1/2} (1+\Im(z'))^{1/2}} \exp( -\frac{1}{2}( |z| - |z'| )^2 ).$$
If one uses Lemma \ref{enz-alt} instead of \eqref{enz}, one gains an additional factor of $\frac{|z|^{1/2} |z'|^{1/2}}{\left||z||z'| - z\overline{z'}\right|}$.  Thus, it suffices to show that
\begin{equation}\label{fli}
\frac{|\overline{z'}-z|}{(1 + \Im(z))^{1/2} (1+\Im(z'))^{1/2}} \min( 1, \frac{|z|^{1/2} |z'|^{1/2}}{\left||z||z'| - z\overline{z'}\right|} )
\exp( -\frac{1}{2}( |z| - |z'| )^2 ) \ll 1.
\end{equation}
By symmetry, we may assume that $0 < \Im(z) \leq \Im(z')$.  We may assume that $|z|$ and $|z'|$ are comparable and larger than $1$, since otherwise the claim easily follows from the $\exp( -\frac{1}{2}( |z| - |z'| )^2 )$ term.

Let $\theta$ denote the angle subtended by $z$ and $z'$.  Observe from the triangle inequality that
\begin{equation}\label{sash}
 |\overline{z'}-z| \ll ||z|-|z'|| + \Im(z) + |z| \theta
\end{equation}
and
$$ \left||z||z'| - z\overline{z'}\right| \gg |z|^2 \theta.$$
The first two terms on the right-hand side of \eqref{sash} give an acceptable contribution to \eqref{fli} (bounding the minimum crudely by $1$), so it suffices to show that
$$ \frac{|z| \theta}{(1 + \Im(z))^{1/2} (1+\Im(z'))^{1/2}} \min( 1, \frac{|z|}{|z|^2 \theta} ) \ll 1,$$
but this is clear after discarding the denominator and using the second term in the minimum.  This establishes the bound $|\widetilde{S}_n(z,z')| \ll 1$.  Similar arguments, which we leave to the reader, show that $|\widetilde{DS}_n(z,z')| \ll 1$ and $|\widetilde{IS}_n(z,z')| \ll 1$.

Finally, we turn to the real-complex case.  Using \eqref{enz} and \eqref{erfc-bound}, we can bound
$$
|\widetilde{S}_n(x,z)| \ll \exp(-\frac{1}{2} \Re( (x-\overline{z})^2 ) ) \frac{\exp( - \Im(z)^2 )}{1+\Im(z)^{1/2}} \exp( - x \overline{z} + |x| |z| ).$$
The right-hand side simplifies to $\exp( -(x-|z|)^2/2 ) / (1 + \Im(z)^{1/2})$, which is clearly $O(1)$.

A similar argument (using \eqref{rhin}) shows that $\widetilde{S}_n(x,z) = O(1)$ and $\widetilde{IS}_n(x,z) = O(1)$.  The bound $\widetilde{DS}_n(x,z) = O(1)$ can be established by the same arguments used to handle the complex-complex case; we leave the details to the reader.  This completes the proof of Lemma \ref{corf}.

\end{document}